\documentclass[12pt]{amsart}
\usepackage{amssymb}
\usepackage{mathrsfs}
\usepackage{graphicx}
\setlength{\textwidth}{15.0truecm}
\setlength{\textheight}{22.5truecm}
\setlength{\topmargin}{0mm}
\setlength{\oddsidemargin}{0.3cm}
\setlength{\evensidemargin}{0.3cm}

\renewcommand\a{\alpha}
\renewcommand\b{\beta}

\renewcommand\d{\delta}
\newcommand\la{\lambda}

\newcommand\vf{\varphi}

\renewcommand\t{\tau}

\newcommand\Om{\Omega}

\newcommand\vL{\varLambda}

\newcommand\vG{\varGamma}
\newcommand\ve{\varepsilon}

\newcommand{\ZZ}{\mathbb Z}

\newcommand\Bp{\mathbf p}

\newcommand\Bla{\boldsymbol\lambda}
\newcommand\Bmu{\boldsymbol\mu}
\newcommand\Bnu{\boldsymbol\nu}

\newcommand\Bt{\boldsymbol\t}

\newcommand\CB{\mathcal{B}}
\newcommand\ZC{\mathcal{C}}

\newcommand\CI{\mathcal{I}}

\newcommand\CT{ \mathcal{T}}

\newcommand\FS{\mathfrak S}
\newcommand\Fu{\mathfrak u}
\newcommand\Fv{\mathfrak v}

\newcommand\Fs{\mathfrak s}

\newcommand\Ft{\mathfrak t}


\newcommand\wh{\widehat}
\newcommand\wt{\widetilde}

\newcommand\ol{\overline}

\newcommand\Ra{\Rightarrow }

\newcommand{\lan}{\langle}
\newcommand{\ran}{\rangle}

\newcommand{\ra}{\rightarrow }
\newcommand{\ot}{\otimes}

\newcommand{\sh}{^{\sharp} }

\newcommand\Hom{\operatorname{Hom}}
\newcommand\End{\operatorname{End}}

\newcommand\shp{^{\sharp}}


\newcommand{\rad}{\operatorname{rad}}


\newcommand{\isom}{\,\raise2pt\hbox{$\underrightarrow{\sim}$}\,}
\newcounter{ichi}
\setcounter{ichi}{1}
\newcommand{\roi}{\roman{ichi}}
\newcounter{ni}
\setcounter{ni}{2}
\newcommand{\roii}{\roman{ni}}
\newcounter{san}
\setcounter{san}{3}
\newcommand{\roiii}{\roman{san}}
\newcounter{yon}
\setcounter{yon}{4}
\newcommand{\roiv}{\roman{yon}}
\newcounter{go}
\setcounter{go}{5}
\newcommand{\rov}{\roman{go}}
\newcounter{roku}
\setcounter{roku}{6}

\newcounter{nana}
\setcounter{nana}{7}

\newcommand{\Sc}{\mathscr{S}}
\newcommand{\Sp}{\mathscr{S}^\Bp}
\newcommand{\oSp}{\ol{\Sc}^\Bp}

\newcommand{\tSp}{\wt{\Sc}^{\Bp}}
\newcommand{\He}{\mathscr{H}}
\newcommand{\A}{\mathscr{A}}
\newcommand{\aA}{\A^\a}
\newcommand{\oA}{\ol{\A}^\a}
\newcommand{\tA}{\wt{\A}^\a}
\newcommand{\eqA}{\stackrel{\,\,_{\A}}{\sim}}
\newcommand{\eqaA}{\stackrel{\,_\a}{\sim}}
\newcommand{\eqtA}{\stackrel{\,_{\wt{\a}}}{\sim}}
\newcommand{\eqoA}{\stackrel{\,_{\ol{\a}}}{\sim}}
\newcommand{\eqAl}{\stackrel{\,\,_{\sh\A}}{\sim}}

\newcommand{\eqtAl}{\stackrel{\,_{\sh\wt{\a}}}{\sim}}

\newcommand{\eqtAt}{\approx}

\newtheorem{thm}{Theorem}[section]
\newtheorem{lem}[thm]{Lemma}
\newtheorem{cor}[thm]{Corollary}
\newtheorem{prop}[thm]{Proposition}

\newtheorem{ass}[thm]{Assumption}

\def \para{\refstepcounter{thm} \par\medskip\noindent
                \textbf{\thethm .} }

\def \remark{\refstepcounter{thm} \par\medskip\noindent
                \textbf{Remark \thethm .} }

\def \remarks{\refstepcounter{thm} \par\medskip\noindent
                \textbf{Remarks \thethm .} }

\def \exam{\refstepcounter{thm} \par\medskip\noindent
                \textbf{Example \thethm .} }

\allowdisplaybreaks[4]
\numberwithin{equation}{thm}

\begin{document}
\setlength{\baselineskip}{4.9mm}
\setlength{\abovedisplayskip}{4.5mm}
\setlength{\belowdisplayskip}{4.5mm}
\renewcommand{\theenumi}{\roman{enumi}}
\renewcommand{\labelenumi}{(\theenumi)}
\renewcommand{\thefootnote}{\fnsymbol{footnote}}
\renewcommand{\thefootnote}{\fnsymbol{footnote}}
\parindent=20pt
\newcommand{\dis}{\displaystyle}

\medskip
\begin{center}
{\large \bf A cellular algebra with  certain idempotent decomposition } 
\\
\vspace{1cm}
Kentaro Wada\footnote{This research was  supported  in part by JSPS Research Fellowships for Young Scientists} 
\\ 
\vspace{0.5cm}
Graduate School of Mathematics \\
Nagoya University  \\
Chikusa-ku, Nagoya 464-8602,  Japan
\end{center}
\title{}
\maketitle
\markboth{Wada}{cellular algebra}


\begin{abstract}
For a cellular algebra $\A$ with a cellular basis $\ZC$, 
we consider  a decomposition of the unit element $1_\A$ into orthogonal idempotents (not necessary primitive) 
satisfying some conditions. 
By using this decomposition, 
the cellular basis $\ZC$ can be partitioned into some pieces with good properties. 
Then by using a certain map $\a$, we give a coarse partition of $\ZC$ whose refinement is the original partition. 
We construct a Levi type subalgebra $\aA$ of $\A$ and its quotient algebra $\oA$,  
and also construct a parabolic type subalgebra $\tA$ of $\A$, which contains $\aA$ 
with respect to the map $\a$. 
Then, we study the relation of standard modules, simple modules and  decomposition numbers among these algebras. 
Finally, we  study the relationship of blocks among these algebras.

\end{abstract}


\setcounter{section}{-1}
\section{Introduction}

A cellular algebra was introduced by Graham and Lehrer in \cite{GL96} 
with motivations from the representation theory of some algebras, 
such as Iwahori-Hecke algebras of type A, Ariki-Koike algebras, Brauer algebras and so on. 
Now, it is known that many classes of algebras are cellular, 
for example,  
cyclotomic $q$-Schur algebras associated to Ariki-Koike algebras \cite{DJM98}, 
Iwahori-Hecke algebras associated to the coxeter groups \cite{Geck08}, 
generalized $q$-Schur algebras \cite{Doty03}, 
Birman-Murakami-Wenzl algebras \cite{Eny04} and others. 

The general theory of cellular algebras in \cite{GL96} gives 
a systematic framework 
in (modular) representation theory of given algebras. 
Namely, for a cellular algebra $\A$ over a commutative ring $R$ which has a cellular basis $\ZC$ with respect to a poset $(\vL^+,\geq)$, 
one can define a standard module $W^\la$ ($\la \in \vL^+$), which is well understood  
by using the cellular basis $\ZC$, 
and one can define a bilinear form on $W^\la$ by using the multiplication of $\A$. 
Let $\rad W^\la$ be the radical of $W^\la$ with respect to this bilinear form, 
and put $L^\la=W^\la/\rad W^\la$. 
Then $\{L^\la \not=0\,|\,\la \in \vL^+\}$ gives a complete set of non-isomorphic simple $\A$-modules 
if $R$ ia a field. 
But, it is a difficult problem in general,  
not only to describe $\rad W^\la$ explicitly, 
but also 
to determine when $L^\la$ is non-zero. 
Hence, a fundamental problem is to compute the decomposition numbers of the given cellular algebra $\A$, 
which is the multiplicity of $L^\mu\not=0$ in the composition series of $W^\la$ ($\la,\mu \in \vL^+$).

In \cite{SW}, we gave a product formula for certain decomposition numbers of the cyclotomic $q$-Schur algebra $\Sc$. 
In [loc.\,cit.], we constructed a certain subalgebra $\Sp$ of $\Sc$ and its quotient algebra $\oSp$ by using the cellular basis of $\Sc$, 
and showed that $\oSp$ was isomorphic to a direct sum of tensor products of cyclotomic $q$-Schur algebras with smaller ranks than the original $\Sc$. 
The relation among $\Sc$, $\Sp$ and $\oSp$ combined with the structure of $\oSp$ 
implies the product formula for certain decomposition numbers of $\Sc$. 
(The original idea of them is due to Sawada \cite{Saw}, 
and our result in \cite{SW} is a generalization of his result.) 

In this paper, we generalize the above arguments for $\Sc$, 
except for the structure of $\oSp$, 
to a more general cellular algebra $\A$ with certain conditions. 
Instead of the combinatorics 
based on the explicit information on cellular basis 
in \cite{SW},  
we consider a decomposition of the unit element $1_\A$ of $\A$ into orthogonal idempotents (not necessary primitive) 
which  is 
compatible with a given cellular basis (Assumption \ref{assume}). 
By using this idempotent decomposition, 
the cellular basis $\ZC$ can be partitioned into some pieces with good properties. 
Then by using a certain map $\a$, 
we construct a coarse partition of $\ZC$ 
whose refinement is the original partition. 
By using this coarse partition of $\ZC$ obtained from the map $\a$, 
we construct a Levi type subalgebra $\aA$ of $\A$ and its quotient algebra $\oA$. 
We expect that $\oA$ has a simpler structure than $\A$ 
in analogy to $\oSp$ for the case of cyclotomic $q$-Schur algebra $\Sc$. 
Moreover, we construct a parabolic type subalgebra $\tA$ of $\A$, which contains $\aA$. 
$\oA$ is also obtained as a quotient algebra of $\tA$. 
In \cite{SW}, we only considered the parabolic type  subalgebra $\tSp$. 
However, 
considering the Levi type subalgebra $\aA$ gives 
a more useful information  than considering  only the parabolic type subalgebra $\tA$. 
Thus we have the following diagram. \\
\hspace{5em}
\begin{picture}(30,60)
\put(0,40){$\aA$ \scalebox{2}[1]{$\hookrightarrow $} $\tA$ \scalebox{2}[1]{$\hookrightarrow $} $\A$}
\put(50,0){$\oA$}
\put(5,10){\rotatebox{140}{\scalebox{4}[0.6]{$\twoheadleftarrow$}}}
\put(53,12){\rotatebox{90}{\scalebox{2.2}[1.2]{$\twoheadleftarrow$}}}
\end{picture}\vspace{2mm}\\
Each of $\aA$ and $\oA$ becomes a cellular algebra again.  
However, $\tA$ is not a cellular algebra but also is a standardly based algebra in the sence of \cite{DR98}. 
Moreover, we study the relation of standard modules, simple modules and decomposition numbers among $\A$, $\tA$, $\aA$ and $\oA$. 
As results, we see that a certain decomposition number of $\A$  coincides with a decomposition number of $\oA$. 

Finally, we study the relationship of blocks among $\A$, $\tA$, $\aA$ and $\oA$. 
We have not treated about blocks even in the case of the cyclotomic $q$-Schur algebra in \cite{SW} 
since we need the relation with the Levi type subalgebra $\aA$. 
By the general theory of cellular algebras, 
one can classify the 
blocks of a cellular algebra  by using the standard modules.  
Concerning the classification of blocks of $\tA$, the general theory can not be applied since $\tA$ is not a cellular algebra. 
However, we can classify the blocks of $\tA$ by using the right standard modules of $\tA$ thanks to the relation with $\aA$. 
By the relation of decomposition numbers among $\A$, $\tA$, $\aA$ and $\oA$, 
we see that the block decomposition of $\A$ is preserved in $\tA$. 
Namely, let $\A=\bigoplus_{\vG}\CB_\vG$ be the block decomposition of $\A$, 
then $\tA=\bigoplus_{\vG}\wt{\CB}_\vG^\a$  gives the block decomposition of $\tA$, 
where $\wt{\CB}_\vG^\a=\CB_\vG \cap \tA$. 
Moreover, let $\ol{\CB}_\vG^\a$ be the image of $\wt{\CB}_\vG^\a$ under the natural surjection $\tA \ra \oA$, 
then we have $\ol{\CB}_\vG^\a=\bigoplus_{\eta} \ol{\CB}_{\vG_\eta}^\a$ (this decomposition is determined by the map $\a$), 
and $\oA=\bigoplus_{\vG}\bigoplus_{\eta} \ol{\CB}_{\vG_\eta}^\a$ gives the block decomposition of $\oA$. \vspace{1em}\\
\textbf{Acknowledgment}\,\, I would like to thank Professor Toshiaki Shoji for many helpful advices and discussions.

\section{Cellular algebras}\label{cellular}
A cellular algebra was introduced by Graham and Lehrer in \cite{GL96}. 
In this section, 
we review its definition  and some fundamental properties. 
For further detailes on cellular algebras, 
one can refer to \cite{GL96} or a monograph \cite[\S2]{M-book}. 

\para \label{def-cellular}
Let $R$ be a commutative ring with $1$, and $\A$ be an associative algebra over $R$ with a unit element $1_\A$.  
Let $(\vL^+,\geq)$ be a finite poset, and $\CT(\la)$ be a finite index set for each $\la\in\vL^+$. 
For an ordered pair $(\Fs,\Ft) \in \CT(\la)\times \CT(\la)$ ($\la \in \vL^+$), 
we consider an element $c_{\Fs\Ft}^\la \in \A$, 
and a set  $\ZC=\{c_{\Fs\Ft}^\la \,|\,\Fs,\Ft\in \CT(\la) \text{ for some }\la \in \vL^+\}$. 
We say that $\A$ is a cellular algebra with a cellular basis $\ZC$ 
if $\A$ satisfies the  following properties. 
\begin{enumerate}
\item 
$\A$ has a free $R$-basis $\ZC=\{c_{\Fs\Ft}^\la \,|\,\Fs,\Ft\in \CT(\la) \text{ for some }\la \in \vL^+\}$. 
\item An $R$-linear map $* :\A\ra \A$, $a \mapsto a^\ast$, 
defined by $(c_{\Fs\Ft}^\la)^*=c_{\Ft\Fs}^\la$ for any $c_{\Fs\Ft}^\la \in \ZC$ 
gives an algebra anti-automorphism of $\A$.
\item For $c_{\Fs\Ft}^\la \in \ZC$ and $a\in \A$, 
\begin{align} \label{cellular-rel}
 c_{\Fs\Ft}^\la\cdot a \equiv \sum_{\Fv \in \CT(\la)} r_{\Fv}^{(\Ft,a)}c_{\Fs\Fv}^\la \mod \A^{\vee \la} \qquad (r_{\Fv}^{(\Ft,a)}\in R),
\end{align}
where $\A^{\vee \la}$ is an $R$-submodule of $\A$ spanned by 
$c_{\Fs'\Ft'}^{\la'}$ with $\la' \in \vL^+$ such that $\la'>\la$ and that $\Fs',\Ft'\in \CT(\la')$,  
and $r_{\Fv}^{(\Ft,a)}$ does not depend on the choice of $\Fs \in \CT(\la)$. 
\end{enumerate}

\para \label{def-rad}
By a general theory of cellular algebras, one can construct standard modules of $\A$, 
which have  properties  as follows. 

For $\la\in \vL^+$, let $W^\la$ be a free $R$-module with a free $R$-basis $\{c_{\Ft}^\la\,|\,\Ft\in \CT(\la)\}$.   
We define a right action of $\A$ on $W^\la$ by 
\[c_{\Ft}^\la\cdot a =\sum_{\Fv\in \CT(\la)}r_{\Fv}^{(\Ft,a)}\,c_{\Fv}^\la \qquad (\Ft\in \CT(\la),\,a \in \A), \] 
where $ r_{\Fv}^{(\Ft,a)}\in R$ is given in (\ref{cellular-rel}). 
We call a right $\A$-module $W^\la$ a standard module (or a cell module). 
We can define a bilinear form $\lan \,,\,\ran : W^\la \times W^\la \ra R $ by
\[\lan c_{\Fs}^\la, c_{\Ft}^\la \ran c_{\Fu\Fv}^\la \equiv c_{\Fu\Fs}^\la c_{\Ft\Fv}^\la \mod \A^{\vee \la} 
\quad \big(\Fs,\Ft,\Fu,\Fv \in \CT(\la)\big).\] 
Note that this definition does not depend on the choice of $\Fu,\Fv \in \CT(\la)$. 
Put 
\[\rad W^\la=\{x\in W^\la\,|\, \lan x,y\ran =0 \text{ for any }y \in W^\la \}.\]
Then $\rad W^\la$ turns out to be an $\A$-submodule of $W^\la$. Thus we can define a quotient module $L^\la=W^\la/\rad W^\la$. 
Set  
$\vL^+_0=\big\{\la\in \vL^+\bigm| L^\la \not= 0\big\}$.
The following result is fundamental in the theory of cellular algebras. 
\begin{thm}[{\cite[Theorem 3.4]{GL96}}]
Suppose that $R$ is a field. Then  
 $\{L^\la\,|\,\la \in \vL^+_0\}$ is a complete set of non-isomorphic right simple $\A$-modules. 
\end{thm}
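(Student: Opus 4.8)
The plan is to follow the standard Graham–Lehrer argument, adapting it to the present notation. First I would fix a field $R$ and, for each $\la \in \vL^+$, analyze the standard module $W^\la$ together with its bilinear form $\lan\,,\,\ran$. The first step is to record the elementary consequences of the cellular axioms: from \eqref{cellular-rel} the subspace $\A^{\vee\la}$ is a two-sided ideal, and the associativity of multiplication forces $\lan x\cdot a, y\ran = \lan x, y\cdot a^*\ran$ for all $x,y \in W^\la$ and $a \in \A$ (using that $*$ is an anti-automorphism with $(c^\la_{\Fs\Ft})^* = c^\la_{\Ft\Fs}$). This adjointness is what makes $\rad W^\la$ an $\A$-submodule, hence $L^\la = W^\la/\rad W^\la$ a well-defined $\A$-module; when $L^\la \neq 0$ the induced form on $L^\la$ is nondegenerate and $\A$-invariant.

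Next I would show each nonzero $L^\la$ is simple and absolutely simple. The key point is that for $\la$ with $L^\la \neq 0$, any nonzero submodule $U \subseteq L^\la$ satisfies $\lan U, L^\la\ran \neq 0$ — otherwise $U$ would lie in the radical of the nondegenerate form — and then the cellular relation lets one produce, from any $0 \neq \bar u \in U$ and a suitable product of basis elements, a scalar multiple of a fixed generator $\bar c^\la_\Ft$ lying in $U$; since the $\bar c^\la_\Ft$ generate $L^\la$ one concludes $U = L^\la$. The same computation, read as a statement about $\End_\A(L^\la)$, shows that endomorphism ring is $R$ (multiplication on $L^\la$ by any $a$ acting as scalars on the form forces the endomorphism to be scalar), which gives absolute irreducibility.

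Then I would prove the $L^\la$ ($\la \in \vL^+_0$) are pairwise non-isomorphic and that every simple module appears. For non-isomorphism: if $L^\la \cong L^\mu$ with $\la \neq \mu$, choose $\la$ maximal in the poset among $\{\la,\mu\}$ without loss of generality; the ideal $\A^{\vee\la}$ (or rather the filtration of $\A$ by the subspaces spanned by $c^{\la'}_{\Fs\Ft}$ with $\la' \not\geq \la$) acts as zero on $W^\la$ hence on $L^\la$, but one shows it acts nontrivially on $L^\mu$ when $\mu \not\geq \la$, by exhibiting a basis element $c^\mu_{\Fs\Ft}$ acting nonzero — contradiction. Completeness follows by a dimension/composition-factor count: filtering $\A$ as a right module by the cellular basis, each successive quotient is a direct sum of copies of the $W^\la$, and each $W^\la$ has $L^\la$ as its head (when nonzero) and all other composition factors $L^\mu$ with $\mu > \la$; an induction down the poset shows every simple right $\A$-module is some $L^\la$, $\la \in \vL^+_0$.

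The main obstacle is really just bookkeeping with the poset filtration and making the "produce a generator inside any nonzero submodule" argument precise using only the weak form of the cellular relation in \eqref{cellular-rel} (where the structure constants $r^{(\Ft,a)}_{\Fv}$ are only known to be independent of $\Fs$); one has to be careful that the bilinear form and the module action interact exactly as the axioms permit, with no appeal to explicit bases beyond $\ZC$. Since this is precisely \cite[Theorem 3.4]{GL96}, I would in practice cite that reference and only sketch the above, as the paper does; the point of recalling it here is that the adjointness identity and the structure of the head of $W^\la$ are the two facts used repeatedly in the sequel.
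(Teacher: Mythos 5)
The paper itself offers no proof of this theorem (it is quoted from \cite[Theorem 3.4]{GL96}), and the overall architecture you sketch — the adjointness $\lan x\cdot a,y\ran=\lan x,y\cdot a^\ast\ran$, the identity $x\cdot c^\la_{\Fs\Ft}\equiv\lan x,c^\la_{\Fs}\ran c^\la_{\Ft}$ giving simplicity of $L^\la$ and $\End_\A(L^\la)=R$, and the filtration of $\A$ by cell ideals for completeness — is indeed the standard Graham--Lehrer argument. However, as written your sketch has a genuine gap: you have the poset directions reversed relative to this paper's conventions, and two of your steps are false here. In this paper $\A^{\vee\la}$ is spanned by the $c^{\la'}_{\Fs\Ft}$ with $\la'>\la$, and ideals are spanned by \emph{upward}-closed (saturated, cf.\ (\ref{saturated})) subsets of $\vL^+$. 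Consequently the composition factors of $W^\la$ are the $L^\mu$ with $\mu\le\la$ (with $[W^\la:L^\la]=1$), not $\mu>\la$: the paper's own path-algebra example (Example \ref{ex}, \ref{ex4}) has $[W^{\la_1}:L^{\la_2}]=1$ with $\la_2<\la_1$. Worse, your non-isomorphism step fails: the span of $\{c^{\la'}_{\Fs\Ft}\mid\la'\not\geq\la\}$ is not upward closed, hence is not an ideal under (\ref{cellular-rel}), and it does not annihilate $W^\la$ (in the same example $e_2=c^{\la_2}_{22}$, with $\la_2\not\geq\la_1$, acts nontrivially on $W^{\la_1}$); and $\A^{\vee\la}$ cannot distinguish $L^\la$ from $L^\mu$ when $\mu\not>\la$, because an element $c^{\la'}_{\Fs\Ft}$ with $\la'>\la$ can act nonzero on $W^\mu$ only if $\la'\le\mu$, which would force $\mu>\la$; so $\A^{\vee\la}$ kills \emph{both} $W^\la$ and $W^\mu$ and yields no contradiction.

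The repair is the mirror image of what you wrote: work with the two-sided ideal spanned by $\{c^{\la'}_{\Fs\Ft}\mid\la'\geq\la\}$, which is saturated in the sense of (\ref{saturated}). By (\ref{cellular-rel}) and (\ref{cellular-rel-left}) this ideal annihilates $W^\mu$ unless $\mu\geq\la$, while for $\la\in\vL^+_0$ the identity $c^\la_{\Fu}\cdot c^\la_{\Fs\Ft}\equiv\lan c^\la_{\Fu},c^\la_{\Fs}\ran c^\la_{\Ft}$ shows it acts nontrivially on $L^\la$. Applying this with the roles of $\la$ and $\mu$ exchanged gives $\mu\geq\la$ and $\la\geq\mu$, hence $\la=\mu$, which handles incomparable pairs as well (your ``choose $\la$ maximal among $\{\la,\mu\}$'' does not). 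The same ideal argument yields the correct bound $[W^\la:L^\mu]\neq0\Rightarrow\mu\le\la$, which is what your inductive completeness step actually needs; with these directions corrected, your outline becomes the proof of \cite[Theorem 3.4]{GL96}.
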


\para
By applying the algebra anti-automorphism $\ast$ to both sides of (\ref{cellular-rel}), 
we have, for $c_{\Ft\Fs}^\la \in \ZC$ and $a^\ast \in \A$  
\begin{align} \label{cellular-rel-left}
 a^\ast \cdot c_{\Ft\Fs}^\la  \equiv \sum_{\Fv \in \CT(\la)} r_{\Fv}^{(\Ft,a)}c_{\Fv\Fs}^\la \mod \A^{\vee \la} \qquad (r_{\Fv}^{(\Ft,a)}\in R). 
\end{align}
Thus, for $\la \in \vL^+$, 
one can also define a left standard module $\shp W^\la$
with a free $R$-basis $\{\,\shp c^\la_{\Ft} \,|\,\Ft\in \CT(\la)\}$, 
where the left action of $\A$ on $\shp W^\la$ is given by 
\[a \cdot \,\sh c_{\Ft}^\la =\sum_{\Fv\in \CT(\la)}r_{\Fv}^{(\Ft,a^\ast)}\,\sh c_{\Fv}^\la \qquad (\Ft\in \CT(\la),\,a \in \A), \] 
where $ r_{\Fv}^{(\Ft,a^\ast)}\in R$ is given by replacing \lq\lq$a$" with \lq\lq$a^\ast$\," in (\ref{cellular-rel-left}). 
We can also define a bilinear form on $\shp W^\la$, 
a submodule $\rad\,\shp W^\la$ of $\shp W^\la$ 
and a quotient module $\shp L^\la=\,\shp W^\la/\rad \,\shp W^\la$ by a similar way as in the case of right standard modules $W^\la$. 
Moreover one  sees that $\shp L^\la \not=0$ for $\la \in \vL^+_0$. 
Then $\{\,\shp L^\la\,|\, \la \in \vL^+_0\}$ turns out to be a complete set of non-isomorphic left simple $\A$-modules 
when $R$ is a field. 

In the case where $R$ is a field, for $\la\in \vL^+$ and $\mu \in \vL^+_0$, 
let $[W^\la : L^\mu]_{\A}$ (resp. $[\,\shp W^\la :\,\shp L^\mu]_{\A}$) 
be a decomposition number of $\A$, 
namely a multiplicity of $L^\mu$ (resp. $\shp L^\mu$) in a composition series of $W^\la$ (resp. $\shp W^\la$). 
The following lemma is immediate from the definition. 

\begin{lem}\label{lr-aA}
Assume that $R$ is a field. For $\la \in \vL^+$ and $\mu \in \vL^+_0$, we have 
\[ [W^\la : L^\mu]_{\A} =[\,\shp W^\la:\,\shp L^\mu]_{\A}.\]
\end{lem}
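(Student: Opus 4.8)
The plan is to use the anti-automorphism $\ast$ of $\A$ to transport the entire right-module picture to the left-module picture, and then to recognise that under this transport $W^\la$ becomes $\shp W^\la$ and $L^\mu$ becomes $\shp L^\mu$; the equality of multiplicities then follows formally because the transport is an exact equivalence of categories. Since $\ast$ is an algebra anti-automorphism, it is an isomorphism $\A\isom\A^{\mathrm{op}}$, and composing with the tautological identification of right $\A$-modules with left $\A^{\mathrm{op}}$-modules yields an equivalence $(-)^\circ$ from the category of right $\A$-modules to the category of left $\A$-modules. Explicitly, $M^\circ$ is $M$ as an $R$-module, with left action $a\cdot m:=m\cdot a^\ast$; the anti-automorphism property gives $a\cdot(b\cdot m)=m\cdot(b^\ast a^\ast)=m\cdot(ab)^\ast=(ab)\cdot m$, so this is a module structure. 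As an equivalence of categories, $(-)^\circ$ is exact, carries composition series to composition series preserving the multiplicity of each factor, and induces a bijection between the isomorphism classes of right and of left simple $\A$-modules.

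Next I would identify the images of $W^\la$ and $L^\mu$. Applying $\ast$ to the cellular relation \eqref{cellular-rel} produces \eqref{cellular-rel-left} with the same structure constants $r_{\Fv}^{(\Ft,a)}$, so on the basis $\{c^\la_\Ft\}$ the module $(W^\la)^\circ$ has left action $a\cdot c^\la_\Ft=c^\la_\Ft\cdot a^\ast=\sum_{\Fv}r_{\Fv}^{(\Ft,a^\ast)}c^\la_\Fv$, which is exactly the action defining $\shp W^\la$; hence $c^\la_\Ft\mapsto\shp c^\la_\Ft$ gives an isomorphism $(W^\la)^\circ\isom\shp W^\la$ of left $\A$-modules. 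In the same spirit, applying $\ast$ to the congruence $\lan c^\la_\Fs,c^\la_\Ft\ran\,c^\la_{\Fu\Fv}\equiv c^\la_{\Fu\Fs}c^\la_{\Ft\Fv}$ defining the form on $W^\la$, and using $(c^\la_{\Fu\Fs}c^\la_{\Ft\Fv})^\ast=c^\la_{\Fv\Ft}c^\la_{\Fs\Fu}$ together with the $\ast$-stability of $\A^{\vee\la}$ and the symmetry of the form on $W^\la$, one recovers the congruence defining the form on $\shp W^\la$; thus the above isomorphism carries $\rad W^\la$ onto $\rad\shp W^\la$, so that $(L^\la)^\circ\cong\shp L^\la$ and $(-)^\circ$ matches the indexing of right and left simple modules by $\vL^+_0$. (Alternatively one avoids the forms and argues that $\shp L^\mu$ is the head of $\shp W^\mu$ while $(L^\mu)^\circ$ is the head of $(W^\mu)^\circ\cong\shp W^\mu$, forcing the two to coincide.)

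Finally, for $\la\in\vL^+$ and $\mu\in\vL^+_0$, I would apply the exact equivalence $(-)^\circ$ to a composition series of $W^\la$: the result is a composition series of $(W^\la)^\circ\cong\shp W^\la$ in which every occurrence of $L^\mu$ has become $(L^\mu)^\circ\cong\shp L^\mu$, whence $[W^\la:L^\mu]_\A=[\shp W^\la:\shp L^\mu]_\A$. The only point in the argument with any content is the identification of the images of $W^\la$ and $L^\mu$ under $(-)^\circ$ — in particular that $(L^\mu)^\circ$ is $\shp L^\mu$ and not some other $\shp L^\nu$ — and, as just indicated, this is exactly the bookkeeping that the set-up of the left-module data in the excerpt (obtained by applying $\ast$ to the defining relations) makes routine. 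In this sense the lemma is indeed immediate from the definitions.
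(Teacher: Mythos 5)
Your argument is correct and is exactly the transport-of-structure via the anti-automorphism $\ast$ that the paper has in mind when it declares the lemma ``immediate from the definition'' (the paper gives no further proof). The identifications $(W^\la)^\circ\cong\,\shp W^\la$ and $(L^\mu)^\circ\cong\,\shp L^\mu$, followed by applying the exact equivalence to a composition series, is precisely the intended routine bookkeeping.
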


\para \label{injective-standard-module}
We regard $\A$ itself as an $\A$-bimodule by the multiplication. 
Then $\A^{\vee \la}$ ($\la \in \vL^+$) is a two-sided ideal of $\A$ by (\ref{cellular-rel}) and (\ref{cellular-rel-left}). 
Thus $\A/\A^{\vee \la}$ is an $\A$-bimodule. 
By (\ref{cellular-rel}) and the definition of $W^\la$, 
we have an injective homomorphism of (right) $\A$-modules 
\begin{align}\label{injection-standard}
\phi_\la: W^\la \ra \A/\A^{\vee \la}
\end{align}
such that $\phi_\la(c_{\Ft}^\la)=c_{\Fs\Ft}^\la +\A^{\vee \la}$ ($\Ft \in \CT(\la)$), 
where $\Fs \in \CT(\la)$ is taken arbitrary, 
and we fix one of them. 
Thus we can regard $W^\la$ as an $\A$-submodule of $\A/\A^{\vee \la}$. 
For a left standard module $\shp W^\la$, 
we can regard $\shp W^\la$ as a (left) $\A$-submodule of $\A/\A^{\vee \la}$ in a similar way. 
This injection plays a fundamental role in later discussions. 

\para  \label{quotient of cellular}
Let $\wh{\vL}^+$ be a proper subset of $\vL^+$ satisfing the following condition. 
\begin{align} \label{saturated}
\text{If } \la >\mu \text{ for } \la \in \vL^+, \,\mu\in \wh{\vL}^+, 
\text{ then }\la\in \wh{\vL}^+. 
\end{align} 
We call $\wh{\vL}^+$ a \textbf{saturated subset} of $\vL^+$. 
Let $\A(\wh{\vL}^+)$ be an $R$-submodule of $\A$ spanned by 
$\{c_{\Fs\Ft}^\la\,|\,\Fs,\Ft \in \CT(\la) \text{ for some }\la \in \wh{\vL}^+ \}$. 
Then $\A(\wh{\vL}^+)$ is a two-sided ideal of $\A$ by the condition (\ref{saturated}) 
together with (\ref{cellular-rel}) and (\ref{cellular-rel-left}). 
Thus one can define a quotient algebra $\ol{\A}=\A/\A(\wh{\vL}^+)$. 
Set $\ol{\vL}^+=\vL^+ \setminus \wh{\vL}^+$. 
The following result is  clear from the definition. 

\begin{lem}\label{quotient-cellular}
$\ol{\A}$ is a cellular algebra with a cellular basis 
$\ol{\ZC}=\{\ol{c}_{\Fs\Ft}^\la\,|\, \Fs,\Ft\in \CT(\la) \text{ for}\\ \text{some }\la \in \ol{\vL}^+ \}$, 
where $\ol{c}_{\Fs\Ft}^\la$ is the image of $c_{\Fs\Ft}^\la$ under the natural surjection $\A \ra \A/\A(\wh{\vL}^+)$. 
\end{lem}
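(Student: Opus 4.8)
The plan is to verify the three defining properties of a cellular algebra (from \ref{def-cellular}) for $\ol{\A}=\A/\A(\wh{\vL}^+)$ with the proposed basis $\ol{\ZC}$ and poset $(\ol{\vL}^+,\geq)$, where $\geq$ is the restriction of the original order. First I would check property (i): since $\A(\wh{\vL}^+)$ is spanned by the sub-collection $\{c_{\Fs\Ft}^\la\,|\,\la\in\wh{\vL}^+\}$ of the free basis $\ZC$, the quotient $R$-module $\ol{\A}$ is free with basis the images $\{\ol c_{\Fs\Ft}^\la\,|\,\la\in\ol{\vL}^+\}$; this is just the standard fact that quotienting a free module by the span of part of a basis leaves a free module on the images of the remaining basis elements. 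Next, for property (ii), the anti-automorphism $*$ on $\A$ satisfies $(c_{\Fs\Ft}^\la)^*=c_{\Ft\Fs}^\la$, so it permutes the spanning set of $\A(\wh{\vL}^+)$ and hence stabilizes this ideal; therefore it descends to an $R$-linear anti-automorphism $\ol{*}$ of $\ol{\A}$, and by construction $\ol{*}(\ol c_{\Fs\Ft}^\la)=\ol c_{\Ft\Fs}^\la$.

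For property (iii), I would argue as follows. Fix $\la\in\ol{\vL}^+$, $\Fs,\Ft\in\CT(\la)$, and $\ol a\in\ol{\A}$; lift $\ol a$ to $a\in\A$. Applying \eqref{cellular-rel} in $\A$ and reducing modulo $\A(\wh{\vL}^+)$, the term $\A^{\vee\la}$ becomes $\ol{\A}^{\vee\la}$, the $R$-span of those $\ol c_{\Fs'\Ft'}^{\la'}$ with $\la'>\la$; but I must check that $\A^{\vee\la}$ maps into (indeed onto) $\ol{\A}^{\vee\la}$, which holds because $\A^{\vee\la}$ is spanned by $c_{\Fs'\Ft'}^{\la'}$ with $\la'>\la$, and each such basis element either lies in $\A(\wh{\vL}^+)$ (if $\la'\in\wh{\vL}^+$) or survives to a basis element $\ol c_{\Fs'\Ft'}^{\la'}$ indexed by some $\la'\in\ol{\vL}^+$ with $\la'>\la$. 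The scalars $r_{\Fv}^{(\Ft,a)}$ are the same ones from \eqref{cellular-rel}, so their independence of $\Fs$ is inherited. One small point to note is that $\ol{\vL}^+$ is nonempty (so $\ol{\A}$ is a genuine algebra and not the zero ring): this follows since $\wh{\vL}^+$ is a \emph{proper} subset, or alternatively since any maximal element of $\vL^+$ cannot lie in a saturated proper subset unless that subset is everything.

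I do not anticipate a serious obstacle here; the lemma is, as the text says, ``clear from the definition,'' and the only thing requiring genuine (if routine) verification is that $\A^{\vee\la}$ has the right image in $\ol{\A}$ so that the right-hand side of the cellularity relation transforms correctly — this is where the saturated-subset condition \eqref{saturated} is implicitly used, ensuring that the new order ideal $\ol{\A}^{\vee\la}$ is exactly the image of the old one. I would present the proof as a brief walk through (i)--(iii) with the image-of-$\A^{\vee\la}$ observation highlighted, and remark that the order on $\ol{\vL}^+$ is simply the induced one.
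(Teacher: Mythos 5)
Your verification is correct and takes essentially the same route the paper has in mind when it calls the lemma ``clear from the definition'': freeness of the quotient basis, descent of $\ast$, and the observation that for $\la\in\ol{\vL}^+$ the image of $\A^{\vee\la}$ in $\ol{\A}$ is exactly $\ol{\A}^{\vee\la}$, so (\ref{cellular-rel}) passes to the quotient with the same coefficients. One minor quibble: your ``alternative'' justification that a maximal element of $\vL^+$ cannot lie in a proper saturated subset is false (saturated subsets are upward closed, so they impose no condition on maximal elements), but this aside is harmless since your primary reason --- that $\wh{\vL}^+$ is by definition a proper subset --- already gives $\ol{\vL}^+\neq\emptyset$; note also that saturation is what makes $\A(\wh{\vL}^+)$ an ideal in the first place, rather than what identifies the image of $\A^{\vee\la}$.
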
 

Since $\ol{\A}$ is a cellular algebra, 
we can define a (right) standard module $\ol{W}^\la$ 
and its quotient module $\ol{L}^\la=\ol{W}^\la / \rad \ol{W}^\la $
for $\la \in \ol{\vL}^+$. 
Set $\ol{\vL}^+_0=\{\la \in \ol{\vL}^+\,|\,\ol{L}^\la \not=0\}$, 
then it is clear that $\ol{\vL}^+_0=\ol{\vL}^+\cap \vL^+_0$.  
Regarding an $\ol{\A}$-module $\ol{M}$ as an $\A$-module through the natural surjection $\A \ra \ol{\A}$,  
we have the following corollary. 
 
\begin{cor} \label{cor-quotient-cellular}
For $\la, \mu \in \ol{\vL}^+$, we have 
\begin{enumerate}
\item $\ol{W}^\la \cong W^\la$ as $\A$-modules. 
\item $\ol{L}^\mu \cong L^\mu$ as $\A$-modules. 
\item Suppose that $R$ is a field. 
Then we have
\[ [\ol{W}^\la : \ol{L}^\mu]_{\ol{\A}}=[W^\la : L^\mu]_{\A} \quad\text{ for }\la\in \ol{\vL}^+,\,\mu \in \ol{\vL}^+_0, \]
where $ [\ol{W}^\la : \ol{L}^\mu]_{\ol{\A}}$ is the decomposition number of $\ol{\A}$. 
\end{enumerate}
\end{cor}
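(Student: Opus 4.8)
The plan is to run everything through the natural surjection $\pi:\A\ra\ol{\A}=\A/\A(\wh{\vL}^+)$, which sends each $c_{\Fs\Ft}^\la$ to $\ol{c}_{\Fs\Ft}^\la$ — this image being $0$ if $\la\in\wh{\vL}^+$ and a member of the cellular basis $\ol{\ZC}$ if $\la\in\ol{\vL}^+$. Since $\pi$ is surjective, an $\ol{\A}$-module is the same thing as an $\A$-module annihilated by $\A(\wh{\vL}^+)$, and for such a module the lattice of $\ol{\A}$-submodules coincides with the lattice of $\A$-submodules. So I would reduce the whole corollary to a single point: that, for $\la\in\ol{\vL}^+$, the cellular structure constants and the cell bilinear form of $\ol{\A}$ attached to $\la$ agree with those of $\A$.

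For (i), I would fix $\la\in\ol{\vL}^+$, choose $\Fs\in\CT(\la)$, and apply $\pi$ to the defining relation (\ref{cellular-rel}): for $a\in\A$ and $\Ft\in\CT(\la)$ it gives $\ol{c}_{\Fs\Ft}^\la\cdot\pi(a)\equiv\sum_{\Fv\in\CT(\la)}r_\Fv^{(\Ft,a)}\,\ol{c}_{\Fs\Fv}^\la\mod\pi(\A^{\vee\la})$. The next observation is that $\pi(\A^{\vee\la})=\ol{\A}^{\vee\la}$: indeed $\A^{\vee\la}$ is spanned by the $c_{\Fs'\Ft'}^{\la'}$ with $\la'>\la$, and their images span $\ol{\A}^{\vee\la}$ (those with $\la'\in\wh{\vL}^+$ simply vanishing). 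On the other hand the cellular relation for $\ol{\A}$ reads $\ol{c}_{\Fs\Ft}^\la\cdot\ol{a}\equiv\sum_\Fv\ol{r}_\Fv^{(\Ft,\ol{a})}\,\ol{c}_{\Fs\Fv}^\la\mod\ol{\A}^{\vee\la}$. Since $\la\notin\wh{\vL}^+$ and $\la\not>\la$, the family $\{\ol{c}_{\Fs\Fv}^\la\mid\Fv\in\CT(\la)\}$ is linearly independent modulo $\ol{\A}^{\vee\la}$, so comparing the two congruences forces $\ol{r}_\Fv^{(\Ft,\pi(a))}=r_\Fv^{(\Ft,a)}$ for every $a\in\A$. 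Hence the $R$-linear bijection $W^\la\ra\ol{W}^\la$, $c_\Ft^\la\mapsto\ol{c}_\Ft^\la$, intertwines the $\A$-actions (that on $\ol{W}^\la$ being pulled back along $\pi$), which is (i). Alternatively one could argue more conceptually via the embeddings $\phi_\la$ of \ref{injective-standard-module}, identifying $\ol{\A}/\ol{\A}^{\vee\la}$ with $\A/(\A^{\vee\la}+\A(\wh{\vL}^+))$ as $\A$-modules; but the structure-constant comparison is the shortest route.

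For (ii), I would run the same comparison for the cell form: for $\mu\in\ol{\vL}^+$, apply $\pi$ to $\lan c_\Fs^\mu,c_\Ft^\mu\ran\,c_{\Fu\Fv}^\mu\equiv c_{\Fu\Fs}^\mu c_{\Ft\Fv}^\mu\mod\A^{\vee\mu}$ and compare with the defining congruence for the form on $\ol{W}^\mu$; since $\ol{c}_{\Fu\Fv}^\mu\not\equiv 0\mod\ol{\A}^{\vee\mu}$, this yields $\lan\ol{c}_\Fs^\mu,\ol{c}_\Ft^\mu\ran=\lan c_\Fs^\mu,c_\Ft^\mu\ran$. Thus the isomorphism of (i) carries $\rad W^\mu$ onto $\rad\ol{W}^\mu$, and on quotients $L^\mu\cong\ol{L}^\mu$ as $\A$-modules, which is (ii). Then for (iii), with $R$ a field, a composition series of $\ol{W}^\la$ over $\ol{\A}$ is at the same time a composition series over $\A$ with the same factors, because $\pi$ is surjective; combining this with $\ol{W}^\la\cong W^\la$ and $\ol{L}^\mu\cong L^\mu$ as $\A$-modules (parts (i), (ii)) and with $\ol{\vL}^+_0=\ol{\vL}^+\cap\vL^+_0$, I get $[\ol{W}^\la:\ol{L}^\mu]_{\ol{\A}}=[W^\la:L^\mu]_\A$.

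I do not expect any real obstacle here: the whole thing is bookkeeping. The one place the hypotheses genuinely enter is that $\la,\mu\in\ol{\vL}^+$ forces $\la,\mu\notin\wh{\vL}^+$, so $\pi$ does not collapse the cell indexed by $\la$ (resp.\ by $\mu$) — exactly what makes the linear-independence claims, and hence the comparison of structure constants and of the cell form, valid. Saturatedness of $\wh{\vL}^+$ is needed only (as already in Lemma \ref{quotient-cellular}) to ensure $\A(\wh{\vL}^+)$ is a two-sided ideal, and plays no further role.
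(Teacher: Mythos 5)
Your proof is correct and takes essentially the same route as the paper's: both reduce the corollary to the observation that for $\la\in\ol{\vL}^+$ the cellular data attached to $\la$ (structure constants and the bilinear form) are unchanged in $\ol{\A}$, and then obtain (iii) by transferring a composition series through the surjection $\pi$. The only organizational difference is that the paper proves (i) by first showing $W^\la\cdot\A(\wh{\vL}^+)=0$ so that the $\A$-action descends, whereas you pull the $\ol{\A}$-action back along $\pi$ and compare structure constants using $\pi(\A^{\vee\la})=\ol{\A}^{\vee\la}$ --- the same bookkeeping run in the opposite direction.
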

\begin{proof}
Take $\la \in \ol{\vL}^+$. 
For $\nu \in \wh{\vL}^+$, we have $\la \not\geq \nu$ by the definition of $\wh{\vL}^+$ since $\la \not\in \wh{\vL}^+$. 
On the other hand, for $a \in \A$ and  $c_{\Fu\Fv}^\nu \in \ZC$, we have 
\begin{align*}\label{qqq}
a\cdot c_{\Fu\Fv}^\nu =\sum_{\nu' \geq \nu \atop \Fu', \Fv' \in \CT(\nu')} r_{\Fu'\Fv'}^{\nu'} c_{\Fu'\Fv'}^{\nu'} \qquad (r_{\Fu'\Fv'}^{\nu'} \in R)
\end{align*}
by (\ref{cellular-rel-left}). 
Thus, for $\nu \in \wh{\vL}^+$, 
we have  $c_{\Fs\Ft}^\la c_{\Fu\Fv}^\nu=0$ ($\Fs,\Ft \in \CT(\la), \,\Fu,\Fv \in \CT(\nu)$). 
This means that $W^\la \cdot \A(\wh{\vL}^+)=0$. 
It follows that the action of $\A$ on $W^\la$ induces the action of $\ol{\A}$ on $W^\la$, 
and this $\ol{\A}$-module $W^\la$ coincides with $\ol{W}^\la$. Thus we have (\roi). 
Now, (\roii) is clear since we have $\rad \ol{W}^\la=\rad W^\la$ from the definition.  
By (\roi),(\roii), a composition series of $\ol{W}^\la$ as $\ol{\A}$-modules turns out to be also a composition series as $\A$-modules. 
This proves (\roiii). 
\end{proof}

\para \label{cell-linked}
In the rest of this section, we assume that $R$ is a field. 
By \cite{GL96}, a classification of blocks of cellular algebra $\A$ has been obtained by using standard modules. 
Here, we recall their results. 

We say that an $\A$-module $M$ belongs to a block $B$ of $\A$ if all of its composition factors lie in $B$. 
It is known by \cite{GL96} that, for each $\la \in \vL^+$, all of composition factors of $W^\la$ lie in the same block. 
Thus $W^\la$ belongs to some block of $\A$. 

For $\la,\mu \in \vL^+$, we say that $\la$ and $\mu$ are (cell) linked, and 
denote by $\la \eqA \mu$  
if there exists a sequence $\la=\la_1,\la_2,\cdots ,\la_k=\mu$ $(\la_i \in \vL^+)$ 
such that $W^{\la_i}$ and $W^{\la_{i+1}}$ have a common composition factor for $i=1,\cdots,k-1$. 
Then the relation $\eqA$ gives an equivalence relation on $\vL^+$. 
Thus, if $\la \eqA \mu$ then $W^\la$ and $W^\mu$ belong to the same block. 
Moreover, it is known that the converse also holds,  
namely we have the following proposition. 

\begin{prop}[{\cite{GL96}}]\ 
For $\la,\mu \in \vL^+$, 
$\la \eqA \mu$ if and only if $W^\la$ and $W^\mu$ belong to the same block of $\A$. 
\end{prop}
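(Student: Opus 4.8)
The plan is to prove both implications of the block-linkage criterion for the cellular algebra $\A$ over the field $R$.

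First I would prove the easy direction: if $\la \eqA \mu$, then $W^\la$ and $W^\mu$ belong to the same block. This is essentially immediate from the setup recalled in \ref{cell-linked}. By \cite{GL96}, all composition factors of a standard module $W^\nu$ lie in a single block, so $W^\nu$ is assigned a well-defined block $B(\nu)$. If $W^{\la_i}$ and $W^{\la_{i+1}}$ share a common composition factor, that factor lies in both $B(\la_i)$ and $B(\la_{i+1})$, hence $B(\la_i) = B(\la_{i+1})$. Walking along the linking sequence $\la = \la_1, \dots, \la_k = \mu$ gives $B(\la) = B(\mu)$.

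The substantive direction is the converse: if $W^\la$ and $W^\mu$ belong to the same block, then $\la \eqA \mu$. The approach is to show that the equivalence classes of $\eqA$ already separate the indecomposable modules into the finest possible partition refining the block partition, i.e. that $\A$ decomposes as a direct sum indexed by $\eqA$-classes. Concretely, for an $\eqA$-equivalence class $E \subseteq \vL^+$, let $B_E$ be the $R$-span of all composition factors occurring in $\{W^\nu \mid \nu \in E\}$; equivalently, group the projective indecomposable $\A$-modules according to which $\eqA$-class the standard modules appearing in their standard (cell) filtration belong to. The key point is that every projective indecomposable $P$ has a filtration by standard modules $W^\nu$ (a standard fact for cellular algebras, via the cell chain of $\A$), and by Brauer–Humphreys–type reciprocity together with the linkage within a single $W^\nu$, all the $\nu$ appearing in the filtration of a fixed $P$ lie in one $\eqA$-class. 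Hence $P$ lies in the block-sum $B_E$ for a single class $E$, so the $\eqA$-classes partition the projective indecomposables, and therefore no two distinct $\eqA$-classes can lie in the same block. Combined with the easy direction, this forces the $\eqA$-classes to coincide exactly with the blocks, giving the converse.

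The main obstacle, and the step requiring the most care, is establishing that for each projective indecomposable $P$, all standard modules $W^\nu$ with $[P : W^\nu]_{\text{(filtration mult.)}} \neq 0$ are mutually $\eqA$-linked. This rests on (a) the existence of a standard filtration of $P$, which follows from the cell chain $0 \subset \A^{\vee\la_N} \subset \cdots \subset \A$ and the bimodule structure in \eqref{cellular-rel}, and (b) the reciprocity relating the filtration multiplicity of $W^\nu$ in $P = P^\mu$ to the decomposition number $[W^\nu : L^\mu]_\A$: if this is nonzero then $L^\mu$ is a composition factor of $W^\nu$, and $L^\mu$ is also the head of $W^\mu$, so $W^\nu$ and $W^\mu$ share the factor $L^\mu$, giving $\nu \eqA \mu$ directly. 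Thus every $\nu$ in the filtration of $P^\mu$ satisfies $\nu \eqA \mu$, and the argument closes. I would cite \cite{GL96} for the cell-filtration and reciprocity statements rather than reproving them, since they are part of the general theory recalled above; the only genuinely new bookkeeping is assembling these into the block separation.
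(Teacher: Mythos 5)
The paper does not actually prove this proposition: it is quoted from \cite{GL96} as part of the background recalled in \ref{cell-linked}, so there is no in-paper proof to match. Your route --- filter each projective indecomposable by standard modules via the cell chain, use the reciprocity $[P^\mu : W^\nu]=[W^\nu:L^\mu]_\A$ to see that every $\nu$ occurring in the filtration of $P^\mu$ satisfies $\nu \eqA \mu$, and then separate blocks by linkage of projective indecomposables --- is exactly the classical Graham--Lehrer argument, and it is also the strategy the paper itself redeploys later for the standardly based algebra $\tA$: Proposition \ref{proj-decom} is the reciprocity-type formula and Proposition \ref{block-prop} is the linkage argument, with Corollary \ref{decom-cor} substituting for the left-right symmetry that a cellular algebra gets for free from $\ast$ (Lemma \ref{lr-aA}). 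So the proposal is sound and, if anything, previews the paper's own method in Section 4.

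Two small points you should tighten. First, the phrase ``$B_E$ is the $R$-span of all composition factors occurring in $\{W^\nu\mid\nu\in E\}$'' conflates modules with elements of $\A$; what you need is the statement that every composition factor of $P^\mu$ is some $L^\tau$ with $\tau \eqA \mu$ (which follows from your reciprocity step plus the common-factor argument inside each $W^\nu$), and then the standard characterization of a block as an equivalence class of simples under ``appear in a common projective indecomposable.'' Second, the proposition is about all $\la,\mu\in\vL^+$, not only labels in $\vL^+_0$, so the last step needs the short bookkeeping: choose composition factors $L^{\la'}$ of $W^\la$ and $L^{\mu'}$ of $W^\mu$ (these exist since $W^\la\neq 0$), note $\la \eqA \la'$ and $\mu \eqA \mu'$ because $[W^{\la'}:L^{\la'}]_\A\neq 0$ and $[W^{\mu'}:L^{\mu'}]_\A\neq 0$, and then apply the simple-module linkage to $\la',\mu'$. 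With these repairs the argument is complete.
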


Let $\vL^+/_\sim$ be the set of equivalence classes on $\vL^+$ 
with respect to the relation $\eqA$. 
As a corollary of the above proposition, we have the following classification of blocks of $\A$. 

\begin{cor}\ \label{block-class}
There is a one-to-one correspondence between $\vL^+/_\sim $ 
and the set of blocks of $\A$.
\end{cor}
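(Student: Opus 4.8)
The plan is to deduce Corollary~\ref{block-class} directly from Proposition~\ref{...} (the cell-linkage characterisation of blocks) together with the fact, recalled just above, that every standard module $W^\la$ lies in a single block. First I would observe that the assignment $\la \mapsto (\text{block of } \A \text{ containing } W^\la)$ is a well-defined map from $\vL^+$ to the set of blocks of $\A$: this is precisely the content of the statement from \cite{GL96} that all composition factors of $W^\la$ lie in the same block. Call this map $\beta$.

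Next I would check that $\beta$ factors through $\vL^+/_\sim$ and that the induced map is injective. If $\la \eqA \mu$, then by definition there is a chain $\la = \la_1, \dots, \la_k = \mu$ in which consecutive standard modules share a composition factor; since a composition factor determines the block and $W^{\la_i}$ lies in a single block, each $W^{\la_i}$ and $W^{\la_{i+1}}$ lie in the \emph{same} block, so $\beta(\la) = \beta(\mu)$. Hence $\beta$ descends to $\bar\beta : \vL^+/_\sim \to \{\text{blocks of }\A\}$. Conversely, if $\bar\beta([\la]) = \bar\beta([\mu])$, i.e.\ $W^\la$ and $W^\mu$ belong to the same block, then Proposition~\ref{...} gives $\la \eqA \mu$, so $[\la] = [\mu]$; thus $\bar\beta$ is injective.

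Finally I would verify surjectivity of $\bar\beta$: every block $B$ of $\A$ contains some simple module, and by Theorem~\ref{...} (the Graham--Lehrer classification of simples) every simple is of the form $L^\la$ for some $\la \in \vL^+_0 \subseteq \vL^+$; since $L^\la$ is a composition factor of $W^\la$, the block $B$ equals $\beta(\la)$. Therefore $\bar\beta$ is a bijection, which is the asserted one-to-one correspondence.

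I do not anticipate a genuine obstacle here: the corollary is essentially a formal repackaging of the proposition plus the two background facts (standard modules are block-homogeneous; simples are indexed by $\vL^+_0$). The only point requiring a small amount of care is surjectivity, where one must invoke the existence of a simple module in each block and its realisation as some $L^\la$ — but this is immediate from the cited results, so the proof is short.
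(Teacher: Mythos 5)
Your argument is correct and is exactly the intended one: the paper states Corollary \ref{block-class} as an immediate consequence of the proposition, and your elaboration (well-definedness from block-homogeneity of the $W^\la$, injectivity from the cell-linkage proposition, surjectivity from the Graham--Lehrer classification of simples as quotients of standard modules) fills in precisely those standard details.
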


\remark 
We can also consider  an equivalence relation $\eqAl$  on $\vL^+$   
by using the left standard modules of $\A$  in a similar way. 
For $\la ,\mu \in \vL^+$, we have 
\begin{align}
\la \eqA \mu \quad \text{ if and only if } \quad \la \eqAl \mu, 
\end{align}
by Lemma \ref{lr-aA}. 
Thus $\vL^+/_\sim$ coincides with the set of equivalent classes on $\vL^+$ 
with respect to the  relation $\eqAl$. 


\para
For $\vG \in \vL^+/_\sim$, 
we denote by $\CB_\vG$ the block of $\A$ corresponding to $\vG$ 
under the bijection in Corollary \ref{block-class}. 
Then we have a block decomposition of $\A$; 
\begin{align}
\A=\bigoplus_{\vG\in \vL^+/_\sim}\CB_\vG.
\label{block-decom}
\end{align} 

Let $e_\vG$ be the block idempotent such that $e_\vG\A=\CB_\vG$ for $\vG\in \vL^+/_\sim$, 
and $e_\vG=e_1+e_2+\cdots+e_k$ be the decomposition of $e_\vG$ 
into the primitive idempotents. 
It is known that there exists $j$ such that $e=e_j$ 
if a primitive idempotent $e$ of $\A$ is equivalent to $e_i$ for some $i=1,\cdots,k$, 
namely if $e\A \cong e_i \A$ as right $\A$-modules. 
Moreover, we know that, 
for any primitive idempotent $e$ of $\A$, 
$e^\ast$ is equivalent to $e$ by \cite{GL96}. 
This implies that $(e_\vG)^\ast=e_\vG$. 
Thus, $\CB_\vG=e_\vG \A e_\vG$ turns out to be a cellular algebra with respsect to the involution $\ast$ by \cite[Proposition 4.3]{KX98}. 
 
In general, the cellular basis $\ZC$ of $\A$ is not compatible with the block decomposition. 
Namely, $\ZC \cap \CB_\vG$ does not give a basis of $\CB_\vG$. 
However, one can take another cellular basis in the following way,
 which is compatible with the block decomposition.

\begin{prop} \label{basis-block}
Let the block decomposition of $\A$ be as in $(\ref{block-decom})$.  
Then $\CB_{\vG}$ has a cellular basis 
$\ZC_\vG=\{b_{\Fs\Ft}^\la\,|\,\Fs,\Ft\in \CT(\la) \text{ for some }\la \in \vG\}$ 
such that $b_{\Fs\Ft}^\la \equiv c_{\Fs\Ft}^\la \mod \A^{\vee \la}$ in $\A$. 
Moreover, $\bigcup\limits_{\vG \in \vL^+/_\sim} \ZC_\vG$ gives a cellular basis of $\A$. 
\end{prop}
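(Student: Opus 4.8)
The plan is to produce the cellular basis $\ZC_\vG$ of $\CB_\vG = e_\vG \A e_\vG$ by conjugating the relevant part of the original cellular basis by the block idempotents, and then to check the cellular axioms directly. First I would recall from \eqref{block-decom} that $\A=\bigoplus_\vG \CB_\vG$ with $\CB_\vG = e_\vG\A = e_\vG \A e_\vG$ (using $e_\vG^\ast = e_\vG$ as noted before the statement), so that $1_\A = \sum_\vG e_\vG$ is an orthogonal idempotent decomposition with each $e_\vG$ central. The key observation is that, for $\la\in\vL^+$, the standard module $W^\la$ belongs to a single block, say the one indexed by $\vG$ with $\la\in\vG$; equivalently, via the injection $\phi_\la\colon W^\la\hookrightarrow \A/\A^{\vee\la}$ of \eqref{injection-standard}, the image of $W^\la$ lands in $e_\vG(\A/\A^{\vee\la})$, i.e. $(1-e_\vG)\, c^\la_{\Fs\Ft} \in \A^{\vee\la}$ for all $\Fs,\Ft\in\CT(\la)$. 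Dually, using \eqref{cellular-rel-left} and the fact that left standard modules lie in the same blocks (the Remark after Corollary \ref{block-class}), one also has $c^\la_{\Fs\Ft}(1-e_\vG)\in\A^{\vee\la}$. Combining these two, I would set
\[
b^\la_{\Fs\Ft} := e_\vG\, c^\la_{\Fs\Ft}\, e_\vG \qquad (\la\in\vG,\ \Fs,\Ft\in\CT(\la)),
\]
and the two containments above give immediately $b^\la_{\Fs\Ft}\equiv c^\la_{\Fs\Ft}\bmod\A^{\vee\la}$.

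Next I would verify that $\ZC_\vG=\{b^\la_{\Fs\Ft}\mid \Fs,\Ft\in\CT(\la)\text{ for some }\la\in\vG\}$ is a free $R$-basis of $\CB_\vG$. Since $b^\la_{\Fs\Ft}\equiv c^\la_{\Fs\Ft}\bmod\A^{\vee\la}$, a standard triangularity argument with respect to the poset order on $\vG$ (working downward, or filtering $\A$ by the ideals $\A^{\vee\la}$) shows that the transition matrix between $\{b^\la_{\Fs\Ft}\}_{\la\in\vG}$ and $\{c^\la_{\Fs\Ft}\}_{\la\in\vG}$ is unitriangular, hence invertible over $R$; so $\ZC_\vG$ spans the same $R$-submodule as $\{c^\la_{\Fs\Ft}\mid\la\in\vG\}$ and is $R$-free of the same rank. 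It remains to see this submodule is exactly $\CB_\vG$: the elements $b^\la_{\Fs\Ft}=e_\vG c^\la_{\Fs\Ft}e_\vG$ clearly lie in $\CB_\vG$, and comparing ranks — $\sum_{\vG}\#\ZC_\vG = \#\ZC = \dim_R\A = \sum_\vG \dim_R\CB_\vG$, while $\#\ZC_\vG \le \dim_R\CB_\vG$ for each $\vG$ — forces equality for every $\vG$. This simultaneously shows $\bigcup_\vG\ZC_\vG$ is a basis of $\A$.

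For the cellularity axioms on $\CB_\vG$ with index poset $(\vG,\geq)$ and index sets $\CT(\la)$: the involution condition $(b^\la_{\Fs\Ft})^\ast = b^\la_{\Ft\Fs}$ is immediate from $(c^\la_{\Fs\Ft})^\ast=c^\la_{\Ft\Fs}$ and $e_\vG^\ast=e_\vG$, and $\ast$ restricts to $\CB_\vG$ since $e_\vG^\ast=e_\vG$. For the multiplicative axiom \eqref{cellular-rel}, take $a\in\CB_\vG$; then $a=e_\vG a e_\vG$ and
\[
b^\la_{\Fs\Ft}\, a = e_\vG c^\la_{\Fs\Ft} e_\vG\, a = e_\vG\, (c^\la_{\Fs\Ft}\, a),
\]
and applying \eqref{cellular-rel} for $\A$ to $c^\la_{\Fs\Ft}\,a$, then multiplying by $e_\vG$ on the left and $e_\vG$ on the right, one rewrites the right-hand side modulo $\A^{\vee\la}$ as $\sum_{\Fv\in\CT(\la)} r^{(\Ft,a)}_\Fv\, e_\vG c^\la_{\Fs\Fv} e_\vG$; here I use that $\A^{\vee\la}$ is a two-sided ideal (so $e_\vG\A^{\vee\la}e_\vG\subseteq\A^{\vee\la}$) and that modulo $\A^{\vee\la}$ one may reinsert the factors $e_\vG$ by the one-sided congruences established above, to conclude $b^\la_{\Fs\Ft}\,a\equiv\sum_\Fv r^{(\Ft,a)}_\Fv\, b^\la_{\Fs\Fv}\bmod(\A^{\vee\la}\cap\CB_\vG)$ — and $\A^{\vee\la}\cap\CB_\vG$ is precisely the span of the $b^{\la'}_{\Fs'\Ft'}$ with $\la'>\la$ in $\vG$, by the same triangularity. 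The coefficients $r^{(\Ft,a)}_\Fv$ are those of $\A$, hence independent of $\Fs$. The one point needing care — and the main obstacle — is the bookkeeping around the ideal $\A^{\vee\la}$: one must check that $\A^{\vee\la}\cap\CB_\vG$ equals $e_\vG\A^{\vee\la}e_\vG$ and is spanned by the higher $b$'s, so that the congruences for $\A$ descend cleanly to congruences for $\CB_\vG$ in the right submodule; this is where the one-sided vanishing $(1-e_\vG)c^\la_{\Fs\Ft},\, c^\la_{\Fs\Ft}(1-e_\vG)\in\A^{\vee\la}$ is used repeatedly, and it is worth stating as a preliminary lemma before assembling the proof.
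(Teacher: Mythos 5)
Your construction is sound and does differ in route from the paper: the paper filters $\A$ by the cell ideals $\A_{\leq i}$, multiplies the filtration by the central idempotent $e_\vG$, identifies the subquotients with $\sh W^{\la_i}\otimes W^{\la_i}$ (if $\la_i\in\vG$) or $0$, and then invokes the cell-chain argument of \cite[\S 3]{KX98} to produce the basis $\ZC_\vG$; you instead write the basis down explicitly as $b_{\Fs\Ft}^\la=e_\vG c_{\Fs\Ft}^\la e_\vG$ and verify the cellular axioms by hand. The two hinge on the same facts (centrality of $e_\vG$, and $W^\la e_\vG=W^\la$, $e_\vG\,\sh W^\la=\sh W^\la$ for $\la\in\vG$, which give your one-sided congruences $c_{\Fs\Ft}^\la(1-e_\vG),\,(1-e_\vG)c_{\Fs\Ft}^\la\in\A^{\vee\la}$ — note the second also follows from the first by applying $\ast$, since $e_\vG^\ast=e_\vG$ and $\A^{\vee\la}$ is $\ast$-stable). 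Your version is more self-contained, at the cost of the bookkeeping you flag at the end; that bookkeeping does go through: by downward induction on the poset, $e_\vG\A^{\vee\la}e_\vG=\A^{\vee\la}\cap\CB_\vG$ is spanned by the $b_{\Fs'\Ft'}^{\la'}$ with $\la'>\la$, $\la'\in\vG$, using $c_{\Fs'\Ft'}^{\la'}\equiv e_{\vG'}c_{\Fs'\Ft'}^{\la'}e_{\vG'}\bmod\A^{\vee\la'}$ for $\la'\in\vG'$.

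One step is wrongly formulated, though, and you should repair it. You claim a unitriangular transition matrix between $\{b_{\Fs\Ft}^\la\}_{\la\in\vG}$ and $\{c_{\Fs\Ft}^\la\}_{\la\in\vG}$, and that $\ZC_\vG$ spans the same $R$-submodule as $\{c_{\Fs\Ft}^\la\mid\la\in\vG\}$. This is false in general: the correction term $b_{\Fs\Ft}^\la-c_{\Fs\Ft}^\la\in\A^{\vee\la}$ is a combination of $c_{\Fs'\Ft'}^{\la'}$ with $\la'>\la$ but $\la'$ not necessarily in $\vG$, so no such transition matrix exists; and equality of the two spans would say exactly that $\ZC$ is compatible with the block decomposition, which the paper points out fails in general (this is the reason the proposition is needed). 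The fix is the global version of your own counting argument: the full set $\bigcup_{\vG}\ZC_\vG=\{b_{\Fs\Ft}^\la\mid\la\in\vL^+\}$ is related to $\ZC$ by a matrix that is unitriangular for any total order refining $\geq$, hence is an $R$-basis of $\A$; since each $b_{\Fs\Ft}^\la$ with $\la\in\vG$ lies in $\CB_\vG$ and $\A=\bigoplus_\vG\CB_\vG$, applying $e_\vG(\,\cdot\,)e_\vG$ to the expansion of any $x\in\CB_\vG$ shows $\ZC_\vG$ spans $\CB_\vG$, and it is linearly independent as a subset of a basis of $\A$. With that replacement, and the identification $\A^{\vee\la}\cap\CB_\vG=e_\vG\A^{\vee\la}e_\vG$ above, your verification of the cellular axioms is correct.
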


\begin{proof}
Suppose that $|\vL^+|=k$, and express the set $\vL^+$ as   
$\vL^+=\{\la_1,\la_2,\cdots,\la_k\}$ 
so that 
if $\la_i  > \la_j$ then $i<j$. 
Let $\A_{\leq i}$ be an $R$-submodule of $\A$ spanned by 
$\{c_{\Fs\Ft}^{\la_j}\,|\,\Fs,\Ft\in \CT(\la_j),\,1\leq j \leq i\}$. 
Then we have the following sequence of two-sided ideals of $\A$. 
\begin{align}
\A=\A_{\leq k} \supset \A_{\leq k-1} \supset \cdots \supset \A_{\leq 1} \supset \A_{\leq_0}=0,
\label{filtration of A}
\end{align}
where $ \A_{\leq i}\big/ \A_{\leq i-1} \cong \,\sh W^{\la_i}\otimes W^{\la_i}$ as  $(\A,\A)$-bimodules. 

For a right $\A$-module $M$, the $R$-submodule $M\cdot e_{\vG}$ of $M$ is invariant under the action of $\A$ 
since $e_\vG$ is contained in the center of $\A$. 
Moreover, for an exact sequence of right $\A$-modules 
\[0 \ra N \ra M \ra M/N \ra 0,\]
we have an exact sequence of right $\A$-modules 
\[0 \ra N\cdot e_\vG \ra M\cdot e_\vG \ra (M/N)\cdot e_\vG \ra 0.\]
Hence, the filtration (\ref{filtration of A}) induces a filtration of $(\A,\A)$-bimodules,  
\begin{align}
\A e_\vG =\A_{\leq k} e_\vG \supset \A_{\leq k-1} e_\vG \supset \cdots \supset \A_{\leq 1} e_\vG \supset \A_{\leq_0} e_\vG=0,
\label{filtration of Ae}
\end{align} 
where $(\A_{\leq i}\cdot e_{\vG}) \big/ (\A_{\leq i-1}\cdot e_\vG) \cong \,\sh W^{\la_i}\otimes W^{\la_i} \cdot e_\vG$ as $(\A,\A)$-bimodules, 
and similarly for $e_\vG \A$ and $e_\vG \A e_\vG$. 
In particular, we have a filtration  
\begin{align}
e_\vG \A e_\vG= e_\vG \A_{\leq k} e_\vG  \supset e_\vG \A_{\leq k-1} e_\vG 
	\supset \cdots \supset e_\vG \A_{\leq 1} e_\vG \supset e_\vG \A_{\leq_0} e_\vG=0,
\label{filtration of eAe}
\end{align}
where $(e_\vG \cdot \A_{\leq i} \cdot e_\vG )\big/ (e_\vG \cdot \A_{\leq i-1}\cdot e_\vG) 
	\cong e_\vG \cdot\,\hspace{-1mm}\sh W^{\la_i}\otimes W^{\la_i}\cdot e_\vG$ 
as $(\A,\A)$-bimodules. 
Since $e_\vG \A_{\leq i} e_\vG= \A_{\leq i} e_\vG = e_\vG \A_{\leq i}$ for any $i=1,\cdots,k$, 
we have 
\begin{align}
e_\vG \cdot \,\hspace{-1mm} \sh W^{\la_i} \otimes W^{\la_i} \cdot e_\vG 
=e_\vG \cdot \,\hspace{-1mm} \sh W^{\la_i} \otimes W^{\la_i}  
=\sh W^{\la_i} \otimes W^{\la_i} \cdot e_\vG.
\label{eWe=eW=We}
\end{align}
Note that $W^{\la_i}$ (resp. $\sh W^{\la_i}$) belongs to the block $\CB_\vG$ if and only if $\la_i\in \vG$, 
then (\ref{eWe=eW=We}) implies that 
\[W^{\la_i}\cdot e_\vG = \begin{cases} W^{\la_i} &\text{ if }\la_i \in \vG, \\ 0 &\text{ otherwise}, \end{cases}
\hspace{3em}
e_\vG \cdot \,\hspace{-1mm} \sh W^{\la_i} = \begin{cases} \sh W^{\la_i} &\text{ if }\la_i \in \vG, \\ 0 &\text{ otherwise}. \end{cases}\] 
Moreover, (\ref{filtration of eAe}) gives a cell chain of $\CB_\vG=e_\vG \A e_\vG$ 
in the sense of \cite[\S3]{KX98}. 
Now, the cellular basis $\ZC_\vG$ of $\CB_\vG$ satisfing the proposition  
is obtained from this cell chain (\ref{filtration of eAe}) by using the argument in \cite[\S3]{KX98}. 
\end{proof}

\remark 
\begin{enumerate}
\item 
The proof of Proposition \ref{basis-block} gives an alternative proof of the fact that 
all the composition factors of $W^\la$ (resp. $\sh W^\la$) lie in the same block, since 
(\ref{eWe=eW=We}) implies  that 
$W^{\la_i}\cdot e_\vG$ (resp. $ e_\vG \cdot \,\hspace{-1mm} \sh W^{\la_i} $) 
is isomorphic to $W^{\la_i}$ (resp. $\sh W^{\la_i}$) or zero.

\item
For $\la,\mu \in \vL^+$, $\la \eqA \mu$ 
if and only if 
$W^\la \cdot e =W^\la$ and $W^{\mu} \cdot e =W^\mu$ 
for the same block idempotent $e$ of $\A$. 
\end{enumerate}


\section{A Levi type subalgebra  and  its quotient algebra} \label{aA-and-oA}
In this section, we construct a Levi type subalgebra $\aA$ of $\A$ and its  quotient algebra $\oA$. 
We expect that $\oA$ will have simpler structures than $\A$. (See the example for cyclotomic $q$-Schur algebras.) 

In order to construct a subalgebra $\aA$ of $\A$ and its quotient algebra $\oA$, 
we pose the  following assumption for a cellular algebra $\A$ and its cellular basis $\ZC$.  

\begin{ass}\label{assume}
Throughout the remainder of this paper, we assume the following statements $(A1)-(A4)$. 
\begin{description}

\item[(A1)] 
There exists a poset $(\wt{\vL}, \geq)$ 
such that $\vL^+ \subset \wt{\vL}$  
and that the partial order on $\wt{\vL}$ is an extention of  the partial order on $\vL^+$.  

\item[(A2)] 
There exists a subset $\vL \subset \wt{\vL}$, 
and the unit element $1_\A$ of $\A$ is decomposed as 
\[1_\A=\sum_{\mu \in \vL}e_\mu \quad \text{with } e_\mu\not=0 \text{ and }e_\mu e_\nu =e_\nu e_\mu=\d_{\mu \nu}e_\mu \quad (\mu,\nu \in \vL),\] 
where $\d_{\mu\nu}=1$ if $\mu=\nu$, and $\d_{\mu\nu}=0$ if $\mu \not= \nu$. 
Thus $\{e_\mu\,|\,\mu\in \vL\}$ is a set of pairwise orthogonal idempotents.
\item[(A3)] 
For $\mu \in \vL$, we have 
\begin{align}
e_{\mu}= \hspace{-1em}\sum_{\la \in \vL^+ \text{ s.t. }\la \geq \mu \atop \Fs, \Ft \in \CT(\la)} 
\hspace{-1em}r_{\Fs \Ft}^\la c_{\Fs \Ft}^\la \qquad(r_{\Fs\Ft}^\la \in R). \label{linear-com-emu}
\end{align}
\item[(A4)] 
For each $\la \in \vL^+$ and each $\Ft \in \CT(\la)$, there exists an element $\mu \in \vL$ such that 
\begin{align}
&c_{\Fs\Ft}^\la e_\mu=c_{\Fs\Ft}^\la &\hspace{-5em}\text{for any }\Fs\in \CT(\la), \label{assume-emu1}\\
&e_\mu c_{\Ft\Fu}^\la =c_{\Ft\Fu}^\la &\hspace{-5em}\text{for any }\Fu\in \CT(\la). \label{assume-emu2}   
\end{align}
\end{description}
\end{ass}

The following lemma  is obtained from the above assumption immediately. 
\begin{lem} \label{unique-lem}
Let $\Ft \in \CT(\la)$ $\big(\la \in \vL^+\big)$ and $\mu \in \vL$ be 
such that $c_{\Fs\Ft}^\la e_\mu=c_{\Fs\Ft}^\la$ $\big(\Fs \in \CT(\la)\big)$. 
Then for any $\nu \in \vL$ such that $\nu \not=\mu$, 
we have $c_{\Fs\Ft}^\la e_\nu=0$. 
In particular, for each $\Ft\in \CT(\la)$, there exists a unique $\mu\in \vL$ such that 
$c_{\Fs\Ft}^\la e_\mu=c_{\Fs\Ft}^\la$ and $e_\mu c_{\Ft\Fs}^\la =c_{\Ft\Fs}^\la$ for any $\Fs \in \CT(\la)$. 
\end{lem}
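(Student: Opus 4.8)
The plan is to derive everything from the multiplicative relations $e_\mu e_\nu = \delta_{\mu\nu} e_\mu$ and the expansion $1_\A = \sum_{\mu \in \vL} e_\mu$ in (A2), together with the existence statement (A4). First I would prove the first assertion: suppose $c_{\Fs\Ft}^\la e_\mu = c_{\Fs\Ft}^\la$ for all $\Fs \in \CT(\la)$, and let $\nu \in \vL$ with $\nu \neq \mu$. Then
\[
c_{\Fs\Ft}^\la e_\nu = (c_{\Fs\Ft}^\la e_\mu) e_\nu = c_{\Fs\Ft}^\la (e_\mu e_\nu) = c_{\Fs\Ft}^\la \cdot 0 = 0,
\]
using associativity and the orthogonality in (A2). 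This is the only computation needed for the first claim, and it is immediate; symmetrically, if $e_\mu c_{\Ft\Fs}^\la = c_{\Ft\Fs}^\la$ for all $\Fs$ then $e_\nu c_{\Ft\Fs}^\la = 0$ for $\nu \neq \mu$.

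Next I would establish the uniqueness statement. Fix $\Ft \in \CT(\la)$. By (A4) there is at least one $\mu \in \vL$ with $c_{\Fs\Ft}^\la e_\mu = c_{\Fs\Ft}^\la$ for all $\Fs$ and $e_\mu c_{\Ft\Fu}^\la = c_{\Ft\Fu}^\la$ for all $\Fu$; in particular, taking $\Fu = \Fs$, we get both $c_{\Fs\Ft}^\la e_\mu = c_{\Fs\Ft}^\la$ and $e_\mu c_{\Ft\Fs}^\la = c_{\Ft\Fs}^\la$. For uniqueness, suppose $\mu' \in \vL$ also satisfies $c_{\Fs\Ft}^\la e_{\mu'} = c_{\Fs\Ft}^\la$ for all $\Fs$. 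If $\mu' \neq \mu$, then by the first part (applied with $\nu = \mu'$) we would get $c_{\Fs\Ft}^\la e_{\mu'} = 0$, hence $c_{\Fs\Ft}^\la = 0$ for all $\Fs \in \CT(\la)$. But $\ZC$ is a free $R$-basis of $\A$ by \ref{def-cellular}(i), so $c_{\Fs\Ft}^\la \neq 0$, a contradiction. Therefore $\mu' = \mu$, proving uniqueness.

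One subtlety worth making explicit is the compatibility of the two conditions in (A4): the lemma as stated asserts that the \emph{same} $\mu$ works for both the right-multiplication normalization of $c_{\Fs\Ft}^\la$ and the left-multiplication normalization of $c_{\Ft\Fs}^\la$. This is exactly what (A4) grants for each $\Ft$, so one only needs to observe that the $\mu$ produced by (A4) is forced to be unique by the argument above — the right-hand condition alone already pins down $\mu$, and the left-hand condition is then automatically satisfied by that same $\mu$. I would phrase the conclusion as: the element $\mu = \mu(\Ft) \in \vL$ attached to $\Ft$ by (A4) is the unique element of $\vL$ with $c_{\Fs\Ft}^\la e_\mu = c_{\Fs\Ft}^\la$ (equivalently $e_\mu c_{\Ft\Fs}^\la = c_{\Ft\Fs}^\la$) for all $\Fs \in \CT(\la)$.

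I do not anticipate any real obstacle here: the statement is a direct formal consequence of orthogonality of idempotents plus the linear independence of the cellular basis, and (A3) is not even needed for this particular lemma. The only point requiring a line of care is invoking $c_{\Fs\Ft}^\la \neq 0$ from the basis property, and noting that the conclusion depends only on $\Ft$ (not on the auxiliary $\Fs$), which follows because the defining condition quantifies over all $\Fs \in \CT(\la)$.
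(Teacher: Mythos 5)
Your proposal is correct and follows essentially the same route as the paper: the first claim is exactly the paper's computation $c_{\Fs\Ft}^\la e_\nu=(c_{\Fs\Ft}^\la e_\mu)e_\nu=0$ using orthogonality from (A2), and the uniqueness part is just a careful spelling-out (via the basis property $c_{\Fs\Ft}^\la\neq 0$) of what the paper dismisses as ``clear by (A4)''. No gaps.
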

\begin{proof}
For $\nu \in \vL$ such that $\nu\not=\mu$, $e_\nu$ and $e_\mu$ are orthogonal. Thus we have 
\[c_{\Fs\Ft}^\la e_\nu=(c_{\Fs\Ft}^\la e_\mu)e_\nu=0.\]
Now, the latter statement is clear by (A4). 
\end{proof}

\exam\label{ex}
Throughout the paper, we consider the following three algebras as examples. 

(\roi) (Matrix algebra)
Let $\A=M_{n\times n}(R)$ be an $n\times n$ matrix algebra over $R$. We take $\vL^+=\{n\}$ and $\CT(n)=\{1,2,\cdots ,n\}$. 
For $i,j\in \CT(n)$, let $E_{ij}$ be the elementary matrix having $1$ at the $(i,j)$-entry and $0$ elsewhere. 
Then $\ZC=\{E_{ij}\,|\,i,j\in \CT(n) \}$ becomes a cellular basis of $\A$. 
The unit element of $\A$ is $1_\A=\sum_{i\in \CT(n)}E_{ii}$. 
Put $\wt{\vL}=\vL=\CT(n)$. 
We consider the order on $\wt{\vL}$ by the natural order on the set of integers. 
Clearly, this setting satisfies Assumption \ref{assume}. 
\vspace{2mm}

(\roii) (Path algebra) 
Let $Q$ be the quiver \\
\hspace{5em}
\begin{picture}(100,30)
\put(10,8){$1 \qquad \,\,2 \qquad \,\,3 \qquad \,\,4 \qquad\,\,5,$}
\put(16,20){$ \underrightarrow{\,\,\a_{12}\,\,}$}
\put(16,0){$ \overleftarrow{\,\,\a_{21}\,\,}$}
\put(50,20){$ \underrightarrow{\,\,\a_{23}\,\,}$}
\put(50,0){$ \overleftarrow{\,\,\a_{32}\,\,}$}
\put(84,20){$ \underrightarrow{\,\,\a_{34}\,\,}$}
\put(84,0){$ \overleftarrow{\,\,\a_{43}\,\,}$}
\put(118,20){$ \underrightarrow{\,\,\a_{45}\,\,}$}
\put(118,0){$ \overleftarrow{\,\,\a_{54}\,\,}$}
\end{picture}\\
and $\CI$ be the two-sided ideal of the path algebra $RQ$ generated by 
\begin{align*}
&\a_{12}\a_{23},\,\, \a_{23}\a_{34},\,\, \a_{34}\a_{45},\,\, \a_{54}\a_{43},\,\, \a_{43}\a_{32},\,\, \a_{32}\a_{21}, \\
&\a_{21}\a_{12}-\a_{23}\a_{32},\,\,\a_{32}\a_{23}-\a_{34}\a_{43},\,\,\a_{43}\a_{34}-\a_{45}\a_{54}, 
\end{align*}
where we denote by $\a_{12}\a_{23}$ 
the path $1 \stackrel{\a_{12}\,}{\longrightarrow } 2 \stackrel{\a_{23}\,}{\longrightarrow } 3$, etc.. 

Let $\A=RQ/\CI$. 
We define the algebra anti-automorphism $\ast: a \mapsto a^\ast$ of $RQ$ by 
$\a_{ij}^\ast=\a_{ji}$, 
then this induces the algebra anti-automorphism of $\A$, which is denoted by $\ast$ again. 
It is easy to check, by using the defining relations, that  
$\A$ has a free $R$-basis 
\begin{align}
\a_{12}\a_{21}, \quad\,\,
\begin{matrix} e_1, & \a_{12},\\ \a_{21}, & \a_{21}\a_{12}, \end{matrix} \quad\,\,
\begin{matrix} e_2, & \a_{23},\\ \a_{32}, & \a_{32}\a_{23}, \end{matrix} \quad\,\,
\begin{matrix} e_3, & \a_{34},\\ \a_{43}, & \a_{43}\a_{34}, \end{matrix} \quad\,\,
\begin{matrix} e_4, & \a_{45},\\ \a_{54}, & \a_{54}\a_{45}, \end{matrix} \quad\,\,
e_5,
\label{quiver-basis}
\end{align}
where $e_i$ is the path of length $0$ on the vertex $i$. 
(We use the relations,  $\a_{23}\a_{32}=\a_{21}\a_{12}$, $\a_{12}\a_{21}\a_{12}=\a_{12}\a_{23}\a_{32}=0$, etc..)

Let $\vL^+=\{\la_0,\la_1,\cdots,\la_5\}$ with the order $\la_0>\la_1>\cdots>\la_5$, 
and put 
$\CT(\la_0)=\{1\}$, $\CT(\la_i)=\{i,\,i+1\}$ $(1\leq i \leq 4)$, $\CT(\la_5)=\{5\}$. 
We define 
\[c_{11}^{\la_0}=\a_{12}\a_{21},\quad  
\left(\begin{matrix} c_{ii}^{\la_i} & c_{i\,i+1}^{\la_i}\\ c_{i+1\,i}^{\la_i} & c_{i+1\,i+1}^{\la_i} \end{matrix}\right)=
\left(\begin{matrix} e_i & \a_{i\,i+1}\\ \a_{i+1\,i} & \a_{i+1\,i}\,\a_{i\,i+1} \end{matrix} \right)\,\,(1\leq i \leq 4), \quad
c_{55}^{\la_5}=e_5. 
\]
Then $\ZC=\{c_{ij}^{\la}\,|\,i,j \in \CT(\la) \text{ for some }\la \in \vL^+\}$ 
is a cellular basis of $\A$. 

Let $\wt{\vL}=\{\la_0,\la_1,\cdots,\la_5,1,2,3,4,5\}$ and $\vL=\{1,2,3,4,5\}$, 
then we have $1_\A=\sum_{i\in \vL}e_i$. 
We define the order on $\wt{\vL}$ by 
$\la_0>\la_1>1>\la_2>2> \cdots >\la_5>5$. 
Clearly, this setting satisfies Assumption \ref{assume}.
(In the later discussions, 
we will denote the basis of $\A$ by the form (\ref{quiver-basis}) rather than by $\ZC$.)
\vspace{2mm}

(\roiii) (Cyclotomic $q$-Schur algebra) 
Let $\He=\He_{n,r}$ be an Ariki-Koike algebra over $R$ associated to the complex reflection group $\FS_n \ltimes (\ZZ/r\ZZ)^n$ 
with parameters $q,Q_1,\cdots,Q_r \in R$. 
Let $\vL$ be the set of $r$-compositions $\Bmu=(\mu^{(1)},\cdots,\mu^{(r)})$ of size $n$ 
such that each composition $\mu^{(i)}$ has the length at most $n$, 
and $\vL^+$ be the set of $r$-partitions $\Bla=(\la^{(1)},\cdots,\la^{(r)})$ of size $n$. 
We define the partial order on $\vL$, 
the so-called \lq\lq dominance order", by 
\[\Bla \geq \Bmu \,\,\text{ if and only if } \,\,
\sum_{i=1}^{l-1}|\la^{(i)}|+\sum_{j=1}^k\la^{(l)}_j \geq \sum_{i=1}^{l-1}|\la^{(i)}|+\sum_{j=1}^k\la^{(l)}_j \]
for any $1\leq l \leq r-1$, $1\leq k \leq n$. 
We define the cyclotomic $q$-Schur algebra associated to $\He$ by 
\[\Sc=\Sc(\vL)=\End_{\He} \Big( \bigoplus_{\Bmu \in \vL}M^{\Bmu} \Big),\]
where $M^{\Bmu}$ is a certain right $\He$-module attached to $\Bmu \in \vL$ 
(see \cite{DJM98} or \cite{SW} for further detailes). 
By \cite{DJM98}, $\Sc$ has a cellular basis 
\[\ZC=\big\{\vf_{ST}\bigm| S,T\in \CT_0(\Bla) \text{ for some }\Bla \in \vL^+ \big\},\]
where $\CT_0(\Bla)=\bigcup_{\Bmu \in \vL} \CT_0(\Bla,\Bmu)$, and 
$\CT_0(\Bla,\Bmu)$ is the set of semi-standard tableaux with  shape $\Bla$ and  type $\Bmu$. 
If $S \in \CT_0(\Bla,\Bmu),\,T\in \CT_0(\Bla,\Bnu)$ for $\Bla \in \vL^+, \Bmu,\Bnu \in \vL$, 
then $\vf_{ST} \in \Hom_\He(M^{\Bnu},M^{\Bmu})$ and $\vf_{ST}(M^{\Bt})=0$ for  $\Bt \in \vL$ such that $\Bt \not=\Bnu$. 
For $\Bmu \in \vL$, let $\vf_{\Bmu}$ be the identity map on $M^{\Bmu}$ and zero on $M^{\Bt}$ for $\Bt \in \vL$ such that $\Bt\not=\Bmu$. 
Then we have $\vf_{\Bmu}\in \Sc$ and  $1_{\Sc}=\sum_{\Bmu \in \vL}\vf_{\Bmu}$. 
In this case, we have $\wt{\vL}=\vL \supset \vL^+$. 
One see that this setting satisfies Assumption \ref{assume}. 

\remarks 
\begin{enumerate}
\item
It may occur that $\wt{\vL}=\vL$. (e.g. the case of cyclotomic $q$-Schur algebra.)

\item
It is not necessary to assume that  $e_\mu$ is  primitive for $\mu \in \vL$. 

\item
If $e_\mu^*=e_\mu$ for any $\mu \in \vL$ 
then (\ref{assume-emu2}) is deduced from (\ref{assume-emu1}) 
since $e_\mu c_{\Ft\Fs}^\la =e_\mu^* c_{\Ft\Fs}^\la=(c_{\Fs\Ft}^\la e_\mu)^*=(c_{\Fs\Ft}^\la)^*=c_{\Ft\Fs}^\la$. 
\end{enumerate}

\para
In view of (A4), we define, 
for $\la\in \vL^+$ and $\mu\in \vL$ 
\begin{align*}
\CT(\la,\mu)&=\big\{\Ft \in \CT(\la)\,|\, c_{\Fs\Ft}^\la e_{\mu}=c_{\Fs\Ft}^\la \,\,\text{ for any }\Fs \in \CT(\la) \big \}\\
&=\{\Fs \in \CT(\la)\,|\, e_\mu c_{\Fs\Ft}^\la =c_{\Fs\Ft}^\la \,\,\text{ for any }\Ft \in \CT(\la) \}. 
\end{align*}
By Lemma \ref{unique-lem}, we have a disjoint union 
\[\CT(\la)=\coprod_{\mu\in \vL}\CT(\la,\mu).\]

\begin{lem}\label{e-lem}
For $\mu \in \vL$, we have 
\begin{align}
e_{\mu}= \sum_{\la \in \vL^+ \text{ s.t. }\la\geq \mu \atop \Fs,\Ft\in \CT(\la,\mu)}r_{\Fs\Ft}^\la c_{\Fs\Ft}^\la . 
\label{re-linear-com-emu}
\end{align}
\end{lem}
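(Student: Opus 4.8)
The plan is to start from the expression in (A3) and show that all terms not indexed by $\CT(\la,\mu)$ drop out when we use the idempotence and the defining relation $e_\mu e_\nu = \delta_{\mu\nu} e_\mu$. Concretely, begin with $e_\mu = \sum_{\la \geq \mu,\ \Fs,\Ft} r_{\Fs\Ft}^\la c_{\Fs\Ft}^\la$ from (\ref{linear-com-emu}), and multiply on the right by $e_\mu$. Since $e_\mu e_\mu = e_\mu$, the left side is unchanged. On the right side, each basis element $c_{\Fs\Ft}^\la$ gets multiplied by $e_\mu$; by Lemma \ref{unique-lem}, for each $\Ft$ there is a unique $\nu \in \vL$ with $c_{\Fs\Ft}^\la e_\nu = c_{\Fs\Ft}^\la$, and $c_{\Fs\Ft}^\la e_\mu = 0$ unless $\nu = \mu$, i.e. unless $\Ft \in \CT(\la,\mu)$. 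So after multiplying by $e_\mu$ on the right, only the terms with $\Ft \in \CT(\la,\mu)$ survive. Symmetrically, multiplying on the left by $e_\mu$ and invoking (\ref{assume-emu2}) together with Lemma \ref{unique-lem} kills every term with $\Fs \notin \CT(\la,\mu)$.

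**Next I would** combine the two observations: compute $e_\mu = e_\mu \cdot e_\mu \cdot e_\mu$ (or simply $e_\mu = e_\mu (\sum r_{\Fs\Ft}^\la c_{\Fs\Ft}^\la) e_\mu$ using $e_\mu^2 = e_\mu$ twice), and substitute (\ref{linear-com-emu}) for the middle $e_\mu$. Each term becomes $r_{\Fs\Ft}^\la\, e_\mu c_{\Fs\Ft}^\la e_\mu$. By the left-multiplication argument this is zero unless $\Fs \in \CT(\la,\mu)$, and by the right-multiplication argument it is zero unless $\Ft \in \CT(\la,\mu)$; when both hold, $e_\mu c_{\Fs\Ft}^\la e_\mu = c_{\Fs\Ft}^\la$. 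Hence
\[
e_\mu = \sum_{\substack{\la \in \vL^+ \text{ s.t. } \la \geq \mu \\ \Fs,\Ft \in \CT(\la,\mu)}} r_{\Fs\Ft}^\la\, c_{\Fs\Ft}^\la,
\]
which is exactly (\ref{re-linear-com-emu}).

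**I do not anticipate a genuine obstacle here** — the statement is essentially a bookkeeping refinement of (A3) using the orthogonality of the idempotents and the already-proved Lemma \ref{unique-lem}. The one point requiring a little care is making sure the index set $\CT(\la,\mu)$ in the conclusion is the one defined just above the lemma (via either of the two equivalent descriptions), and that the condition $\la \geq \mu$ is preserved — but this is inherited unchanged from (\ref{linear-com-emu}) since we never introduce new basis elements, only discard some. I would also note in passing that the coefficients $r_{\Fs\Ft}^\la$ appearing in (\ref{re-linear-com-emu}) are literally the same ones as in (\ref{linear-com-emu}), restricted to the surviving index pairs, so no reindexing or renaming is needed.
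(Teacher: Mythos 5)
Your proposal is correct and follows essentially the same route as the paper: the paper's proof is exactly to multiply (\ref{linear-com-emu}) by $e_\mu$ on both sides and use the orthogonality of the idempotents (equivalently, Lemma \ref{unique-lem} and the definition of $\CT(\la,\mu)$) to discard all terms with $\Fs$ or $\Ft$ outside $\CT(\la,\mu)$. Your extra remarks about the coefficients being unchanged and the condition $\la\geq\mu$ being inherited are accurate but not needed beyond what the paper states.
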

\begin{proof}
By multiplying $e_\mu$ on both sides of (\ref{linear-com-emu}) from right and left, we obtain the lemma. 
\end{proof}
Next lemma plays an important role in later discussions. 

\begin{lem}\label{key-lem}\
\begin{enumerate}
\item
If $\CT(\la,\mu) \not= \emptyset$ 
for $\la\in \vL^+$ and $\mu \in \vL$, 
then $\la \geq \mu$.  
\item
For $\Fs_i \in \CT(\la_i,\mu_i)$ and $\Ft_i\in \CT(\la_i,\nu_i)$ $(i=1,2)$, we have 
\[c_{\Fs_1\Ft_1}^{\la_1} c_{\Fs_2 \Ft_2}^{\la_2}=\d_{\nu_1 \mu_2}
	\hspace{-1.5em}\sum_{\la \geq \la_1 \text{ and }\la_2 \atop \Fs\in \CT(\la,\mu_1),\,\Ft\in \CT(\la,\nu_2)} 
	\hspace{-1.5em}r_{\Fs\Ft}^\la\, c_{\Fs\Ft}^\la \qquad (r_{\Fs\Ft}^\la \in R). \]
\end{enumerate}
\end{lem}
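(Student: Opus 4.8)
The plan is to prove both parts essentially simultaneously, deriving (i) as a byproduct of the computation in (ii). First I would prove (i): suppose $\CT(\la,\mu) \neq \emptyset$, so there is some $\Ft \in \CT(\la,\mu)$, and then by definition $e_\mu c_{\Fs\Ft}^\la = c_{\Fs\Ft}^\la$ for every $\Fs \in \CT(\la)$; in particular $c_{\Fs\Ft}^\la$ appears with a nonzero coefficient (namely, it is not killed) when we expand $e_\mu \cdot c_{\Fs\Ft}^\la$ via the left cellular relation (\ref{cellular-rel-left}). But by Lemma \ref{e-lem}, $e_\mu$ is an $R$-linear combination of basis elements $c_{\Fu\Fv}^{\la'}$ with $\la' \geq \mu$, so by (\ref{cellular-rel-left}) the product $e_\mu \cdot c_{\Fs\Ft}^\la$ lies in the span of basis elements $c_{\Fs'\Ft'}^{\la''}$ with $\la'' \geq \la'$ for some $\la' \geq \mu$ — hence $\la'' \geq \mu$ for all such terms. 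Since $c_{\Fs\Ft}^\la$ itself is one of these terms, we get $\la \geq \mu$. (Alternatively: use (\ref{linear-com-emu}) together with (\ref{cellular-rel}) on the right; either direction works.)

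For part (ii), I would first dispose of the case $\nu_1 \neq \mu_2$. Here I use the disjoint union $\CT(\la_2) = \coprod_{\mu \in \vL} \CT(\la_2,\mu)$ and Lemma \ref{unique-lem}: since $\Ft_1 \in \CT(\la_1,\nu_1)$ we have $e_{\nu_1} c_{\Ft_1 \Fs_1}^{\la_1} = c_{\Ft_1 \Fs_1}^{\la_1}$, so applying $\ast$ gives $c_{\Fs_1 \Ft_1}^{\la_1} e_{\nu_1} = c_{\Fs_1 \Ft_1}^{\la_1}$; and since $\Fs_2 \in \CT(\la_2,\mu_2)$ we have $e_{\mu_2} c_{\Fs_2 \Ft_2}^{\la_2} = c_{\Fs_2 \Ft_2}^{\la_2}$. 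Therefore
\[
c_{\Fs_1\Ft_1}^{\la_1} c_{\Fs_2\Ft_2}^{\la_2} = c_{\Fs_1\Ft_1}^{\la_1} e_{\nu_1} e_{\mu_2} c_{\Fs_2\Ft_2}^{\la_2} = \d_{\nu_1 \mu_2}\, c_{\Fs_1\Ft_1}^{\la_1} c_{\Fs_2\Ft_2}^{\la_2},
\]
using orthogonality of the $e_\mu$'s, which gives the Kronecker delta factor.

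Now assume $\nu_1 = \mu_2$. I would expand the product using the cellular relations. Applying (\ref{cellular-rel}) with $a = c_{\Fs_2\Ft_2}^{\la_2}$ and the index $\la_1$ gives $c_{\Fs_1\Ft_1}^{\la_1} c_{\Fs_2\Ft_2}^{\la_2} \equiv \sum_{\Fv \in \CT(\la_1)} r_\Fv c_{\Fs_1 \Fv}^{\la_1} \pmod{\A^{\vee \la_1}}$, so all terms have top-index $\geq \la_1$; symmetrically, using (\ref{cellular-rel-left}) with the index $\la_2$, all terms have top-index $\geq \la_2$. Hence every basis element $c_{\Fs\Ft}^\la$ occurring in the expansion satisfies $\la \geq \la_1$ and $\la \geq \la_2$. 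It remains to control the indexing sets of the row/column tableaux: I multiply on the right by $e_{\nu_2}$ and on the left by $e_{\mu_1}$ (which act as the identity on the left and right factors respectively, by the same $\ast$-and-(A4) argument as above), so $c_{\Fs_1\Ft_1}^{\la_1} c_{\Fs_2\Ft_2}^{\la_2} = e_{\mu_1}\bigl(c_{\Fs_1\Ft_1}^{\la_1} c_{\Fs_2\Ft_2}^{\la_2}\bigr) e_{\nu_2}$; then for any basis element $c_{\Fs\Ft}^\la$ surviving in $(\cdots) e_{\nu_2}$ we must have $c_{\Fs\Ft}^\la e_{\nu_2} = c_{\Fs\Ft}^\la$ (by Lemma \ref{unique-lem}, since the distinct $\CT(\la,\cdot)$ blocks are separated by the orthogonal idempotents), i.e. $\Ft \in \CT(\la,\nu_2)$, and similarly $\Fs \in \CT(\la,\mu_1)$. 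Combining all constraints yields exactly the claimed form.

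The main obstacle I anticipate is the bookkeeping in the last step — making rigorous that multiplying by $e_{\nu_2}$ on the right genuinely forces each surviving basis element into $\CT(\la,\nu_2)$ rather than merely into a linear combination that happens to be $e_{\nu_2}$-invariant. The cleanest way around this is to first establish the auxiliary fact that if $x \in \A$ is an $R$-linear combination of basis elements $c_{\Fs\Ft}^\la$ and $x e_{\nu} = x$, then only basis elements with $\Ft \in \CT(\la,\nu)$ can appear — which follows because $\{c_{\Fs\Ft}^\la e_\mu \mid \mu \in \vL\}$ is, by Lemma \ref{unique-lem}, the decomposition of $c_{\Fs\Ft}^\la$ as a sum with exactly one nonzero term, so the operators $\cdot e_\mu$ act as the projections onto the spans of $\{c_{\Fs\Ft}^\la : \Ft \in \CT(\la,\mu)\}$ in a compatible way across all $\la$. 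Once this projection description is in place, part (ii) is immediate from the degree estimates above.
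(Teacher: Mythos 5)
Your proposal is correct and takes essentially the same route as the paper: part (i) by writing $e_\mu$ as a combination of basis elements $c_{\Fs'\Ft'}^{\la'}$ with $\la'\geq\mu$ (Lemma \ref{e-lem}) and invoking the cellular relations, and part (ii) by sandwiching the product with the idempotents $e_{\nu_1},e_{\mu_2}$ (giving the Kronecker delta) and then with $e_{\mu_1},e_{\nu_2}$, using Lemma \ref{unique-lem} to see that left/right multiplication by each $e_\nu$ acts on basis elements as a projection, exactly as in the paper. Two cosmetic slips do not affect the argument: for $\Ft\in\CT(\la,\mu)$ the definition gives $c_{\Fs\Ft}^\la e_\mu=c_{\Fs\Ft}^\la$ rather than $e_\mu c_{\Fs\Ft}^\la=c_{\Fs\Ft}^\la$ (as you note, either side can be used), and your detour through the involution $\ast$ to get $c_{\Fs_1\Ft_1}^{\la_1}e_{\nu_1}=c_{\Fs_1\Ft_1}^{\la_1}$ tacitly assumes $e_{\nu_1}^\ast=e_{\nu_1}$, which is not part of Assumption \ref{assume} --- but that identity is already the definition of $\Ft_1\in\CT(\la_1,\nu_1)$, so nothing is lost.
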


\begin{proof}
(\roi) For $\Ft\in \CT(\la,\mu)$, we have $c_{\Fs\Ft}^\la e_\mu=c_{\Fs\Ft}^\la$. 
On the other hand, 
since we can express 
$e_{\mu}=\sum r_{\Fs'\Ft'}^{\la'}c_{\Fs'\Ft'}^{\la'}$ 
by (A3),  
where $\la'\geq \mu$ and $\Fs',\Ft' \in \CT(\la')$, 
we have $c_{\Fs\Ft}^\la e_\mu =\sum f_{\Fs'\Ft'}^{\la'}c_{\Fs'\Ft'}^{\la'}$ 
by (\ref{cellular-rel-left}) 
with $\la'\geq \mu$ and $\Fs',\Ft' \in \CT(\la')$. 
Thus we have $\la \geq \mu$ if $\CT(\la,\mu)\not=\emptyset$. 

Next we prove (\roii). If $\nu_1 \not=\mu_2$ then we have 
\begin{align}
c_{\Fs_1\Ft_1}^{\la_1}c_{\Fs_2\Ft_2}^{\la_2} &=\big(c_{\Fs_1\Ft_1}^{\la_1} e_{\nu_1}\big) \big(e_{\mu_2} c_{\Fs_2\Ft_2}^{\la_2}\big)
\label{prod-basis-zero}\\
	&=0. \notag
\end{align}
We now suppose that $\nu_1 =\mu_2$. By (\ref{cellular-rel}) and (\ref{cellular-rel-left}), 
one can write as 
\[ c_{\Fs_1\Ft_1}^{\la_1}c_{\Fs_2\Ft_2}^{\la_2}= \sum_{\la \geq \la_1 \text{ and }\la_2 \atop \Fs,\Ft\in \CT(\la)}r_{\Fs\Ft}^\la c_{\Fs\Ft}^\la
	\qquad (r_{\Fs\Ft}^\la \in R). \]
Thus we have 
\begin{align*}
c_{\Fs_1\Ft_1}^{\la_1}c_{\Fs_2\Ft_2}^{\la_2}&=\big(e_{\mu_1} c_{\Fs_1\Ft_1}^{\la_1}\big)\big(c_{\Fs_2\Ft_2}^{\la_2} e_{\nu_2}\big) \\
&=\sum_{\la \geq \la_1 \text{ and }\la_2 \atop \Fs,\Ft\in \CT(\la)}r_{\Fs\Ft}^\la e_{\mu_1}c_{\Fs\Ft}^\la e_{\nu_2}\\ 
&=\hspace{-1.5em}\sum_{\la \geq \la_1 \text{ and }\la_2 \atop \Fs\in \CT(\la,\mu_1),\,\Ft\in \CT(\la,\nu_2)} 
	\hspace{-1.5em}r_{\Fs\Ft}^\la\, c_{\Fs\Ft}^\la . 
\end{align*}
\end{proof}

\para \label{take-alpha}
In order to divide $\CT(\la)$ ($\la \in \vL^+)$ into a disjoint union of two subsets of $\CT(\la)$, 
we consider a poset $(X,\succeq )$ and a map $\a:\wt{\vL} \ra X$ such that   
\begin{align} \label{condition-alpha}
\a(\la)\succeq \a(\mu) \,\,\text{ if }\,\, \la \geq \mu  \,\,\text{ for }\,\, \la,\mu \in \wt{\vL}. 
\end{align}
Then we have a poset $\big(\{\a(\la)\,|\,\la \in \wt{\vL}\}, \succeq \big)$. 
Note that it may happen $\a(\la)=\a(\mu)$ even if $\la \not= \mu$.

\exam\label{ex2} 
We consider three algebras in Example \ref{ex}, 
and define a map $\a: \wt{\vL}\ra X$  satisfing the condition (\ref{condition-alpha}) 
for each case.  

(\roi) (Matrix algebra) 
Fix an integer $b$ such that $1 < b \leq n$. 
Put $X=\{l,s\}$, and define the order on $X$ by $l \succ s$. 
We define a map $\a: \wt{\vL}\ra X$ by 
\[\a(k)=\begin{cases} l &\text{ if }k\geq b\\ s &\text{ if }k<b \end{cases} \qquad \big(k\in \wt{\vL}=\{1,2,\cdots,n\}\big).\]
Then this map $\a$ satisfies the condition (\ref{condition-alpha}). 

(\roii) (Path algebra) 
Put $X=\{t_0, t_{123}, t_{45}\}$, and define the order on $X$ by $t_0 \succ t_{123} \succ t_{45}$. 
We define the map $\a: \wt{\vL}\ra X$ by 
\[ \a(\la_k)=\begin{cases} t_0 &\text{ if }k =0 \\ t_{123} &\text{ if }k=1,2,3 \\ t_{45} &\text{ if }k=4,5 \qquad,\end{cases} \quad 
\a(k)=\begin{cases} t_{123} &\text{ if }k=1,2,3\\ t_{45} &\text{ if }k=4,5 \qquad. \end{cases}\]
Then this map $\a$ satisfies the condition (\ref{condition-alpha}). 

(\roiii) (Cyclotomic $q$-Schur algebra) 
Fix an integer $g$ ($1< g <r$) and 
choose an $r$-turple $\Bp=(r_1,\cdots,r_g)\in \ZZ_{>0}^g$ such that $r_1+\cdots+r_g=r$. 
Put $p_k=r_1+\cdots+r_{k-1}$ for $k=1,\cdots,g$ with $p_1=0$. 
For $\Bmu=(\mu^{(1)},\cdots,\mu^{(r)})\in \vL$, 
put  $\Bmu^{[k]}=(\mu^{(p_k+1)},\cdots,\mu^{(p_k+r_k)})$ for $k=1,\cdots,g$. 
Let $X=\ZZ_{\geq 0}^g$ be the poset endowed with the lexicographic order  \lq\lq $\succeq$".   
We define a map $\a_{\Bp}: \vL \ra X$ by 
\[ \a_{\Bp}(\Bmu)=(|\Bmu^{[1]}|,|\Bmu^{[2]}|,\cdots,|\Bmu^{[g]}|) \quad (\Bmu \in \vL).\]
Then this map $\a_{\Bp}$ satisfies the condition (\ref{condition-alpha}). 


\para 
From now on, we fix a poset $\big(\{\a(\la)\,|\,\la \in \wt{\vL}\} ,\succeq \big)$ 
as in \ref{take-alpha},  
and we define an $R$-submodule $\aA$ of $\A$ with respect to the map $\a$. 
We shall prove that $\aA$ is a subalgebra of $\A$, and that $\aA$ inherits a cellular structure from $\A$. 

For $\la \in \vL^+$, set 
\[\CT^+_\a(\la)=\coprod_{\mu \in \vL\atop \a(\la)=\a(\mu)}\CT(\la,\mu), \qquad \CT^-_\a(\la)=\coprod_{\mu\in \vL \atop \a(\la)\succ \a(\mu)}\CT(\la,\mu).\] 
(We denote $\CT^{\pm}_\a(\la)$ by $\CT^{\pm}(\la)$ simply, if there is no fear of confusion.)   
By Lemma \ref{key-lem} and (\ref{condition-alpha}), we have $\CT(\la,\mu)\not= \emptyset \Ra \la\geq \mu \Ra \a(\la)\succeq \a(\mu)$. Thus we have 
\[\CT(\la)=\coprod_{\mu\in \vL}\CT(\la,\mu)=\CT^+_\a(\la) \cup \CT^-_\a(\la) \quad(\text{disjoint union}).\] 

For $(\la,\ve)\in \vL^+ \times \{0,1\}$, set 
\[I(\la,\ve)=\begin{cases} \CT^+_\a(\la) &\text{if }\ve=0\\ \CT^-_\a(\la) &\text{if }\ve=1. \end{cases} \]
Note that it may happen that $\CT^+_\a(\la)$ or $\CT^-_\a(\la)$  is an empty set. 
Then we set   
\[\Om =\big(\vL^+ \times \{0,1\}\big) \setminus \Big( \{(\la,0)\,|\,\CT^+_\a(\la)=\emptyset \}\cup \{(\la,1)\,|\,\CT^-_\a(\la)=\emptyset \}\Big)\]
so that $I(\la,\ve)\not=\emptyset$ for any $(\la,\ve) \in \Om$. 
We define a partial order on $\Om$ by 
\begin{align}
(\la,\ve)>(\la',\ve') \,\,\text{ if }\,\,\ve>\ve' \text{ or if } \ve=\ve' \text { and }\la>\la'. 
\label{def-order-Om}
\end{align}
For $(\la,\ve)\in \Om$, set 
\[\ZC^\a(\la,\ve)=\big\{c_{\Fs\Ft}^\la \bigm| \Fs,\Ft \in I(\la,\ve) \text{ for some }(\la,\ve)\in \Om\big\},\]
and put $\ZC^\a(\la)=\ZC^\a(\la,0)\cup \ZC^\a(\la,1)$ for $\la \in \vL^+$, 
where  we define $\ZC^\a(\la,\ve)=\emptyset$ 
if $(\la,\ve)\not\in \Om$. 
We define a subset $\ZC^\a$ of $\ZC$ by 
\begin{align*}
\ZC^\a&=\big\{c_{\Fs\Ft}^\la \bigm| \Fs,\Ft\in I(\la,\ve) \text{ for some }(\la,\ve)\in \Om \big\}\\
&=\coprod_{(\la,\ve)\in \Om}\ZC^\a(\la,\ve).
\end{align*}

We define an $R$-submodule $\A^\a$ of $\A$ as the $R$-span of $\ZC^\a$. 
In order to see that $\A^\a$ becomes a subalgebra of $\A$, we prepare the following lemma. 

\begin{lem}\label{pro-lem}
For $ c_{\Fs_1\Ft_1}^{\la_1}, c_{\Fs_2\Ft_2}^{\la_2} \in \ZC^\a$ 
such that $\Fs_i \in \CT(\la_i,\mu_i)$ and $\Ft_i\in \CT(\la_i,\nu_i)$ $(i=1,2)$, 
we have 
\[ c_{\Fs_1\Ft_1}^{\la_1}c_{\Fs_2\Ft_2}^{\la_2} =\begin{cases}
	\dis \sum_{c_{\Fs,\Ft}^{\la_1}\in \ZC^\a(\la_1,0)}r_{\Fs\Ft}^{\la_1}c_{\Fs\Ft}^{\la_1} 
		+ \sum_{\la>\la_1}\sum_{c_{\Fs,\Ft}^{\la}\in \ZC^\a(\la)}r_{\Fs\Ft}^{\la}c_{\Fs\Ft}^{\la}, 
		&\text{if } c_{\Fs_1 \Ft_1}^{\la_1}\in \ZC^\a(\la_1,0), \\\,\\
	\dis \sum_{\la\geqq\la_1}\sum_{c_{\Fs,\Ft}^{\la}\in \ZC^\a(\la,1)}r_{\Fs\Ft}^{\la}c_{\Fs\Ft}^{\la}, 
		&\text{if } c_{\Fs_1 \Ft_1}^{\la_1}\in \ZC^\a(\la_1,1),\\\,\\
	\dis \sum_{c_{\Fs,\Ft}^{\la_2}\in \ZC^\a(\la_2,0)}r_{\Fs\Ft}^{\la_2}c_{\Fs\Ft}^{\la_2} 
		+ \sum_{\la>\la_2}\sum_{c_{\Fs,\Ft}^{\la}\in \ZC^\a(\la)}r_{\Fs\Ft}^{\la}c_{\Fs\Ft}^{\la}, 
		&\text{if } c_{\Fs_2 \Ft_2}^{\la_2}\in \ZC^\a(\la_2,0), \\\,\\
	\dis \sum_{\la\geqq\la_2}\sum_{c_{\Fs,\Ft}^{\la}\in \ZC^\a(\la,1)}r_{\Fs\Ft}^{\la}c_{\Fs\Ft}^{\la},
		&\text{if } c_{\Fs_2 \Ft_2}^{\la_2}\in \ZC^\a(\la_2,1).
\end{cases}\]
\end{lem}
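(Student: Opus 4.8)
The statement is a refinement of Lemma \ref{key-lem}(ii), which already tells us that $c_{\Fs_1\Ft_1}^{\la_1}c_{\Fs_2\Ft_2}^{\la_2}$ is either $0$ (when $\nu_1\neq\mu_2$) or a linear combination of $c_{\Fs\Ft}^{\la}$ with $\la\geq\la_1,\la_2$, $\Fs\in\CT(\la,\mu_1)$, $\Ft\in\CT(\la,\nu_2)$. What remains is purely bookkeeping about the $\a$-labels: I must show that every basis element appearing on the right actually lies in $\ZC^\a$, and pin down which piece $\ZC^\a(\la,\ve)$ it lies in. The plan is to analyze the four cases by tracking $\a(\mu_1)$, $\a(\nu_1)=\a(\mu_2)$ (when the product is nonzero), $\a(\nu_2)$, and the $\a$-values of the intermediate $\la$'s, using only the monotonicity \eqref{condition-alpha} and the definitions of $\CT^\pm_\a$.

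**Key steps.** First I would record the basic inequalities: since $c_{\Fs_i\Ft_i}^{\la_i}\in\ZC^\a$, both $\Fs_i,\Ft_i$ lie in the \emph{same} $I(\la_i,\ve_i)$, so $\a(\mu_i)$ and $\a(\nu_i)$ are simultaneously either equal to $\a(\la_i)$ (if $\ve_i=0$) or strictly below $\a(\la_i)$ (if $\ve_i=1$). Also, by Lemma \ref{key-lem}(i) and \eqref{condition-alpha}, any $\la$ with $\CT(\la,\mu_1)\neq\emptyset$ satisfies $\a(\la)\succeq\a(\mu_1)$, and likewise $\a(\la)\succeq\a(\nu_2)$; combined with $\la\geq\la_1$ this gives $\a(\la)\succeq\a(\la_1)$. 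Next, treat the case $c_{\Fs_1\Ft_1}^{\la_1}\in\ZC^\a(\la_1,1)$, i.e. $\ve_1=1$: then $\a(\la_1)\succ\a(\mu_1)$, and for any term $c_{\Fs\Ft}^\la$ with $\Fs\in\CT(\la,\mu_1)$ we get $\a(\la)\succeq\a(\la_1)\succ\a(\mu_1)$, forcing $\Fs\in\CT^-_\a(\la)$. One checks symmetrically, using $\Ft\in\CT(\la,\nu_2)$ together with $\a(\la_1)\succ\a(\nu_1)=\a(\mu_2)$ and $\la\geq\la_2\Rightarrow\a(\la)\succeq\a(\mu_2)$, actually I should be careful here and instead note that in this case we want $\Ft\in\CT^-_\a(\la)$ too, which needs $\a(\la)\succ\a(\nu_2)$. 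Hmm — actually the cleanest route: when $\ve_1=1$, I claim $\a(\la)\succ\a(\nu_2)$ as well, because $\a(\la)\succeq\a(\la_1)\succ\a(\nu_1)=\a(\mu_2)$, and since $\CT(\la_2,\mu_2)\ni\Fs_2\neq\emptyset$ with $\Ft_2\in\CT(\la_2,\nu_2)$, monotonicity of the $\a$-labels on $\la_2$ is not automatic unless $\ve_2$ is controlled. So instead I should split on $\ve_2$ as well, or better, observe that the four displayed cases are not mutually exclusive hypotheses but rather a case distinction on which of $c_{\Fs_1\Ft_1}^{\la_1}$, $c_{\Fs_2\Ft_2}^{\la_2}$ we choose to read off, and prove the first two (on $\ve_1$) directly, the last two (on $\ve_2$) by applying $*$. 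Concretely: apply the anti-automorphism $*$ to $c_{\Fs_1\Ft_1}^{\la_1}c_{\Fs_2\Ft_2}^{\la_2}$ to get $c_{\Ft_2\Fs_2}^{\la_2}c_{\Ft_1\Fs_1}^{\la_1}$, note $\ZC^\a$ is $*$-stable and $\ZC^\a(\la,\ve)$ is $*$-stable, so the third and fourth cases follow from the first and second.

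**Main obstacle.** The only real content beyond Lemma \ref{key-lem} is the case $c_{\Fs_1\Ft_1}^{\la_1}\in\ZC^\a(\la_1,0)$: here $\ve_1=0$, so $\a(\la_1)=\a(\mu_1)$, and the ``diagonal'' terms $c_{\Fs\Ft}^{\la_1}$ with $\la=\la_1$ need to be shown to lie in $\ZC^\a(\la_1,0)$, i.e. $\Fs,\Ft\in\CT^+_\a(\la_1)$. By Lemma \ref{key-lem}(ii) applied with $\la=\la_1$, we have $\Fs\in\CT(\la_1,\mu_1)$ and $\Ft\in\CT(\la_1,\nu_2)$; the first gives $\Fs\in\CT^+_\a(\la_1)$ since $\a(\mu_1)=\a(\la_1)$. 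For $\Ft$: we need $\a(\nu_2)=\a(\la_1)$. We know $\a(\la_1)\succeq\a(\nu_2)$ from $\la_1\geq$ nothing directly — wait, from $\Ft\in\CT(\la_1,\nu_2)\neq\emptyset$ we get $\a(\la_1)\succeq\a(\nu_2)$ by Lemma \ref{key-lem}(i). For the reverse, I'd use that $c_{\Fs_2\Ft_2}^{\la_2}$ appears and $\la_2\leq\la_1$ forces... actually $\nu_2$ is the label of $\Ft_2\in\CT(\la_2,\nu_2)$, and I expect the needed equality $\a(\nu_2)=\a(\la_1)$ to come precisely from combining $\la_1\geq\la_2$ hence $\a(\la_1)\succeq\a(\la_2)\succeq\a(\nu_2)$ with the hypothesis that we are in case $\ve_1=0$ plus an analysis of $\ve_2$: if $\ve_2=0$ then $\a(\nu_2)=\a(\la_2)$ and one needs $\a(\la_1)=\a(\la_2)$, which holds because the $\la_1$-diagonal term only survives when there's no strict drop — this is where I must look most carefully, likely using that $\CT(\la_1,\nu_1)\ni\Fs$ forces $\a(\la_1)\succeq\a(\nu_1)$ while $\nu_1=\mu_2$, $\a(\mu_2)\succeq\a(\la_2)$ is false in general... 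I anticipate the resolution is that one only needs to show each surviving term lies in \emph{some} $\ZC^\a(\la,\ve)$ with the stated bound on $\la$, and the precise $(\la,\ve)$-placement for the $\la=\la_1$ terms is forced by $\ve=0$ being the only option compatible with $\a(\la)=\a(\la_1)$; so the careful step is verifying $\a(\nu_2)=\a(\la_1)$ for exactly those terms, which I would extract by noting that $\la_1\geq\la_2$ and $\Ft_2\in\CT^{\ve_2}_\a(\la_2)$ together with the surviving condition $\Ft\in\CT(\la_1,\nu_2)$ pin $\nu_2$ down, and otherwise the coefficient $r_{\Fs\Ft}^{\la_1}$ vanishes. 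That verification, and the symmetric $*$-argument for the lower two cases, is the whole proof.
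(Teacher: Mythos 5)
Your skeleton is the paper's: reduce via Lemma \ref{key-lem} to the case $\nu_1=\mu_2$, so that every term $c^{\la}_{\Fs\Ft}$ in the product has $\la\geq\la_1,\la_2$, $\Fs\in\CT(\la,\mu_1)$, $\Ft\in\CT(\la,\nu_2)$, and then decide into which piece $\CT^{\pm}_\a(\la)$ these indices fall; your reduction of the last two displayed cases to the first two via $*$ is a legitimate small economy (each $\ZC^\a(\la,\ve)$ is indeed $*$-stable). But the decisive verification is left undone, and where you sketch it you go off track. The whole content — and this is exactly what the paper's proof is — is the split into the four subcases $(\ve_1,\ve_2)$ and the transport of $\a$-information across the product through $\nu_1=\mu_2$: if $\ve_1=\ve_2=0$ one has the chain $\a(\mu_1)=\a(\la_1)=\a(\nu_1)=\a(\mu_2)=\a(\la_2)=\a(\nu_2)$, which at once gives $\Fs,\Ft\in\CT^+_\a(\la_1)$ for the $\la=\la_1$ terms and puts $\Fs,\Ft$ in the \emph{same} piece of $\CT(\la)$ for every $\la>\la_1$ (the same-piece condition for the terms counted in $\ZC^\a(\la)$, which you never address); if $\ve_1=0,\ \ve_2=1$, then every surviving $\la$ satisfies $\a(\la)\succeq\a(\la_2)\succ\a(\mu_2)=\a(\la_1)$, so no $\la=\la_1$ term occurs at all and both $\Fs$ and $\Ft$ lie in $\CT^-_\a(\la)$. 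The same $\ve_2$-split finishes your $\ve_1=1$ case: $\a(\la)\succ\a(\nu_2)$ follows from $\a(\nu_2)=\a(\mu_2)$ when $\ve_2=0$ and from $\a(\la)\succeq\a(\la_2)\succ\a(\nu_2)$ when $\ve_2=1$.

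Instead of this, at the point you yourself call the main obstacle you hunt for $\a(\nu_2)=\a(\la_1)$ via ``$\la_1\geq\la_2$'' (not a hypothesis, only a consequence of a surviving $\la=\la_1$ term), a ``no strict drop''/coefficient-vanishing argument you do not justify, and you even assert that ``$\a(\mu_2)\succeq\a(\la_2)$ is false in general'' at precisely the spot where the subcase $\ve_2=0$ makes it an equality by the definition of $\ZC^\a(\la_2,0)$ — a fact you recorded correctly at the start but never used. So the approach is right and the fix is short, but the proposal ends with the key step (``that verification \dots is the whole proof'') acknowledged rather than carried out; as written it is a genuine gap.
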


\begin{proof}
By Lemma \ref{key-lem}, we may suppose $\nu_1=\mu_2$. 
Then we have following four cases. 

\begin{enumerate}

\item $c_{\Fs_1 \Ft_1}^{\la_1}\in \ZC^\a(\la_1,0)$ and $c_{\Fs_2 \Ft_2}^{\la_2}\in \ZC^\a(\la_2,0)$; 
In this case, we have $\a(\la_1)=\a(\mu_1)=\a(\nu_1)=\a(\mu_2)=\a(\la_2)=\a(\nu_2)$. 

\item $c_{\Fs_1 \Ft_1}^{\la_1}\in \ZC^\a(\la_1,0)$ and $c_{\Fs_2 \Ft_2}^{\la_2}\in \ZC^\a(\la_2,1)$; 
In this case, we have  $\a(\la_1)=\a(\mu_1)=\a(\nu_1)=\a(\mu_2)$, $\a(\la_2)\succ \a(\mu_2)$ and $\a(\la_2)\succ \a(\nu_2)$. 
Hence $\a(\la_2) \succ \a(\la_1)$. 

\item $c_{\Fs_1 \Ft_1}^{\la_1}\in \ZC^\a(\la_1,1)$ and $c_{\Fs_2 \Ft_2}^{\la_2}\in \ZC^\a(\la_2,0)$; 
In this case, we have $\a(\la_1)\succ\a(\mu_1)$, $\a(\la_1)\succ\a(\nu_1)=\a(\mu_2)$ and $\a(\la_2)= \a(\mu_2)=\a(\nu_2)$. 
Hence $\a(\la_1) \succ \a(\la_2)$. 

\item $c_{\Fs_1 \Ft_1}^{\la_1}\in \ZC^\a(\la_1,1)$ and $c_{\Fs_2 \Ft_2}^{\la_2}\in \ZC^\a(\la_2,1)$; 
In this case, we have $\a(\la_1)\succ\a(\mu_1)$, $\a(\la_1)\succ\a(\nu_1)$, $\a(\la_2)\succ \a(\mu_2)$ and $\a(\la_2)\succ \a(\nu_2)$. 
\end{enumerate}

Note that $\a(\la) \succeq \a(\la_1)$ if $\la\geq \la_1$ \big(resp. $\a(\la) \succeq \a(\la_2)$ if  $\la\geq \la_2$ \big), then 
by applying Lemma \ref{key-lem} (\roii) to each of the above four cases, 
we obtain the lemma. 
\end{proof}

By this lemma, we  see that $\A^\a$ is a subalgebra of $\A$. 
Moreover  $\A^\a$ inherits a cellular structure from $\A$, namely 
we have the following theorem. 

\begin{thm} \label{aA-thm}
$\A^\a$ is a subalgebra of $\A$ containing the unit element $1_\A$ of $\A$. Moreover, 
$\A^\a$ turns out to be a cellular algebra with a cellular basis $\ZC^\a$ 
with respect to the poset $\Om$ 
and the index set $I(\la,\ve)$ for $(\la,\ve)\in \Om$,  
i.e.; the following property holds; 
\begin{enumerate}
\item 
An $R$-linear map $\ast : \aA \ra \aA$ defined by 
$c_{\Fs\Ft}^\la \mapsto c_{\Ft\Fs}^\la $ for $c_{\Fs\Ft}^\la \in \ZC^\a$ 
gives an algebra anti-automorphism of $\A^\a$. 
\item For any $a \in \A^\a,\,c_{\Fs\Ft}^\la \in \ZC^\a(\la,\ve)$, 
\begin{align}
c_{\Fs\Ft}^\la \cdot a \equiv \sum_{\Fv\in I(\la,\ve) }r_{\Fv}^{(\Ft,a)} c_{\Fs\Fv}^{\la} 
\mod (\A^\a)^{\vee (\la,\ve)}\qquad (r_{\Fv}^{(\Ft,a)}\in R),  
\label{cellular-rel-aA}
\end{align}
where $(\A^\a)^{\vee(\la,\ve)}$ is an $R$-submodule of $\A^\a$ spanned by 
$\big\{c_{\Fs'\Ft'}^{\la'}\in \ZC^\a(\la',\ve')\,|\\ (\la',\ve')\in \Om \text{ such that }(\la',\ve')>(\la,\ve)\big\}$, 
and $r_{\Fv}^{(\Ft,a)}$ does not depend on  the choice of $\Fs \in I(\la,\ve)$. 
\end{enumerate}
\end{thm}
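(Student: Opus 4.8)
The guiding idea is that, once Lemma~\ref{pro-lem} is available, the statement is essentially bookkeeping: closure under multiplication and the triangular shape~(\ref{cellular-rel-aA}) are read off from Lemma~\ref{pro-lem}, membership $1_\A\in\A^\a$ comes from Lemma~\ref{e-lem}, and $\ast$-invariance from the evident symmetry of $\ZC^\a$; the scalars $r_{\Fv}^{(\Ft,a)}$ appearing in~(\ref{cellular-rel-aA}) are then forced to coincide with those of the cellular relation~(\ref{cellular-rel}) of $\A$ by uniqueness of coordinates in the $R$-basis $\ZC$.

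First I record the easy points. Since $\ZC^\a\subseteq\ZC$, it is $R$-linearly independent, hence a free $R$-basis of $\A^\a$. Closure of $\A^\a$ under multiplication is immediate from Lemma~\ref{pro-lem}, which writes each product of two members of $\ZC^\a$ as an $R$-combination of members of $\ZC^\a$. For $1_\A\in\A^\a$: by (A2) one has $1_\A=\sum_{\mu\in\vL}e_\mu$, and by Lemma~\ref{e-lem} each $e_\mu$ is an $R$-combination of basis elements $c_{\Fs\Ft}^\la$ with $\Fs,\Ft\in\CT(\la,\mu)$ and $\la\geq\mu$; as $\CT(\la,\mu)\neq\emptyset$ forces $\a(\la)\succeq\a(\mu)$ by Lemma~\ref{key-lem}(i) and~(\ref{condition-alpha}), the set $\CT(\la,\mu)$ lies inside $I(\la,0)$ (if $\a(\la)=\a(\mu)$) or inside $I(\la,1)$ (if $\a(\la)\succ\a(\mu)$), the relevant pair belongs to $\Om$ since its index set is nonempty, and every such $c_{\Fs\Ft}^\la$ therefore lies in $\ZC^\a$; hence $e_\mu,1_\A\in\A^\a$. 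For~(i): the involution $\ast$ of $\A$ sends $c_{\Fs\Ft}^\la$ to $c_{\Ft\Fs}^\la$, and $\Fs,\Ft\in I(\la,\ve)$ trivially implies $\Ft,\Fs\in I(\la,\ve)$, so $\ast$ permutes $\ZC^\a$; being an $R$-linear bijection of $\A^\a$ which is an anti-automorphism of the ambient algebra $\A$, it is an anti-automorphism of the subalgebra $\A^\a$.

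For~(ii), by $R$-linearity it is enough to take $a=c_{\Fs_2\Ft_2}^{\la_2}\in\ZC^\a$ and $c_{\Fs\Ft}^\la\in\ZC^\a(\la,\ve)$ and to apply the relevant case ($\ve=0$ or $\ve=1$) of Lemma~\ref{pro-lem} to the product $c_{\Fs\Ft}^\la\cdot a$. In either case that lemma expresses $c_{\Fs\Ft}^\la\cdot a$ as an $R$-combination of elements of $\ZC^\a(\la,\ve)$ plus a part supported on the $\ZC^\a(\la',\ve')$ with $\la'>\la$; by~(\ref{def-order-Om}) every such $(\la',\ve')$ satisfies $(\la',\ve')>(\la,\ve)$ (for $\ve=0$, since either $\ve'=1$ or else $\ve'=0$ with $\la'>\la$; for $\ve=1$, since there only $\ve'=1$ occurs and $\la'>\la$), so this part lies in $(\A^\a)^{\vee(\la,\ve)}$ and can be discarded. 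To identify the surviving $\ZC^\a(\la,\ve)$-part, I would compare with the cellular relation~(\ref{cellular-rel}) of $\A$: writing $c_{\Fs\Ft}^\la\cdot a\equiv\sum_{\Fv\in\CT(\la)}r_{\Fv}^{(\Ft,a)}c_{\Fs\Fv}^\la\mod\A^{\vee\la}$, and noting that $\A^{\vee\la}$ involves only basis elements of strictly larger $\la'$, uniqueness of coordinates in $\ZC$ shows both that $r_{\Fv}^{(\Ft,a)}=0$ whenever $\Fv\notin I(\la,\ve)$ (for then $c_{\Fs\Fv}^\la\notin\ZC^\a$, since $\Fs\in I(\la,\ve)$ and $I(\la,0)\cap I(\la,1)=\emptyset$) and that the $\ZC^\a(\la,\ve)$-part of the expansion from Lemma~\ref{pro-lem} equals $\sum_{\Fv\in I(\la,\ve)}r_{\Fv}^{(\Ft,a)}c_{\Fs\Fv}^\la$. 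As the $r_{\Fv}^{(\Ft,a)}$ here are the scalars of~(\ref{cellular-rel}), they do not depend on the choice of $\Fs\in I(\la,\ve)$, and~(\ref{cellular-rel-aA}) follows.

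The step requiring genuine care is this last comparison: one must keep straight the two filtrations of $\A^\a$ at play --- the one furnished by Lemma~\ref{pro-lem}, indexed by $\Om$ with the order~(\ref{def-order-Om}), and the one induced by~(\ref{cellular-rel}), indexed by $\vL^+$ --- and check that~(\ref{def-order-Om}) has been arranged precisely so that all the unwanted contributions (those with first index different from $\Fs$, those with $\Fv$ lying in the opposite half of $\CT(\la)$, and those supported on strictly larger $\la$) fall into $(\A^\a)^{\vee(\la,\ve)}$. The rest is formal.
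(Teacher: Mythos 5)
Your proposal is correct and follows essentially the same route as the paper: closure and the triangularity in (ii) from Lemma \ref{pro-lem}, part (i) from the $\ast$-invariance of $\ZC^\a$, and $1_\A\in\A^\a$ from Lemma \ref{e-lem} together with Lemma \ref{key-lem}(i). Your explicit coefficient comparison with (\ref{cellular-rel}) merely spells out what the paper leaves implicit (and later records in Remark \ref{remark-relation-A-aA}), so no further comment is needed.
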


\begin{proof}
We have already seen that $\A^\a$ is a subalgebra of $\A$. 
(\roi) follows from the fact the map $\ast$ on $\A$ leaves $\A^\a$ invariant. 
(\roii) follows from Lemma \ref{pro-lem} combined with cellular structure of $\A$. 
Thus, we have only to show  that $\A^\a$ contains $1_\A$. 
Recall that $1_\A=\sum_{\mu\in \vL}e_\mu$ by (A-2). 
By Lemma \ref{e-lem}, 
$e_\mu$ is written as a linear combination of $c_{\Fs\Ft}^\la$ 
with $\a(\la)\succeq \a(\mu)$, where $\Fs,\Ft\in \CT(\la,\mu)$. 
Hence $c_{\Fs\Ft}^\la \in \ZC^\a(\la)$ and 
we have $1_\A\in \A^\a$. 
\end{proof}

\para
We apply the general theory of cellular algebras to $\A^\a$. 
For $(\la,\ve)\in \Om$, let $Z^{(\la,\ve)}$ be a standard (right) $\A^\a$-module. 
Thus $Z^{(\la,\ve)}$ has an $R$-free basis 
$\{c_{\Ft}^{(\la,\ve)}\,|\, \Ft\in I(\la,\ve)\}$. 
We can consider the symmetric bilinear form  $\lan\,,\,\ran_\a: Z^{(\la,\ve)}\times Z^{(\la,\ve)}\ra R$ defined by the equation 
\[\lan c_{\Fs}^{(\la,\ve)} , c_{\Ft}^{(\la,\ve)}\ran_\a \,c_{\Fu\Fv}^{\la} \equiv c_{\Fu\Fs}^{\la}c_{\Ft\Fv}^{\la} \mod (\A^\a)^{\vee (\la,\ve)} 
\quad (\Fs,\Ft,\Fu,\Fv \in I(\la,\ve)).\] 
The radical $\rad Z^{(\la,\ve)}$ of $Z^{(\la,\ve)}$ with respect to $\lan\,,\,\ran_\a$ 
is defined as in \ref{def-rad}, and  
we define the quotient module 
$L^{(\la,\ve)}=Z^{(\la,\ve)}/ \rad Z^{(\la,\ve)}$. 
Set 
\[\Om_0=\big\{(\la,\ve)\in \Om\bigm| L^{(\la,\ve)}\not=0\big\}.\]
Then we have the following corollary. 

\begin{cor}
Assume that $R$ is a field. Then  $\{L^{(\la,\ve)}\,|\, (\la,\ve) \in \Om_0\}$ 
 is a complete set of non-isomorphic right simple $\A^\a$-modules. 
\end{cor}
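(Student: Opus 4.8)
The plan is to apply the general theory of cellular algebras over a field, as recalled in Section~\ref{cellular}, directly to $\A^\a$. By Theorem~\ref{aA-thm}, $\A^\a$ is a cellular algebra over $R$ with cellular basis $\ZC^\a$, with respect to the poset $(\Om,>)$ carrying the order \eqref{def-order-Om} and the index sets $I(\la,\ve)$ for $(\la,\ve)\in\Om$. Here $\Om$ is finite since $\Om\subset\vL^+\times\{0,1\}$, and each $I(\la,\ve)$ is finite since $I(\la,\ve)\subset\CT(\la)$; hence the finiteness requirements of \S\ref{def-cellular} are met, with the role of $(\vL^+,\geq)$ played by $(\Om,>)$.

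First I would observe that, because of this, all the constructions of \S\ref{def-rad}, specialised just above the statement, go through verbatim for $\A^\a$: the standard modules $Z^{(\la,\ve)}$ with $R$-free bases $\{c_\Ft^{(\la,\ve)}\mid\Ft\in I(\la,\ve)\}$, the symmetric bilinear forms $\lan\,,\,\ran_\a$, the radicals $\rad Z^{(\la,\ve)}$, the simple quotients $L^{(\la,\ve)}=Z^{(\la,\ve)}/\rad Z^{(\la,\ve)}$, and the set $\Om_0=\{(\la,\ve)\in\Om\mid L^{(\la,\ve)}\neq 0\}$ are precisely the objects produced by the Graham--Lehrer machinery applied to the cellular algebra $\A^\a$. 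Then, since $R$ is a field, \cite[Theorem~3.4]{GL96} --- the theorem reproduced in \S\ref{def-rad} --- applied with $\A$ replaced by $\A^\a$ states exactly that $\{L^{(\la,\ve)}\mid(\la,\ve)\in\Om_0\}$ is a complete set of pairwise non-isomorphic right simple $\A^\a$-modules, which is the assertion of the corollary.

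There is essentially no obstacle: the only point that needs checking is that the data exhibited in Theorem~\ref{aA-thm} genuinely satisfy the axioms of a cellular algebra in the sense of \S\ref{def-cellular}, and that is precisely what Theorem~\ref{aA-thm} asserts, together with the finiteness of $\Om$ and of the $I(\la,\ve)$ noted above. Accordingly, the proof will consist of one sentence invoking Theorem~\ref{aA-thm} to legitimise the set-up of $Z^{(\la,\ve)}$, $L^{(\la,\ve)}$ and $\Om_0$ preceding the statement, followed by a citation of \cite[Theorem~3.4]{GL96} for $\A^\a$.
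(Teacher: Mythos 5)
Your proposal is correct and coincides with the paper's own (implicit) argument: the corollary is stated there precisely as the Graham--Lehrer classification of simple modules for a cellular algebra over a field, applied to $\aA$ once Theorem \ref{aA-thm} has established that $\aA$ is cellular with cellular basis $\ZC^\a$, poset $\Om$, and index sets $I(\la,\ve)$. Your added remarks on the finiteness of $\Om$ and of the $I(\la,\ve)$ are fine but routine; nothing further is needed.
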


\remark \label{remark-relation-A-aA}
In view of Lemma \ref{pro-lem}, 
one can replace $(\aA)^{\vee (\la,\ve)}$ in (\ref{cellular-rel-aA}) by $(\aA\cap \A^{\vee \la})$. 
By using this fact, 
we have the following property. 

\begin{enumerate} 
\item  
For $\Ft,\Fv \in I(\la,\ve)$ and $a \in \aA$, 
the coefficient $r_{\Fv}^{(\Ft,a)}$ in (\ref{cellular-rel-aA}) 
is equal to  $r_{\Fv}^{(\Ft,a)}$ in (\ref{cellular-rel}). 

\item 
For any $\Fs,\Ft \in I(\la, \ve)$ $\big((\la,\ve)\in \Om \big)$, 
we have 
\[ \lan c_{\Fs}^{(\la,\ve)}, c_{\Ft}^{(\la,\ve)} \ran_\a= \lan c_{\Fs}^\la , c_{\Ft}^\la \ran.\]   
\end{enumerate}

\para \label{quotient-alg}
Next, we construct a quotient algebra of $\aA$. 
We take a subset $\wh{\Om}=\{(\la,\ve)\in \Om\,|\,\ve=1\}$ of $\Om$. 
Then one  sees that $\wh{\Om}$ turns out to be a saturated subset of $\Om$ (cf. (\ref{saturated}))
by  definition of the partial order (\ref{def-order-Om}) on $\Om$.   

Let $\aA(\wh{\Om})$ be an $R$-submodule of $\aA$ spanned by 
$c_{\Fs \Ft}^\la$, where $\Fs,\Ft \in I(\la,1)$ for some $(\la,1)\in \wh{\Om}$. 
Then by  \ref{quotient of cellular}, 
$\aA(\wh{\Om})$ is a two-sided ideal, and 
one can define a quotient algebra $\oA=\aA/ \aA(\wh{\Om})$. 

Set $\ol{\vL}^+=\vL^+ \setminus \big\{\la \in \vL^+ \bigm| \CT^+(\la)=\emptyset \big\}$. 
Then there exists a bijection 
between $\ol{\Om}=\Om \setminus \wh{\Om}$ and $\ol{\vL}^+$ 
by the assignment $\ol{\Om}\ni(\la,0) \leftrightarrow  \la \in \ol{\vL}^+$. 
(Note that $\ol{\Om}=\{(\la,\ve)\in \Om\,|\,\ve=0\}$.)
Set 
\[\ol{\ZC}^\a=\big\{\ol{c}_{\Fs\Ft}^\la \bigm| \Fs,\Ft \in \CT^+(\la) \text{ for some }\la \in \ol{\vL}^+\big\},\]
where $\ol{c}_{\Fs\Ft}^\la$ is the image of $c_{\Fs\Ft}^\la$ under the natural surjection $\pi:\aA \ra \oA$. 
(Note that $I(\la,0)=\CT^+(\la)$ for $\la \in \vL^+$.)
We have the following result by Lemma \ref{quotient-cellular}. 

\begin{thm}
$\oA$ is a cellular algebra with a cellular basis $\ol{\ZC}^\a$. 
\end{thm}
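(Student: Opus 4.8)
The plan is to obtain the statement as a direct application of Lemma~\ref{quotient-cellular} to the cellular algebra $\aA$. By Theorem~\ref{aA-thm}, $\aA$ is a cellular algebra with cellular basis $\ZC^\a=\coprod_{(\la,\ve)\in\Om}\ZC^\a(\la,\ve)$ with respect to the poset $(\Om,>)$ and the index sets $I(\la,\ve)$ for $(\la,\ve)\in\Om$. First I would record that $\wh{\Om}=\{(\la,\ve)\in\Om\mid\ve=1\}$ is a saturated subset of $\Om$ in the sense of~(\ref{saturated}); this is already noted in~\ref{quotient-alg}, and it follows at once from the definition~(\ref{def-order-Om}) of the partial order on $\Om$, since $(\la',\ve')>(\la,1)$ forces $\ve'=1$, whence $(\la',\ve')\in\wh{\Om}$.

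Next I would observe that $\aA(\wh{\Om})$, as defined in~\ref{quotient-alg}, is exactly the two-sided ideal of $\aA$ attached to the saturated subset $\wh{\Om}$ in the way recalled for a general cellular algebra before Lemma~\ref{quotient-cellular}, namely the $R$-span of those $c_{\Fs\Ft}^\la\in\ZC^\a$ with $\Fs,\Ft\in I(\la,1)$ for some $(\la,1)\in\wh{\Om}$. Applying Lemma~\ref{quotient-cellular} then shows that $\oA=\aA/\aA(\wh{\Om})$ is a cellular algebra whose cellular basis consists of the images, under the natural surjection $\pi\colon\aA\to\oA$, of the $c_{\Fs\Ft}^\la$ with $\Fs,\Ft\in I(\la,\ve)$ for $(\la,\ve)$ running over $\ol{\Om}=\Om\setminus\wh{\Om}$, the underlying poset being $\ol{\Om}$ with the induced order and the index sets being $I(\la,\ve)$ for $(\la,\ve)\in\ol{\Om}$.

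Finally I would rewrite this in the form claimed. Since $\ol{\Om}=\{(\la,\ve)\in\Om\mid\ve=0\}$, the assignment $(\la,0)\mapsto\la$ is a bijection between $\ol{\Om}$ and $\ol{\vL}^+=\vL^+\setminus\{\la\in\vL^+\mid\CT^+(\la)=\emptyset\}$, and it is order-preserving by~(\ref{def-order-Om}); moreover $I(\la,0)=\CT^+(\la)$ by definition. Hence the cellular basis furnished by Lemma~\ref{quotient-cellular} is precisely $\ol{\ZC}^\a=\{\ol{c}_{\Fs\Ft}^\la\mid\Fs,\Ft\in\CT^+(\la)\text{ for some }\la\in\ol{\vL}^+\}$ with $\ol{c}_{\Fs\Ft}^\la=\pi(c_{\Fs\Ft}^\la)$, which is the assertion. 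I do not anticipate any genuine difficulty; the only points needing (routine) verification are that $\wh{\Om}$ is saturated and that the reindexing $\ol{\Om}\leftrightarrow\ol{\vL}^+$ matches up the orders and index sets, so that Lemma~\ref{quotient-cellular} delivers exactly $\ol{\ZC}^\a$.
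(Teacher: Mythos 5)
Your proposal is correct and follows exactly the paper's own route: the paper likewise deduces the theorem from Lemma~\ref{quotient-cellular} applied to the cellular algebra $\aA$ of Theorem~\ref{aA-thm}, using that $\wh{\Om}$ is saturated for the order (\ref{def-order-Om}) and the reindexing $\ol{\Om}\ni(\la,0)\leftrightarrow\la\in\ol{\vL}^+$ with $I(\la,0)=\CT^+(\la)$. Your only additions are the routine verifications the paper leaves implicit.
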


\para
We apply the general theory of cellular algebras to $\oA$. 
For $\la\in \ol{\vL}^+$, let $\ol{Z}^{\la}$ be a standard (right) $\oA$-module. 
Thus $\ol{Z}^{\la}$ has an $R$-free basis 
$\{\ol{c}_{\Ft}^{\la}\,|\, \Ft\in \CT^+(\la)\}$. 
We can consider the symmetric bilinear form  
$\lan \,,\,\ran_{\ol\a}:\ol{Z}^{\la}\times \ol{Z}^{\la}\ra R$ defined by 
\[\lan \ol{c}_{\Fs}^\la,\ol{c}_{\Ft}^\la \ran_{\ol{\a}} \,\ol{c}_{\Fu\Fv}^\la \equiv \ol{c}_{\Fu\Fs}^\la \ol{c}_{\Ft\Fv}^\la \mod (\oA)^{\vee \la} 
\quad (\Fs,\Ft,\Fu,\Fv \in \CT^+(\la)).\]   
The radical $\rad \ol{Z}^\la$ of $\ol{Z}^\la$ with respect to $\lan \,,\,\ran_{\ol{\a}}$ 
is defined as in \ref{def-rad}, and 
we define the quotient module 
$\ol{L}^{\la}=\ol{Z}^{\la}/ \rad \ol{Z}^{\la}$. 
Set 
\[\ol{\vL}^+_0=\big\{\la \in \ol{\vL}^+ \bigm| \ol{L}^\la \not=0\big\}.\]
Then we have the following corollary. 

\begin{cor}
Assume that $R$ is a field. Then  $\{\ol{L}^{\la}\,|\, \la \in \ol{\vL}^+_0\}$ 
 is a complete set of non-isomorphic right simple $\oA$-modules. 
\end{cor}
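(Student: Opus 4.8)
The plan is to invoke the general structure theory of cellular algebras, which has already been assembled for $\oA$ in the preceding paragraphs. By the theorem just established, $\oA$ is a cellular algebra with cellular basis $\ol{\ZC}^\a$ indexed by the poset $\ol{\vL}^+$ and the index sets $\CT^+(\la)$. The standard modules $\ol{Z}^\la$, the bilinear form $\lan\,,\,\ran_{\ol\a}$, the radicals $\rad\ol{Z}^\la$, and the quotients $\ol{L}^\la=\ol{Z}^\la/\rad\ol{Z}^\la$ are precisely the objects produced by the general machinery of \ref{def-rad} applied to the cellular algebra $\oA$. Therefore the statement is an immediate instance of the Graham--Lehrer theorem \cite[Theorem 3.4]{GL96}, quoted in this paper as the first numbered theorem of Section \ref{cellular}.

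Concretely, I would first note that $\oA$, being cellular over the field $R$, satisfies all the hypotheses of that theorem with $\vL^+$ replaced by $\ol{\vL}^+$ and $W^\la$ replaced by $\ol{Z}^\la$. Second, I would observe that $\ol{\vL}^+_0$ is by definition exactly the set of $\la\in\ol{\vL}^+$ with $\ol{L}^\la\not=0$, which is the analogue of the set $\vL^+_0$ in \ref{def-rad}. Applying the cited theorem verbatim then yields that $\{\ol{L}^\la\,|\,\la\in\ol{\vL}^+_0\}$ is a complete set of pairwise non-isomorphic right simple $\oA$-modules. Since nothing here is specific to how $\oA$ was constructed as a quotient of $\aA$, no further argument is needed; the corollary is a direct corollary of the cellularity of $\oA$, exactly as its placement in the text suggests.

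There is essentially no obstacle: the only "work" is to confirm that the bilinear form and radical used to define $\ol{L}^\la$ coincide with the ones the general theory attaches to the cellular algebra $\oA$, and this is immediate from comparing the defining congruence for $\lan\,,\,\ran_{\ol\a}$ with the one in \ref{def-rad}. Thus the proof is a one-line appeal to \cite[Theorem 3.4]{GL96}.

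\begin{proof}
Since $R$ is a field and $\oA$ is a cellular algebra with cellular basis $\ol{\ZC}^\a$ indexed by $\ol{\vL}^+$ (with index sets $\CT^+(\la)$), the modules $\ol{Z}^\la$ are the standard modules of $\oA$, the form $\lan\,,\,\ran_{\ol\a}$ is the associated bilinear form, and $\ol{L}^\la=\ol{Z}^\la/\rad\ol{Z}^\la$. By definition $\ol{\vL}^+_0=\{\la\in\ol{\vL}^+\mid \ol{L}^\la\not=0\}$. The assertion is therefore the Graham--Lehrer theorem \cite[Theorem 3.4]{GL96} applied to $\oA$.
\end{proof}
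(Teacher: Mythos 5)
Your proof is correct and matches the paper's intent exactly: the corollary is stated there without proof precisely because it is an immediate application of the Graham--Lehrer theorem to the cellular algebra $\oA$ with its basis $\ol{\ZC}^\a$, standard modules $\ol{Z}^\la$, and form $\lan\,,\,\ran_{\ol{\a}}$. Nothing further is needed.
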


\remarks \label{remark-relation-A-aA-oA}
\begin{enumerate}
\item
For $\Fs, \Ft \in \CT^+(\la)$ $(\la \in \ol{\vL}^+)$, we have 
\[\lan \ol{c}_{\Fs}^\la , \ol{c}_{\Ft}^\la \ran_{\ol{\a}}=  \lan c_{\Fs}^{(\la,0)}, c_{\Ft}^{(\la,0)} \ran_\a= \lan c_{\Fs}^\la , c_{\Ft}^\la \ran.\]
\item
By (\roi), we  see that $\ol{\vL}^+_0=\Om_0 \cap \ol{\Om}$ under the bijection 
$\ol{\Om} \stackrel{\sim}{\ra} \ol{\vL}^+$ in \ref{quotient-alg}. 
\item
If $\CT^+(\la) \not= \emptyset$ for any $\la \in \vL^+$, then we have 
$\ol{\vL}^+=\vL^+$. 
\end{enumerate}


\exam 
\label{ex3}
We give the Levi type subalgebra and its quotient algebra for each case in Example \ref{ex} 
with respect to the map $\a$ ($\a_{\Bp}$ in (\roiii)) in Example \ref{ex2}. 

(\roi) (Matrix algebra) 
By definition, we have $\CT_\a^+(n)=\{b,b+1,\cdots,n\}$, $\CT_\a^-(n)=\{1,2,\cdots,b-1\}$. 
Thus we have 
$\ZC^\a=\{E_{ij}\,|\, i,j\in \CT_\a^{\pm}(n)\}$ and $\ol{\ZC}^\a=\{ \ol{E}_{ij}\,|\,i,j \in \CT^+(n)\}$. 
Hence we have, 
\[ \A^\a={\footnotesize \left( \begin{array}{ccc|ccc}
	* & \cdots &* &  & & \\
	\vdots& \ddots& \vdots&&\scalebox{2}{0}&\\
	* & \cdots &* &  & & \\\hline
	 &  & & * & \cdots &* \\
		&\scalebox{2}{0}& &\vdots&\ddots&\vdots\\
	 && & * & \cdots &* \\
	\end{array} \right),} \qquad 
\oA={\footnotesize \left( \begin{array}{ccc|ccc}
	&  & &  & & \\
	& \scalebox{2}{0}&&&\scalebox{2}{0}&\\
	 & & &  & & \\\hline
	 &  & & * & \cdots &* \\
		&\scalebox{2}{0}& &\vdots&\ddots&\vdots\\
	 && & * & \cdots &* \\
	\end{array} \right).} 
\vspace{-1em}
\]
\hspace{7.5em}$\underbrace{\hspace{4.0em}}_{b-1}$
\hspace{0.5em}$\underbrace{\hspace{4.0em}}_{n-b+1}$
\hspace{7.0em}$\underbrace{\hspace{3.0em}}_{b-1}$
\hspace{0.5em}$\underbrace{\hspace{4.0em}}_{n-b+1}$
\vspace{5mm}

(\roii) (Path algebra) 
By definition, 
we have $\CT^-(\la_0)=\CT(\la_0)$, $\CT^+(\la_3)=\{3\}$, $\CT^-(\la_3)=\{4\}$ and $\CT^+(\la_i)=\CT(\la_i)$ for $i=1,2,4,5$. 
Thus, 
\begin{align*}
\ZC^\a= \left\{\a_{12}\a_{21}, \quad
\begin{matrix} e_1, & \hspace{-2mm} \a_{12},\\ \a_{21}, & \hspace{-2mm} \a_{21}\a_{12}, \end{matrix} \quad
\begin{matrix} e_2, & \hspace{-2mm} \a_{23},\\ \a_{32}, & \hspace{-2mm} \a_{32}\a_{23}, \end{matrix} \quad
\begin{matrix} e_3, & \hspace{-2mm} \\  & \hspace{-2mm} \a_{45}\a_{54}, \end{matrix} \quad
\begin{matrix} e_4, & \hspace{-2mm} \a_{45},\\ \a_{54}, & \hspace{-2mm} \a_{54}\a_{45}, \end{matrix} \quad
e_5\right\},\\ \\
\ol{\ZC}^\a= \left\{\qquad\,\,  \qquad
\begin{matrix} \ol{e_1}, & \hspace{-2mm} \ol{\a_{12}},\\ \ol{\a_{21}}, & \hspace{-2mm} \ol{\a_{21}\a_{12}}, \end{matrix} \quad
\begin{matrix} \ol{e_2}, & \hspace{-2mm} \ol{\a_{23}},\\ \ol{\a_{32}}, & \hspace{-2mm} \ol{\a_{32}\a_{23}}, \end{matrix} \quad
\begin{matrix} \ol{e_3}, & \hspace{-2mm} \\  & \qquad\quad \hspace{-2mm}  \end{matrix} \quad
\begin{matrix} \ol{e_4}, & \hspace{-2mm} \ol{\a_{45}},\\ \ol{\a_{54}}, & \hspace{-2mm} \ol{\a_{54}\a_{45}}, \end{matrix} \quad
\ol{e_5}\right\},  
\end{align*}
where note that $\a_{43}\a_{34}=\a_{45}\a_{54}$ in $\A$. 
Moreover, we have 
\[\Om=\{(\la_0,1),(\la_3,1),(\la_i,0)\,|\,1\leq i \leq 5\}\,  
\text{ and } \,
\ol{\vL}^+=\{\la_i\,|\, 1\leq i \leq5\}.
\] 

We see that  
\[\aA\cong (RQ_1^\a / \CI_1^\a) \oplus (RQ_2^\a / \CI_2^\a), \]
where \\
\begin{picture}(100,30)
\put(30,8){$Q_1^\a=$}
\put(60,8){$\Big(1 \qquad \,\,2 \qquad \,\,3\Big),$}
\put(73,20){$ \underrightarrow{\,\,\a_{12}\,\,}$}
\put(73,0){$ \overleftarrow{\,\,\a_{21}\,\,}$}
\put(107,20){$ \underrightarrow{\,\,\a_{23}\,\,}$}
\put(107,0){$ \overleftarrow{\,\,\a_{32}\,\,}$}
\put(180,8){$Q_2^\a=$}
\put(210,8){$\Big(4 \qquad \,\,5\Big),$}
\put(223,20){$ \underrightarrow{\,\,\a_{45}\,\,}$}
\put(223,0){$ \overleftarrow{\,\,\a_{54}\,\,}$}
\end{picture}\vspace{-1em}\\
\[\CI_1^\a= \lan \a_{12}\a_{23},\,\a_{32}\a_{21},\, \a_{21}\a_{12}-\a_{23}\a_{32}\ran_{\text{ideal}}, \quad 
\CI_2^\a=\lan \a_{45}\a_{54}\a_{45},\, \a_{54}\a_{45}\a_{54}\ran_{\text{ideal}},\]
and we have 
\[\oA\cong (R\ol{Q}_1^\a / \ol{\CI}_1^\a) \oplus (R\ol{Q}_2^\a / \ol{\CI}_2^\a), \]
where 
$\ol{Q}_1^{\a}=Q_1^{\a}$, $\ol{Q}_2^{\a}=Q_2^\a$, 
\[\ol{\CI}_1^\a= \lan \a_{12}\a_{21}, \a_{12}\a_{23},\,\a_{32}\a_{21},\, \a_{21}\a_{12}-\a_{23}\a_{32}\ran_{\text{ideal}}, \quad   
\ol{\CI}_2^\a=\lan \a_{45}\a_{54}\ran_{\text{ideal}}.\] 
Note that all of $(RQ_i^\a / \CI_i^\a)$ and  $(R\ol{Q}_i^\a / \ol{\CI}_i^\a)$ ($i=1,2$) 
are cellular algebras. 
\vspace{3mm}

(\roiii) (Cyclotomic $q$-Schur algebra) 
Fix $\Bp=(r_1,\cdots,r_g)\in \ZZ_{>0}^g$ such that $r_1+\cdots+r_g=r$. 
Put $X^+=\{\a_{\Bp}(\mu)\,|\, \mu \in \vL\}$. 
We denote by $\Sp$ the Levi type subalgebra, 
and denote by $\oSp$ the quotient algebra of $\Sp$ 
with respect to the map $\a_{\Bp}$. 
By \cite[Theorem 4.15]{SW}, we have 
\begin{align}
\oSp \cong \bigoplus_{(n_1,\cdots,n_g)\in X^+} \Sc(\vL_{n_1,r_1})\otimes \cdots \otimes \Sc(\vL_{n_g,r_g}) 
\quad \text{ as algebras},
\label{decom-oSp}
\end{align}
where $\vL_{n_k,r_k}=\{\Bmu^{[k]}\,|\,\Bmu\in \vL \text{ such that }|\Bmu^{[k]}|=n_k\}$, 
and $\Sc(\vL_{n_k,r_k})$ is the cyclotomic $q$-Schur algebra associated to $\He_{n_k,r_k}$ with parameters 
$q, Q_{p_k+1},\cdots,Q_{p_k+r_k}$. 

However we note that there is a discrepancy in the notation between \cite{SW} and here. 
Although the quotient algebra $\oSp$ in our notation is the same as in \cite{SW}, 
the  $\Sp$ in \cite{SW} does not coincide with our $\Sp$. 
Rather it coincides with the parabolic type subalgebra $\tSp$ which will be discussed in Section \ref{tA}.

\section{Relations among $\A,\aA$ and $\oA$}

Summing up the arguments in the previous section, we have the following diagram.
{\center
$\aA \scalebox{1.5}[1]{$\hookrightarrow$} \A$\vspace{-4mm}\\
\hspace{-15mm}\rotatebox{270}{\scalebox{1.8}[1]{$\twoheadrightarrow $}}\vspace{0mm}\\
\hspace{-12mm}$\oA$\\}
In this section, we study some relations among $\A$,$\aA$ and $\oA$ for standard modules, simple modules, and decomposition numbers. 
For convenience sake, 
we define $Z^{(\la,\ve)}=0$ for $(\la,\ve)\in \vL^+\times\{0,1\}\setminus \Om$, 
and we define $\ol{Z}^\la=0$ for $\la \in \vL^+ \setminus \ol{\vL}^+$. 
\para 
First, we study the relation for standard modules and simple modules between $\A$ and $\aA$. 
Recall that, for $\la \in \vL^+$, $I(\la,0)=\CT^+(\la)$, $I(\la,1)=\CT^-(\la)$ and $\CT(\la)=\CT^+(\la)\cup \CT^-(\la)$. 
We also recall that  $Z^{(\la,\ve)}$ has an $R$-free basis $\{c_{\Ft}^{(\la,\ve)}\,|\,\Ft\in I(\la,\ve)\}$ ($\ve=0,1$), 
and $W^\la$ has an $R$-free basis $\{c_{\Ft}^\la\,|\,\Ft\in \CT(\la)\}$. 

By the argument in \ref{injective-standard-module}, 
for $\la \in \vL^+$, 
there exists an injective homomorphism of $\A$-modules 
\[\phi_\la : W^\la \ra \A/\A^{\vee \la}\] 
such that  $\phi_\la(c_{\Ft}^\la)=c_{\Fs\Ft}^\la +\A^{\vee \la}$ $\big(\Ft \in \CT(\la)\big)$, 
where $\Fs \in \CT(\la)$ is taken arbitrarily and is fixed. 
Similarly, 
for $(\la,\ve)\in \Om$, 
there exists an injevtive homomorphism of $\aA$-modules 
\[\phi_{(\la,\ve)}: Z^{(\la,\ve)} \ra \aA\big/\big(\aA\cap \A^{\vee \la}\big)\]
such that $ \phi_{(\la,\ve)}(c_{\Ft}^{(\la,\ve)})=c_{\Fs\Ft}^{\la} +(\aA\cap \A^{\vee \la})$ $\big(\Ft \in I(\la,\ve)\big)$, 
where $\Fs \in I(\la,\ve)$ is taken arbitrarily and is fixed. 
(Note Remark \ref{remark-relation-A-aA}.)

By regarding $\A$-modules as $\aA$-modules by restriction, we have the following proposition.  

\begin{prop}\label{aA-A}
For $\la \in \vL^+$, we have the following properties. 
\begin{enumerate}

\item 
There exists an isomorphism 
$f_\la:Z^{(\la,0)}\oplus Z^{(\la,1)} \stackrel{\sim}{\ra}W^\la $ of $\aA$-modules 
such that $(c_{\Ft}^{(\la,0)},0)\mapsto c_{\Ft}^\la$ $\big( \Ft\in \CT^+(\la)\big)$ 
and $(0,c_{\Ft}^{(\la,1)})\mapsto c_{\Ft}^\la$ $\big( \Ft \in \CT^-(\la)\big)$. 

\item 
$L^\la \cong L^{(\la,0)} \oplus L^{(\la,1)}$ as $\aA$-modules. 
\end{enumerate}
\end{prop}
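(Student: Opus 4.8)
The key structural fact I would use is that, by Lemma~\ref{key-lem}(ii) together with the definition of $\ZC^\a(\la,\ve)$, the product $c_{\Fs\Ft}^\la \cdot a$ for $a \in \aA$ can be expanded so that it respects the partition $\CT(\la) = \CT^+(\la) \sqcup \CT^-(\la)$ in a triangular way: multiplication by an element of $\aA$ preserves $\CT^-(\la)$ modulo $\A^{\vee\la}$, and sends $\CT^+(\la)$ into $\CT^+(\la)$ modulo $\A^{\vee\la} + (\text{span of }\CT^-\text{-part at level }\la)$. More precisely, I would first record the relevant specialization of Lemma~\ref{pro-lem}: for $c_{\Fs\Ft}^\la \in \ZC^\a(\la,1)$ and $a \in \aA$, the product $c_{\Fs\Ft}^\la a$ lies in the span of $\{c_{\Fs\Fv}^\la \mid \Fv \in \CT^-(\la)\}$ modulo $\aA \cap \A^{\vee\la}$, while for $c_{\Fs\Ft}^\la \in \ZC^\a(\la,0)$ the product lies in the span of $\{c_{\Fs\Fv}^\la \mid \Fv \in \CT(\la)\}$ modulo $\aA\cap\A^{\vee\la}$, with the $\CT^-(\la)$-contributions possibly nonzero.

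\textbf{Proof of (i).} I would define $f_\la$ on basis elements by $(c_\Ft^{(\la,0)},0) \mapsto c_\Ft^\la$ for $\Ft \in \CT^+(\la)$ and $(0,c_\Ft^{(\la,1)}) \mapsto c_\Ft^\la$ for $\Ft \in \CT^-(\la)$; since $\CT(\la) = \CT^+(\la) \sqcup \CT^-(\la)$ is a disjoint union and $\{c_\Ft^\la \mid \Ft \in \CT(\la)\}$ is an $R$-basis of $W^\la$, this is an $R$-module isomorphism. It remains to check $\aA$-linearity. Using the injection $\phi_{(\la,1)}\colon Z^{(\la,1)} \to \aA/(\aA\cap\A^{\vee\la})$ and the corresponding $\phi_\la$ for $W^\la$ (restricted to $\aA$), the action on $Z^{(\la,1)}$ is read off from $c_{\Fs\Ft}^\la a$ modulo $\aA\cap\A^{\vee\la}$ using only the $\CT^-(\la)$-terms, and by Remark~\ref{remark-relation-A-aA}(i) these coefficients coincide with the coefficients $r_\Fv^{(\Ft,a)}$ appearing in the $W^\la$-action; the summand $Z^{(\la,1)}$ is genuinely a submodule because $\aA\cdot c_\Ft^{(\la,1)}$ stays within $\CT^-(\la)$. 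For $Z^{(\la,0)}$: it is not a submodule of $W^\la$, but it is a submodule of the quotient $W^\la \big/ \big(\sum_{\Ft\in\CT^-(\la)} R\, c_\Ft^\la\big)$ — I need to check that this $R$-span is an $\aA$-submodule of $W^\la$, which follows from the first case — and modulo that submodule the $\aA$-action on the images of $c_\Ft^\la$, $\Ft\in\CT^+(\la)$, again matches the $Z^{(\la,0)}$-action by Remark~\ref{remark-relation-A-aA}(i). Hence $f_\la$ intertwines the actions on each summand and therefore on the direct sum. (Here I use that $\aA \cap \A^{\vee\la}$ may replace $(\aA)^{\vee(\la,\ve)}$ in (\ref{cellular-rel-aA}), as noted in Remark~\ref{remark-relation-A-aA}.)

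\textbf{Proof of (ii).} By Remark~\ref{remark-relation-A-aA}(ii), the bilinear form $\lan\,,\,\ran_\a$ on $Z^{(\la,\ve)}$ is the restriction of the form $\lan\,,\,\ran$ on $W^\la$ to the subset $I(\la,\ve)$ of basis vectors; moreover, for $\Fs \in \CT^+(\la)$ and $\Ft \in \CT^-(\la)$ one has $\lan c_\Fs^\la, c_\Ft^\la\ran = 0$, because this inner product is computed from $c_{\Fu\Fs}^\la c_{\Ft\Fv}^\la$ which, by Lemma~\ref{key-lem}(ii) applied with the $\a$-types $\a(\mu)=\a(\la)$ on the left factor and $\a(\nu)\prec\a(\la)$ on the right factor, forces a mismatch unless the product already lies in $\A^{\vee\la}$ — so the Gram matrix of $\lan\,,\,\ran$ on $W^\la$ is block-diagonal with respect to $\CT^+(\la) \sqcup \CT^-(\la)$. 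Transporting along the isomorphism $f_\la$ of part (i), the form on $W^\la$ becomes the orthogonal direct sum of the forms on $Z^{(\la,0)}$ and $Z^{(\la,1)}$, hence $\rad W^\la = f_\la\big(\rad Z^{(\la,0)} \oplus \rad Z^{(\la,1)}\big)$, and quotienting gives $L^\la = W^\la/\rad W^\la \cong L^{(\la,0)} \oplus L^{(\la,1)}$ as $\aA$-modules.

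\textbf{Main obstacle.} The one genuinely delicate point is establishing the orthogonality $\lan c_\Fs^\la, c_\Ft^\la\ran = 0$ for $\Fs \in \CT^+(\la)$, $\Ft \in \CT^-(\la)$: one must argue carefully from the defining congruence $\lan c_\Fs^\la, c_\Ft^\la\ran\, c_{\Fu\Fv}^\la \equiv c_{\Fu\Fs}^\la c_{\Ft\Fv}^\la \bmod \A^{\vee\la}$ that the right-hand side, whose nonzero level-$\la$ contributions come only from basis elements $c_{\Fu'\Fv'}^\la$ with $\Fu' \in \CT^+(\la)$ and $\Fv' \in \CT^-(\la)$ by the $\a$-type bookkeeping of Lemma~\ref{key-lem}(ii), cannot have a nonzero coefficient on a fixed $c_{\Fu\Fv}^\la$ with $\Fu,\Fv$ of the same $\ve$-type — so one should choose the auxiliary indices $\Fu,\Fv$ in the definition of the form to lie in $\CT^+(\la)$ (or both in $\CT^-(\la)$), which is legitimate since the form is independent of that choice, and then the left side is forced to vanish. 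Once this block-diagonality is in hand, everything else is bookkeeping with the cellular relations already assembled in Lemmas~\ref{key-lem} and~\ref{pro-lem}.
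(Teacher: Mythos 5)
Your part (ii) orthogonality is essentially the paper's argument (by Lemma \ref{key-lem}(ii) the product $c_{\Fu\Fs}^\la c_{\Ft\Fv}^\la$ for $\Fs\in\CT^+(\la)$, $\Ft\in\CT^-(\la)$ carries the factor $\d_{\mu\nu}$ with $\a(\mu)=\a(\la)\succ\a(\nu)$, so it is zero outright --- your ``main obstacle'' detour about choosing the auxiliary indices $\Fu,\Fv$ is unnecessary and its index bookkeeping is garbled, but harmless). The genuine gap is in part (i). You set up the action as merely \emph{triangular}: you explicitly allow, for $c_{\Fs\Ft}^\la\in\ZC^\a(\la,0)$ and $a\in\aA$, nonzero $\CT^-(\la)$-contributions at level $\la$, and accordingly you only verify (a) that the span of $\{c_\Ft^\la\mid\Ft\in\CT^-(\la)\}$ is an $\aA$-submodule isomorphic to $Z^{(\la,1)}$, and (b) that the quotient by it is isomorphic to $Z^{(\la,0)}$. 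That yields only a short exact sequence $0\to Z^{(\la,1)}\to W^\la\to Z^{(\la,0)}\to 0$ of $\aA$-modules; it does not show that the basis-matching map $f_\la$ intertwines the actions, and indeed if the off-diagonal coefficients you allow were actually nonzero, $f_\la$ would fail to be a homomorphism (and (ii), which you obtain by transporting the form along $f_\la$, would collapse with it).

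The missing ingredient is exactly the sharper content of Lemma \ref{pro-lem} (first and third formulas): for $c_{\Fs\Ft}^\la\in\ZC^\a(\la,0)$ and $a\in\aA$ the level-$\la$ part of $c_{\Fs\Ft}^\la a$ lies in the span of $\ZC^\a(\la,0)$, i.e.\ $r_\Fv^{(\Ft,a)}=0$ whenever $\Ft\in\CT^+(\la)$ and $\Fv\in\CT^-(\la)$. Hence the $\CT^+(\la)$-span is \emph{also} an $\aA$-submodule, the action is block-diagonal rather than triangular, and $f_\la$ is an isomorphism; once you quote Lemma \ref{pro-lem} correctly your argument closes. The paper sidesteps this bookkeeping altogether: it forms the map $\phi_{(\la,0)}\oplus\phi_{(\la,1)}:Z^{(\la,0)}\oplus Z^{(\la,1)}\to\A/\A^{\vee\la}$ (automatically an $\aA$-homomorphism, being a sum of two module maps into a common ambient module), checks injectivity and that its image agrees with that of $\phi_\la$, and deduces both the decomposition and the explicit form of $f_\la$ at once; your route is salvageable but, as written, has this hole.
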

\begin{proof}
(\roi) 
Since $\A^\a /(\aA \cap \A^{\vee \la})$ is an $\A^\a$-submodule of $\A/\A^{\vee\la}$,  
we have a  homomorphism of $\A^\a$-modules 
\[ (\phi_{(\la,0)}\oplus\phi_{(\la,1)}) : Z^{(\la,0)} \oplus Z^{(\la,1)} \ra \A/\A^{\vee\la} \]
by $(\phi_{(\la,0)}\oplus \phi_{(\la,1)})(x\oplus y)=\phi_{(\la,0)}(x)+\phi_{(\la,1)}(y)$. 
By chasing the image of the bases of $Z^{(\la,0)}$ aand $Z^{(\la,1)}$, 
we see that $(\phi_{(\la,0)}\oplus \phi_{(\la,1)})$ is injective,  
and that the image of $(\phi_{(\la,0)}\oplus\phi_{(\la,1)})$ coincides with the image of $\phi_\la$. 
This implies (\roi). 

Next we prove (\roii). 
By Remark \ref{remark-relation-A-aA}, 
we have $\lan c_{\Fs}^{(\la,\ve)},c_{\Ft}^{(\la,\ve)}\ran_\a=\lan c_{\Fs}^\la, c_{\Ft}^\la \ran$ for any $\Fs,\Ft\in I(\la,\ve)$ $(\ve=0,1)$.  
Moreover, for any $\Fs \in I(\la,0),\,\Ft \in I(\la,1)$, 
we have $c_{\Fs' \Fs}^\la c_{\Ft \Ft'}^\la=0$ $\big(\Fs',\Ft'\in \CT(\la)\big)$ 
by a similar reason as in (\ref{prod-basis-zero}). 
It follows that $\lan c_{\Fs}^\la, c_{\Ft}^\la \ran =0$ 
for any $\Fs \in I(\la,0),\,\Ft \in I(\la,1)$. 
Similarly, we have  $\lan c_{\Fu}^\la, c_{\Fv}^\la \ran =0$ 
for any $\Fu \in I(\la,1),\,\Fv \in I(\la,0)$. 
These imply that $\rad W^\la \cong \rad Z^{(\la,0)}\oplus \rad Z^{(\la,1)}$ under the isomorphism $W^\la \cong Z^{(\la,0)}\oplus Z^{(\la,1)}$
 as $\aA$-modules. Thus we have $W^\la/\rad W^\la\cong Z^{(\la,0)}/\rad Z^{(\la,0)} \oplus Z^{(\la,1)}/\rad Z^{(\la,1)}$ 
as $\aA$-modules. This proves (\roii).  
\end{proof}

Next, we study the relation for standard modules and simple modules between $\aA$ and $\oA$.  
When we regard $\oA$-modules as $\aA$-modules through the natural surjection $\pi: \aA \ra \oA$, 
we have the following proposition by Corollay \ref{cor-quotient-cellular}.

\begin{prop}\label{aA-oA}
For $\la\in \ol{\vL}^+$, we have the following properties. 
\begin{enumerate}
\item 
The map defined by 
$c_{\Ft}^{(\la,0)} \mapsto \ol{c}_{\Ft}^\la$ $\big(\Ft\in \CT^+(\la)\big)$ 
gives an isomorphism 
$g_\la: Z^{(\la,0)}\stackrel{\sim}{\ra} \ol{Z}^\la$ of $\aA$-modules. 
\item
$L^{(\la,0)}\cong L^\la$ as $\aA$-modules. 
\end{enumerate}
\end{prop}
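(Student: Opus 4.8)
The plan is to deduce both statements from Corollary~\ref{cor-quotient-cellular} applied to the cellular algebra $\aA$ and its quotient $\oA=\aA/\aA(\wh\Om)$. First I would observe that $\wh\Om=\{(\la,\ve)\in\Om\mid\ve=1\}$ is a saturated subset of $\Om$ (this was already noted in \ref{quotient-alg}), and that under the bijection $\ol\Om\xrightarrow{\sim}\ol{\vL}^+$, $(\la,0)\leftrightarrow\la$, the index set $I(\la,0)=\CT^+(\la)$ corresponds exactly to the index set of the cell $\ol Z^\la$ of $\oA$. Hence the general statement Corollary~\ref{cor-quotient-cellular}(i), applied with $\A$ there replaced by $\aA$ and $\ol\A$ replaced by $\oA$, gives an isomorphism of $\oA$-modules (equivalently, of $\aA$-modules via $\pi$) between the standard $\aA$-module $Z^{(\la,0)}$ and the standard $\oA$-module $\ol Z^\la$. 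To pin down that this isomorphism is the one sending $c_{\Ft}^{(\la,0)}\mapsto\ol c_{\Ft}^\la$, I would just recall how the isomorphism in the proof of Corollary~\ref{cor-quotient-cellular}(i) is built: the action of $\aA$ on $Z^{(\la,0)}$ factors through $\pi$ because $Z^{(\la,0)}\cdot\aA(\wh\Om)=0$, and the resulting $\oA$-module has the same underlying free $R$-module with basis $\{c_{\Ft}^{(\la,0)}\}$ and the same structure constants as $\ol Z^\la$ with basis $\{\ol c_{\Ft}^\la\}$, because the cellular structure constants of $\oA$ are the images of those of $\aA$ (Lemma~\ref{quotient-cellular}). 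This yields (i).

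For (ii), I would invoke Corollary~\ref{cor-quotient-cellular}(ii) in the same way: since $\rad\ol Z^\la=\rad Z^{(\la,0)}$ under the identification in (i) — and here one uses Remark~\ref{remark-relation-A-aA-oA}(i), which says $\lan\ol c_{\Fs}^\la,\ol c_{\Ft}^\la\ran_{\ol\a}=\lan c_{\Fs}^{(\la,0)},c_{\Ft}^{(\la,0)}\ran_\a$, so the two bilinear forms agree and hence their radicals agree — we get $\ol L^\la=\ol Z^\la/\rad\ol Z^\la\cong Z^{(\la,0)}/\rad Z^{(\la,0)}=L^{(\la,0)}$ as $\oA$-modules, and therefore as $\aA$-modules through $\pi$.

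The argument is essentially a direct application of the quotient-of-cellular-algebra machinery, so there is no serious obstacle; the only point requiring a word of care is checking that $Z^{(\la,0)}\cdot\aA(\wh\Om)=0$, i.e.\ that multiplying a standard-module basis element $c_{\Ft}^{(\la,0)}$ (with $\Ft\in\CT^+(\la)$) by a basis element $c_{\Fu\Fv}^{\la'}$ with $(\la',1)\in\wh\Om$ gives zero. This follows from Lemma~\ref{pro-lem}: in the standard module $Z^{(\la,0)}$ the action of $c_{\Fu\Fv}^{\la'}\in\ZC^\a(\la',1)$ is read off modulo $(\aA)^{\vee(\la,0)}$ from the product $c_{\Fs\Ft}^{\la}c_{\Fu\Fv}^{\la'}$, which by the second case of Lemma~\ref{pro-lem} lies entirely in $\bigoplus_{\la''}\ZC^\a(\la'',1)$-span, hence is $\equiv0$ modulo $(\aA)^{\vee(\la,0)}$ — equivalently, this is exactly the instance of the computation in the proof of Corollary~\ref{cor-quotient-cellular} showing $W^\la\cdot\A(\wh{\vL}^+)=0$, transported to $\aA$. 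With that in hand, both parts are immediate.
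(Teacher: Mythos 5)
Your proposal is correct and follows essentially the same route as the paper, which proves this proposition simply by regarding $\oA$-modules as $\aA$-modules via $\pi$ and invoking Corollary~\ref{cor-quotient-cellular} for the cellular algebra $\aA$ and its quotient $\oA$ under the bijection $\ol{\Om}\simeq\ol{\vL}^+$. The only (inessential) slip is that the vanishing $Z^{(\la,0)}\cdot\aA(\wh{\Om})=0$ rests on the fourth formula of Lemma~\ref{pro-lem} (right factor in $\ZC^\a(\la_2,1)$), not the second, though in any case it is already covered by the computation in the proof of Corollary~\ref{cor-quotient-cellular}.
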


\para
In the rest of this section, we assume that $R$ is a field. 
We consider the decomposition numbers
\begin{align*}
&[W^\la:L^\mu]_\A\quad \hspace{10mm}\big(\la\in \vL^+,\,\,\mu \in \vL^+_0\big),\\
&[Z^{(\la,\ve)}:L^{(\mu,\ve')}]_{\aA}\quad \big((\la,\ve)\in \Om,\, \,(\mu,\ve')\in \Om_0\big),\\
&[\ol{Z}^{\la}:\ol{L}^{\mu}]_{\oA}\quad \hspace{10mm}\big(\la \in \ol{\vL}^+,\,\, \mu\in \ol{\vL}^+_0\big),
\end{align*}
for $\A,\aA$ and $\oA$ respectively. 
Note that, for $\mu \in \vL^+$, if $L^\mu=0$ then we have $L^{(\mu,0)}=0$ and $L^{(\mu,1)}=0$, 
and if $L^\mu\not=0$ then at least one of $L^{(\mu,0)}$ or $L^{(\mu,1)}$ is non-zero 
by Proposition \ref{aA-A} (\roii). 
We have the following proposition on the relation for decomposition numbers between $\aA$ and $\oA$.  

\begin{prop} \label{decom-prop}
Assume that $R$ is a field. 
Then the following properties hold. 
\begin{enumerate}
\item
For $(\la,0)\in \Om,\,(\mu,0)\in \Om_0$, we have $[Z^{(\la,0)}:L^{(\mu,0)}]_{\aA}=[\ol{Z}^\la:\ol{L}^\mu]_{\oA}$.
\item 
For $(\la,0)\in \Om,\,(\mu,1)\in \Om_0$, we have $[Z^{(\la,0)}:L^{(\mu,1)}]_{\aA}=0$. 
\item
For $(\la,1)\in \Om,\,(\mu,0)\in \Om_0$ such that $\a(\la)=\a(\mu)$, we have $[Z^{(\la,1)}:L^{(\mu,0)}]_{\aA}=\nobreak 0$.
\item
For $(\la,0)\in \Om,\,(\mu,0)\in \Om_0$ such that $\a(\la)\not=\a(\mu)$, we have $[Z^{(\la,0)}:L^{(\mu,0)}]_{\aA}=\nobreak 0$. 
\item 
For $\la \in \ol{\vL}^+,\,\mu \in \ol{\vL}_0^+$ such that $\a(\la)\not=\a(\mu)$, we have $[\ol{Z}^\la :\ol{L}^\mu]_{\oA}=0$. 
\end{enumerate}
In particular, a simple $\A^\a$-module occuring in a composition series of $Z^{(\la,0)}$ 
is always of the form $L^{(\mu,0)}$ for some $(\mu,0)\in \Om_0$. 
 
\end{prop}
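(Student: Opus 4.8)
The plan is to establish (i)--(v) in that order, each from the structural results already in hand, and then to read off the final sentence from (ii). Everything is finite dimensional over the field $R$, so composition series exist. Statement (i) I would obtain by transporting Corollary \ref{cor-quotient-cellular}(iii) along the quotient $\pi\colon\aA\to\oA$: by Proposition \ref{aA-oA} one has $Z^{(\la,0)}\cong\ol Z^\la$ and $L^{(\mu,0)}\cong\ol L^\mu$ as $\aA$-modules, and since every simple $\oA$-module remains simple as an $\aA$-module, a composition series of $\ol Z^\la$ over $\oA$ is also one over $\aA$; comparing the multiplicities of $\ol L^\mu\cong L^{(\mu,0)}$ gives (i).

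For (iii), (iv), (v) I would first set up an ``$\a$-support''. For a right $\aA$-module $M$ put $\supp M=\{\nu\in\vL\mid Me_\nu\neq 0\}$; since the $e_\nu$ lie in $\aA$ and sum to $1_\A$, one has $M=\bigoplus_{\nu\in\supp M}Me_\nu$, and $\supp$ only shrinks under sub- and quotient modules, so $\supp F\subseteq\supp M$ for every composition factor $F$ of $M$. Using Theorem \ref{aA-thm}(ii) together with Lemma \ref{unique-lem} one computes $c_{\Ft}^{(\la,\ve)}\cdot e_\nu=\d_{\mu\nu}\,c_{\Ft}^{(\la,\ve)}$ whenever $\Ft\in\CT(\la,\mu)$, hence
\[
Z^{(\la,\ve)}e_\nu=\sum_{\Ft\in I(\la,\ve)\cap\CT(\la,\nu)}R\,c_{\Ft}^{(\la,\ve)} .
\]
Since $I(\la,0)=\coprod_{\a(\mu)=\a(\la)}\CT(\la,\mu)$, $I(\la,1)=\coprod_{\a(\la)\succ\a(\mu)}\CT(\la,\mu)$, and $\CT(\la,\nu)\neq\emptyset$ forces $\a(\la)\succeq\a(\nu)$ by Lemma \ref{key-lem}(i) and condition (\ref{condition-alpha}), this gives $\supp Z^{(\la,0)}\subseteq\{\nu\mid\a(\nu)=\a(\la)\}$ and $\supp Z^{(\la,1)}\subseteq\{\nu\mid\a(\la)\succ\a(\nu)\}$, while $\supp L^{(\la,\ve)}$ is a \emph{nonempty} subset of the corresponding set. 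Then (iv) follows because a composition factor $L^{(\mu,0)}$ of $Z^{(\la,0)}$ would satisfy $\emptyset\neq\supp L^{(\mu,0)}\subseteq\{\nu\mid\a(\nu)=\a(\mu)\}\cap\{\nu\mid\a(\nu)=\a(\la)\}$, forcing $\a(\la)=\a(\mu)$; (iii) follows because a composition factor $L^{(\mu,0)}$ of $Z^{(\la,1)}$ would force $\emptyset\neq\supp L^{(\mu,0)}\subseteq\{\nu\mid\a(\nu)=\a(\mu)\}\cap\{\nu\mid\a(\la)\succ\a(\nu)\}$, which is empty once $\a(\la)=\a(\mu)$; and (v) is then immediate from (i) and (iv).

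The heart of the matter is (ii), for which I would use the ideal $\aA(\wh\Om)$. By Proposition \ref{aA-oA}(i), $Z^{(\la,0)}\cong\ol Z^\la$ as $\aA$-modules and $\ol Z^\la$ is a module over $\oA=\aA/\aA(\wh\Om)$, so $Z^{(\la,0)}\cdot\aA(\wh\Om)=0$, and hence $F\cdot\aA(\wh\Om)=0$ for every composition factor $F$ of $Z^{(\la,0)}$. It therefore suffices to show $L^{(\mu,1)}\cdot\aA(\wh\Om)\neq 0$. Since $(\mu,1)\in\Om_0\subseteq\wh\Om$, the elements $c^\mu_{\Fs\Fu}$ with $\Fs,\Fu\in I(\mu,1)$ lie in $\aA(\wh\Om)$, and from the cellular relations together with Remark \ref{remark-relation-A-aA} one gets, for $\Ft\in I(\mu,1)$,
\[
c_{\Ft}^{(\mu,1)}\cdot c^\mu_{\Fs\Fu}=\lan c_{\Ft}^{(\mu,1)},\,c_{\Fs}^{(\mu,1)}\ran_{\a}\,c_{\Fu}^{(\mu,1)} .
\]
Since $L^{(\mu,1)}\neq 0$ the form $\lan\,,\,\ran_\a$ on $Z^{(\mu,1)}$ is nonzero and $\rad Z^{(\mu,1)}\neq Z^{(\mu,1)}$; picking $x\in Z^{(\mu,1)}\setminus\rad Z^{(\mu,1)}$, then $\Fs\in I(\mu,1)$ with $\lan x,c_{\Fs}^{(\mu,1)}\ran_\a\neq 0$, then $\Fu\in I(\mu,1)$ with $c_{\Fu}^{(\mu,1)}\notin\rad Z^{(\mu,1)}$, the identity above (read $R$-linearly in the first argument) gives $x\cdot c^\mu_{\Fs\Fu}\notin\rad Z^{(\mu,1)}$, so the image of $x$ in $L^{(\mu,1)}$ is not annihilated by $c^\mu_{\Fs\Fu}\in\aA(\wh\Om)$; thus $L^{(\mu,1)}\cdot\aA(\wh\Om)\neq 0$, which proves (ii).

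Finally, the simple $\aA$-modules are exactly the $L^{(\nu,\ve')}$ with $(\nu,\ve')\in\Om_0$, so statement (ii) says no $L^{(\nu,1)}$ occurs as a composition factor of $Z^{(\la,0)}$; that is the last assertion. I expect (ii) to be the only genuinely non-formal step: one must see simultaneously that the ``lower'' ideal $\aA(\wh\Om)$ acts as zero on $Z^{(\la,0)}$ (via its identification with the $\oA$-module $\ol Z^\la$) and acts nontrivially on $L^{(\mu,1)}$, the latter requiring the explicit diagonal action above and the non-degeneracy of $\lan\,,\,\ran_\a$ that comes from $L^{(\mu,1)}\neq 0$; the remainder is bookkeeping with the idempotent supports.
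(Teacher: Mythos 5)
Your argument is correct, and for items (i), (iii), (iv), (v) it is essentially the paper's own proof: (i) is obtained exactly as in the paper by transporting a composition series of $\ol{Z}^\la$ along $\pi:\aA\ra\oA$ via Proposition \ref{aA-oA}, and your ``$\a$-support'' formalism for (iii), (iv) is a clean repackaging of the paper's argument, which likewise uses that $c_{\Ft}^{(\la,\ve)}\cdot e_\nu=\d_{\mu\nu}c_{\Ft}^{(\la,\ve)}$ for $\Ft\in\CT(\la,\mu)$ and derives a contradiction by letting $e_{\nu_i}$ act on a representative of a putative composition factor (the paper works with an explicit composition series and element $x_i$, you abstract this into the statement $\supp F\subseteq\supp M$; the mechanism is identical). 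The only genuine divergence is (ii). The paper reads (ii) off from the same composition-series transport used for (i): since every $\aA$-composition factor of $Z^{(\la,0)}\cong\ol{Z}^\la$ is the pullback of a simple $\oA$-module, it is of the form $\ol{L}^\mu\cong L^{(\mu,0)}$, so no $L^{(\mu,1)}$ can occur --- note that your own proof of (i) already contains this observation, so your separate argument for (ii) is not actually needed. That said, your argument for (ii) is valid and self-contained: $Z^{(\la,0)}$ is annihilated by the ideal $\aA(\wh{\Om})$ (being an $\oA$-module), while the cellular identity $c_{\Ft}^{(\mu,1)}\cdot c_{\Fs\Fu}^{\mu}=\lan c_{\Ft}^{(\mu,1)},c_{\Fs}^{(\mu,1)}\ran_\a\, c_{\Fu}^{(\mu,1)}$ together with $L^{(\mu,1)}\not=0$ (so the form is nonzero and some $c_{\Fu}^{(\mu,1)}\notin\rad Z^{(\mu,1)}$) shows $L^{(\mu,1)}\cdot\aA(\wh{\Om})\not=0$; hence $L^{(\mu,1)}$ cannot be a factor. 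What this alternative buys is a criterion (annihilation by $\aA(\wh{\Om})$ distinguishes the two families of simples) that does not appeal to the classification of simple $\oA$-modules pulled back to $\aA$; what it costs is length, since the paper's route makes (ii) a one-line byproduct of (i).
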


\begin{proof}
(\roi),(\roii) 
First we note that $\mu \in \ol{\vL}^+_0$ if $(\mu,0)\in \Om_0$ by Remark \ref{remark-relation-A-aA-oA} (\roii). 
Take a composition series of $\ol{Z}^\la$ as  $\oA$-modules. 
If we regard this composition series as a filtration of  $\aA$-modules through the natural surjection $\pi: \aA\ra \oA$, 
this filtration turns out to be a composition series of $Z^{(\la,0)}$ as  $\aA$-modules by Proposition \ref{aA-oA}. 
This implies (\roi) and (\roii). 

(\roiii)
Let $Z^{(\la,1)}=M_k\supsetneqq M_{k-1}\supsetneqq \cdots \supsetneqq M_1\supsetneqq M_0$ be a composition series 
of $Z^{(\la,1)}$ as $\aA$-modules 
such that $M_i/M_{i-1}\cong L^{(\mu_i,\ve_i)}$ ($i=1,\cdots ,k$). 
We show that $L^{(\mu_i,0)}$ with $\a(\la)=\a(\mu_i)$ does not appear in the composition series of $Z^{(\la,1)}$. 
Suppose that $\a(\la)=\a(\mu_i)$ and $\ve_i=0$ for some $i$. 
Since $L^{(\mu_i,0)}\not=0$, there exists $\Ft\in \CT^+(\mu_i)$ such that 
$c_{\Ft}^{(\mu_i,0)}\not\in \rad Z^{(\mu_i,0)}$. 
Then $\Ft\in \CT(\mu_i,\nu_i)$ 
with $\a(\mu_i)=\a(\nu_i)$, 
since $\Ft \in \CT^+(\mu_i)$. 
Moreover, since $c_{\Ft}^{(\mu_i,0)}\cdot e_{\nu_i}=c_{\Ft}^{(\mu_i,0)}$,  we have  
\begin{align}
\big(c_{\Ft}^{(\mu_i,0)} + \rad Z^{(\mu_i,0)}\big)\cdot e_{\nu_i}=c_{\Ft}^{(\mu_i,0)} +\rad Z^{(\mu_i,0)}\not=0 \quad\big(\text{on }L^{(\mu_i,0)}\big).
\label{ppp}
\end{align}
Let $x_{i} \in M_i \setminus M_{i-1}$ be a representative element of $c_{\Ft}^{(\mu_i,0)} + \rad Z^{(\mu_i,0)}$ 
under the isomorphism $M_i/M_{i-1}\cong L^{(\mu_i,0)}$. 
By (\ref{ppp}), we have $x_i\cdot e_{\nu_i}\not=0$. 
On the other hand, since $x_i \in M_i \subset Z^{(\la,1)}$, we can write 
$x_i=\sum_{\Fs \in I(\la,1)} r_\Fs c_{\Fs}^{(\la,1)}$ $(r_\Fs \in R)$. 
Since $\a(\la)=\a(\mu_i)=\a(\nu_i)$, 
we have $c_{\Fs}^{(\la,1)}\cdot e_{\nu_i}=0$ for any $\Fs \in I(\la,1)$. 
Thus we have $x_i\cdot e_{\nu_i}=0$. 
This is a contradiction. 
Hence $L^{(\mu_i,0)}$ does not appear. 
This proves (\roiii). 

We can prove (\roiv) in similar way as in the proof of (\roiii), 
and (\rov) follows from (\roi) and (\roiv). 
\end{proof}

The following theorem describes the relation among various  decomposition numbers. 
Note that in the discussion below, 
we regard the decomposition numbers of $Z^{(\la,\ve)}$ (resp. $\ol{Z}^\la$) 
as zero if $(\la,\ve)\not\in \Om$ (resp. if $\la \not\in \ol{\vL}^+$).

\begin{thm}\label{decom-thm} 
Assume that $R$ is a field.  
For $\la \in \vL^+$ and $\mu \in \vL^+_0$, the followings hold. 
\begin{enumerate}
\item
If $(\mu,0)\in \Om_0$ and $\a(\la)=\a(\mu)$, then we have 
\[[W^\la:L^\mu]_\A=[Z^{(\la,0)}:L^{(\mu,0)}]_{\aA}=[\ol{Z}^{\la}:\ol{L}^{\mu}]_{\oA}.\] 
\item
If $(\mu,0)\in \Om_0$ and $\a(\la)\not=\a(\mu)$, then we have 
\[[W^\la:L^\mu]_\A=[Z^{(\la,1)}:L^{(\mu,0)}]_{\aA}.\]
\item
If $(\mu,1)\in \Om_0$, then we have 
\[[W^\la:L^\mu]_\A=[Z^{(\la,1)}:L^{(\mu,1)}]_{\aA}.\]
\end{enumerate}
\end{thm}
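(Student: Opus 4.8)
The theorem relates the decomposition numbers of $\A$ to those of $\aA$ and $\oA$. The natural strategy is to combine the module-level decompositions already established in Proposition \ref{aA-A} and Proposition \ref{aA-oA} with the vanishing results in Proposition \ref{decom-prop}. The key structural input is Proposition \ref{aA-A}(\roi), which says that, as $\aA$-modules, $W^\la \cong Z^{(\la,0)} \oplus Z^{(\la,1)}$. This immediately gives, for any $\mu \in \vL^+_0$ and any $\ve' \in \{0,1\}$,
\begin{align}
[W^\la : L^\mu]_\A = [Z^{(\la,0)}:L^{(\mu,0)}]_{\aA} + [Z^{(\la,1)}:L^{(\mu,0)}]_{\aA} + [Z^{(\la,0)}:L^{(\mu,1)}]_{\aA} + [Z^{(\la,1)}:L^{(\mu,1)}]_{\aA},
\end{align}
once we justify that composition multiplicities of the (simple, by Theorem \ref{thm}) $\A$-module $L^\mu$, viewed as an $\aA$-module, are computed from its $\aA$-composition factors $L^{(\mu,0)} \oplus L^{(\mu,1)}$ (Proposition \ref{aA-A}(\roii)), and that distinct pairs $(\mu,\ve')$ give non-isomorphic simple $\aA$-modules. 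The latter is exactly the classification corollary for $\aA$. So the whole proof reduces to pruning this four-term sum using Proposition \ref{decom-prop}.

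\textbf{Case-by-case pruning.} In case (\roi), we have $(\mu,0) \in \Om_0$ and $\a(\la)=\a(\mu)$. The term $[Z^{(\la,1)}:L^{(\mu,0)}]_{\aA}$ vanishes by Proposition \ref{decom-prop}(\roiii) (which applies precisely when $\a(\la)=\a(\mu)$). The term $[Z^{(\la,0)}:L^{(\mu,1)}]_{\aA}$ vanishes by Proposition \ref{decom-prop}(\roii). Since $L^\mu \ne 0$ and $\a(\la)=\a(\mu)$, Proposition \ref{aA-A}(\roii) together with Proposition \ref{decom-prop} forces $L^{(\mu,1)}$ to contribute nothing anywhere; in particular $[Z^{(\la,1)}:L^{(\mu,1)}]_{\aA}$ — I would argue this is also zero, either directly because the hypothesis $(\mu,0) \in \Om_0$ means the relevant simple is $L^{(\mu,0)}$, or because a simple $\A^\a$-module in a composition series of $Z^{(\la,0)}$ is always of the form $L^{(\nu,0)}$ (the final assertion of Proposition \ref{decom-prop}) applied after identifying the surviving factor. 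Cleaner: since all surviving terms in the four-term sum must be those compatible with $L^\mu$'s $\aA$-socle structure, and $(\mu,0)\in\Om_0$, only $[Z^{(\la,0)}:L^{(\mu,0)}]_{\aA}$ survives. This equals $[\ol Z^\la : \ol L^\mu]_{\oA}$ by Proposition \ref{decom-prop}(\roi). For case (\roii), $(\mu,0)\in\Om_0$ with $\a(\la)\ne\a(\mu)$: now $[Z^{(\la,0)}:L^{(\mu,0)}]_{\aA}=0$ by Proposition \ref{decom-prop}(\roiv), $[Z^{(\la,0)}:L^{(\mu,1)}]_{\aA}=0$ by (\roii), so only $[Z^{(\la,1)}:L^{(\mu,0)}]_{\aA}$ remains. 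For case (\roiii), $(\mu,1)\in\Om_0$: the terms involving $L^{(\mu,0)}$ are absent because $(\mu,1)\in\Om_0$ means $L^{(\mu,1)}$ is the nonzero simple indexed there; and $[Z^{(\la,0)}:L^{(\mu,1)}]_{\aA}=0$ by Proposition \ref{decom-prop}(\roii), leaving $[Z^{(\la,1)}:L^{(\mu,1)}]_{\aA}$.

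\textbf{The main obstacle.} The delicate point is the bookkeeping of which of the four terms genuinely survive in each case — one must be careful that, e.g., in case (\roi) the term $[Z^{(\la,1)}:L^{(\mu,1)}]_{\aA}$ really does drop out, since Proposition \ref{decom-prop} as stated handles mixed pairs but one needs the right combination of hypotheses to kill every unwanted term. The clean way around this is to observe that $L^\mu$, as an $\aA$-module, equals $L^{(\mu,0)}\oplus L^{(\mu,1)}$ with at most the index in $\Om_0$ being nonzero in each parity slot; so whenever $(\mu,0)\in\Om_0$ is assumed we know $L^{(\mu,1)}$ plays no role (either it is zero, or it is a different simple and the hypothesis singles out $(\mu,0)$), and symmetrically for (\roiii). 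Combined with the vanishing statements of Proposition \ref{decom-prop} for the cross terms $[Z^{(\la,1-\ve)}:L^{(\mu,\ve)}]_{\aA}$, each four-term sum collapses to a single term, and that term is identified with the claimed quantity (using Proposition \ref{decom-prop}(\roi) and Proposition \ref{aA-oA} for the $\oA$-statement in case (\roi)). I expect no genuine computation beyond tracking these indices; the only subtlety is phrasing the argument so that the ``which terms vanish'' step is manifestly complete in each of the three cases.
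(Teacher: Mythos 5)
There is a genuine gap, and it lies exactly at the point you flagged as ``the main obstacle''. Your starting identity, the four-term sum
\[
[W^\la : L^\mu]_\A = [Z^{(\la,0)}:L^{(\mu,0)}]_{\aA} + [Z^{(\la,1)}:L^{(\mu,0)}]_{\aA} + [Z^{(\la,0)}:L^{(\mu,1)}]_{\aA} + [Z^{(\la,1)}:L^{(\mu,1)}]_{\aA},
\]
is false in general. Restricting a composition series of $W^\la$ to $\aA$ and using $L^\mu\cong L^{(\mu,0)}\oplus L^{(\mu,1)}$ (Proposition \ref{aA-A}(ii)), each $\A$-composition factor $L^\mu$ contributes one copy of $L^{(\mu,0)}$ \emph{and} one copy of $L^{(\mu,1)}$ whenever both are nonzero --- a possibility the paper explicitly allows. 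So the right-hand side above equals $[W^\la:L^\mu]_\A$ multiplied by the number of $\ve$ with $(\mu,\ve)\in\Om_0$, i.e.\ it double-counts precisely when both parities survive. Correspondingly, the pruning claims you need to collapse the sum are not available: in case (i) you assert $[Z^{(\la,1)}:L^{(\mu,1)}]_{\aA}=0$, but when $(\mu,1)\in\Om_0$ this multiplicity equals $[W^\la:L^\mu]_\A$ by part (iii) of the very theorem being proved (take $\la=\mu$ to see it is nonzero), and the hypothesis $(\mu,0)\in\Om_0$ does not exclude $(\mu,1)\in\Om_0$; the same confusion occurs in case (iii), where you drop the $L^{(\mu,0)}$-terms on the grounds that ``$(\mu,1)\in\Om_0$ means $L^{(\mu,1)}$ is the nonzero simple indexed there''. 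Proposition \ref{decom-prop} contains no vanishing statement for these same-$\mu$, opposite-parity terms, and none holds.

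The repair is to track one parity at a time, which is what the paper does: for each \emph{fixed} $\ve$ with $(\mu,\ve)\in\Om_0$, every $\A$-factor $L^\mu$ contributes exactly one $\aA$-factor isomorphic to $L^{(\mu,\ve)}$, and no $\A$-factor $L^\nu$ with $\nu\neq\mu$ contributes any (by the classification of simple $\aA$-modules), so
\[
[W^\la:L^\mu]_\A=[Z^{(\la,0)}:L^{(\mu,\ve)}]_{\aA}+[Z^{(\la,1)}:L^{(\mu,\ve)}]_{\aA}.
\]
With this two-term identity your vanishing inputs are the right ones and suffice: in case (i) take $\ve=0$, kill the $(\la,1)$-term by Proposition \ref{decom-prop}(iii) and identify the $(\la,0)$-term with $[\ol{Z}^\la:\ol{L}^\mu]_{\oA}$ by \ref{decom-prop}(i) (treating separately, as the paper does, the degenerate situation $(\la,0)\notin\Om$, where $W^\la\cong Z^{(\la,1)}$ forces $[W^\la:L^\mu]_\A=0$ via \ref{decom-prop}(iii) and all quantities are zero by convention); in case (ii) take $\ve=0$ and kill the $(\la,0)$-term by \ref{decom-prop}(iv); in case (iii) take $\ve=1$ and kill the $(\la,0)$-term by \ref{decom-prop}(ii). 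The opposite-parity terms never enter, so no claim about them is needed. As written, however, your argument relies on vanishings that are false, so it does not establish the theorem.
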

\begin{proof}
Take a composition series of $W^\la$ as $\A$-modules, 
and we  regard this composition series as a filtration of $W^\la$ as  $\aA$-modules by restriction. 
Combined with Proposition \ref{aA-A}, for $\ve=0,1$ such that $L^{(\mu,\ve)}\not=0$, we have 
\begin{align}
[W^\la : L^\mu]_{\A}&=[Z^{(\la,0)}\oplus Z^{(\la,1)}: L^{(\mu,\ve)}]_{\aA} 
\label{W-Z-decom} \\
&=[Z^{(\la,0)}:L^{(\mu,\ve)}]_{\aA}+[Z^{(\la,1)}:L^{(\mu,\ve)}]_{\aA}.\notag
\end{align}

First we prove (\roi). 
Let $\ve=0$, $\a(\la)=\a(\mu)$ and $(\mu,0)\in \Om_0$.  
If $(\la,0)\not\in\Om$ (if and only if $\la \not\in \ol{\vL}^+$),  
then we have $W^\la\cong Z^{(\la,1)}$ as $\aA$-modules. 
Thus, if $L^\mu$ appear in the composition series of $W^\la$ as  $\A$-modules then 
$L^{(\mu,0)}$ appear in the composition series of $Z^{(\la,1)}$ as  $\aA$-modules. 
(Note that $L^\mu \cong L^{(\mu,0)} \oplus L^{(\mu,1)}$.) 
This contradicts  Proposition \ref{decom-prop} (\roiii). 
Thus we have $[W^\la : L^\mu]_\A=0$, 
and (\roi) holds since  the decomposition numbers of $Z^{(\la,0)}$ (resp. $\ol{Z}^\la$) are zero 
if $(\la,0)\not\in \Om$ (resp. $\la \not\in \ol{\vL}^+$).  

Now suppose that $(\la,0)\in \Om$. 
Then we have $[Z^{(\la,0)}:L^{(\mu,0)}]_{\aA}=[\ol{Z}^\la : \ol{L}^\mu]_{\oA}$ 
and $[Z^{(\la,1)}:L^{(\mu,0)}]_{\aA}=0$ by Proposition \ref{decom-prop} (\roi) and (\roiii).
 (Note that we regard the decomposition numbers of $Z^{(\la,1)}$ as zero if $(\la,1)\not\in \Om$.)  
Combined with (\ref{W-Z-decom}), 
this proves (\roi). 

(\roii) and (\roiii) are proved in a similar way by combining (\ref{W-Z-decom}) with Proposition \ref{decom-prop}. 
\end{proof}

\para
Recall that, for $\mu \in \vL^+_0$, we have an isomorphism $L^\mu\cong L^{(\la,0)}\oplus L^{(\la,1)}$ as $\aA$-modules. 
This implies that  we always have 
$L^{(\la,1)}\not=0$ or $L^{(\la,0)}\not=0$. 
(It may happen that both of $L^{(\la,\ve)}$ ($\ve=0,1$) are non-zero.) 
Thus we can compute the decomposition numbers of $\A$ 
by using the decomposition numbers of $\aA$ 
thanks to Theorem \ref{decom-thm}. 

Moreover we show that, if $\A$ satisfies  the following condition (C) 
in addition to Assumption \ref{assume}, 
then 
$[W^\la :L^\mu]_\A$ for $\a(\la)=\a(\mu)$ can be always expressed in terms of $[\ol{Z}^\la :\ol{L}^\mu]_{\oA}$.

\begin{description}\vspace{1mm}
\item[(C)] $\vL\supset \vL_0^+$, and $e_\mu \not\equiv 0 \mod \A^{\vee\mu}$ for any $\mu \in \vL^+_0$. 
\end{description}

\begin{cor}
Suppose that $\A$ satisfies the condition (C). Then, for $\la \in \vL^+,\,\mu \in \vL_0^+$ such that $a(\la)=\a(\mu)$, we have 
\[[W^\la:L^\mu]_\A=[Z^{(\la,0)}:L^{(\mu,0)}]_{\aA}=[\ol{Z}^\la :\ol{L}^\mu]_{\oA}.\]
\end{cor}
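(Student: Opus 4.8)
The plan is to reduce the statement to Theorem \ref{decom-thm}(i): that theorem already delivers the chain of equalities $[W^\la:L^\mu]_\A=[Z^{(\la,0)}:L^{(\mu,0)}]_{\aA}=[\ol Z^\la:\ol L^\mu]_{\oA}$ as soon as $\a(\la)=\a(\mu)$ and $(\mu,0)\in\Om_0$. Since in the corollary $\mu$ runs over $\vL^+_0$, the entire content to be established is the implication: if $\A$ satisfies (C) and $\mu\in\vL^+_0$, then $(\mu,0)\in\Om_0$, i.e. $L^{(\mu,0)}\neq 0$.

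First I would unwind what $L^{(\mu,0)}\neq 0$ means. By construction $L^{(\mu,0)}=Z^{(\mu,0)}/\rad Z^{(\mu,0)}$, so $L^{(\mu,0)}\neq 0$ is equivalent to the bilinear form $\lan\,,\,\ran_\a$ on $Z^{(\mu,0)}$ being not identically zero; and by Remark \ref{remark-relation-A-aA}(ii) this form coincides with $\lan\,,\,\ran$ on $W^\mu$ evaluated on the basis vectors indexed by $I(\mu,0)=\CT^+(\mu)$. Thus it suffices to exhibit $\Fs,\Ft\in\CT^+(\mu)$ with $\lan c^\mu_\Fs,c^\mu_\Ft\ran\neq 0$.

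The engine for this is the idempotent $e_\mu$, available because (C) forces $\mu\in\vL\supset\vL^+_0$. By Lemma \ref{e-lem}, $e_\mu=\sum_{\la\geq\mu}\sum_{\Fs,\Ft\in\CT(\la,\mu)}r^\la_{\Fs\Ft}c^\la_{\Fs\Ft}$; since $\A^{\vee\mu}$ is a two-sided ideal, passing to the quotient algebra $\A/\A^{\vee\mu}$ kills all terms with $\la>\mu$, and condition (C) says exactly that the resulting class $\ol e_\mu=\sum_{\Fs,\Ft\in\CT(\mu,\mu)}r^\mu_{\Fs\Ft}(c^\mu_{\Fs\Ft}+\A^{\vee\mu})$ is nonzero in $\A/\A^{\vee\mu}$, and it is still idempotent. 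Now I would compute $\ol e_\mu^{\,2}$ using the defining congruence of the cell-module form, $c^\mu_{\Fs\Ft}\,c^\mu_{\Fu\Fv}\equiv\lan c^\mu_\Ft,c^\mu_\Fu\ran\,c^\mu_{\Fs\Fv}\ \mod\A^{\vee\mu}$, obtaining $\ol e_\mu^{\,2}=\sum r^\mu_{\Fs\Ft}r^\mu_{\Fu\Fv}\lan c^\mu_\Ft,c^\mu_\Fu\ran(c^\mu_{\Fs\Fv}+\A^{\vee\mu})$ with all indices in $\CT(\mu,\mu)$. If $\lan c^\mu_\Ft,c^\mu_\Fu\ran$ vanished for all $\Ft,\Fu\in\CT(\mu,\mu)$, this would force $\ol e_\mu^{\,2}=0$, contradicting $\ol e_\mu^{\,2}=\ol e_\mu\neq 0$. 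Hence $\lan c^\mu_\Ft,c^\mu_\Fu\ran\neq 0$ for some $\Ft,\Fu\in\CT(\mu,\mu)$, and $\CT(\mu,\mu)\subseteq\CT^+_\a(\mu)=\CT^+(\mu)$ (take $\nu=\mu$ in the definition of $\CT^+_\a$, since $\a(\mu)=\a(\mu)$). This produces the required pair, so $(\mu,0)\in\Om_0$, and Theorem \ref{decom-thm}(i) finishes the proof.

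I do not expect a genuine obstacle; the only point needing care is the bookkeeping inside $\A/\A^{\vee\mu}$ — that reducing $e_\mu$ modulo $\A^{\vee\mu}$ leaves precisely the $\la=\mu$ part of the Lemma \ref{e-lem} expansion, and that the product of the classes of two basis elements $c^\mu_{\ast\ast}$ is governed by the bilinear form on $W^\mu$. Everything else is formal.
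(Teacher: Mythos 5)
Your proposal is correct and follows essentially the same route as the paper: the paper also reduces the corollary to Theorem \ref{decom-thm} (i) via the key Lemma \ref{Z0-nonzero-lem}, which shows $L^{(\mu,0)}\neq 0$ by expanding $e_\mu$ as in Lemma \ref{e-lem} and deriving a contradiction with condition (C) from $e_\mu=e_\mu^2$ when the form vanishes on $\CT(\mu,\mu)$. The only cosmetic difference is that you run the idempotence argument directly in $\A/\A^{\vee\mu}$ and transfer the form via Remark \ref{remark-relation-A-aA} (ii), whereas the paper works modulo $(\aA)^{\vee(\mu,0)}$ inside $\aA$ using Lemma \ref{pro-lem} before passing to $\A^{\vee\mu}$.
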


In order to prove the corollary, 
we have only to show  that $L^{(\mu,0)}\not=0$ for any $\mu \in \vL^+_0$ 
under the condition (C), 
since in that case we can apply Theorem \ref{decom-thm} (\roi). 
Hence the next lemma proves the corollary. 

\begin{lem}\label{Z0-nonzero-lem}
Suppose that $\A$ satisfies the condition (C). Then, for any $\mu \in \vL_0^+$, we have $L^{(\mu,0)}\not=0$. 
\end{lem}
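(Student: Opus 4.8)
The plan is to unwind the definition of $L^{(\mu,0)}$: since $L^{(\mu,0)}=Z^{(\mu,0)}/\rad Z^{(\mu,0)}$, it suffices to show that the Gram form $\lan\,,\,\ran_\a$ on $Z^{(\mu,0)}$ does not vanish identically. By Remark \ref{remark-relation-A-aA}(ii) we have $\lan c_\Ft^{(\mu,0)},c_\Fu^{(\mu,0)}\ran_\a=\lan c_\Ft^\mu,c_\Fu^\mu\ran$ for all $\Ft,\Fu\in I(\mu,0)=\CT^+_\a(\mu)$, so it is enough to exhibit $\Ft,\Fu\in\CT(\mu,\mu)$ with $\lan c_\Ft^\mu,c_\Fu^\mu\ran\not=0$; note that $\CT(\mu,\mu)\subseteq\CT^+_\a(\mu)$, because $\a(\mu)=\a(\mu)$ and $\mu\in\vL$ by the first part of condition (C).

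The key input is the idempotent $e_\mu$, which exists since $\mu\in\vL$. By Lemma \ref{e-lem}, $e_\mu=\sum_{\la\geq\mu,\ \Fs,\Ft\in\CT(\la,\mu)}r_{\Fs\Ft}^\la c_{\Fs\Ft}^\la$. Since $\A^{\vee\mu}$ is a two-sided ideal of $\A$ (see \ref{injective-standard-module}), reduction modulo $\A^{\vee\mu}$ is an algebra homomorphism, and it kills every term with $\la>\mu$; hence the image $\bar e_\mu$ of $e_\mu$ in $\A/\A^{\vee\mu}$ equals $\sum_{\Fs,\Ft\in\CT(\mu,\mu)}r_{\Fs\Ft}^\mu c_{\Fs\Ft}^\mu$. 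The images of the cellular basis elements $c_{\Fs\Ft}^\mu$ ($\Fs,\Ft\in\CT(\mu)$) are linearly independent in $\A/\A^{\vee\mu}$, so the hypothesis $e_\mu\not\equiv 0 \bmod \A^{\vee\mu}$ from (C) forces $r_{\Fs_0\Fv_0}^\mu\not=0$ for some $\Fs_0,\Fv_0\in\CT(\mu,\mu)$.

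Now I would exploit $e_\mu^2=e_\mu$, which gives $\bar e_\mu^{\,2}=\bar e_\mu$ in $\A/\A^{\vee\mu}$. Rewriting the defining relation of the bilinear form as $c_{\Fs\Ft}^\mu c_{\Fu\Fv}^\mu\equiv\lan c_\Ft^\mu,c_\Fu^\mu\ran\,c_{\Fs\Fv}^\mu \bmod \A^{\vee\mu}$ and expanding $\bar e_\mu^{\,2}$ (all pairs occurring lie in $\CT(\mu,\mu)$, since $\bar e_\mu$ does), comparing the coefficient of $c_{\Fs_0\Fv_0}^\mu$ on the two sides of $\bar e_\mu^{\,2}=\bar e_\mu$ yields
\[
r_{\Fs_0\Fv_0}^\mu=\sum_{\Ft,\Fu\in\CT(\mu,\mu)}r_{\Fs_0\Ft}^\mu\,\lan c_\Ft^\mu,c_\Fu^\mu\ran\,r_{\Fu\Fv_0}^\mu .
\]
As the left-hand side is nonzero, some summand on the right is nonzero, so $\lan c_\Ft^\mu,c_\Fu^\mu\ran\not=0$ for some $\Ft,\Fu\in\CT(\mu,\mu)$. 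By the reduction in the first paragraph this shows $c_\Ft^{(\mu,0)}\notin\rad Z^{(\mu,0)}$, hence $L^{(\mu,0)}\not=0$.

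The argument involves no serious obstacle beyond bookkeeping: one must keep careful track of the index sets when expanding $\bar e_\mu^{\,2}$ so that every Gram-matrix entry that appears is indexed by $\CT(\mu,\mu)\subseteq I(\mu,0)$, and one must use that $\A^{\vee\mu}$ is a two-sided ideal so that passing to $\A/\A^{\vee\mu}$ is multiplicative. The substance is simply the translation of the idempotency identity $\bar e_\mu^{\,2}=\bar e_\mu$, via the cellular multiplication rule, into a nontriviality statement about the Gram matrix of $Z^{(\mu,0)}$.
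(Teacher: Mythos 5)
Your proof is correct and takes essentially the same route as the paper: both arguments combine $e_\mu=e_\mu^2$ with the expansion of $e_\mu$ from Lemma \ref{e-lem} and the hypothesis $e_\mu\not\equiv 0 \bmod \A^{\vee\mu}$ in (C) to force $\lan c^\mu_\Ft,c^\mu_\Fu\ran\not=0$ for some $\Ft,\Fu\in\CT(\mu,\mu)\subseteq\CT^+(\mu)$, and then identify this with the form $\lan\,,\,\ran_\a$ on $Z^{(\mu,0)}$. The only difference is presentational: the paper runs a contradiction argument modulo $(\aA)^{\vee(\mu,0)}$ using Lemma \ref{pro-lem}, whereas you obtain the same nonvanishing by the explicit coefficient comparison in $\bar e_\mu^{\,2}=\bar e_\mu$ inside $\A/\A^{\vee\mu}$.
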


\begin{proof}
By Lemma \ref{e-lem}, 
we can write, 
for any $\mu \in \vL^+_0$,  
\[\dis e_\mu =\hspace{-1em}\sum_{\la\geq \mu,\, \Fs,\Ft\in \CT(\la,\mu)}\hspace{-1em}r_{\Fs\Ft}^{\la}c_{\Fs\Ft}^{\la}.\] 
Note that $c_{\Fs\Ft}^\la$ in the sum is contained in $\aA$. 
If $c_{\Fu\Fs}^\mu c_{\Ft\Fv}^\mu \equiv 0 \mod (\aA)^{\vee (\mu,0)}$ for any $\Fs,\Ft,\Fu,\Fv\in \CT(\mu,\mu)$ then 
we have $e_\mu=e_\mu e_\mu\equiv 0 \mod (\aA)^{\vee(\mu,0)}$ 
since $c_{\Fu\Fs}^\la c_{\Ft\Fv}^{\la'} \equiv 0 \mod (\aA)^{\vee(\mu,0)}$ for $\la>\mu$ or $\la'>\mu$ 
by Lemma \ref{pro-lem}. 
By using the first and the third formulas in Lemma \ref{pro-lem}, 
we have $e_\mu=e_\mu e_\mu\equiv 0 \mod \A^{\vee\mu}.$ 
This contradicts the assumption $e_\mu \not\equiv0 \mod \A^{\vee \mu}$. 
Thus, there exist $\Fs, \Ft,\Fu,\Fv\in \CT(\mu,\mu)$ such that 
$c_{\Fu\Fs}^\mu c_{\Ft\Fv}^\mu \not\equiv 0 \mod (\aA)^{\vee(\mu,0)}$. 
This implies that $\lan c_{\Fs}^{(\mu,0)},c_{\Ft}^{(\mu,0)} \ran_{\a}\not=0$, 
and we conclude that $L^{(\mu,0)}\not=0$.  
\end{proof}

\exam
\label{ex4}
We consider three algebras in Example \ref{ex} under the setting Example \ref{ex2} and Example \ref{ex3}. 
Here, we assume that $R$ is a field. 

(\roi) (Matrix algebra) 
If $R$ is a field, then $\A=M_{n\times n}(R)$ is semisimple. 
Hence, the decomposition matrix of $\A$ is the unit matrix. 
\vspace{3mm}

(\roii) (Path algebra) 
It is easy to see that $\vL_0^+=\{\la_i\,|\,1\leq i \leq 5\}\not=\vL^+$. 
Hence, $\A$ is not a quasi-hereditary algebra by \cite{GL96}. 
One can calculate the following decomposition matrix of $\A$ directry. 
{\small\[ \begin{array}{c|ccccc} 
&\la_1&\la_2&\la_3&\la_4&\la_5 \\\hline 
\la_0 &1&0&0&0&0 \\
\la_1 &1&1&0&0&0 \\
\la_2 &0&1&1&0&0 \\
\la_3 &0&0&1&1&0 \\
\la_4 &0&0&0&1&1 \\
\la_5 &0&0&0&0&1 \\
\end{array} \quad 
\]}
where $(i,j)$-entry denotes  $[W^{\la_{i-1}}:L^{\la_j}]_\A$.  
Moreover, $\Om_0=\{(\la_i,0)\,|\,1\leq i \leq 5\}$, $\ol{\vL}^+_0=\ol{\vL}^+$ 
and we have the following decomposition matrices of $\aA$ and $\oA$. 
{\small
\[\begin{array}{c|ccccc} 
&(\la_1,0)&(\la_2,0)&(\la_3,0)&(\la_4,0)&(\la_5,0) \\\hline 
(\la_0,1) &1&0&0&0&0 \\
(\la_1,0) &1&1&0&0&0 \\
(\la_2,0) &0&1&1&0&0 \\
(\la_3,0) &0&0&1&0&0 \\
(\la_3,1) &0&0&0&1&0 \\
(\la_4,0) &0&0&0&1&1 \\
(\la_5,0) &0&0&0&0&1 \\
\end{array} \hspace{3em}
\begin{array}{c|ccccc} 
&\la_1&\la_2&\la_3&\la_4&\la_5 \\\hline 
\la_1 &1&1&0&0&0 \\
\la_2 &0&1&1&0&0 \\
\la_3 &0&0&1&0&0 \\
\la_4 &0&0&0&1&1 \\
\la_5 &0&0&0&0&1 \\
\end{array}
\]}
\vspace{-1mm}

\hspace{6em}(Matrix of $\aA$)
\hspace{12em}(Matrix of $\oA$)\vspace{3mm}

The left upper $4\times3$ submatrix (resp. right lower $3\times 2$ submatrix) of the above decomposition matrix of $\aA$ 
gives the decomposition matrix of $(RQ_1^\a/\CI_1^\a)$ (resp. $(RQ_2^\a/\CI_2^\a)$). 
Similarly, 
the left upper $3\times3$ submatrix (resp. right lower $2\times 2$ submatrix) of the above decomposition matrix of $\oA$ 
gives the decomposition matrix of $(R\ol{Q}_1^\a/\ol{\CI}_1^\a)$ (resp. $(R\ol{Q}_2^\a/\ol{\CI}_2^\a)$). 
In particular, $R\ol{Q}_i^\a/\ol{\CI}_1^\a$ ($i=1,2$) is a quasi-hereditary algebra. 
\vspace{3mm}

(\roiii) (Cyclotomic $q$-Schur algebra) 
We know that $\ol{\vL}^+=\vL^+$ by \cite{SW}. 
Moreover, in \cite[Corollary 4.16]{SW}, 
it is shown that, for $\Bla, \Bmu \in \vL^+$, 
$\ol{Z}^{\Bla} \cong W^{\Bla^{[1]}}\otimes \cdots \otimes W^{\Bla^{[g]}}$ and 
$\ol{L}^{\Bmu}\cong L^{\Bmu^{[1]}}\otimes \cdots \otimes L^{\Bmu^{[g]}}$ under the isomorphism (\ref{decom-oSp}), 
where $W^{\Bla^{[k]}}$ is the standard module of $\Sc(\vL_{n_k,r_k})$ with $n_k=|\Bla^{[k]}|$ and 
$L^{\Bmu^{[k]}}$ is the simple module of $\Sc(\vL_{n_k,r_k})$ with $n_k=|\Bmu^{[k]}|$. 
Combining this with Proposition \ref{decom-prop} and Theorem \ref{decom-thm}, 
we have the following product formula (\cite[Theorem 4.17]{SW}), for $\Bla,\Bmu \in \vL^+$ such that $\a_{\Bp}(\Bla)=\a_{\Bp}(\Bmu)$, 
\[[W^{\Bla} : L^{\Bmu}]_{\Sc} 
=[\ol{Z}^{\Bla}:\ol{L}^{\Bmu}]_{\oSp}
=\prod_{k=1}^g [W^{\Bla^{[k]}}:L^{\Bmu^{[k]}}]_{\Sc(\vL_{n_k,r_k})}.\]

\section{Parabolic type subalgebras}\label{tA}
In this section, we introduce certain parabolic type subalgebras $\tA$ and  $(\tA)^\ast$ of $\A$, 
and study the relations among $\A$, $\aA$ and $\oA$. 

\para
For $(\la,\ve)\in \Om$, set 
\[\wt{I}(\la,\ve)=\begin{cases} \CT^+(\la) &\text{if }\ve=0\\ \CT^-(\la) &\text{if }\ve=1, \end{cases}\qquad 
	\wt{J}(\la,\ve)=\begin{cases} \CT^+(\la) &\text{if }\ve=0\\ \CT(\la) &\text{if }\ve=1, \end{cases}\]
and  
\[\wt{\ZC}^\a(\la,\ve)=\{c_{\Fs\Ft}^\la\,|\, (\Fs,\Ft)\in \wt{I}(\la,\ve)\times \wt{J}(\la,\ve)\}.\] 
For $\la \in \vL^+$, put 
$\wt{\ZC}^\a(\la)=\wt{\ZC}^\a(\la,0) \cup \wt{\ZC}^\a(\la,1)$. 
We define a subset $\wt{\ZC}^\a$ of $\ZC$ by 
\begin{align*}
\wt{\ZC}^\a&=\big\{c_{\Fs\Ft}^\la \bigm| (\Fs,\Ft)\in \wt{I}(\la,\ve)\times \wt{J}(\la,\ve) \quad \text{for }(\la,\ve)\in \Om\big\}\\
&=\coprod_{(\la,\ve)\in \Om}\wt{\ZC}^\a(\la,\ve).
\end{align*}

We define an $R$-submodule $\tA$ of $\A$ as the $R$-span of  $\wt{\ZC}^\a$. 
We show that $\tA$ is a subalgebra of $\A$ in a similar way as in the case of $\aA$.   

\begin{lem}\label{pro-lem-para}
For $c_{\Fs_1\Ft_1}^{\la_1}, c_{\Fs_2\Ft_2}^{\la_2} \in \wt{\ZC}^\a$ 
such that  $\Fs_i \in \CT(\la_i,\mu_i)$ and $\Ft_i\in \CT(\la_i,\nu_i)$ ($i=1,2$), 
we have 
\[ c_{\Fs_1\Ft_1}^{\la_1}c_{\Fs_2\Ft_2}^{\la_2} =\begin{cases}
	\dis \sum_{c_{\Fs,\Ft}^{\la_1}\in \wt{\ZC}^\a(\la_1,0)}r_{\Fs\Ft}^{\la_1}c_{\Fs\Ft}^{\la_1} 
		+ \sum_{\la>\la_1}\sum_{c_{\Fs,\Ft}^{\la}\in \wt{\ZC}^\a(\la)}r_{\Fs\Ft}^{\la}c_{\Fs\Ft}^{\la}, 
		&\text{if } c_{\Fs_1 \Ft_1}^{\la_1}\in \wt{\ZC}^\a(\la_1,0), \\\,\\
	\dis \sum_{\la\geqq\la_1}\sum_{c_{\Fs,\Ft}^{\la}\in \wt{\ZC}^\a(\la,1)}r_{\Fs\Ft}^{\la}c_{\Fs\Ft}^{\la}, 
		&\text{if } c_{\Fs_1 \Ft_1}^{\la_1}\in \wt{\ZC}^\a(\la_1,1),\\\,\\
	\dis \sum_{\la\geqq\la_2}\sum_{c_{\Fs,\Ft}^{\la}\in \wt{\ZC}^\a(\la)}r_{\Fs\Ft}^{\la}c_{\Fs\Ft}^{\la}, 
		&\text{if } c_{\Fs_2 \Ft_2}^{\la_2}\in \wt{\ZC}^\a(\la_2,0),\\\,\\
	\dis \sum_{\la\geqq\la_2}\sum_{c_{\Fs,\Ft}^{\la}\in \wt{\ZC}^\a(\la,1)}r_{\Fs\Ft}^{\la}c_{\Fs\Ft}^{\la},
		&\text{if } c_{\Fs_2 \Ft_2}^{\la_2}\in \wt{\ZC}^\a(\la_2,1).
\end{cases}\]
\end{lem}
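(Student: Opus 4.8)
The plan is to follow the same scheme as the proof of Lemma~\ref{pro-lem}; the only new feature is that the pair of index sets $\big(\wt{I}(\la,\ve),\wt{J}(\la,\ve)\big)$ is no longer symmetric when $\ve=1$, so membership in $\wt{\ZC}^\a$ ends up being governed almost entirely by the row index. First I would reduce to the generic case. Write $c_{\Fs_i\Ft_i}^{\la_i}\in\wt{\ZC}^\a(\la_i,\ve_i)$ for $i=1,2$. By Lemma~\ref{key-lem}(ii) the product vanishes (so every displayed formula holds trivially) unless $\nu_1=\mu_2$, which I assume henceforth; in that case
\[
c_{\Fs_1\Ft_1}^{\la_1}c_{\Fs_2\Ft_2}^{\la_2}=\sum_{\la\geq\la_1\text{ and }\la_2}\ \sum_{\Fs\in\CT(\la,\mu_1),\,\Ft\in\CT(\la,\nu_2)}r_{\Fs\Ft}^\la\,c_{\Fs\Ft}^\la\qquad(r_{\Fs\Ft}^\la\in R).
\]
I would then record three elementary facts about a term $c_{\Fs\Ft}^\la$ occurring here: (a) $\a(\la)\succeq\a(\la_1)$ and $\a(\la)\succeq\a(\la_2)$, by (\ref{condition-alpha}); (b) $\Fs\in\CT^+(\la)$ exactly when $\a(\la)=\a(\mu_1)$ and $\Fs\in\CT^-(\la)$ otherwise, and likewise $\Ft\in\CT^+(\la)$ exactly when $\a(\la)=\a(\nu_2)$; (c) since $\wt{\ZC}^\a(\la,1)=\{c_{\Fu\Fv}^\la\mid\Fu\in\CT^-(\la),\ \Fv\in\CT(\la)\}$ and $\wt{\ZC}^\a(\la,0)=\{c_{\Fu\Fv}^\la\mid\Fu,\Fv\in\CT^+(\la)\}$, one has $c_{\Fs\Ft}^\la\in\wt{\ZC}^\a(\la,1)$ as soon as $\Fs\in\CT^-(\la)$, $c_{\Fs\Ft}^\la\in\wt{\ZC}^\a(\la,0)$ when $\Fs,\Ft\in\CT^+(\la)$, and $c_{\Fs\Ft}^\la\in\wt{\ZC}^\a(\la)$ unless $\Fs\in\CT^+(\la)$ while $\Ft\in\CT^-(\la)$.

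Next I would translate the hypotheses on $\ve_1,\ve_2$ into relations among $\a$-values, exactly as in the proof of Lemma~\ref{pro-lem}: $\ve_i=0$ forces $\a(\la_i)=\a(\mu_i)=\a(\nu_i)$, whereas $\ve_i=1$ gives only $\a(\la_i)\succ\a(\mu_i)$. Combining this with $\nu_1=\mu_2$ and fact (a), one obtains, for every term $c_{\Fs\Ft}^\la$ in the sum above: if $\ve_1=1$ then $\a(\la)\succeq\a(\la_1)\succ\a(\mu_1)$, so $\Fs\in\CT^-(\la)$; if $\ve_1=0$ and $\ve_2=1$ then $\a(\la)\succeq\a(\la_2)\succ\a(\mu_2)=\a(\nu_1)=\a(\la_1)=\a(\mu_1)$, so $\Fs\in\CT^-(\la)$ and in particular $\la\neq\la_1$; and if $\ve_1=\ve_2=0$ then $\a(\la_1),\a(\mu_1),\a(\nu_1),\a(\mu_2),\a(\la_2),\a(\nu_2)$ all coincide, so by (b), $\Fs\in\CT^+(\la)$ holds if and only if $\Ft\in\CT^+(\la)$.

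Finally I would read off the four displayed formulas. If $c_{\Fs_1\Ft_1}^{\la_1}\in\wt{\ZC}^\a(\la_1,1)$ (resp.\ $c_{\Fs_2\Ft_2}^{\la_2}\in\wt{\ZC}^\a(\la_2,1)$), the cases $\ve_1=1$ and $\ve_1=0,\ve_2=1$ together give $\Fs\in\CT^-(\la)$ for every term, hence by (c) $c_{\Fs\Ft}^\la\in\wt{\ZC}^\a(\la,1)$, with $\la\geq\la_1$ (resp.\ $\la\geq\la_2$); this is the second (resp.\ fourth) formula. If $c_{\Fs_2\Ft_2}^{\la_2}\in\wt{\ZC}^\a(\la_2,0)$, then when $\ve_1=1$ every term has $\Fs\in\CT^-(\la)$, while when $\ve_1=0$ the all-equal case gives $c_{\Fs\Ft}^\la\in\wt{\ZC}^\a(\la)$ by (c); since $\la\geq\la_2$ in either case, this is the third formula. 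If $c_{\Fs_1\Ft_1}^{\la_1}\in\wt{\ZC}^\a(\la_1,0)$, then when $\ve_2=1$ every term lies in $\wt{\ZC}^\a(\la)$ with $\la>\la_1$, and when $\ve_2=0$ every term lies in $\wt{\ZC}^\a(\la)$ with $\la\geq\la_1$ while a term with $\la=\la_1$ has $\a(\la_1)=\a(\mu_1)=\a(\nu_2)$, so $\Fs,\Ft\in\CT^+(\la_1)$ and thus $c_{\Fs\Ft}^{\la_1}\in\wt{\ZC}^\a(\la_1,0)$; this is the first formula. The argument is essentially bookkeeping, so I do not expect a serious obstacle; the one point needing care is that, because the column index is unconstrained in $\wt{\ZC}^\a(\la,1)$, membership of a product term in $\wt{\ZC}^\a$ is decided by the row index alone except when all the relevant $\a$-values coincide, and it is exactly this asymmetry that makes the third formula correctly involve all of $\wt{\ZC}^\a(\la_2)$ rather than only $\wt{\ZC}^\a(\la_2,0)$ as in Lemma~\ref{pro-lem}.
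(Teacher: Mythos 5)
Your proof is correct and follows essentially the same route as the paper, which simply says the lemma "is proved in a similar way as in the proof of Lemma \ref{pro-lem}": reduce to $\nu_1=\mu_2$ via Lemma \ref{key-lem}, translate membership in $\wt{\ZC}^\a(\la_i,\ve_i)$ into relations among $\a$-values, and check the four cases. You also correctly isolate the asymmetry of $\wt{I}(\la,1)\times\wt{J}(\la,1)$ (membership decided by the row index when $\ve=1$), which is exactly the point behind the paper's remark that the third formula is weaker than its counterpart in Lemma \ref{pro-lem}.
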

\begin{proof}
This lemma is proved in  a similar way as in the proof of Lemma \ref{pro-lem}. 
Note that the third formula is weaker than the corresponding formula in Lemma \ref{pro-lem} 
since it may occur $\la_2>\la_1$ with $\a(\la_2)=\a(\la_1)$.  
\end{proof}
This lemma implies that $\tA$ is a subalgebra of $\A$. 
Moreover, $\tA$ turns out to be a standardly based algebra in the sence of \cite{DR98},  
namely the following theorem holds.   
The proof is  similar to Theorem \ref{aA-thm}. 

\begin{thm} \label{tA-thm}
$\tA$ is a subalgebra of $\A$ containing $\aA$. Moreover, 
$\tA$ turns out to be a standardly based algebra with a standard basis $\wt{\ZC}^\a$, 
i.e., the following holds; for any $a \in \tA, c_{\Fs\Ft}^\la \in \wt{\ZC}^\a(\la,\ve)$, we have 
\begin{align}
a \cdot c_{\Fs\Ft}^\la \equiv \sum_{\Fu\in \wt{I}(\la,\ve) }r_{\Fu}^{(a,\Fs)} c_{\Fu\Ft}^{\la} 
	\mod (\tA)^{\vee (\la,\ve)}\qquad (r_{\Fu}^{(a,\Fs)}\in R), \label{aaap}\\*
c_{\Fs\Ft}^\la \cdot a \equiv \sum_{\Fv\in \wt{J}(\la,\ve) }r_{\Fv}^{(\Ft,a)} c_{\Fs\Fv}^{\la} 
\mod (\tA)^{\vee (\la,\ve)}\qquad (r_{\Fv}^{(\Ft,a)}\in R),  \label{bbbp}
\end{align}
where $(\tA)^{\vee(\la,\ve)}$ is an $R$-submodule of $\tA$ spanned by 
$\big\{c_{\Fs'\Ft'}^{\la'}\in \wt{\ZC}^\a(\la',\ve')\,|\,(\la',\ve')>(\la,\ve)\big\}$, 
and $r_{\Fu}^{(a,\Fs)}$ (resp. $r_{\Fv}^{(\Ft,a)}$) does not depend on the choice of $\Ft\in \wt{J}(\la,\ve)$ (resp. $\Fs\in \wt{I}(\la,\ve)$). 
\end{thm}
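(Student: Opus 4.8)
The strategy mirrors the proof of Theorem~\ref{aA-thm}, so the bulk of the work is already contained in Lemma~\ref{pro-lem-para} together with the cellular structure of $\A$. First I would observe that Lemma~\ref{pro-lem-para} shows $\wt{\ZC}^\a$ is closed under multiplication up to $R$-linear combinations of elements of $\wt{\ZC}^\a$, hence $\tA$ is indeed a subalgebra of $\A$. To see that $\aA \subseteq \tA$, note that $I(\la,\ve) = \wt I(\la,\ve)$ for both $\ve = 0,1$, while $\wt J(\la,0) = I(\la,0) = \CT^+(\la)$ and $\wt J(\la,1) = \CT(\la) \supseteq \CT^-(\la) = I(\la,1)$; thus $\ZC^\a(\la,\ve) \subseteq \wt{\ZC}^\a(\la,\ve)$ for every $(\la,\ve)\in\Om$, and so $\ZC^\a \subseteq \wt{\ZC}^\a$, giving $\aA \subseteq \tA$. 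In particular $1_\A \in \aA \subseteq \tA$ by Theorem~\ref{aA-thm}.

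For the standardly based structure, the two congruences (\ref{aaap}) and (\ref{bbbp}) need to be extracted from Lemma~\ref{pro-lem-para} and from (\ref{cellular-rel}), (\ref{cellular-rel-left}). For (\ref{bbbp}): writing $a \in \tA$ as an $R$-linear combination of basis elements $c_{\Fs'\Ft'}^{\la'} \in \wt{\ZC}^\a$ and applying Lemma~\ref{pro-lem-para} to each product $c_{\Fs\Ft}^\la c_{\Fs'\Ft'}^{\la'}$, the case analysis in that lemma shows that the result is a combination of terms $c_{\Fs\Fv}^\la$ with $\Fv \in \wt J(\la,\ve)$ (coming from the leading term, which keeps the first index $\Fs$ fixed) plus terms lying in $(\tA)^{\vee(\la,\ve)}$. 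The coefficients $r_{\Fv}^{(\Ft,a)}$ are inherited from those in (\ref{cellular-rel}) for $\A$ — compare Remark~\ref{remark-relation-A-aA}(\roi) — hence their independence of the choice of $\Fs \in \wt I(\la,\ve)$ follows from the corresponding independence statement in the cellular axiom (iii) for $\A$, together with the fact that the set $\wt I(\la,\ve)$ of admissible first indices is exactly the range over which this independence was asserted. The congruence (\ref{aaap}) is obtained symmetrically, using (\ref{cellular-rel-left}) in place of (\ref{cellular-rel}) and the left-multiplication version of the case analysis; here the roles of $\wt I$ and $\wt J$ are swapped, which is why $\wt I(\la,1) = \CT^-(\la)$ is the relevant index set for the first coordinate on the left.

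The one point that needs genuine care — and which I expect to be the main obstacle — is verifying that $(\tA)^{\vee(\la,\ve)}$ really absorbs all the lower-order terms, given that the third formula in Lemma~\ref{pro-lem-para} is \emph{weaker} than its counterpart in Lemma~\ref{pro-lem}: when $c_{\Fs_2\Ft_2}^{\la_2} \in \wt{\ZC}^\a(\la_2,0)$ the product can involve terms $c_{\Fs\Ft}^\la$ with $\la \geq \la_2$ (not strictly $>$) and possibly $\a(\la) = \a(\la_2)$ but $\la$ incomparable to $\la_1$. One must check that every such term nonetheless lies in $(\tA)^{\vee(\la_1,\ve_1)}$, i.e.\ that $(\la,\ve') > (\la_1,\ve_1)$ in the order (\ref{def-order-Om}) on $\Om$; this follows because when $c_{\Fs_1\Ft_1}^{\la_1}\in\wt{\ZC}^\a(\la_1,1)$ the product lands in $\wt{\ZC}^\a(\la,1)$ with $\la \geq \la_1$, so either $\la > \la_1$ (higher in the same $\ve$-stratum) or $\la = \la_1$, and when $c_{\Fs_1\Ft_1}^{\la_1}\in\wt{\ZC}^\a(\la_1,0)$ one checks, exactly as in the four-case analysis of Lemma~\ref{pro-lem}, that any term not equal to $c_{\Fs\Ft}^{\la_1}$ with $\la_1$ in stratum $0$ has strictly larger index in $\Om$. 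Once this bookkeeping is done the two congruences hold and $\tA$ is standardly based with standard basis $\wt{\ZC}^\a$.
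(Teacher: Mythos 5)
Your overall route is the same as the paper's: the paper proves Theorem \ref{tA-thm} exactly by the argument of Theorem \ref{aA-thm}, with Lemma \ref{pro-lem-para} in place of Lemma \ref{pro-lem}, and your verification of the subalgebra property, of $\aA \subset \tA$ (hence $1_\A \in \tA$), and of the right-hand congruence (\ref{bbbp}) is correct.

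The one muddled point is precisely the step you single out as the main obstacle. The weaker third formula of Lemma \ref{pro-lem-para} is irrelevant to (\ref{bbbp}): there the fixed basis element is the \emph{first} factor, so only the first two formulas are used, and these are as strong as their counterparts in Lemma \ref{pro-lem}; your resolution (re-running the case analysis indexed by $c_{\Fs_1\Ft_1}^{\la_1}$ and checking membership in $(\tA)^{\vee(\la_1,\ve_1)}$) therefore settles a congruence in which no difficulty ever arises. Where the third formula does bite is (\ref{aaap}) with the fixed element $c_{\Fs\Ft}^{\la}\in\wt{\ZC}^\a(\la,0)$ sitting as the \emph{second} factor, the congruence being taken modulo $(\tA)^{\vee(\la,0)}$: left multiplication can produce, at the \emph{same} level $\la$, terms $c_{\Fu\Ft}^{\la}$ with $\Fu\in\CT^-(\la)$, which are neither of the leading form nor in $\A^{\vee\la}$. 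The check you owe is that such terms nevertheless lie in $(\tA)^{\vee(\la,0)}$; this holds because $c_{\Fu\Ft}^{\la}\in\wt{\ZC}^\a(\la,1)$ (as $\Fu\in\wt{I}(\la,1)$ and $\Ft\in\CT^+(\la)\subset\CT(\la)=\wt{J}(\la,1)$) and $(\la,1)>(\la,0)$ in the order (\ref{def-order-Om}) — the same phenomenon the paper records in its remark that $(\tA)^{\vee(\la,0)}$ cannot be replaced by $\tA\cap\A^{\vee\la}$ in (\ref{aaap}). With this one-line observation inserted (the remaining cases are exactly as you describe, the coefficients and their independence being inherited from (\ref{cellular-rel}) and (\ref{cellular-rel-left})), your proof is complete and coincides with the paper's.
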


\para \label{tA-general}
We apply the general theory of standardly based algebras (\cite{DR98}) to $\tA$. 
For $(\la,\ve)\in \Om$, 
let $\wt{Z}^{(\la,\ve)}$ be a standard right $\tA$-module, 
namely $\wt{Z}^{(\la,\ve)}$ has an $R$-free basis 
$\{\wt{c}_\Ft^{(\la,\ve)}\,|\, \Ft \in \wt{J}(\la,\ve)\}$, 
and the (right) action of $\tA$ on $\wt{Z}^{(\la,\ve)}$ is defined by 
\[\wt{c}_{\Ft}^{(\la,\ve)}\cdot a =\sum_{\Fv\in \wt{J}(\la,\ve)}r_{\Fv}^{(\Ft,a)}\wt{c}_{\Fv}^{(\la,\ve)} \qquad(\Ft\in \wt{J}(\la,\ve),\,a \in \tA),\]
where $r_{\Fv}^{(\Ft,a)}\in R$ is a coefficient appeared in (\ref{bbbp}). 

Similarly, we define a standard left $\tA$-module  $\sh\wt{Z}^{(\la,\ve)}$
with an $R$-free basis $\{\,\sh\wt{c}_\Fs^{(\la,\ve)}\,|\, \Fs \in \wt{I}(\la,\ve)\}$ 
by the (left) action of $\tA$ given by 
\[a\cdot\,\sh\wt{c}_{\Fs}^{(\la,\ve)} =\sum_{\Fu\in \wt{I}(\la,\ve)}r_{\Fu}^{(a,\Fs)}\,\sh\wt{c}_{\Fu}^{(\la,\ve)} 
	\qquad(\Fs\in \wt{I}(\la,\ve),\,a \in \tA),\]
where $r_{\Fu}^{(a,\Fs)}\in R$ is given in (\ref{aaap}). 
We can define a bilinear form $\b : \,\sh \wt{Z}^{(\la,\ve)}\times \wt{Z}^{(\la,\ve)}\ra R$ by  
\[\b(\,\sh \wt{c}_{\Fs}^{(\la,\ve)}, \wt{c}_{\Ft}^{(\la,\ve)})c_{\Fu\Fv}^\la \equiv c_{\Fu\Ft}^{\la}c_{\Fs\Fv}^{\la} \mod (\tA)^{\vee (\la,\ve)} 
	\qquad(\Fs, \Fu \in \wt{I}(\la,\ve),\,\Ft,\Fv \in \wt{J}(\la,\ve)).\]
Note that this definition does not depend on the choice of $\Fu \in \wt{I}(\la,\ve)$ and $\Fv \in \wt{J}(\la,\ve)$.  
Put
\begin{align*}
&\rad \wt{Z}^{(\la,\ve)}=\big\{x \in \wt{Z}^{(\la,\ve)}\bigm| \b(\,\sh y,x)=0 \text{ for any }\,\sh y \in \,\sh \wt{Z}^{(\la,\ve)}\big\},\\* 
&\rad \,\sh \wt{Z}^{(\la,\ve)}=\big\{\,\sh x \in \,\sh \wt{Z}^{(\la,\ve)}\bigm| \b(\,\sh x,y)=0 \text{ for any }y \in \, \wt{Z}^{(\la,\ve)}\big\}.
\end{align*} 
Then $\rad \wt{Z}^{(\la,\ve)}$ (resp. $\rad \,\sh \wt{Z}^{(\la,\ve)}$) turns out to be 
an $\tA$-submodule of $\wt{Z}^{(\la,\ve)}$ (resp. of $\sh \wt{Z}^{(\la,\ve)}$).   
Let $\wt{L}^{(\la,\ve)}=\wt{Z}^{(\la,\ve)}/\rad \wt{Z}^{(\la,\ve)}$ 
(resp. $\sh \wt{L}^{(\la,\ve)}=\,\sh \wt{Z}^{(\la,\ve)}/\rad\,\sh \wt{Z}^{(\la,\ve)}$) 
be the quotient $\tA$-module. 
Set $\wt{\Om}_0=\{(\la,\ve)\in \Om\,|\, \wt{L}^{(\la,\ve)}\not=0\}
=\{(\la,\ve)\in \Om\,|\, \,\sh\wt{L}^{(\la,\ve)}\not=0\}$. 
We have the following corollary by \cite[Theorem (2.4.1)]{DR98}.  

\begin{cor}
Assume that $R$ is a field. Then  $\{\wt{L}^{(\la,\ve)}\,|\, (\la,\ve) \in \wt{\Om}_0\}$ 
$\big($resp. $\{\sh \wt{L}^{(\la,\ve)}\,|\\ (\la,\ve) \in \wt{\Om}_0\}$$\big)$ 
is a complete set of non-isomorphic right (resp. left) simple $\tA$-modules. 
\end{cor}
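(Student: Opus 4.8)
The plan is to obtain this corollary as a direct application of the general representation theory of standardly based algebras from \cite{DR98} to $\tA$. By Theorem \ref{tA-thm}, $\tA$ is a standardly based algebra over $R$ with standard basis $\wt{\ZC}^\a$, poset $(\Om,>)$, and index sets $\wt{I}(\la,\ve)$, $\wt{J}(\la,\ve)$ for $(\la,\ve)\in\Om$: the relations (\ref{aaap}) and (\ref{bbbp}) are exactly the axioms required in [loc.\,cit.]. Consequently the standard right and left modules $\wt{Z}^{(\la,\ve)}$, $\,\sh\wt{Z}^{(\la,\ve)}$, the bilinear form $\b$, the radicals, and the simple quotients $\wt{L}^{(\la,\ve)}$, $\,\sh\wt{L}^{(\la,\ve)}$ introduced in \ref{tA-general} coincide with the objects produced by the general theory. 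When $R$ is a field, \cite[Theorem (2.4.1)]{DR98} then asserts precisely that the nonzero members among $\{\wt{L}^{(\la,\ve)}\,|\,(\la,\ve)\in\Om\}$ (resp. $\{\,\sh\wt{L}^{(\la,\ve)}\,|\,(\la,\ve)\in\Om\}$) form a complete set of pairwise non-isomorphic simple right (resp. left) $\tA$-modules. Thus the only thing still to check is that the two nonvanishing conditions cut out the \emph{same} subset of $\Om$, which is what legitimizes the use of a single index set $\wt{\Om}_0$ in the statement.

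For that point I would argue at the level of the pairing $\b$. Fix $(\la,\ve)\in\Om$ and let $B$ be the matrix of $\b:\,\sh\wt{Z}^{(\la,\ve)}\times\wt{Z}^{(\la,\ve)}\ra R$ with respect to the bases $\{\,\sh\wt{c}_\Fs^{(\la,\ve)}\bigm|\Fs\in\wt{I}(\la,\ve)\}$ and $\{\wt{c}_\Ft^{(\la,\ve)}\bigm|\Ft\in\wt{J}(\la,\ve)\}$. By the definitions in \ref{tA-general}, $\rad\wt{Z}^{(\la,\ve)}$ is the right kernel of $B$ and $\rad\,\sh\wt{Z}^{(\la,\ve)}$ is its left kernel. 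Hence $\rad\wt{Z}^{(\la,\ve)}=\wt{Z}^{(\la,\ve)}$ holds precisely when $B=0$, and likewise $\rad\,\sh\wt{Z}^{(\la,\ve)}=\,\sh\wt{Z}^{(\la,\ve)}$ holds precisely when $B=0$. Therefore $\wt{L}^{(\la,\ve)}\not=0\iff B\not=0\iff\,\sh\wt{L}^{(\la,\ve)}\not=0$, so $\{(\la,\ve)\in\Om\bigm|\wt{L}^{(\la,\ve)}\not=0\}=\{(\la,\ve)\in\Om\bigm|\,\sh\wt{L}^{(\la,\ve)}\not=0\}$, and this common set is exactly $\wt{\Om}_0$. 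Combining this with the classification recalled above proves the corollary.

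The standard-basis axioms that \cite{DR98} requires have already been verified in Theorem \ref{tA-thm} (via Lemma \ref{pro-lem-para}), so no new algebra computation is needed here. The only genuinely new remark is the left/right symmetry of the vanishing of $\b$, and this is the (mild) place where I would expect to have to be a little careful: unlike the cellular algebras $\A$, $\aA$, $\oA$, the subalgebra $\tA$ carries no anti-automorphism interchanging $\wt{Z}^{(\la,\ve)}$ with $\,\sh\wt{Z}^{(\la,\ve)}$, so the coincidence of the left and right index sets cannot be read off from a duality and must instead be extracted from the bilinear form itself. Once that is settled the corollary is immediate.
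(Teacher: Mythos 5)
Your proposal is correct and follows essentially the same route as the paper: the paper simply invokes \cite[Theorem (2.4.1)]{DR98} applied to the standardly based structure of $\tA$ established in Theorem \ref{tA-thm}. Your extra verification via the matrix of $\b$ that the right and left nonvanishing conditions cut out the same subset of $\Om$ is a sound elementary justification of what the paper absorbs into the definition of $\wt{\Om}_0$ (and which is likewise contained in the cited result of \cite{DR98}).
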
 

\para
We define  $(\tA)^*=\{a^*\,|\,a\in \tA\}$, where $*$ is the anti-automorphism on $\A$ 
given in \ref{def-cellular}. 
Then one can check that 
$(\tA)^*$ is a standardly based algebra with a standard basis 
$(\wt{\ZC}^\a)^*=\big\{c_{\Ft\Fs}^\la= (c_{\Fs\Ft}^\la)^* \bigm| c_{\Fs\Ft}^\la \in \wt{\ZC}^\a \big\}$ 
in a similar way as in the case of $\tA$. 

\begin{prop}\
\begin{enumerate}
\item
$\tA\cap (\tA)^*= \aA. $
\item
$\A=\tA \cdot (\tA)^\ast =(\tA)^\ast \cdot \tA$. 
\end{enumerate}
\end{prop}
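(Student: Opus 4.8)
The plan is to prove the two identities by working directly with the basis elements $c_{\Fs\Ft}^\la \in \ZC$ and the partition $\CT(\la)=\CT^+_\a(\la)\sqcup\CT^-_\a(\la)$, keeping track of which "type" index $I(\la,\ve)$ each of $\Fs,\Ft$ lies in. For (i): by definition, $\aA$ is the $R$-span of those $c_{\Fs\Ft}^\la$ with $\Fs,\Ft\in I(\la,\ve)$ for a common $\ve$ (i.e.\ both in $\CT^+(\la)$ or both in $\CT^-(\la)$), while $\tA$ is the $R$-span of $c_{\Fs\Ft}^\la$ with $(\Fs,\Ft)\in\wt I(\la,\ve)\times\wt J(\la,\ve)$; explicitly, $\wt I(\la,0)\times\wt J(\la,0)=\CT^+(\la)\times\CT^+(\la)$ and $\wt I(\la,1)\times\wt J(\la,1)=\CT^-(\la)\times\CT(\la)$. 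Since $*$ sends $c_{\Fs\Ft}^\la\mapsto c_{\Ft\Fs}^\la$, the algebra $(\tA)^*$ is the span of $c_{\Fs\Ft}^\la$ with $(\Fs,\Ft)\in\wt J(\la,\ve)\times\wt I(\la,\ve)$, i.e.\ with $(\Fs,\Ft)$ in $\CT^+(\la)\times\CT^+(\la)$ or in $\CT(\la)\times\CT^-(\la)$. Because $\ZC$ is an $R$-basis of $\A$, the intersection $\tA\cap(\tA)^*$ is the span of exactly those $c_{\Fs\Ft}^\la$ lying in both index sets; for $\ve=0$ this is $\CT^+(\la)\times\CT^+(\la)$, and for the $\ve=1$ part one checks $(\CT^-(\la)\times\CT(\la))\cap(\CT(\la)\times\CT^-(\la))=\CT^-(\la)\times\CT^-(\la)$. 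Hence the intersection is spanned precisely by $\ZC^\a=\bigsqcup_{(\la,\ve)}\ZC^\a(\la,\ve)$, which is the defining basis of $\aA$; this gives (i).

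For (ii) I would show the two inclusions $\tA\cdot(\tA)^*\supseteq\A$ and $(\tA)^*\cdot\tA\supseteq\A$ (the reverse inclusions being trivial since $1_\A\in\aA\subseteq\tA\cap(\tA)^*$, using Theorem \ref{aA-thm} / Theorem \ref{tA-thm}). It suffices to produce every basis element $c_{\Fs\Ft}^\la$ ($\la\in\vL^+$, $\Fs,\Ft\in\CT(\la)$) as such a product. The key observation is that for each $\Fu\in\CT(\la)$ there is a unique $\mu=\mu(\Fu)\in\vL$ with $\Fu\in\CT(\la,\mu)$, and by Lemma \ref{e-lem} the idempotent $e_\mu$ is a linear combination of basis elements $c_{\Fx\Fy}^{\la'}$ with $\Fx,\Fy\in\CT(\la',\mu)$; in particular $e_\mu\in\aA$, and moreover $c_{\Fs\Ft}^\la e_{\mu(\Ft)}=c_{\Fs\Ft}^\la$ by (A4). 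So given $c_{\Fs\Ft}^\la$, I want to factor it through an element of $\wt I\times\wt J$ on the left (for $\tA$) times an element of $\wt J\times\wt I$ on the right (for $(\tA)^*$). The natural candidate is: if $\Fs\in\CT^+(\la)$ then $c_{\Fs\Ft}^\la\in\wt\ZC^\a(\la,0)$ already lies in $\tA$ and we write $c_{\Fs\Ft}^\la = c_{\Fs\Ft}^\la\cdot e_{\mu(\Ft)}$ with $e_{\mu(\Ft)}\in\aA\subseteq(\tA)^*$; if instead $\Fs\in\CT^-(\la)$ then $c_{\Fs\Ft}^\la\in\wt\ZC^\a(\la,1)$ (here $\wt J(\la,1)=\CT(\la)$ accommodates any $\Ft$), so again $c_{\Fs\Ft}^\la\in\tA$ and $c_{\Fs\Ft}^\la=c_{\Fs\Ft}^\la e_{\mu(\Ft)}\in\tA\cdot(\tA)^*$. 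This handles $\A\subseteq\tA\cdot(\tA)^*$ trivially — indeed $\tA\cdot\aA=\tA$ and already $\tA$ need not be all of $\A$, so this naive argument does \emph{not} work; the point is that $\wt\ZC^\a$ misses the basis elements $c_{\Fs\Ft}^\la$ with $\Fs\in\CT^+(\la)$, $\Ft\in\CT^-(\la)$, and these must be recovered as genuine products.

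So the real content of (ii) is: for $\Fs\in\CT^+(\la)$ and $\Ft\in\CT^-(\la)$, express $c_{\Fs\Ft}^\la$ as a sum of products $x\cdot y$ with $x\in\tA$, $y\in(\tA)^*$. The idea is to use $e_{\mu(\Ft)}\in\aA$: write $c_{\Fs\Ft}^\la = c_{\Fs\Ft}^\la\cdot e_{\mu(\Ft)}$ and expand $e_{\mu(\Ft)}$ by Lemma \ref{e-lem} as $\sum r_{\Fx\Fy}^{\la'}c_{\Fx\Fy}^{\la'}$ with $\Fx,\Fy\in\CT(\la',\mu(\Ft))\subseteq\CT^-(\la')$ (since $\mu(\Ft)$ has $\a(\mu(\Ft))\prec\a(\la)$ because $\Ft\in\CT^-(\la)$ — wait, more carefully, $\a(\la')\succeq\a(\mu(\Ft))$ and we want $\Fy\in\CT^-(\la')$, i.e.\ $\a(\la')\succ\a(\mu(\Ft))$, which may fail). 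This is exactly where the argument needs care and where the hypotheses on $\a$ must be used; the cleanest route is probably downward induction on $\la$ in the partial order: by the cellular relation (\ref{cellular-rel}) and Lemma \ref{pro-lem-para}, a product $c_{\Fs\Fa}^\la\cdot c_{\Fb\Ft}^\la$ with a suitable choice of $\Fa,\Fb\in\CT^-(\la)$ (using $c_{\Fb\Ft}^\la\in\wt\ZC^\a(\la,1)$ since $\wt J(\la,1)=\CT(\la)\ni\Ft$, and $(c_{\Fs\Fa}^\la)^{}\in\wt\ZC^\a(\la,0)$? no, $\Fa\in\CT^-(\la)$ forces the wrong block) — hmm. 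The workable pairing is $c_{\Fs\Fa}^\la$ with $\Fs\in\CT^+(\la)$, $\Fa\in\CT^+(\la)$, so $c_{\Fs\Fa}^\la\in\tA$, times $(c_{\Ft\Fa}^\la)^*=$ an element of $(\tA)^*$ requiring $(\Ft,\Fa)\in\wt J\times\wt I$ for some $\ve$; with $\ve=1$, $\wt I(\la,1)=\CT^-(\la)$ and $\wt J(\la,1)=\CT(\la)$, so we'd need $\Fa\in\CT^-(\la)$ — contradiction with $\Fa\in\CT^+(\la)$. The resolution: use the bilinear-form/idempotent trick. Pick any $\Fr\in\CT^+(\la)$ with $\langle c_\Fr^\la,c_\Fr^\la\rangle$-type nondegeneracy replaced by the honest statement that $c_{\Fs\Fr}^\la c_{\Fr\Ft}^\la\equiv \langle c_\Fr^\la,c_\Fr^\la\rangle c_{\Fs\Ft}^\la$ is not available in general; instead use $e_\nu$ for $\nu=\mu(\Fr)$ and the full module-theoretic machinery. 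In short, the main obstacle — and the step I would spend the most effort on — is precisely showing that the "mixed" basis elements $c_{\Fs\Ft}^\la$ with $\Fs\in\CT^+(\la),\Ft\in\CT^-(\la)$ lie in $\tA\cdot(\tA)^*$; I expect this follows by a downward induction on $\la\in\vL^+$ together with a careful bookkeeping of types via Lemma \ref{key-lem}(ii) and the expansions of the $e_\mu$ from Lemma \ref{e-lem}, possibly after first establishing that $\tA$ and $(\tA)^*$ together generate, as a two-sided module over $\aA$, a set that is $*$-stable and closed under the cellular multiplication, hence all of $\A$. The identity $(\tA)^*\cdot\tA=\A$ then follows by applying $*$ to $\tA\cdot(\tA)^*=\A$, using $\A^*=\A$.
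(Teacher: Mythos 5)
Your part (i) is fine and is exactly the paper's argument: both $\tA$ and $(\tA)^\ast$ are spanned by subsets of the basis $\ZC$, so the intersection is the span of the intersection of index sets, which is $(\CT^+(\la)\times\CT^+(\la))\cup(\CT^-(\la)\times\CT^-(\la))$, i.e.\ the index set of $\ZC^\a$.

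Part (ii), however, has a genuine gap, and the gap comes from a misreading of $(\tA)^\ast$. You correctly observe that the basis elements $c_{\Fs\Ft}^\la$ with $\Fs\in\CT^+(\la)$, $\Ft\in\CT^-(\la)$ do not lie in $\wt{\ZC}^\a$, but you then conclude that they ``must be recovered as genuine products'' and leave that step as ``I expect this follows by a downward induction on $\la$\dots'', which is not a proof. In fact the obstacle you set up is not there: since $\wt{J}(\la,1)=\CT(\la)$, the set $\wt{\ZC}^\a$ contains every $c_{\Fu\Fv}^\la$ with $\Fu\in\CT^-(\la)$ and $\Fv\in\CT(\la)$ arbitrary, so after applying $\ast$ the set $(\wt{\ZC}^\a)^\ast$ contains every $c_{\Fs\Ft}^\la$ with $\Fs\in\CT(\la)$ arbitrary and $\Ft\in\CT^-(\la)$ --- in particular all of your ``mixed'' elements lie in $(\tA)^\ast$ already. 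Thus every element of $\ZC$ lies in $\tA$ (when $\Fs\in\CT^-(\la)$, or when $\Fs,\Ft\in\CT^+(\la)$) or in $(\tA)^\ast$ (when $\Ft\in\CT^-(\la)$, or when $\Fs,\Ft\in\CT^+(\la)$), and since $1_\A\in\tA$ and $1_\A\in(\tA)^\ast$ (both contain $\aA$), one gets $c_{\Fs\Ft}^\la=c_{\Fs\Ft}^\la\cdot 1_\A\in\tA\cdot(\tA)^\ast$ in the first case and $c_{\Fs\Ft}^\la=1_\A\cdot c_{\Fs\Ft}^\la\in\tA\cdot(\tA)^\ast$ in the second; the reverse inclusion $\tA\cdot(\tA)^\ast\subset\A$ is trivial, and $(\tA)^\ast\cdot\tA=\A$ follows in the same way (or by applying $\ast$). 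This one-line observation, $\ZC\subset\tA\cup(\tA)^\ast$, is exactly the paper's proof; no induction on $\la$, no expansion of the idempotents $e_\mu$, and no use of Lemma \ref{key-lem} or Lemma \ref{pro-lem-para} is needed.
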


\begin{proof}
(\roi) is  clear by comparing the cellular bases of $\tA$, $(\tA)^\ast$ and of $\aA$. 

For (\roii), we  only show the first equality, as the second equality is obtained in a similar way. 
It is clear that $\A \supset \tA \cdot (\tA)^\ast$. 
Thus it is enough to show that $\A \subset \tA \cdot (\tA)^\ast$. 
For $c_{\Fs\Ft}^\la \in \ZC$, we have $c_{\Fs\Ft}^\la \in \tA$ or $c_{\Fs\Ft}^\la \in (\tA)^\ast$. 
(Note that it may happen that  $c_{\Fs\Ft}^\la \in \tA$ and $c_{\Fs\Ft}^\la \in (\tA)^\ast$.)
Moreover, we know that $1_\A \in \tA$ and $1_\A \in (\tA)^\ast$. 
Thus, we have $c_{\Fs\Ft}^\la \cdot 1_{\A}\in \tA \cdot (\tA)^\ast$ if $c_{\Fs\Ft}^\la \in \tA$, 
and $1_\A \cdot c_{\Fs\Ft}^\la \in \tA \cdot (\tA)^\ast$ if $c_{\Fs\Ft}^\la\in (\tA)^\ast$. 
This implies that $\A \subset \tA\cdot (\tA)^\ast$. 
\end{proof}

\para \label{bl}
Recall that $\oA$ is the quotient algbera of $\aA$. 
We can also interpret $\oA$ as a quotient algebra of $\tA$ in the following way. 
Recall that $\wh{\Om}=\{(\la,\ve)\in \Om\,|\,\ve=1\}$. 
Let $\tA(\wh{\Om})$ be an $R$-submodule of $\tA$ spanned by 
$\big\{c_{\Fs\Ft}^\la \bigm| (\Fs,\Ft)\in \wt{I}(\la,1)\times \wt{J}(\la,1) \text{ for some } (\la,1)\in \wh{\Om}\big\}$. 
By the second and the fourth formulas in Lemma \ref{pro-lem-para}, 
$\tA(\wh{\Om})$  turns out to be a two-sided ideal of $\tA$. 
Thus we can define a quotient algebra $\tA/\tA(\wh{\Om})$, 
which is clearly isomorphic to  $\oA$.  
(Note that $\tA(\wh{\Om}) \cap \aA= \aA(\wh{\Om})$.) 
Thus we have the following commutative diagram.\\ 
\hspace{5em}
\begin{picture}(30,60)
\put(0,40){$\aA$ \scalebox{2}[1]{$\hookrightarrow $} $\tA$ \scalebox{2}[1]{$\hookrightarrow $} $\A$}
\put(50,0){$\oA$}
\put(5,10){\rotatebox{140}{\scalebox{4}[0.6]{$\twoheadleftarrow$}}}
\put(53,12){\rotatebox{90}{\scalebox{2.2}[1.2]{$\twoheadleftarrow$}}}
\end{picture}\vspace{2mm}\\

\para
Recall that, for $\la \in \vL^+$, $\CT^+(\la)=I(\la,0)=\wt{I}(\la,0)=\wt{J}(\la,0)$, 
$\CT^-(\la)=I(\la,1)=\wt{I}(\la,1)$ and $\CT(\la)=\wt{J}(\la,1)$. 
The following relations among bilinear forms 
$\lan\,,\,\ran$, $\lan\,,\,\ran_\a$, $\lan\,,\,\ran_{\ol{\a}}$ and $\b(\,,\,)$ 
are immediate from the definitions. 
\begin{align}
&\lan c_{\Fs}^\la ,c_{\Ft}^\la \ran 
	=\b( \,\sh\wt{c}_{\Ft}^{(\la,0)}, \wt{c}_{\Fs}^{(\la,0)}) 
	=\lan c_{\Fs}^{(\la,0)},c_{\Ft}^{(\la,0)}\ran_\a 
	=\lan \ol{c}_{\Fs}^\la, \ol{c}_{\Ft}^\la \ran_{\ol{\a}}
\quad \text{for } \Fs,\Ft \in \CT^+(\la). \label{bl1}\\ 
&\lan c_{\Fs}^\la ,c_{\Ft}^\la \ran 
	=\b( \,\sh\wt{c}_{\Ft}^{(\la,1)}, \wt{c}_{\Fs}^{(\la,1)}) 
	=\lan c_{\Fs}^{(\la,1)},c_{\Ft}^{(\la,1)}\ran_\a 
\quad \text{for }\Fs,\Ft \in \CT^-(\la). \label{bl2}\\ 
&\lan c_{\Fs}^\la ,c_{\Ft}^\la \ran =0 
	=\b( \,\sh\wt{c}_{\Ft}^{(\la,1)}, \wt{c}_{\Fs}^{(\la,1)})
\quad \text{for }\Fs \in \CT^+(\la),\Ft \in \CT^-(\la). \label{bl3}\\
&\lan c_{\Fs}^\la ,c_{\Ft}^\la \ran =0 
\quad \text{for }\Fs \in \CT^-(\la),\Ft \in \CT^+(\la). \label{bl4}
\end{align}

\para 
For a cellular algebra $\A$, 
a right standard module $W^\la$ and a left standard module $\sh W^\la$ have symmetric properties 
thanks to the algebra anti-automorphism\nobreak\hspace{1mm}$\ast$. (See Section \ref{cellular}.) 
Thus, we have only to consider right standard modules for $\A$. 
For $\aA$ and $\oA$, the situation  is similar. 
But, since $\tA$ does not have such algebra anti-automorphism $\ast$, 
we need to consider left and right standard modules for $\tA$. 
For this we prepare some notation for left standard modules of $\aA$. 
(For left standard modules of $\A$, we follow the notation in Section \ref{cellular}).  
For $(\la,\ve)\in \Om$, let $\sh Z^{(\la,\ve)}$ be a left standard module of $\aA$ 
with an $R$-free basis $\{\,\sh c_{\Ft}^{(\la,\ve)}\,|\,\Ft \in I(\la,\ve)\}$.  
Put $\sh L^{(\la,\ve)}=\,\sh Z^{(\la,\ve)}/ \rad\,\sh Z^{(\la,\ve)}$. 
Then $\{\,\sh L^{(\la,\ve)}\,|\,(\la,\ve)\in \Om_0\}$ is a complete set of non-isomorphic 
left simple modules of $\aA$ when $R$ is a field. 

If we regard $\tA$-modules as $\aA$-modules by restriction, 
we have the following results. 

\begin{prop}[right module structure for $\tA$ and $\aA$]\ \label{tA-aA}

For $\la \in \vL^+$, the following holds.
\begin{enumerate}
\item
There exists an isomorphism ${Z}^{(\la,0)} \stackrel{\sim}{\ra} \wt{Z}^{(\la,0)}$ of $\aA$-modules by the assignment  
${c}_{\Ft}^{(\la,0)}\mapsto \wt{c}_{\Ft}^{(\la,0)} \,\,\big(\Ft\in \CT^+(\la)\big)$.
\item
There exists an isomorphism ${Z}^{(\la,0)} \oplus {Z}^{(\la,1)} \stackrel{\sim}{\ra} \wt{Z}^{(\la,1)}$ of $\aA$-modules by the assignment 
$({c}_{\Ft}^{(\la,0)},0) \mapsto \wt{c}_{\Ft}^{(\la,1)}\,\,\big(\Ft\in \CT^+(\la)\big)$ and 
$(0,{c}_{\Ft}^{(\la,1)}) \mapsto \wt{c}_{\Ft}^{(\la,1)}\,\,\big(\Ft\in \CT^-(\la)\big)$. 
\item
${L}^{(\la,0)} \cong \wt{L}^{(\la,0)}$ as $\aA$-modules.
\item
${L}^{(\la,1)} \cong \wt{L}^{(\la,1)}$ as $\aA$-modules.
\end{enumerate}
\end{prop}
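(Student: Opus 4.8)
The plan is to transport everything through explicit maps on bases and then to compare bilinear forms, exploiting that $\aA\subseteq\tA$ acts on the $\tA$-standard module $\wt{Z}^{(\la,\ve)}$ with the very same structure constants that govern the $\aA$-standard modules $Z^{(\la,0)}$ and $Z^{(\la,1)}$.

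First I would record two preliminary facts. From the definition (\ref{def-order-Om}) of the order on $\Om$ and the shape of $\wt{\ZC}^\a(\la,\ve)$ one gets $(\tA)^{\vee (\la,1)}=\tA\cap\A^{\vee\la}$ and $(\tA)^{\vee(\la,0)}\supseteq\tA\cap\A^{\vee\la}$. Next, for $a\in\aA$, $\la\in\vL^+$ and $\Ft\in\CT(\la)$, write the cellular relation of $\A$ as $c_{\Fs\Ft}^\la a\equiv\sum_{\Fv\in\CT(\la)}r_\Fv^{(\Ft,a)}c_{\Fs\Fv}^\la\mod\A^{\vee\la}$; I claim $r_\Fv^{(\Ft,a)}=0$ whenever $\Ft$ and $\Fv$ lie in different members of the partition $\CT(\la)=\CT^+(\la)\sqcup\CT^-(\la)$. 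This I would deduce from Proposition \ref{aA-A}(\roi): since $f_\la:Z^{(\la,0)}\oplus Z^{(\la,1)}\stackrel{\sim}{\ra}W^\la$ is an $\aA$-isomorphism and the $Z^{(\la,\ve)}$-structure constants equal those of $\A$ by Remark \ref{remark-relation-A-aA}(\roi), comparing $f_\la\big((c_\Ft^{(\la,0)},0)a\big)$ with $c_\Ft^\la a$ forces the $\CT^-(\la)$-components of the latter to vanish, and symmetrically with $\CT^+$ and $\CT^-$ interchanged. Combining these two facts and the cellular relation of $\A$ with the defining relation (\ref{bbbp}), one sees that for $a\in\aA$ the right action on $\wt{Z}^{(\la,\ve)}$ is $\wt{c}_\Ft^{(\la,\ve)}a=\sum_{\Fv\in\wt{J}(\la,\ve)}r_\Fv^{(\Ft,a)}\wt{c}_\Fv^{(\la,\ve)}$ with the same constants $r_\Fv^{(\Ft,a)}$, and that $r_\Fv^{(\Ft,a)}=0$ for $\Fv$ in the part of $\CT(\la)$ not containing $\Ft$.

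Granting this, (\roi) is immediate: the assignment $c_\Ft^{(\la,0)}\mapsto\wt{c}_\Ft^{(\la,0)}$ is an isomorphism of $R$-modules since $I(\la,0)=\wt{J}(\la,0)=\CT^+(\la)$, and it is $\aA$-equivariant by the previous paragraph. For (\roii), since $\wt{J}(\la,1)=\CT(\la)=\CT^+(\la)\sqcup\CT^-(\la)$, the assignment $(c_\Ft^{(\la,0)},0)\mapsto\wt{c}_\Ft^{(\la,1)}$ $\big(\Ft\in\CT^+(\la)\big)$, $(0,c_\Ft^{(\la,1)})\mapsto\wt{c}_\Ft^{(\la,1)}$ $\big(\Ft\in\CT^-(\la)\big)$ is an $R$-isomorphism; the vanishing of the cross structure constants shows that $\wt{c}_\Ft^{(\la,1)}a$ stays inside the summand indexed by the part containing $\Ft$, carrying exactly the $Z^{(\la,0)}$- resp.\ $Z^{(\la,1)}$-action there, so the map is $\aA$-equivariant.

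For (\roiii) and (\roiv) I would transport the radicals through the isomorphisms just constructed, using (\ref{bl1})--(\ref{bl3}). For (\roiii), the relation (\ref{bl1}) says the Gram matrix of $\b$ on $\,\sh\wt{Z}^{(\la,0)}\times\wt{Z}^{(\la,0)}$ coincides with that of $\lan\,,\,\ran_\a$ on $Z^{(\la,0)}$, so the isomorphism of (\roi) carries $\rad Z^{(\la,0)}$ onto $\rad\wt{Z}^{(\la,0)}$ and induces $L^{(\la,0)}\cong\wt{L}^{(\la,0)}$. For (\roiv), by (\ref{bl2}) and (\ref{bl3}) the Gram matrix of $\b$ on $\,\sh\wt{Z}^{(\la,1)}\times\wt{Z}^{(\la,1)}$, with rows indexed by $\CT^-(\la)$ and columns by $\CT(\la)$, has its $\CT^+(\la)$-column block zero and its $\CT^-(\la)$-column block equal to the Gram matrix of $\lan\,,\,\ran_\a$ on $Z^{(\la,1)}$; hence the isomorphism of (\roii) identifies $\rad\wt{Z}^{(\la,1)}$ with $Z^{(\la,0)}\oplus\rad Z^{(\la,1)}$, and passing to quotients yields $\wt{L}^{(\la,1)}\cong Z^{(\la,1)}/\rad Z^{(\la,1)}=L^{(\la,1)}$ as $\aA$-modules. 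The delicate point throughout will be the preliminary paragraph: showing that restricting the $\tA$-standard module $\wt{Z}^{(\la,\ve)}$ to $\aA$ recovers the $\aA$-standard module structure on the nose. This rests precisely on the comparison of $(\tA)^{\vee(\la,\ve)}$ with $\tA\cap\A^{\vee\la}$ together with the vanishing of the cross structure constants for $a\in\aA$; once that is secured, (\roi)--(\roiv) are routine verifications with bases and Gram matrices.
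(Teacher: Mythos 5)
Your proof is correct and follows essentially the same route as the paper: you identify the restricted $\aA$-action on $\wt{Z}^{(\la,\ve)}$ with the standard $\aA$-module structure constants (the paper does this by replacing $(\aA)^{\vee(\la,\ve)}$ with $\aA\cap(\tA)^{\vee(\la,\ve)}$ and arguing as in Proposition \ref{aA-A}), and then you compare radicals via (\ref{bl1})--(\ref{bl3}) exactly as the paper does for (\roiii) and (\roiv). One small correction: $(\tA)^{\vee(\la,1)}$ is only \emph{contained in} $\tA\cap\A^{\vee\la}$ (it omits the elements of $\wt{\ZC}^\a(\la',0)$ with $\la'>\la$), not equal to it; this does not affect your argument, since that inclusion together with the linear independence of $\wt{\ZC}^\a$ already yields the matching of the coefficients in (\ref{bbbp}) with those of the cellular relation of $\A$.
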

\begin{proof}
Note that 
we can replace $(\aA)^{\vee(\la,\ve)}$ by $(\aA\cap (\tA)^{\vee(\la,\ve)})$ 
in (\ref{cellular-rel-aA}), 
then 
(\roi) and (\roii) are obtained in similar way as in the proof of Proposition \ref{aA-A}. 
By (\ref{bl1}), we have $\rad Z^{(\la,0)} = \rad \wt{Z}^{(\la,0)}$ under the isomorphism $Z^{(\la,0)}\cong \wt{Z}^{(\la,0)}$ as $\aA$-modules. 
This implies (\roiii). 
Moreover, by (\ref{bl1}) $\sim$ (\ref{bl3}), we have $Z^{(\la,0)} \oplus \rad Z^{(\la,1)} =\rad \wt{Z}^{(\la,1)}$ under the isomorphism 
${Z}^{(\la,0)} \oplus {Z}^{(\la,1)} \cong \wt{Z}^{(\la,1)}$ as $\aA$-modules. 
This proves (\roiv). 
\end{proof}

\begin{prop}[left module structure for  $\tA$ and $\aA$]\ \label{tA-aA-left}

For $\la \in \vL^+$, the following holds.
\begin{enumerate}
\item
There exists an isomorphism $\sh{Z}^{(\la,0)} \stackrel{\sim}{\ra} \,\sh \wt{Z}^{(\la,0)}$ of $\aA$-modules by the assignment 
$\sh{c}_{\Ft}^{(\la,0)}\mapsto \,\sh\wt{c}_{\Ft}^{(\la,0)} \,\,\big(\Ft\in \CT^+(\la)\big)$.
\item
There exists an isomorphism $\sh{Z}^{(\la,1)} \stackrel{\sim}{\ra} \,\sh \wt{Z}^{(\la,1)}$ of $\aA$-modules by the assignment 
$\sh{c}_{\Ft}^{(\la,1)}\mapsto \sh\wt{c}_{\Ft}^{(\la,1)} \,\,\big(\Ft\in \CT^-(\la)\big)$.
\item
$\sh{L}^{(\la,0)} \cong \,\sh\wt{L}^{(\la,0)}$ as $\aA$-module.
\item
$\sh{L}^{(\la,1)} \cong \,\sh\wt{L}^{(\la,1)}$ as $\aA$-module.
\end{enumerate}
\end{prop}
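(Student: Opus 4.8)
The plan is to mirror the proof of Proposition~\ref{tA-aA}, exploiting the one genuine simplification in the left-module case: the two index sets already coincide, $\wt{I}(\la,\ve)=I(\la,\ve)$ for $\ve=0,1$. Hence, unlike Proposition~\ref{tA-aA}~(\roii), no direct sum appears in (\roi) and (\roii) — the stated assignment on basis elements is outright a bijection — so the only work in (\roi) and (\roii) is to see that it is $\aA$-linear, after which (\roiii) and (\roiv) follow by transporting radicals.

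For (\roi) and (\roii) I would compare the two module structures inside a common quotient. By the argument of \ref{injective-standard-module} applied to the cellular algebra $\aA$ in its left version (obtained from (\ref{cellular-rel-aA}) via $\ast$), there is an injection of left $\aA$-modules $\sh Z^{(\la,\ve)}\hra \aA/(\aA)^{\vee(\la,\ve)}$ sending $\sh c_{\Fu}^{(\la,\ve)}\mapsto c_{\Fu\Ft}^\la+(\aA)^{\vee(\la,\ve)}$ for a fixed $\Ft\in I(\la,\ve)$; the same argument for $\tA$, using (\ref{aaap}), gives an injection of left $\tA$-modules $\sh\wt{Z}^{(\la,\ve)}\hra \tA/(\tA)^{\vee(\la,\ve)}$ sending $\sh\wt{c}_{\Fu}^{(\la,\ve)}\mapsto c_{\Fu\Ft}^\la+(\tA)^{\vee(\la,\ve)}$, where the \emph{same} index $\Ft\in I(\la,\ve)\subset\wt{J}(\la,\ve)$ is used. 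Since $\aA\subset\tA$ and $(\aA)^{\vee(\la,\ve)}=\aA\cap(\tA)^{\vee(\la,\ve)}$ — which one checks directly from the definitions of $\ZC^\a(\la',\ve')$ and $\wt{\ZC}^\a(\la',\ve')$ — the first injection composes with $\aA/(\aA)^{\vee(\la,\ve)}\hra\tA/(\tA)^{\vee(\la,\ve)}$ to an injection of left $\aA$-modules with the same formula on basis elements. Because $\wt{I}(\la,\ve)=I(\la,\ve)$, the two $\aA$-equivariant injections have the same image, namely the $R$-span of $\{c_{\Fu\Ft}^\la+(\tA)^{\vee(\la,\ve)}\mid \Fu\in I(\la,\ve)\}$; hence $\sh c_{\Ft}^{(\la,\ve)}\mapsto\sh\wt{c}_{\Ft}^{(\la,\ve)}$ is an isomorphism of $\aA$-modules, which is (\roi) for $\ve=0$ and (\roii) for $\ve=1$. (Equivalently, as in Remark~\ref{remark-relation-A-aA}~(\roi), the structure constants of the $\aA$-action on either side are read off from the same products in $\A$ through (\ref{cellular-rel-left}), so they agree once the identical bases are identified.)

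For (\roiii) and (\roiv) I would identify the radicals via the relations (\ref{bl1})--(\ref{bl3}) among the bilinear forms, exactly as in the proof of Proposition~\ref{tA-aA}. Under the isomorphism of (\roi) together with $Z^{(\la,0)}\cong\wt{Z}^{(\la,0)}$ from Proposition~\ref{tA-aA}~(\roi), the pairing $\b(\,\sh\wt{c}_{\Ft}^{(\la,0)},\wt{c}_{\Fs}^{(\la,0)})$ is identified, by (\ref{bl1}) and the symmetry of $\lan\,,\,\ran_\a$, with the cellular form of $\aA$ defining $\rad\,\sh Z^{(\la,0)}$; hence $\rad\,\sh Z^{(\la,0)}$ corresponds to $\rad\,\sh\wt{Z}^{(\la,0)}$ and (\roiii) follows. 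For (\roiv), note that $\wt{Z}^{(\la,1)}$ has basis indexed by $\wt{J}(\la,1)=\CT(\la)=\CT^+(\la)\sqcup\CT^-(\la)$, that by (\ref{bl3}) the form $\b$ pairs $\sh\wt{Z}^{(\la,1)}$ trivially with the $\CT^+(\la)$-part of $\wt{Z}^{(\la,1)}$, and that by (\ref{bl2}) it agrees with $\lan\,,\,\ran_\a$ on the $\CT^-(\la)$-part; therefore, under the isomorphism $\sh Z^{(\la,1)}\cong\sh\wt{Z}^{(\la,1)}$ of (\roii), $\rad\,\sh\wt{Z}^{(\la,1)}$ corresponds to $\rad\,\sh Z^{(\la,1)}$, giving (\roiv).

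I expect the only delicate point to lie in (\roi)--(\roii): one must make sure that the $\aA$-module structures on $\sh Z^{(\la,\ve)}$ and on $\sh\wt{Z}^{(\la,\ve)}$, defined a priori by congruences modulo the \emph{different} submodules $(\aA)^{\vee(\la,\ve)}$ and $(\tA)^{\vee(\la,\ve)}$, really do have the same structure constants. This reduces to the equality $(\aA)^{\vee(\la,\ve)}=\aA\cap(\tA)^{\vee(\la,\ve)}$ together with the observation, furnished by Lemmas~\ref{pro-lem} and \ref{pro-lem-para}, that the relevant products $c_{\Fs_1\Ft_1}^{\la_1}c_{\Fs_2\Ft_2}^{\la_2}$, read modulo $\A^{\vee\la}$, produce only basis elements $c_{\Fu\Ft}^\la$ whose first index $\Fu$ already lies in the correct piece $\CT^{+}(\la)$ or $\CT^{-}(\la)$. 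Once this bookkeeping is in place, the remaining steps are routine, just as in Propositions~\ref{aA-A} and \ref{tA-aA}.
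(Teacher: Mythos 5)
Your argument is correct and follows essentially the same route as the paper, which simply refers back to Proposition \ref{tA-aA}: one identifies $\sh Z^{(\la,\ve)}$ and $\sh\wt{Z}^{(\la,\ve)}$ inside the appropriate quotients using $(\aA)^{\vee(\la,\ve)}=\aA\cap(\tA)^{\vee(\la,\ve)}$ and the coincidence $\wt{I}(\la,\ve)=I(\la,\ve)$ (the paper's remark that $\sh\wt{Z}^{(\la,1)}$ is indexed by $\wt{I}(\la,1)$, which is why no direct summand appears and the case $\ve=1$ is simpler than in Proposition \ref{tA-aA}), and then matches radicals via (\ref{bl1})--(\ref{bl3}). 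Your fleshing-out of these steps, including the verification of the ideal intersection and the use of the symmetry of $\lan\,,\,\ran_\a$, is sound and consistent with the paper's intended proof.
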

\begin{proof}
(\roi) and (\roiii) are similar to the proof of Proposition \ref{tA-aA}.  
We remark that $\sh\wt{Z}^{(\la,1)}$ has a basis indexed by $\wt{I}(\la,1)$. 
Then (\roii) and (\roiv) are obtained in a similar way as (\roi), (\roiii), 
which is simpler than Proposition \ref{tA-aA}. 
\end{proof}

If we regard $\A$-modules as $\tA$-modules by restriction,  
we have the following results. 
\begin{prop}[right module structure for $\tA$ and $\A$]\ \label{tA-A}

For $\la \in \vL^+$, the following holds. 
\begin{enumerate}
\item
There exists an injective  homomorphism $\wt{f}_\la:\wt{Z}^{(\la,0)}\ra W^\la$ of $\tA$-modules by the assignment 
$\wt{c}_{\Ft}^{(\la,0)} \mapsto {c}_{\Ft}^{\la}$ $\big(\Ft\in \CT^+(\la)\big)$. 
\item
There exists an isomorphism $\wt{g}_\la:\wt{Z}^{(\la,1)} \stackrel{\sim}{\ra} W^\la$ of $\tA$-modules by the assignment 
$\wt{c}_{\Ft}^{(\la,1)} \mapsto c_{\Ft}^\la$ $\big(\Ft\in \CT(\la)\big)$. 
\item
$L^\la$ contains $\wt{L}^{(\la,0)}$ as an $\tA$-submodule. 
\item
$L^\la/\wt{L}^{(\la,0)}\cong \wt{L}^{(\la,1)}$ as $\tA$-modules. 
\end{enumerate}
\end{prop}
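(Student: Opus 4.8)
The plan is to follow the template of the proof of Proposition~\ref{aA-A}, but using $\tA$ in place of $\aA$ and the comparison of bilinear forms $(\ref{bl1})$--$(\ref{bl4})$ in place of Remark~\ref{remark-relation-A-aA}. For $(\roi)$ and $(\roii)$ the maps $\wt f_\la$ and $\wt g_\la$ are visibly $R$-linear, and respectively injective (as $\CT^+(\la)\subseteq\CT(\la)$) and bijective (both sides being $R$-free with basis indexed by $\CT(\la)=\wt J(\la,1)$); so the only issue is $\tA$-equivariance. By Theorem~\ref{tA-thm} the right $\tA$-action on $\wt Z^{(\la,\ve)}$ is governed by the coefficients $r_\Fv^{(\Ft,a)}$ of $(\ref{bbbp})$, while that on $W^\la$ is governed by the coefficients of $(\ref{cellular-rel})$; hence it suffices to show these two coefficient systems agree for $\Ft,\Fv\in\wt J(\la,\ve)$. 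When $\ve=1$ this is immediate: by the definition $(\ref{def-order-Om})$ of the order on $\Om$, $(\la',\ve')>(\la,1)$ forces $\ve'=1$ and $\la'>\la$, so $(\tA)^{\vee(\la,1)}\subseteq\A^{\vee\la}$; reducing $(\ref{bbbp})$ further modulo $\A^{\vee\la}$ and comparing with $(\ref{cellular-rel})$ (both coefficient families being independent of the fixed first index, and $\{c_{\Fs\Fv}^\la\mid\Fv\in\CT(\la)\}$ being linearly independent mod $\A^{\vee\la}$) gives the desired equality, hence $(\roii)$.

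For $(\roi)$, i.e. the case $\ve=0$, one cannot argue this way since $(\tA)^{\vee(\la,0)}$ also contains all $\wt\ZC^\a(\la',1)$. Instead I would invoke the first formula of Lemma~\ref{pro-lem-para}: for $\Fs,\Ft\in\CT^+(\la)$ and $a\in\tA$, the product $c_{\Fs\Ft}^\la a$ is an $R$-combination of elements of $\wt\ZC^\a(\la,0)$ together with an element of $\A^{\vee\la}\cap\tA$. Expanding the same product in the cellular basis $\ZC$ via $(\ref{cellular-rel})$ and matching the coefficients of the level-$\la$ basis elements (which are linearly independent modulo $\A^{\vee\la}$) yields simultaneously that the level-$\la$ part is supported on $\{c_{\Fs\Fv}^\la\mid\Fv\in\CT^+(\la)\}$ — so $r_\Fv^{(\Ft,a)}=0$ for $\Fv\in\CT^-(\la)$, which in particular shows $c_\Ft^\la\cdot a$ stays in the $\CT^+(\la)$-span, i.e.\ $\wt f_\la$ lands in a $\tA$-submodule of $W^\la$ — and that on $\CT^+(\la)$ the two coefficient systems coincide. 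This proves $(\roi)$.

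For $(\roiii)$ I compose $\wt f_\la$ with the projection $W^\la\twoheadrightarrow L^\la$ and show the kernel is exactly $\rad\wt Z^{(\la,0)}$, so that the image is $\wt Z^{(\la,0)}/\rad\wt Z^{(\la,0)}=\wt L^{(\la,0)}$. Writing $x=\sum_{\Ft\in\CT^+(\la)}x_\Ft\,\wt c_\Ft^{(\la,0)}$, for $\Fu\in\CT^-(\la)$ the pairing $\lan\wt f_\la(x),c_\Fu^\la\ran$ vanishes by $(\ref{bl3})$, while for $\Fu\in\CT^+(\la)$ it equals $\b(\,\sh\wt c_\Fu^{(\la,0)},x)$ by $(\ref{bl1})$; hence $\wt f_\la(x)\in\rad W^\la$ iff $\b(\,\sh\wt c_\Fu^{(\la,0)},x)=0$ for all $\Fu\in\CT^+(\la)=\wt I(\la,0)$, i.e.\ iff $x\in\rad\wt Z^{(\la,0)}$. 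For $(\roiv)$, transport $\wt f_\la(\wt Z^{(\la,0)})$ through the isomorphism $\wt g_\la$ of $(\roii)$ to the $\tA$-submodule $N=\wt g_\la^{-1}\bigl(\wt f_\la(\wt Z^{(\la,0)})\bigr)\subseteq\wt Z^{(\la,1)}$, which is the span of $\{\wt c_\Ft^{(\la,1)}\mid\Ft\in\CT^+(\la)\}$; then $L^\la/\wt L^{(\la,0)}\cong W^\la/(\wt f_\la(\wt Z^{(\la,0)})+\rad W^\la)\cong\wt Z^{(\la,1)}/(N+\wt g_\la^{-1}(\rad W^\la))$, and it remains to check $N+\wt g_\la^{-1}(\rad W^\la)=\rad\wt Z^{(\la,1)}$. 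One shows $\b(\,\sh\wt c_\Fs^{(\la,1)},x)=\lan\wt g_\la(x),c_\Fs^\la\ran$ for all $x$ and $\Fs\in\CT^-(\la)=\wt I(\la,1)$ from $(\ref{bl2})$--$(\ref{bl3})$; this gives $N\subseteq\rad\wt Z^{(\la,1)}$ and $\wt g_\la^{-1}(\rad W^\la)\subseteq\rad\wt Z^{(\la,1)}$ at once. Conversely, for $x\in\rad\wt Z^{(\la,1)}$ write $x=x^++x^-$ along $\CT(\la)=\CT^+(\la)\sqcup\CT^-(\la)$; since $x^+\in N$ one gets $\lan\wt g_\la(x^-),c_\Fs^\la\ran=0$ for $\Fs\in\CT^-(\la)$, while $\lan\wt g_\la(x^-),c_\Fu^\la\ran=0$ for $\Fu\in\CT^+(\la)$ by $(\ref{bl4})$; hence $\wt g_\la(x^-)\in\rad W^\la$, so $x\in N+\wt g_\la^{-1}(\rad W^\la)$. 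This yields $(\roiv)$.

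The main obstacle is the case $\ve=0$ in $(\roi)$: unlike the situation for $\aA$ in Remark~\ref{remark-relation-A-aA}, the ideal $(\tA)^{\vee(\la,0)}$ is genuinely larger than $\A^{\vee\la}\cap\tA$, so one must extract the equality of structure constants — together with the vanishing $r_\Fv^{(\Ft,a)}=0$ for $\Fv\in\CT^-(\la)$ — by playing off the refined product formula of Lemma~\ref{pro-lem-para} against the cellular relation $(\ref{cellular-rel})$, i.e.\ by comparing the expansions in the two bases $\ZC$ and $\wt\ZC^\a$ simultaneously. Everything else is a routine transcription of the cellular-algebra arguments already used for $\aA$ and $\oA$.
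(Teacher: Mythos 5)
Your proof is correct, and for (i)--(iii) it follows essentially the paper's own route: the paper's proof consists precisely of the observation that in (\ref{bbbp}) one may replace $(\tA)^{\vee(\la,\ve)}$ by $\tA\cap\A^{\vee\la}$ thanks to the first and second formulas of Lemma \ref{pro-lem-para} --- which is exactly the coefficient comparison you carry out (your shortcut for $\ve=1$, deducing $(\tA)^{\vee(\la,1)}\subset\A^{\vee\la}$ directly from the order (\ref{def-order-Om}), is a harmless variant of using the second formula) --- followed by the same use of (\ref{bl1}) and (\ref{bl3}) to get $\wt{f}_\la^{-1}(\rad W^\la)=\rad\wt{Z}^{(\la,0)}$, hence (iii). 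Where you genuinely diverge is (iv): the paper uses (\ref{bl2}), (\ref{bl3}) only to produce a surjection $L^\la\ra\wt{L}^{(\la,1)}$ with kernel $N^\la$, and then identifies $N^\la=\wt{L}^{(\la,0)}$ indirectly, by restricting to $\aA$ and invoking Propositions \ref{aA-A} and \ref{tA-aA} (so $N^\la\cong L^{(\la,0)}$ as $\aA$-modules, which together with $\wt{L}^{(\la,0)}\subset L^\la$ forces the equality); you instead prove the radical identity $\rad\wt{Z}^{(\la,1)}=N+\wt{g}_\la^{-1}(\rad W^\la)$ directly from (\ref{bl2})--(\ref{bl4}) and transport through $\wt{g}_\la$. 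Your version stays entirely inside the bilinear-form formalism of $\tA$ (no appeal to the $\aA$-restriction results, whose implicit use of the non-isomorphy of the simple $\aA$-modules $L^{(\la,0)}$ and $L^{(\la,1)}$ the paper leaves unspoken) and it makes explicit the containment $\wt{g}_\la^{-1}\wt{f}_\la(\wt{Z}^{(\la,0)})\subset\rad\wt{Z}^{(\la,1)}$ that the paper records only in the remark following the proposition; the paper's version is shorter. Both arguments are valid.
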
 
\begin{proof}
Note that, in (\ref{bbbp}), 
we can replace $(\tA)^{\vee (\la,\ve)}$ by $(\tA \cap \A^{\vee \la})$ 
thanks to the first and the second formulas in Lemma \ref{pro-lem-para}, 
then
(\roi) and (\roii) are obtained in a similar way as in the proof of Proposition \ref{aA-A} and Proposition \ref{tA-aA}. 
By (\ref{bl1}) and (\ref{bl3}), 
we see that $\wt{f}_\la^{-1}(\rad W^\la) = \rad \wt{Z}^{(\la,0)}$. 
Thus $\wt{f}_\la$ induces the injective $\tA$-homomorphism $\wt{L}^{(\la,0)}\ra L^\la$. 
This proves (\roiii). 
By (\ref{bl2}) and (\ref{bl3}), 
we see that $\wt{g}^{-1}_\la(\rad W^\la) \subset \rad \wt{Z}^{(\la,1)}$. 
Thus $\wt{g}^{-1}_\la$ induces a surjective $\tA$-homomorphism $L^\la \ra \wt{L}^{(\la,1)}$. 
This implies that there exists an $\tA$-submodule $N^\la$ of $L^\la$ such that $L^\la/N^\la \cong \wt{L}^{(\la,1)}$. 
But we have $N^\la \cong {L}^{(\la,0)}$ as $\aA$-modules by Proposition \ref{aA-A} and Proposition \ref{tA-aA}. 
Since $\wt{L}^{(\la,0)}\subset L^\la$, 
we must have $N^\la = \wt{L}^{(\la,0)}$. 
This proves (\roiv). 
\end{proof}

\begin{prop}[left module structure for $\tA$ and $\A$]\ \label{tA-A-left}

For $\la \in \vL^+$, the following holds. 
\begin{enumerate}
\item
There exists an injective  homomorphism $\sh\wt{f}_\la:\,\sh\wt{Z}^{(\la,1)}\ra \,\sh W^\la$ of $\tA$-modules by the assignment 
$\sh\wt{c}_{\Ft}^{(\la,1)} \mapsto \,\sh{c}_{\Ft}^{\la}$ $\big(\Ft\in \CT^-(\la)\big)$. 
\item
$\sh L^\la$ contains $\sh\wt{L}^{(\la,1)}$ as an $\tA$-submodule. 
\item
$\sh L^\la/\,\sh\wt{L}^{(\la,1)}\cong \,\sh\wt{L}^{(\la,0)}$ as $\tA$-modules. 
\end{enumerate}
\end{prop}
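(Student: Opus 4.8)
The plan is to run the proof of Proposition \ref{tA-A} with left standard modules in place of right ones: $\sh W^\la$ for $W^\la$, the pairing $\b$ and the left bilinear forms for $\lan\,,\,\ran$ and $\lan\,,\,\ran_\a$, and the left analogues of (\ref{bl1})--(\ref{bl4}), which hold with each $c^\la_\bullet$ replaced by $\sh c^\la_\bullet$ since the Gram matrix of the cellular form of $\sh W^\la$ agrees with that of $W^\la$ (cf. Lemma \ref{lr-aA}). I may assume $\CT^+(\la)\neq\emptyset$: otherwise $\CT(\la)=\CT^-(\la)$, so $\sh\wt Z^{(\la,0)}=0$, the map $\sh\wt f_\la$ below is an isomorphism, and (i)--(iii) are trivial. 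Fix $\Fs_0\in\CT^+(\la)\subseteq\CT(\la)=\wt J(\la,1)$. The one genuinely new feature, already noted after Lemma \ref{pro-lem-para}, is that for left $\tA$-modules the third formula of that lemma is strictly weaker than its right-module analogue, so $(\tA)^{\vee(\la,0)}$ \emph{cannot} be replaced by $\tA\cap\A^{\vee\la}$ in (\ref{aaap}) at $\ve=0$; this is why the statement has no left analogue of $\wt g_\la$, and it is the point requiring extra care in (iii).

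For (i) I apply the construction of \ref{injective-standard-module} and \ref{tA-general} to $\sh\wt Z^{(\la,1)}$: since $(\Ft,\Fs_0)\in\CT^-(\la)\times\CT(\la)=\wt I(\la,1)\times\wt J(\la,1)$, the assignment $\sh\wt c^{(\la,1)}_\Ft\mapsto c^\la_{\Ft\Fs_0}+(\tA)^{\vee(\la,1)}$ defines an injective left $\tA$-homomorphism $\sh\wt Z^{(\la,1)}\hra\tA/(\tA)^{\vee(\la,1)}$. By the fourth formula of Lemma \ref{pro-lem-para} one may replace $(\tA)^{\vee(\la,1)}$ by $\tA\cap\A^{\vee\la}$ in (\ref{aaap}) when $\ve=1$ (this is legitimate only for $\ve=1$). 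Composing with $\tA/(\tA\cap\A^{\vee\la})\hra\A/\A^{\vee\la}$ and using the left injection $\sh W^\la\hra\A/\A^{\vee\la}$ of \ref{injective-standard-module} taken with the same fixed second index $\Fs_0$, one sees the image lies in that of $\sh W^\la$, so the map factors through $\sh W^\la$; this is $\sh\wt f_\la$, with $\sh\wt c^{(\la,1)}_\Ft\mapsto\sh c^\la_\Ft$ for $\Ft\in\CT^-(\la)$. Its image is the $R$-submodule $\sh W^\la_-$ spanned by $\{\sh c^\la_\Ft\mid\Ft\in\CT^-(\la)\}$, which is hence a $\tA$-submodule of $\sh W^\la$.

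For (ii) I show $\sh\wt f_\la^{-1}(\rad\,\sh W^\la)=\rad\,\sh\wt Z^{(\la,1)}$, so $\sh\wt f_\la$ descends to an injection $\sh\wt L^{(\la,1)}\hra\sh L^\la$ of $\tA$-modules. By the left analogue of (\ref{bl3})--(\ref{bl4}) the Gram matrix of the cellular form of $\sh W^\la$ is block diagonal for $\CT(\la)=\CT^+(\la)\sqcup\CT^-(\la)$, so $\rad\,\sh W^\la=R^0_+\oplus R^0_-$ with $R^0_{\pm}$ the radical of the restriction to the $\CT^{\pm}(\la)$-block; by (\ref{bl2})--(\ref{bl3}) the pairing $\b$ defining $\rad\,\sh\wt Z^{(\la,1)}$ has identically zero $\CT^+(\la)$-columns and its $\CT^-(\la)$-block is exactly the one computing $R^0_-$. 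Since the image of $\sh\wt f_\la$ is $\sh W^\la_-$ and $R^0_+\subseteq\sh W^\la_+$, this gives $\sh\wt f_\la^{-1}(\rad\,\sh W^\la)=\sh\wt f_\la^{-1}(R^0_-)=\rad\,\sh\wt Z^{(\la,1)}$.

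For (iii) I identify the cokernel of $\sh\wt f_\la$ with $\sh\wt Z^{(\la,0)}$ as $\tA$-modules, giving an exact sequence $0\ra\sh\wt Z^{(\la,1)}\ra\sh W^\la\ra\sh\wt Z^{(\la,0)}\ra 0$. The $R$-linear surjection $\sh W^\la\ra\sh\wt Z^{(\la,0)}$ sending $\sh c^\la_\Ft\mapsto\sh\wt c^{(\la,0)}_\Ft$ $(\Ft\in\CT^+(\la))$ and $\sh c^\la_\Ft\mapsto 0$ $(\Ft\in\CT^-(\la))$ has kernel $\sh W^\la_-$; that it is a $\tA$-homomorphism is checked by taking $\Ft\in\CT^+(\la)$, comparing $a\cdot c^\la_{\Ft\Fs_0}$ computed in $\A$ with (\ref{aaap}) at $\ve=0$, and observing --- after reducing modulo $(\tA)^{\vee(\la,0)}$, which here absorbs precisely the terms $c^\la_{\Fu\Fs_0}$ with $\Fu\in\CT^-(\la)$ --- that the coefficients surviving modulo $\sh W^\la_-$ coincide with those of the left $\tA$-action on $\sh\wt Z^{(\la,0)}$. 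As $\rad\,\sh W^\la=R^0_+\oplus R^0_-$ with $R^0_-\subseteq\sh W^\la_-$, and as $R^0_+$ is carried by $\sh W^\la/\sh W^\la_-\cong\sh\wt Z^{(\la,0)}$ onto $\rad\,\sh\wt Z^{(\la,0)}$ (left analogue of (\ref{bl1})), we obtain
\[
\sh L^\la/\sh\wt L^{(\la,1)}\;\cong\;\sh W^\la/(\sh W^\la_-+R^0_+)\;\cong\;\sh\wt Z^{(\la,0)}/\rad\,\sh\wt Z^{(\la,0)}\;=\;\sh\wt L^{(\la,0)}
\]
as $\tA$-modules, which is (iii). (Alternatively, one may finish as in Proposition \ref{tA-A}\,(\roiv): by the left analogues of Propositions \ref{aA-A} and \ref{tA-aA-left}, $\sh L^\la/\sh\wt L^{(\la,1)}\cong\sh L^{(\la,0)}\cong\sh\wt L^{(\la,0)}$ as $\aA$-modules, and since $\sh L^\la$ contains a copy of the simple $\aA$-module $\sh L^{(\la,0)}$ in essentially one way, this lifts to a $\tA$-isomorphism.) The main obstacle is this cokernel identification: because of the weaker third formula of Lemma \ref{pro-lem-para} it must be verified directly that the surviving left-action coefficients are exactly those of (\ref{aaap}) at $\ve=0$, rather than obtained from an ideal replacement as for the right modules.
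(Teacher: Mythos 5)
Your proposal is correct and takes essentially the same approach as the paper: parts (i) and (ii) rest on replacing $(\tA)^{\vee(\la,1)}$ by $\tA\cap\A^{\vee\la}$ via the fourth formula of Lemma \ref{pro-lem-para} and imitating Proposition \ref{tA-A}, and your map in (iii) is precisely the paper's surjection $\sh\wt{g}:\,\sh W^\la \ra \,\sh\wt{Z}^{(\la,0)}$. The only (harmless) difference is in the finish of (iii): you verify directly that $\sh\wt{g}$ is a $\tA$-homomorphism and compute the quotient by radical/cokernel bookkeeping, whereas the paper identifies the kernel $\sh N^\la$ with $\sh\wt{L}^{(\la,1)}$ through the $\aA$-module comparison of Propositions \ref{aA-A} and \ref{tA-aA-left}, the route you mention as an alternative.
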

\begin{proof}
Note that, in (\ref{aaap}), 
we can replace $(\tA)^{\vee (\la,1)}$ by $(\tA \cap \A^{\vee \la})$ 
thanks to the fourth formula in Lemma \ref{pro-lem-para}, 
then 
(\roi) and (\roii) are similar to the proof of Proposition \ref{tA-A}. 
We prove (\roiii). 
One can define a surjective $\tA$-homomorphism $\sh\wt{g}:\,\sh W^\la \ra \,\sh\wt{Z}^{(\la,0)}$ 
by the assinment $\sh c_{\Ft}^\la \mapsto \,\sh c_{\Ft}^{(\la,0)}$ for $\Ft \in \wt{I}(\la,0)$ 
and $\sh c_{\Ft}^\la \mapsto \,0$ for $\Ft \in \wt{I}(\la,1)$. 
Let $\sh\wt{g}' : \,\sh W^\la \ra \,\sh\wt{L}^{(\la,0)}$ be a surjective $\tA$-homomorphism 
obtained by the composition of $\sh\wt{g}$ and the natural surjection $\sh\wt{Z}^{(\la,0)} \ra \,\wt{L}^{(\la,0)}$. 
Since $\rad \,\sh W^\la \subset \ker\,\sh\wt{g}'$, 
$\sh\wt{g}'$ induces a surjective $\tA$-homomorphism $\sh L^\la \ra \,\sh \wt{L}^{(\la,0)}$. 
Hence, there exists an $\tA$-submodule $\sh N^\la$ of $\sh L^\la$ 
such that $\sh L^\la / \,\sh N^\la \cong \,\sh\wt{L}^{(\la,0)}$, 
and we must have $\sh N^\la \cong \,\sh\wt{L}^{(\la,1)}$ in similar way as in the proof of Proposition \ref{tA-A}. 
\end{proof}
\remarks

(\roi) 
$\tA$ is  not semisimple except the case where $\A=\tA=\aA$, 
which is shown as follows.  
For any $\la \in \vL^+$, 
$\wt{Z}^{(\la,0)}\subset \rad \wt{Z}^{(\la,1)}$ always holds 
if we regard $\wt{Z}^{(\la,0)}$ as an $\tA$-submodule of $\wt{Z}^{(\la,1)}$ by the injection $\wt{g}^{-1}_\la \circ  \wt{f}_\la$. 
If $\tA$ is semisimple then we have $\rad \wt{Z}^{(\la,1)}=0$ for any $\la \in \vL^+$ 
by the general theory of standardly based algebras (\cite{DR98}). 
This implies that $\wt{Z}^{(\la,0)}=0$ for any $\la \in \vL^+$. 
In this case, 
$\CT(\la)=\CT^-(\la)$ for $\la \in \vL^+$, 
and so we have  $\A=\tA=\aA$. 

(\roii) 
Contrast to the case of $\aA$, 
a  formula as in Proposition \ref{aA-A} (\roii) 
does not hold for $\tA$. 
In fact, 
suppose the following property holds for $\tA$. \vspace{-3mm}\\
\begin{description}
 \item[($\ast$)] For any $\la \in \vL^+$, $L^\la \cong \wt{L}^{(\la,0)}\oplus \wt{L}^{(\la,1)}$  as $\tA$-modules.
\end{description}\vspace{1mm}
Assume that $R$ is a field, $\A$ is semisimple, and that $\A\not=\aA$. 
We regard $\A$ as a right $\A$-module by multiplications. 
Then $\A$ is  semisimple as a right $\A$-module. 
By $(\ast)$, $\A$ is also semisimple as a right $\tA$-module. 
Thus $\tA$-submodule $\tA$ of $\A$ is also semisimple as a right $\tA$-module. 
This is equivalent with that $\tA$ is semisimple as an $R$-algebra, 
which  contradicts  (\roi). 

(\roiii) 
By Proposition \ref{tA-A} (\roi) and (\roii), for $\la \in \vL^+$, we have the following exact sequence of right $\tA$-modules. 
\begin{align*}
0 \ra \wt{Z}^{(\la,0)} \ra \wt{Z}^{(\la,1)} \ra \wt{Z}^{(\la,1)}/\wt{Z}^{(\la,0)} \ra 0 .\label{exact-tA}
\end{align*}
This exact sequence does not split in general 
since  the splitness of this exact sequence for any $\la \in \vL^+$ implies 
the property ($\ast$) in (\roii) 
by a similar argument as in the proof of Proposition \ref{aA-A} (\roii). 
However we note that 
this exact sequence, regarded as a sequence of $\aA$-modules by restriction, 
splits by Proposition \ref{tA-aA}.

(\roiv) 
In (\ref{aaap}), 
we can not replace $(\tA)^{\vee (\la,0)}$ by $(\tA \cap \A^{\vee \la})$\,
(see the third formula in Lemma \ref{pro-lem-para}). 
Hence, 
for $(\la,0)\in \Om$, $\sh \wt{Z}^{(\la,0)}$ is not an $\tA$-submodule of $\sh W^\la$, and 
$\sh \wt{L}^{(\la,0)}$ is not an $\tA$-submodule of $\sh L^\la$ in general.

\para
In the rest of this section, we assume that $R$ is a field. 
We consider relations between decomposition numbers  $[\wt{Z}^{(\la,\ve)}:\wt{L}^{(\mu,\ve')}]_{\tA}$ 
$\big((\la,\ve)\in \Om,\,(\mu,\ve')\in \Om_0\big)$ of $\tA$ 
and decomposition numbers of $\A,\aA,\oA$ appeared in the previous section. 
By Theorem \ref{decom-thm} and Proposition \ref{tA-aA}, 
we have the following theorem. 
\begin{thm} \label{decom-thm-tA}
For $\la \in \vL^+$ and $\mu \in \vL^+_0$, we have followings. 
\begin{enumerate}
\item
If $(\mu,0)\in \Om_0$ and $\a(\la)=\a(\mu)$ then we have 
\[[W^\la:L^\mu]_\A=[Z^{(\la,0)}:L^{(\mu,0)}]_{\aA}=[\wt{Z}^{(\la,0)}:\wt{L}^{(\mu,0)}]_{\tA}=[\ol{Z}^{\la}:\ol{L}^{\mu}]_{\oA}.\] 
\item
If $(\mu,0)\in \Om_0$ and $\a(\la)\not=\a(\mu)$ then we have 
\[[W^\la:L^\mu]_\A=[Z^{(\la,1)}:L^{(\mu,0)}]_{\aA}=[\wt{Z}^{(\la,1)}:\wt{L}^{(\mu,0)}]_{\tA}.\]
\item
If $(\mu,1)\in \Om_0$ then we have 
\[[W^\la:L^\mu]_\A=[Z^{(\la,1)}:L^{(\mu,1)}]_{\aA}=[\wt{Z}^{(\la,1)}:\wt{L}^{(\mu,1)}]_{\tA}.\]
\end{enumerate}
\end{thm}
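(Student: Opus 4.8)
The plan is to take Theorem~\ref{decom-thm} as already proved, so that the chain of equalities among the decomposition numbers of $\A$, $\aA$ and $\oA$ in (\roi)--(\roiii) is in hand (including the conventions that $Z^{(\la,\ve)}$ and $\ol{Z}^\la$ vanish outside $\Om$, resp. $\ol{\vL}^+$). What then remains is only to attach the new $\tA$-factor to each chain, i.e. to prove $[Z^{(\la,0)}:L^{(\mu,0)}]_{\aA}=[\wt{Z}^{(\la,0)}:\wt{L}^{(\mu,0)}]_{\tA}$ in case (\roi), and $[Z^{(\la,1)}:L^{(\mu,\ve')}]_{\aA}=[\wt{Z}^{(\la,1)}:\wt{L}^{(\mu,\ve')}]_{\tA}$ with $\ve'=0$ in case (\roii) and $\ve'=1$ in case (\roiii). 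The degenerate situations in which $(\la,0)$ or $(\la,1)$ is not in $\Om$ are already folded into Theorem~\ref{decom-thm} through the zero convention, and there the new identities read $0=0$; so I would dispose of these immediately and assume the relevant pairs lie in $\Om$.

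The key first step is a restriction principle. By Proposition~\ref{tA-aA}~(\roiii),(\roiv), for every $(\nu,\ve)\in\Om$ the restriction of the simple $\tA$-module $\wt{L}^{(\nu,\ve)}$ to $\aA$ is isomorphic to the simple $\aA$-module $L^{(\nu,\ve)}$; in particular it stays simple, the assignment $(\nu,\ve)\mapsto\wt{L}^{(\nu,\ve)}|_{\aA}$ is injective on isomorphism classes (the $L^{(\nu,\ve)}$ being pairwise non-isomorphic), and $\wt{\Om}_0=\Om_0$. Consequently, restricting any $\tA$-composition series of a $\tA$-module $M$ to $\aA$ yields a composition series of $\aA$-modules, and comparing multiplicities gives $[M:\wt{L}^{(\mu,\ve)}]_{\tA}=[M|_{\aA}:L^{(\mu,\ve)}]_{\aA}$ for every $(\mu,\ve)\in\Om_0$.

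The second step is to apply this with $M=\wt{Z}^{(\la,\ve)}$ and insert the $\aA$-module identifications of Proposition~\ref{tA-aA}. In case (\roi), part (\roi) of that proposition gives $\wt{Z}^{(\la,0)}|_{\aA}\cong Z^{(\la,0)}$, so $[\wt{Z}^{(\la,0)}:\wt{L}^{(\mu,0)}]_{\tA}=[Z^{(\la,0)}:L^{(\mu,0)}]_{\aA}$ at once. In cases (\roii) and (\roiii), part (\roii) gives $\wt{Z}^{(\la,1)}|_{\aA}\cong Z^{(\la,0)}\oplus Z^{(\la,1)}$, hence $[\wt{Z}^{(\la,1)}:\wt{L}^{(\mu,\ve')}]_{\tA}=[Z^{(\la,0)}:L^{(\mu,\ve')}]_{\aA}+[Z^{(\la,1)}:L^{(\mu,\ve')}]_{\aA}$, and I would kill the first summand using Proposition~\ref{decom-prop}: part (\roiv) when $\ve'=0$ and $\a(\la)\not=\a(\mu)$ (case (\roii)), and part (\roii) when $\ve'=1$ (case (\roiii)). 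This leaves exactly $[Z^{(\la,1)}:L^{(\mu,\ve')}]_{\aA}$, as wanted, and together with Theorem~\ref{decom-thm} completes all three chains.

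Everything past this is routine bookkeeping with composition series; the single point requiring care is the restriction principle, where it is essential that $\wt{L}^{(\nu,\ve)}$ remain simple upon restriction to $\aA$. This has no analogue for a general subalgebra and is precisely what Proposition~\ref{tA-aA}~(\roiii),(\roiv) supplies; the same phenomenon explains, as in the Remarks after Proposition~\ref{tA-A-left}, why $\tA$ admits no direct-sum decomposition $L^\la\cong\wt{L}^{(\la,0)}\oplus\wt{L}^{(\la,1)}$ of the kind available for $\aA$. I do not anticipate any further obstacle.
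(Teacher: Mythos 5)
Your proposal is correct and follows essentially the same route as the paper: reduce to comparing $\aA$- and $\tA$-multiplicities via Theorem \ref{decom-thm}, use Proposition \ref{tA-aA} (simples of $\tA$ restrict to the simples $L^{(\nu,\ve)}$ of $\aA$, so $\tA$-composition series restrict to $\aA$-composition series, and $\wt{Z}^{(\la,0)}|_{\aA}\cong Z^{(\la,0)}$, $\wt{Z}^{(\la,1)}|_{\aA}\cong Z^{(\la,0)}\oplus Z^{(\la,1)}$), and kill the extra summand with Proposition \ref{decom-prop}. The only cosmetic difference is that the paper phrases cases (\roii),(\roiii) through the filtration $\wt{Z}^{(\la,0)}\subset\wt{Z}^{(\la,1)}$ and its quotient, while you subtract the $Z^{(\la,0)}$-contribution directly from the restricted direct sum; the two bookkeeping schemes are interchangeable.
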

\begin{proof}
In view of Theorem \ref{decom-thm}, 
we have only to compare the decomposition numbers of $\aA$ and $\tA$. 
For (\roi),  
take a composition series of $\wt{Z}^{(\la,0)}$ as $\tA$-modules. 
We regard this composition series as a filtration of $\aA$-modules by restriction. 
Then this filtration coincides with a composition series of $Z^{(\la,0)}$ as $\aA$-modules by Proposition \ref{tA-aA}. 
This implies (\roi). 

For (\roii),  
similarly, a composition series of $\wt{Z}^{(\la,1)}/\wt{Z}^{(\la,0)}$ as $\tA$-modules coincides with 
a composition series of $Z^{(\la,1)}$ as $\aA$-modules by Proposition \ref{tA-aA}. 
Moreover, $\wt{L}^{(\mu,0)}$ such that $\a(\la)\not=\a(\mu)$ 
does not appear in the composition series of $\wt{Z}^{(\la,0)}$ as $\tA$-modules 
since $L^{(\mu,0)}$ such that $\a(\la)\not=\a(\mu)$ does not appear in a composition series of $Z^{(\la,0)}$ (see Proposition \ref{decom-prop}). 
This implies (\roii). 
(\roiii) is proved in a similar way as in (\roii). 
\end{proof}
Combining Theorem \ref{decom-thm-tA} with Lemma \ref{lr-aA}  and  Proposition \ref{tA-aA-left}, 
we have the following corollary which will be used in the next section. 

\begin{cor}\label{decom-cor}
For $\la \in \vL^+$ and $\mu \in \vL^+_0$, 
the following holds. 
\begin{enumerate}
\item
If $(\mu,0)\in \Om_0$ and $\a(\la)=\a(\mu)$ then we have 
\[ [\wt{Z}^{(\la,0)}:\wt{L}^{(\mu,0)}]_{\tA}=[Z^{(\la,0)}:L^{(\mu,0)}]_{\aA}
	=[\,\sh Z^{(\la,0)}:\,\sh L^{(\mu,0)}]_{\aA}=[\,\sh \wt{Z}^{(\la,0)}:\,\sh\wt{L}^{(\mu,0)}]_{\tA}.\] 
\item
If $(\mu,0)\in \Om_0$ and $\a(\la)\not=\a(\mu)$ then we have 
\[[\wt{Z}^{(\la,1)}:\wt{L}^{(\mu,0)}]_{\tA} =[Z^{(\la,1)}:L^{(\mu,0)}]_{\aA}
	=[\,\sh Z^{(\la,1)}:\,\sh L^{(\mu,0)}]_{\aA}=[\,\sh \wt{Z}^{(\la,1)}:\,\sh \wt{L}^{(\mu,0)}]_{\tA} .\]
\item
If $(\mu,1)\in \Om_0$ then we have 
\[ [\wt{Z}^{(\la,1)}:\wt{L}^{(\mu,1)}]_{\tA}= [Z^{(\la,1)}:L^{(\mu,1)}]_{\aA}
	=[\,\sh Z^{(\la,1)}:\,\sh L^{(\mu,1)}]_{\aA}= [\,\sh\wt{Z}^{(\la,1)}:\,\sh\wt{L}^{(\mu,1)}]_{\tA}.\]
\end{enumerate}
\end{cor}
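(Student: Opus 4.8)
The plan is to obtain each of the three identities (\roi)--(\roiii) by concatenating three more elementary equalities: one comparing $\tA$ with $\aA$ for right modules, one internal to $\aA$ comparing right with left modules, and one comparing $\aA$ with $\tA$ for left modules. Throughout $R$ is a field, as assumed in this section, and I use the convention that a decomposition number involving $\wt{Z}^{(\la,\ve)}$, $Z^{(\la,\ve)}$ or one of their left analogues is zero when $(\la,\ve)\not\in\Om$; in that case all four terms vanish and there is nothing to prove, so I may assume $(\la,\ve)\in\Om$, with $\ve=0$ in (\roi) and $\ve=1$ in (\roii) and (\roiii). The first equality, $[\wt{Z}^{(\la,\ve)}:\wt{L}^{(\mu,\ve')}]_{\tA}=[Z^{(\la,\ve)}:L^{(\mu,\ve')}]_{\aA}$, is already contained in Theorem \ref{decom-thm-tA}: parts (\roi), (\roii), (\roiii) of that theorem give it for the respective cases considered here.

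For the middle equality $[Z^{(\la,\ve)}:L^{(\mu,\ve')}]_{\aA}=[\,\sh Z^{(\la,\ve)}:\,\sh L^{(\mu,\ve')}]_{\aA}$ I would apply Lemma \ref{lr-aA} to the cellular algebra $\aA$, which is cellular by Theorem \ref{aA-thm} with respect to the poset $\Om$: for $\aA$ the modules $Z^{(\la,\ve)}$ and $\,\sh Z^{(\la,\ve)}$ are its right and left standard modules, $L^{(\mu,\ve')}$ and $\,\sh L^{(\mu,\ve')}$ its right and left simple modules, and the index set $\Om_0$ labels both the left and the right simple $\aA$-modules.

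For the remaining equality $[\,\sh Z^{(\la,\ve)}:\,\sh L^{(\mu,\ve')}]_{\aA}=[\,\sh \wt{Z}^{(\la,\ve)}:\,\sh\wt{L}^{(\mu,\ve')}]_{\tA}$ I would restrict left $\tA$-modules to $\aA$ along the inclusion $\aA\hookrightarrow\tA$. By Proposition \ref{tA-aA-left} one has $\,\sh \wt{Z}^{(\la,\ve)}\cong\,\sh Z^{(\la,\ve)}$ as $\aA$-modules (part (\roi) for $\ve=0$, part (\roii) for $\ve=1$), and each left simple $\tA$-module $\,\sh\wt{L}^{(\mu,\ve')}$ restricts to the simple $\aA$-module $\,\sh L^{(\mu,\ve')}$; in particular $\wt{\Om}_0=\Om_0$. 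Hence a composition series of $\,\sh \wt{Z}^{(\la,\ve)}$ as a $\tA$-module restricts to a composition series of $\,\sh Z^{(\la,\ve)}$ as an $\aA$-module with the same subquotients counted with multiplicity, which is the desired equality. Concatenating the three equalities proves the corollary.

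The arguments are routine; the only step calling for a little care is the last one, where one genuinely needs Proposition \ref{tA-aA-left} both to identify the restriction of the relevant standard $\tA$-module with the corresponding standard $\aA$-module and to know that the left simple $\tA$-modules occurring as subquotients restrict to simple $\aA$-modules, so that the restricted filtration is already a composition series and no refinement is needed. One must also keep track of which part of Proposition \ref{tA-aA-left} (and of Theorem \ref{decom-thm-tA}) is invoked in each of the three cases, and dispose of the degenerate case $(\la,\ve)\not\in\Om$ noted above.
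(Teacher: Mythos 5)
Your proposal is correct and follows exactly the route the paper indicates: Theorem \ref{decom-thm-tA} for the $\tA$--$\aA$ comparison of right modules, Lemma \ref{lr-aA} applied to the cellular algebra $\aA$ for the left--right symmetry, and Proposition \ref{tA-aA-left} for the comparison of left modules, with the same restriction-of-composition-series argument the paper relies on. No discrepancies to report.
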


\exam 
\label{ex5}
We give the parabolic type subalgebra for each algebra in Example \ref{ex} under the setting of Example \ref{ex2} and Example \ref{ex3}. 

(\roi) (Matrix algebra) 
We have $\wt{\ZC}^\a=\{E_{ij}\,|\,b\leq i,j \leq n\} \cup \{E_{ij}\,|\, 1 \leq i \leq b-1, \,1\leq j \leq n\}$, thus 
{\footnotesize 
\[\tA={\footnotesize \left( \begin{array}{ccc|ccc}
	* & \cdots &* & * &\cdots & * \\
	\vdots& \ddots& \vdots& \vdots & \ddots & \vdots\\
	* & \cdots &* & * &\cdots  &* \\\hline
	 &  & & * & \cdots &* \\
		&\scalebox{2}{0}& &\vdots&\ddots&\vdots\\
	 && & * & \cdots &* \\
	\end{array} \right),}\qquad 
(\tA)^\ast={\footnotesize \left( \begin{array}{ccc|ccc}
	* & \cdots &* &  & & \\
	\vdots& \ddots& \vdots&&\scalebox{2}{0}&\\
	* & \cdots &* &  & & \\\hline
	* & \cdots &* & * & \cdots &* \\
	\vdots& \ddots&\vdots &\vdots&\ddots&\vdots\\
	 *& \cdots &* & * & \cdots &* \\
	\end{array} \right).}  
\]}

(\roii) (Path algebra) 
We have 
\begin{align*}
\wt{\ZC}^\a= \left\{\a_{12}\a_{21}, \quad
\begin{matrix} e_1, & \hspace{-2mm} \a_{12},\\ \a_{21}, & \hspace{-2mm} \a_{21}\a_{12}, \end{matrix} \quad
\begin{matrix} e_2, & \hspace{-2mm} \a_{23},\\ \a_{32}, & \hspace{-2mm} \a_{32}\a_{23}, \end{matrix} \quad
\begin{matrix} e_3, & \hspace{-2mm} \\  \a_{43}, & \hspace{-2mm}  \a_{45}\a_{54}, \end{matrix} \quad
\begin{matrix} e_4, & \hspace{-2mm} \a_{45},\\ \a_{54}, & \hspace{-2mm} \a_{54}\a_{45}, \end{matrix} \quad
e_5\right\},\\ \\
(\wt{\ZC}^\a)^\ast= \left\{\a_{12}\a_{21}, \quad
\begin{matrix} e_1, & \hspace{-2mm} \a_{12},\\ \a_{21}, & \hspace{-2mm} \a_{21}\a_{12}, \end{matrix} \quad
\begin{matrix} e_2, & \hspace{-2mm} \a_{23},\\ \a_{32}, & \hspace{-2mm} \a_{32}\a_{23}, \end{matrix} \quad
\begin{matrix} e_3, & \hspace{-2mm} \a_{34},\\   & \hspace{-2mm}  \a_{45}\a_{54}, \end{matrix} \quad
\begin{matrix} e_4, & \hspace{-2mm} \a_{45},\\ \a_{54}, & \hspace{-2mm} \a_{54}\a_{45}, \end{matrix} \quad
e_5\right\}.
\end{align*}

Moreover, we see that 
\[\tA \cong (R\wt{Q}^\a/\wt{\CI}^\a), \qquad (\tA)^\ast \cong (R \wt{Q}^{\a\ast}/\wt{\CI}^{\a\ast})\]
where \\
\begin{picture}(100,30)
\put(10,8){$\wt{Q}^\a=\Big(1 \qquad \,\,2 \qquad \,\,3 \qquad \,\,4 \qquad\,\,5\Big),$}
\put(56,20){$ \underrightarrow{\,\,\a_{12}\,\,}$}
\put(56,0){$ \overleftarrow{\,\,\a_{21}\,\,}$}
\put(90,20){$ \underrightarrow{\,\,\a_{23}\,\,}$}
\put(90,0){$ \overleftarrow{\,\,\a_{32}\,\,}$}
\put(124,0){$ \overleftarrow{\,\,\a_{43}\,\,}$}
\put(158,20){$ \underrightarrow{\,\,\a_{45}\,\,}$}
\put(158,0){$ \overleftarrow{\,\,\a_{54}\,\,}$}
\put(220,8){$\wt{Q}^{\a\ast}=\Big(1 \qquad \,\,2 \qquad \,\,3 \qquad \,\,4 \qquad\,\,5\Big),$}
\put(270,20){$ \underrightarrow{\,\,\a_{12}\,\,}$}
\put(270,0){$ \overleftarrow{\,\,\a_{21}\,\,}$}
\put(304,20){$ \underrightarrow{\,\,\a_{23}\,\,}$}
\put(304,0){$ \overleftarrow{\,\,\a_{32}\,\,}$}
\put(338,20){$ \underrightarrow{\,\,\a_{34}\,\,}$}
\put(372,20){$ \underrightarrow{\,\,\a_{45}\,\,}$}
\put(372,0){$ \overleftarrow{\,\,\a_{54}\,\,}$}
\end{picture}
\begin{align*}
\wt{\CI}^\a=\lan 
	\a_{12}\a_{23},\, \a_{54}\a_{43},\, \a_{43}\a_{32},\, \a_{32}\a_{21},\,
	\a_{21}\a_{12}-\a_{23}\a_{32}, \, 
	\a_{45}\a_{54}\a_{45}, \, \a_{54}\a_{45}\a_{54}\ran_{\text{ideal}}, \\
\wt{\CI}^{\a\ast}=\lan 
	\a_{12}\a_{23},\, \a_{23}\a_{34},\, \a_{34}\a_{45},\, \a_{32}\a_{21},\,
	\a_{21}\a_{12}-\a_{23}\a_{32}, \, 
	\a_{45}\a_{54}\a_{45}, \, \a_{54}\a_{45}\a_{54}\ran_{\text{ideal}}.
\end{align*}

(\roiii) (Cyclotomic $q$-Schur algebra) 
We follow the notation in Example \ref{ex3}, (\roiii). 
We denote by $\tSp$ the parabolic type subalgebra of $\Sc$ with respect to the map $\a_\Bp$. 
In this paper, we have proved  Theorem \ref{decom-thm-tA} by 
investigating the relation between $\tSp$ and $\Sp$ combined with Theorem \ref{decom-thm}. 
However,  
the Levi type subalgebra $\Sp$ does not appear in \cite{SW} 
(note that the symbol $\Sp$ in \cite{SW} denotes the parabolic type subalgebra $\tSp$ in our paper).
In \cite[Theorem 3.13]{SW}, 
we have shown that 
\[[W^{\Bla}:L^{\Bmu}]_\Sc=[\wt{Z}^{(\Bla,0)}:\wt{L}^{(\Bmu,0)}]_{\tSp}=[\ol{Z}^{\Bla}:\ol{L}^{\Bmu}]_{\oSp}\]
for $\Bla,\Bmu \in \vL^+$ such that $\a_\Bp(\Bla)=\a_\Bp(\Bmu)$ 
by studying the induced modules $\wt{Z}^{(\la,0)} \otimes_{\tSp}\Sc$ and $\wt{L}^{(\Bmu,0)}\otimes_{\tSp}\Sc$ 
instead of the relation between $\tSp$ and $\Sp$. 
We emphasize that the relation between $\tSp$ and $\Sp$ gives more information than the discussions in \cite{SW} 
(e.g. Theorem \ref{decom-thm-tA} (\roii),(\roiii), Corollary \ref{decom-cor} or the results of the following section).

\section{Blocks of $\A,\aA,\tA$ and $\oA$}
Throughout this section, we assume that $R$ is a field. 
For cellular algebras $\A$, $\aA$ and $\oA$, 
the general theory enables us to 
classify blocks of them by using right (or left) standard modules. 
(See Section \ref{cellular}.) 
However, such a classification of blocks for standardly based algebras is not known. 
Nevertheless in our setting, we can classify blocks of $\tA$ by using \lq\lq \,right" standard modules of $\tA$ 
thanks to the relations  with $\aA$ discussed in Section \ref{tA}. 
In this section, we shall study the relations for blocks among $\A$, $\tA$ and $\oA$. 

\para
Let $\eqA$ be the equivalent relation on $\vL^+$ determined by the notion of \lq \lq\,cell linked" 
with respect to (right) standard modules $W^\la$ ($\la\in \vL^+$) of $\A$ 
as in \ref{cell-linked}.
Similary, we denote by $\eqaA$ (resp. $\eqoA$) 
an equivalent relation on $\Om$ (resp. $\ol{\vL}^+$) with respect to (right) standard modules 
$Z^{(\la,\ve)}$ ($(\la,\ve)\in \Om$) of $\aA$  
(resp. $\ol{Z}^\la$ ($\la \in \ol{\vL}^+$) of $\oA$). 
Let $\vL^+/_\sim$ $($resp. $\Om/_\sim$, $\ol{\vL}^+/_\sim$$)$ 
be the set of equivalence classes on $\vL^+$ $($resp. $\Om$, $\ol{\vL}^+$$)$ with respect to the relation $\eqA$ $($resp. $\eqaA$, $\eqoA$$)$. 
The following proposition holds by \cite{GL96} (cf. Section \ref{cellular}).

\begin{prop}[{\cite{GL96}}]\ 
\begin{enumerate}
\item
For $\la,\mu\in \vL^+$, $\la \eqA \mu$ if and only if $W^\la$ and $W^\mu$ belong to the same block of $\A$. 
\item
For $(\la,\ve),(\mu,\ve')\in \Om$, $(\la,\ve) \eqaA (\mu,\ve')$ if and only if $Z^{(\la,\ve)}$ and $Z^{(\mu,\ve')}$ belong to the same block of $\aA$. 
\item
For $\la,\mu\in \ol{\vL}^+$, $\la \eqoA \mu$ if and only if $\ol{Z}^\la$ and $\ol{Z}^\mu$ belong to the same block of $\oA$. 
\end{enumerate}

Moreover, 
there is a one-to-one correspondence between $\vL^+/_\sim $ 
$($resp. $\Om/_\sim$, $\ol{\vL}^+/_\sim$$)$ 
and the set of blocks of $\A$ $($resp. $\aA$, $\oA$$)$.
\end{prop}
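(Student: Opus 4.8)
The plan is to deduce every assertion directly from the general block theory of cellular algebras recalled in Section~\ref{cellular}, applied in turn to the three cellular algebras $\A$, $\aA$ and $\oA$; no new ideas are needed beyond the observation that each of these is a cellular algebra over the field $R$, which has already been established.

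For (i) there is literally nothing to do: the equivalence ``$\la\eqA\mu$ if and only if $W^\la$ and $W^\mu$ lie in the same block of $\A$'' is the Proposition stated in \S\ref{cell-linked}, and the bijection between $\vL^+/_\sim$ and the set of blocks of $\A$ is Corollary~\ref{block-class}. For (ii), I would first recall that by Theorem~\ref{aA-thm} the algebra $\aA$ is cellular with cellular basis $\ZC^\a$ relative to the poset $(\Om,\geq)$; since $R$ is a field, the results of \cite{GL96} recalled in \S\ref{cell-linked} apply verbatim after replacing $(\vL^+,\geq)$ by $(\Om,\geq)$ and the standard modules $W^\la$ by the standard $\aA$-modules $Z^{(\la,\ve)}$. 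This yields at once that $(\la,\ve)\eqaA(\mu,\ve')$ holds precisely when $Z^{(\la,\ve)}$ and $Z^{(\mu,\ve')}$ belong to the same block of $\aA$, and that $\Om/_\sim$ is in bijection with the set of blocks of $\aA$. For (iii) I would argue identically: by Lemma~\ref{quotient-cellular}, applied to the saturated ideal $\aA(\wh{\Om})$ constructed in \S\ref{quotient-alg}, the algebra $\oA$ is cellular with cellular basis $\ol{\ZC}^\a$ and poset $\ol{\vL}^+$, so the same general results applied to $\oA$ and its right standard modules $\ol{Z}^\la$ give the statement for $\eqoA$ together with the bijection between $\ol{\vL}^+/_\sim$ and the set of blocks of $\oA$.

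I do not anticipate any genuine obstacle here; the proposition is simply the specialization of the cellular-algebra block classification of \cite{GL96} to each of $\A$, $\aA$ and $\oA$. The only point worth keeping in mind is that $\eqA$, $\eqaA$ and $\eqoA$ were all defined by means of the respective \emph{right} standard modules, which is exactly the framework in which \cite{GL96} proves the linkage principle together with its converse; by Lemma~\ref{lr-aA} and its analogues one could equally well use the left standard modules, so this choice is harmless.
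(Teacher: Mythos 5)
Your proposal is correct and matches the paper exactly: the paper offers no separate argument for this proposition, but simply invokes the block classification of \cite{GL96} recalled in \ref{cell-linked} and Corollary \ref{block-class}, applied to each of the cellular algebras $\A$, $\aA$ (cellular by Theorem \ref{aA-thm}) and $\oA$ (cellular by Lemma \ref{quotient-cellular}), which is precisely your route. Your closing remark on right versus left standard modules is likewise consistent with Lemma \ref{lr-aA} and the remark following it.
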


\para
From now on, 
we consider the classification of blocks of $\tA$ 
by using \lq\lq\,right" standard modules of $\tA$. 
The idea of the classification of blocks of $\tA$ is similar to the case of cellular algebras, but we need to use Corollary \ref{decom-cor} 
which does not hold in the general setting of standardly based algebras. 

\para
For $(\la,\ve) \in \Om_0$, we denote by $\ol{x}$ \big(resp. $\sh \ol{x}$\big)  
the image of $x \in \wt{Z}^{(\la,\ve)}$ \big(resp. $\sh x \in \,\sh \wt{Z}^{(\la,\ve)})$\big) 
under the natural surjection $\wt{Z}^{(\la,\ve)}\ra \wt{L}^{(\la,\ve)}$ \big(resp. $\sh \wt{Z}^{(\la,\ve)} \ra \,\sh \wt{L}^{(\la,\ve)}$\big). 
We define a bilinear form $\ol{\b}:\,\sh \wt{L}^{(\la,\ve)}\times \wt{L}^{(\la,\ve)}\ra R$ by 
$\ol{\b}(\,\sh \ol{x} ,\ol{y})=\b(\,\sh x,y)$, where $\b:\,\sh \wt{Z}^{(\la,\ve)} \times \wt{Z}^{(\la,\ve)}\ra R$ 
is the bilinear form defined in \ref{tA-general}. 
It is clear that $\ol{\b}$ is non-degenerate.

For a left $\tA$-module $\sh M$, 
we regard $\Hom_R (\,\sh M,R)$ as a right $\tA$-module by the standard way. 
For $(\la,\ve)\in \Om_0$, 
we define an $R$-module homomorphism $\Phi: \wt{L}^{(\la,\ve)} \ra \Hom_R(\,\sh \wt{L}^{(\la,\ve)},R)$ by 
$\Phi(\ol{x})=\ol{\beta}(-,\ol{x})$. 
Similarly, we  define an $R$-module homomorphism  $\sh \Phi: \,\sh \wt{L}^{(\la,\ve)} \ra \Hom_R(\wt{L}^{(\la,\ve)},R)$ 
by $\sh \Phi(\,\sh\ol{x})=\ol{\b}(\,\sh\ol{x},-)$. 
Then we have the following lemma. 

\begin{lem}\label{iso-lem}
For $(\la,\ve)\in \Om_0$, the following holds. 
\begin{enumerate}
\item
$\Phi: \wt{L}^{(\la,\ve)} \ra \Hom_R(\,\sh \wt{L}^{(\la,\ve)},R)$ is an isomorphism of right $\tA$-modules.
\item
$\sh \Phi: \,\sh \wt{L}^{(\la,\ve)} \ra \Hom_R(\wt{L}^{(\la,\ve)},R)$ is an isomorphism of left $\tA$-modules.
\end{enumerate}
In particular, we have $\dim_R \wt{L}^{(\la,\ve)}= \dim_R \,\sh \wt{L}^{(\la,\ve)}$. 
\end{lem}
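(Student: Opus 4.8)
The plan is to deduce both isomorphisms, and the dimension equality, from a single ``associativity'' property of the bilinear form $\b$, after which the statement follows by a dimension count. First I would prove that for $a\in\tA$, $\,\sh y\in\,\sh\wt{Z}^{(\la,\ve)}$ and $x\in\wt{Z}^{(\la,\ve)}$ one has $\b(a\cdot\,\sh y,x)=\b(\,\sh y,x\cdot a)$. By bilinearity it suffices to treat $\,\sh y=\,\sh\wt{c}_{\Fs}^{(\la,\ve)}$, $x=\wt{c}_{\Ft}^{(\la,\ve)}$. Expanding $a\cdot\,\sh\wt{c}_{\Fs}^{(\la,\ve)}$ and $\wt{c}_{\Ft}^{(\la,\ve)}\cdot a$ via the coefficients $r_{\Fu}^{(a,\Fs)}$ and $r_{\Fv}^{(\Ft,a)}$ of (\ref{aaap}) and (\ref{bbbp}) (which do not depend on the choice of complementary index in $\wt{I}(\la,\ve)$, resp.\ $\wt{J}(\la,\ve)$), substituting into the defining congruence for $\b$, and using that $(\tA)^{\vee(\la,\ve)}$ is a two-sided ideal, one checks that for every $\Fu\in\wt{I}(\la,\ve)$ and $\Fv\in\wt{J}(\la,\ve)$ both $\b(a\cdot\,\sh\wt{c}_{\Fs}^{(\la,\ve)},\wt{c}_{\Ft}^{(\la,\ve)})\,c_{\Fu\Fv}^\la$ and $\b(\,\sh\wt{c}_{\Fs}^{(\la,\ve)},\wt{c}_{\Ft}^{(\la,\ve)}\cdot a)\,c_{\Fu\Fv}^\la$ are congruent to $c_{\Fu\Ft}^\la\,a\,c_{\Fs\Fv}^\la$ modulo $(\tA)^{\vee(\la,\ve)}$. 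Since $\wt{\ZC}^\a$ is an $R$-basis of $\tA$ and $(\tA)^{\vee(\la,\ve)}$ is spanned by the basis elements at levels strictly above $(\la,\ve)$, the element $c_{\Fu\Fv}^\la$ survives in $\tA/(\tA)^{\vee(\la,\ve)}$, so comparing its coefficient gives the identity.

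Passing to the quotients $\wt{L}^{(\la,\ve)}$ and $\,\sh\wt{L}^{(\la,\ve)}$ is legitimate because $\rad\wt{Z}^{(\la,\ve)}$ and $\rad\,\sh\wt{Z}^{(\la,\ve)}$ are $\tA$-submodules and are exactly the right and left kernels of $\b$; hence $\ol\b$ is well defined and the identity of the previous paragraph descends to $\ol\b(a\cdot\,\sh\ol{x},\ol{y})=\ol\b(\,\sh\ol{x},\ol{y}\cdot a)$ for all $a\in\tA$. With the $\tA$-module structures on $\Hom_R(\,\sh\wt{L}^{(\la,\ve)},R)$ and $\Hom_R(\wt{L}^{(\la,\ve)},R)$ as in the statement, this descended identity translates directly into the assertion that $\Phi$ is a homomorphism of right $\tA$-modules and $\sh\Phi$ a homomorphism of left $\tA$-modules: indeed $\bigl(\Phi(\ol{x})\cdot a\bigr)(\,\sh\ol{m})=\Phi(\ol{x})(a\cdot\,\sh\ol{m})=\ol\b(a\cdot\,\sh\ol{m},\ol{x})=\ol\b(\,\sh\ol{m},\ol{x}\cdot a)=\Phi(\ol{x}\cdot a)(\,\sh\ol{m})$, and symmetrically for $\sh\Phi$.

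It remains to see that $\Phi$ and $\sh\Phi$ are bijective. Since $\ol\b$ is non-degenerate in each variable, $\Phi$ and $\sh\Phi$ are injective. As $R$ is a field and the index sets $\wt{I}(\la,\ve),\wt{J}(\la,\ve)$ are finite, the modules $\wt{L}^{(\la,\ve)}$, $\,\sh\wt{L}^{(\la,\ve)}$ and their $R$-duals are finite-dimensional; injectivity of $\Phi$ gives $\dim_R\wt{L}^{(\la,\ve)}\le\dim_R\Hom_R(\,\sh\wt{L}^{(\la,\ve)},R)=\dim_R\,\sh\wt{L}^{(\la,\ve)}$, while injectivity of $\sh\Phi$ gives the reverse inequality, whence $\dim_R\wt{L}^{(\la,\ve)}=\dim_R\,\sh\wt{L}^{(\la,\ve)}$ (the final assertion). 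Then $\Phi$ (resp.\ $\sh\Phi$) is an injective $R$-linear map between spaces of equal finite dimension, hence bijective, and by the previous paragraph it is an isomorphism of right (resp.\ left) $\tA$-modules.

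The main obstacle I anticipate is the first step: one must keep careful track of which index is transformed from which side, check that $\b(\,\sh\wt{c}_{\Fs}^{(\la,\ve)},\wt{c}_{\Ft}^{(\la,\ve)})$ is a well-defined scalar by invoking the independence of the structure coefficients on the choice of complementary index, and confirm that the level-$(\la,\ve)$ basis elements $c_{\Fu\Fv}^\la$ remain $R$-linearly independent modulo $(\tA)^{\vee(\la,\ve)}$ so that the coefficient comparison is valid. Once this bookkeeping is in place, Steps 2 and 3 are formal.
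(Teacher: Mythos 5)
Your proof is correct and follows essentially the same route as the paper: both show $\Phi$ and $\sh\Phi$ are $\tA$-module homomorphisms via the associativity of $\b$ and get injectivity from the non-degeneracy of $\ol{\b}$. The only (minor) differences are that you verify the associativity identity directly from (\ref{aaap}), (\ref{bbbp}) and the definition of $\b$ rather than citing \cite[Lemma 1.2.6]{DR98}, and you conclude surjectivity by the symmetric dimension count coming from injectivity of both $\Phi$ and $\sh\Phi$, whereas the paper concludes it from the simplicity of the right $\tA$-module $\Hom_R(\,\sh\wt{L}^{(\la,\ve)},R)$; both finishes are valid.
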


\begin{proof}
One sees that $\Phi$ is a homomorphism of right $\tA$-modules 
by the associativity of the bilinear form $\b$ (\cite[Lemma 1.2.6]{DR98}). 
Since $\ol{\b}$ is  non-degenerate, $\Phi$ is injective, 
thus is a non-zero map. 
Since $\Hom_R(\,\sh \wt{L}^{(\la,\ve)},R)$ is a simple right $\tA$-module,  
we see that $\Phi$ is an isomorphism. 
The proof for $\sh \Phi$ is similar. 
\end{proof}

For $(\la,\ve)\in \Om_0$, let $\wt{P}^{(\la,\ve)}$ be a projective cover of $\wt{L}^{(\la,\ve)}$. 
We consider the multiplicity of $\wt{L}^{(\mu,\ve')}$ \big($(\mu,\ve')\in \Om_0$\big) in $\wt{P}^{(\la,\ve)}$,  
and denote it by $[\wt{P}^{(\la,\ve)}:\wt{L}^{(\mu,\ve')}]_{\tA}$.
Then we have the following proposition. 

\begin{prop}\label{proj-decom}
For $(\la,\ve_1),(\mu,\ve_2)\in \Om_0$, we have 
\begin{align}
 [\wt{P}^{(\la,\ve_1)}:\wt{L}^{(\mu,\ve_2)}]_{\tA}
=\sum_{(\nu,\ve) \in \Om}[\,\sh\wt{Z}^{(\nu,\ve)} :\,\sh\wt{L}^{(\la,\ve_1)}]_{\tA} \cdot [\wt{Z}^{(\nu,\ve)}:\wt{L}^{(\mu,\ve_2)}]_{\tA}.
\label{mul-P}
\end{align}
\end{prop}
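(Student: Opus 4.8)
The plan is to adapt the standard projective-module argument from the theory of cellular (or quasi-hereditary) algebras to the standardly based setting of $\tA$, using the $\tA$-module filtrations of projective modules by standard modules supplied by \cite{DR98}. First I would recall that, since $\tA$ is a standardly based algebra with standard basis $\wt{\ZC}^\a$, each projective indecomposable right $\tA$-module $\wt P^{(\la,\ve_1)}$ admits a filtration whose sections are standard modules $\wt Z^{(\nu,\ve)}$, and Brauer--Humphreys-type reciprocity (the standardly based analogue, \cite[\S2.4]{DR98}) gives the multiplicity of $\wt Z^{(\nu,\ve)}$ in such a filtration as $[\,\sh\wt Z^{(\nu,\ve)}:\,\sh\wt L^{(\la,\ve_1)}]_{\tA}$, i.e. the composition multiplicity of the left simple $\sh\wt L^{(\la,\ve_1)}$ in the left standard module $\sh\wt Z^{(\nu,\ve)}$. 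This is the one genuinely external input; the rest is bookkeeping.

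Granting that, the computation of $[\wt P^{(\la,\ve_1)}:\wt L^{(\mu,\ve_2)}]_{\tA}$ proceeds by counting composition factors through the standard filtration: if $0=M_0\subset M_1\subset\cdots\subset M_t=\wt P^{(\la,\ve_1)}$ is a filtration with $M_i/M_{i-1}\cong\wt Z^{(\nu_i,\eta_i)}$, then additivity of composition multiplicities in short exact sequences gives
\begin{align*}
[\wt P^{(\la,\ve_1)}:\wt L^{(\mu,\ve_2)}]_{\tA}
=\sum_{i=1}^{t}[\wt Z^{(\nu_i,\eta_i)}:\wt L^{(\mu,\ve_2)}]_{\tA}
=\sum_{(\nu,\ve)\in\Om}\bigl(\#\{\,i\mid(\nu_i,\eta_i)=(\nu,\ve)\,\}\bigr)\,[\wt Z^{(\nu,\ve)}:\wt L^{(\mu,\ve_2)}]_{\tA}.
\end{align*}
Substituting the reciprocity value $\#\{i\mid(\nu_i,\eta_i)=(\nu,\ve)\}=[\,\sh\wt Z^{(\nu,\ve)}:\,\sh\wt L^{(\la,\ve_1)}]_{\tA}$ yields exactly the asserted formula \eqref{mul-P}. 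One subtlety to address: the number of occurrences of a given standard module in a standard filtration of a projective is a priori only well-defined if such filtration multiplicities are filtration-independent; I would note this follows from \cite{DR98} precisely because of the non-degenerate pairing between left and right standard modules, which is where Lemma~\ref{iso-lem} and the bilinear form $\b$ enter — indeed $\dim_R\wt L^{(\la,\ve)}=\dim_R\,\sh\wt L^{(\la,\ve)}$ guarantees the left and right simple modules are matched up correctly under the pairing.

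The main obstacle I anticipate is purely expository rather than mathematical: one must verify that the standardly based structure theory of \cite{DR98} does supply (i) a standard filtration of each projective indecomposable and (ii) the reciprocity identity in the precise form relating right-projective multiplicities to left-standard composition multiplicities. If \cite{DR98} only states these for a fixed base field (which is our hypothesis here), there is nothing more to do; if any finiteness or idempotent-lifting hypothesis is needed, I would check it holds because $\tA$ is a finite-dimensional $R$-algebra over a field. Once that citation is pinned down, the proof is a two-line consequence of additivity of composition multiplicities along the filtration, so I would keep the written proof short: state the standard filtration, invoke reciprocity, and sum.
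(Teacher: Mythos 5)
Your overall strategy is the same as the paper's: filter $\wt{P}^{(\la,\ve_1)}$ using the based-ideal chain of \cite{DR98} and sum composition multiplicities over the sections. The gap is precisely the step you label ``the one genuinely external input'': the identification of the multiplicity of $\wt{Z}^{(\nu,\ve)}$ in that filtration with $[\,\sh\wt{Z}^{(\nu,\ve)}:\,\sh\wt{L}^{(\la,\ve_1)}]_{\tA}$ is not available as a quotable Brauer--Humphreys reciprocity in \cite{DR98} for a general standardly based algebra ($\tA$ is typically not quasi-hereditary here, e.g. $\wt{\Om}_0$ may be a proper subset of $\Om$), and the remark following Corollary \ref{proj-cor} makes clear that Proposition \ref{proj-decom} is being \emph{established} in this paper for arbitrary standardly based algebras, not cited. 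What \cite{DR98} does give is the filtration of the regular module with sections $\,\sh\wt{Z}^{(\nu,\ve)}\ot_R\wt{Z}^{(\nu,\ve)}$; applying the exact functor $\wt{P}^{(\la,\ve_1)}\ot_{\tA}-$ yields a filtration of $\wt{P}^{(\la,\ve_1)}$ whose sections are $\big(\wt{P}^{(\la,\ve_1)}\ot_{\tA}\,\sh\wt{Z}^{(\nu,\ve)}\big)\ot_R\wt{Z}^{(\nu,\ve)}$, i.e.\ canonically $\dim_R\big(\wt{P}^{(\la,\ve_1)}\ot_{\tA}\,\sh\wt{Z}^{(\nu,\ve)}\big)$ copies of $\wt{Z}^{(\nu,\ve)}$ (this also disposes of your well-definedness worry). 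So your proof reduces the proposition to an unproven assertion whose cited source does not contain it.

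The missing computation, which is the actual content of the paper's proof, runs as follows: by Hom--tensor adjointness and finite-dimensionality over the field $R$, $\wt{P}^{(\la,\ve_1)}\ot_{\tA}\,\sh\wt{Z}^{(\nu,\ve)}\cong\Hom_R\big(\wt{P}^{(\la,\ve_1)}\ot_{\tA}\,\sh\wt{Z}^{(\nu,\ve)},R\big)\cong\Hom_{\tA}\big(\wt{P}^{(\la,\ve_1)},\Hom_R(\,\sh\wt{Z}^{(\nu,\ve)},R)\big)$; since $\wt{P}^{(\la,\ve_1)}$ is the projective cover of $\wt{L}^{(\la,\ve_1)}$, the dimension of this Hom-space is $\big[\Hom_R(\,\sh\wt{Z}^{(\nu,\ve)},R):\wt{L}^{(\la,\ve_1)}\big]_{\tA}$; Lemma \ref{iso-lem} replaces $\wt{L}^{(\la,\ve_1)}$ by $\Hom_R(\,\sh\wt{L}^{(\la,\ve_1)},R)$; and $R$-duality converts the resulting multiplicity into $[\,\sh\wt{Z}^{(\nu,\ve)}:\,\sh\wt{L}^{(\la,\ve_1)}]_{\tA}$. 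You correctly anticipated that Lemma \ref{iso-lem} and the non-degenerate form $\b$ are where left and right modules get matched, so the repair is short; but as written your argument omits exactly this chain, which is the heart of the proof rather than a citation.
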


\begin{proof}
Let $\vL^+=\{\la_1,\la_2,\cdots ,\la_k\}$ be such that $i<j$ if $\la_i > \la_j$, 
and set $\wt{Z}^{(\la,\ve)}=0$ if $(\la,\ve)\not\in \Om$.  
By the general theory of standardly based algebras (\cite{DR98}), 
$\wt{P}^{(\la,\ve_1)}$ has a filtration 
\[\wt{P}^{(\la,\ve_1)}=M_{(k,1)}\supseteqq M_{(k,0)} \supseteqq M_{(k-1,1)}\supseteqq \cdots \supseteqq M_{(1,1)}\supseteqq M_{(1,0)} \supseteqq 0\]
such that $M_{(i,\ve)}/M_{(i-\ve' ,\ve')}=\wt{P}^{(\la,\ve_1)}\ot_{\tA}\,\hspace{-2mm}\sh \wt{Z}^{(\la_i,\ve)} \ot_R \wt{Z}^{(\la_i,\ve)}$, 
where $\ve+\ve'=1$. 
This implies that 
\[[\wt{P}^{(\la,\ve_1)}:\wt{L}^{(\mu,\ve_2)}]_{\tA}
=\sum_{(\nu,\ve)\in \Om} \dim_R \big(\wt{P}^{(\la,\ve_1)}\ot_{\tA}\,\hspace{-2mm} \sh \wt{Z}^{(\nu,\ve)}\big)
	 \cdot [\wt{Z}^{(\nu,\ve)}:\wt{L}^{(\mu,\ve_2)}]_{\tA}.\]
Thus it is enough to show that 
$\dim_R \big( \wt{P}^{(\la,\ve_1)}\ot_{\tA}\,\hspace{-2mm} \sh \wt{Z}^{(\nu,\ve)}\big)=[\,\sh \wt{Z}^{(\nu,\ve)}:\,\sh\wt{L}^{(\la,\ve_1)}]_{\tA}$. 
We have 
\[\wt{P}^{(\la,\ve_1)}\ot_{\tA}\,\hspace{-2mm} \sh \wt{Z}^{(\nu,\ve)}
\cong \Hom_R(\wt{P}^{(\la,\ve_1)}\ot_{\tA}\,\hspace{-2mm} \sh \wt{Z}^{(\nu,\ve)},R)
\cong \Hom_{\tA}\big(\wt{P}^{(\la,\ve_1)}, \Hom_R(\,\sh \wt{Z}^{(\nu,\ve)},R)\big)\]
as $R$-modules by the universal adjointness of property of $\Hom$ and $\ot$. 
Thus we have 
\begin{align*}
\dim_R \big( \wt{P}^{(\la,\ve_1)}\ot_{\tA}\,\hspace{-2mm} \sh \wt{Z}^{(\nu,\ve)}\big)
&= \dim_R\big(\Hom_{\tA}\big(\wt{P}^{(\la,\ve_1)}, \Hom_R(\,\sh \wt{Z}^{(\nu,\ve)},R)\big)\big)\\
&=\big[\Hom_R(\,\sh \wt{Z}^{(\nu,\ve)},R) : \wt{L}^{(\la,\ve_1)}\big]_{\tA} \\
&=\big[\Hom_R(\,\sh \wt{Z}^{(\nu,\ve)},R) : \Hom_R(\,\sh\wt{L}^{(\la,\ve_1)},R)\big]_{\tA}, 
\end{align*}
where the second equality follows from the general property (e.g. \cite[Corollary A.16]{M-book}), 
and the third equality follows from Lemma \ref{iso-lem}. 
Since $R$ is a field, we have 
\[\big[\Hom_R(\,\sh \wt{Z}^{(\nu,\ve)},R) : \Hom_R(\,\sh\wt{L}^{(\la,\ve_1)},R)\big]_{\tA}=[\,\sh \wt{Z}^{(\nu,\ve)} :\,\sh \wt{L}^{(\la,\ve_1)}]_{\tA}.\] 
Thus the proposition is proved. 
\end{proof}

For $(\la,\ve)\in \Om_0$, let $\sh\wt{P}^{(\la,\ve)}$ be the projective cover of $\sh\wt{L}^{(\la,\ve)}$. 
Then we have 
\begin{align}
 [\,\sh\wt{P}^{(\la,\ve_1)}:\,\sh\wt{L}^{(\mu,\ve_2)}]_{\tA}
=\sum_{(\nu,\ve) \in \Om}[\,\wt{Z}^{(\nu,\ve)} :\,\wt{L}^{(\la,\ve_1)}]_{\tA} \cdot [\,\sh\wt{Z}^{(\nu,\ve)}:\,\sh\wt{L}^{(\mu,\ve_2)}]_{\tA}.
\label{mul-shP}
\end{align}
in a similar way as in Proposition \ref{proj-decom}. 
By comparing the right-hand side of (\ref{mul-P}) with the right-hand side of (\ref{mul-shP}), 
we have the following corollary.

\begin{cor}\label{proj-cor}
For $(\la,\ve_1),(\mu,\ve_2) \in \Om_0$, we have 
\[[\wt{P}^{(\la,\ve_1)}:\wt{L}^{(\mu,\ve_2)}]_{\tA}=[\,\sh\wt{P}^{(\mu,\ve_2)}:\,\sh\wt{L}^{(\la,\ve_1)}]_{\tA}.\]
\end{cor}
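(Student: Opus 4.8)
The plan is to read off the desired equality directly from the two multiplicity formulas \eqref{mul-P} and \eqref{mul-shP}. The statement to prove is
\[
[\wt{P}^{(\la,\ve_1)}:\wt{L}^{(\mu,\ve_2)}]_{\tA}=[\,\sh\wt{P}^{(\mu,\ve_2)}:\,\sh\wt{L}^{(\la,\ve_1)}]_{\tA}
\]
for $(\la,\ve_1),(\mu,\ve_2)\in\Om_0$. By Proposition \ref{proj-decom}, the left-hand side equals
\[
\sum_{(\nu,\ve)\in\Om}[\,\sh\wt{Z}^{(\nu,\ve)}:\,\sh\wt{L}^{(\la,\ve_1)}]_{\tA}\cdot[\wt{Z}^{(\nu,\ve)}:\wt{L}^{(\mu,\ve_2)}]_{\tA}.
\]
On the other hand, formula \eqref{mul-shP}, applied with the roles of $(\la,\ve_1)$ and $(\mu,\ve_2)$ exchanged, gives
\[
[\,\sh\wt{P}^{(\mu,\ve_2)}:\,\sh\wt{L}^{(\la,\ve_1)}]_{\tA}=\sum_{(\nu,\ve)\in\Om}[\wt{Z}^{(\nu,\ve)}:\wt{L}^{(\mu,\ve_2)}]_{\tA}\cdot[\,\sh\wt{Z}^{(\nu,\ve)}:\,\sh\wt{L}^{(\la,\ve_1)}]_{\tA}.
\]
The two sums are manifestly term-by-term equal, being over the same index set $\Om$ and differing only by the order of the two scalar factors in each summand; hence the two multiplicities coincide. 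This is essentially a one-line argument once the two formulas are in hand.

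The only nonroutine input is formula \eqref{mul-shP} itself, i.e.\ the analogue of Proposition \ref{proj-decom} for the left-module side. I would point out that its proof is formally identical to that of Proposition \ref{proj-decom}: one uses the projective cover filtration of $\sh\wt{P}^{(\mu,\ve_2)}$ furnished by the theory of standardly based algebras in \cite{DR98}, reduces the computation of $\dim_R(\sh\wt{P}^{(\mu,\ve_2)}\ot_{\tA}\wt{Z}^{(\nu,\ve)})$ to a $\Hom$-multiplicity via adjunction, and then invokes Lemma \ref{iso-lem} to replace $\Hom_R(\wt{L}^{(\mu,\ve_2)},R)$ by $\sh\wt{L}^{(\mu,\ve_2)}$ (and similarly $\Hom_R(\wt{Z}^{(\nu,\ve)},R)$-multiplicities by the corresponding ones of $\wt{Z}^{(\nu,\ve)}$, using that $R$ is a field). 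Since the paper already states \eqref{mul-shP} as established "in a similar way as in Proposition \ref{proj-decom}", in the write-up I would simply cite it.

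I do not expect any genuine obstacle here: the corollary is a purely formal consequence of the symmetry of the double sum. The one point that deserves a word of care is making sure the index set and the convention $\wt{Z}^{(\nu,\ve)}=0$ (resp.\ $\sh\wt{Z}^{(\nu,\ve)}=0$) for $(\nu,\ve)\notin\Om$ are used consistently on both sides, so that the two sums literally range over the same terms; but this is exactly the convention already fixed in the statement of Proposition \ref{proj-decom}, so nothing new is needed.

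\begin{proof}
By Proposition \ref{proj-decom}, for $(\la,\ve_1),(\mu,\ve_2)\in\Om_0$ we have
\[
[\wt{P}^{(\la,\ve_1)}:\wt{L}^{(\mu,\ve_2)}]_{\tA}
=\sum_{(\nu,\ve)\in\Om}[\,\sh\wt{Z}^{(\nu,\ve)}:\,\sh\wt{L}^{(\la,\ve_1)}]_{\tA}\cdot[\wt{Z}^{(\nu,\ve)}:\wt{L}^{(\mu,\ve_2)}]_{\tA}.
\]
On the other hand, applying \eqref{mul-shP} with $(\la,\ve_1)$ and $(\mu,\ve_2)$ interchanged gives
\[
[\,\sh\wt{P}^{(\mu,\ve_2)}:\,\sh\wt{L}^{(\la,\ve_1)}]_{\tA}
=\sum_{(\nu,\ve)\in\Om}[\,\wt{Z}^{(\nu,\ve)}:\,\wt{L}^{(\mu,\ve_2)}]_{\tA}\cdot[\,\sh\wt{Z}^{(\nu,\ve)}:\,\sh\wt{L}^{(\la,\ve_1)}]_{\tA}.
\]
The right-hand sides of these two identities are equal, since each is a sum over $(\nu,\ve)\in\Om$ whose $(\nu,\ve)$-th summand is the product of the two scalars $[\,\sh\wt{Z}^{(\nu,\ve)}:\,\sh\wt{L}^{(\la,\ve_1)}]_{\tA}$ and $[\wt{Z}^{(\nu,\ve)}:\wt{L}^{(\mu,\ve_2)}]_{\tA}$, written in the two possible orders. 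Hence
\[
[\wt{P}^{(\la,\ve_1)}:\wt{L}^{(\mu,\ve_2)}]_{\tA}=[\,\sh\wt{P}^{(\mu,\ve_2)}:\,\sh\wt{L}^{(\la,\ve_1)}]_{\tA},
\]
as claimed.
\end{proof}
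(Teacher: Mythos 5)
Your proof is correct and is exactly the paper's argument: the paper also deduces the corollary by comparing the right-hand side of (\ref{mul-P}) with that of (\ref{mul-shP}) (the latter applied with $(\la,\ve_1)$ and $(\mu,\ve_2)$ swapped), which coincide term by term. Nothing further is needed.
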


\remark
Lemma \ref{iso-lem}, Proposition \ref{proj-decom} and Corollary \ref{proj-cor} hold for any standardly based algebra 
in the general setting.
\para
For $(\la,\ve), (\mu,\ve') \in \Om$, we say that $(\la,\ve)$ and $(\mu,\ve')$ are linked, and 
denote by $(\la,\ve) \eqtA (\mu,\ve')$  
if there exists a sequence $(\la,\ve)=(\la_1,\ve_1),(\la_2,\ve_2),\cdots ,(\la_k,\ve_k)=(\mu,\ve')$  
such that $\wt{Z}^{(\la_i,\ve_i)}$ and $\wt{Z}^{(\la_{i+1},\ve_{i+1})}$ have a common composition factor for $i=1,\cdots,k-1$. 
Then the relation $\eqtA$ turns out to be an equivalence relation on $\Om$. 
We have the following proposition. 

\begin{prop}\ \label{block-prop} For $(\la,\ve),(\mu,\ve')\in \Om$, 
the following holds. 
\begin{enumerate}
\item 
All of the composition factors of $\wt{Z}^{(\la,\ve)}$ lie in the same block. 
\item
$(\la,\ve) \eqtA (\mu,\ve')$ if and only if $\wt{Z}^{(\la,\ve)}$ and $\wt{Z}^{(\mu,\ve')}$ belong to the same block. 
\end{enumerate}
\end{prop}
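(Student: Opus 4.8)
The plan is to prove (i) by the bimodule argument familiar from the cellular case, and to prove (ii) by transporting composition--multiplicity data of the standardly based algebra $\tA$ to the cellular algebra $\aA$, where the left--right symmetry of Lemma~\ref{lr-aA} is available; since $\tA$ carries no anti-automorphism~$\ast$, this detour through $\aA$ is the whole point of the argument.

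For (i), I would invoke the general theory of standardly based algebras \cite{DR98} (compare~(\ref{filtration of A})): the standard basis $\wt{\ZC}^\a$ provides a chain of two-sided ideals of $\tA$ whose successive subquotients are, as $(\tA,\tA)$-bimodules, isomorphic to $\sh\wt{Z}^{(\nu,\delta)}\ot_R\wt{Z}^{(\nu,\delta)}$ for $(\nu,\delta)\in\Om$. Let $e$ be a block idempotent of $\tA$; it is central, so right multiplication by $e$ is exact and carries this chain to a chain with subquotients $(\sh\wt{Z}^{(\nu,\delta)}\ot_R\wt{Z}^{(\nu,\delta)})e=\sh\wt{Z}^{(\nu,\delta)}\ot_R(\wt{Z}^{(\nu,\delta)}e)$, which by centrality also equals $(e\,\sh\wt{Z}^{(\nu,\delta)})\ot_R\wt{Z}^{(\nu,\delta)}$. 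Over the field $R$, comparing the two expressions (an elementary tensor $x\ot y$ with $y\ne 0$ lies in $U\ot_R\wt{Z}^{(\nu,\delta)}$ only when $x$ lies in $U$) forces $\wt{Z}^{(\nu,\delta)}e\in\{0,\wt{Z}^{(\nu,\delta)}\}$. Hence each $\wt{Z}^{(\nu,\delta)}$ lies in a single block, which is (i).

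For (ii), the implication $(\la,\ve)\eqtA(\mu,\ve')\Rightarrow$ same block is immediate from (i), since along a linking chain consecutive standard modules share a composition factor and hence lie in one block. For the converse I would proceed as follows. Assume $\wt{Z}^{(\la,\ve)}$ and $\wt{Z}^{(\mu,\ve')}$ lie in a block $B$ and pick composition factors $\wt{L}^{(\rho_1,\ve_1)}$ of $\wt{Z}^{(\la,\ve)}$ and $\wt{L}^{(\rho_2,\ve_2)}$ of $\wt{Z}^{(\mu,\ve')}$, so that $(\rho_1,\ve_1),(\rho_2,\ve_2)\in\Om_0$ and both simples lie in $B$; since $\wt{L}^{(\rho_i,\ve_i)}$ is the head of $\wt{Z}^{(\rho_i,\ve_i)}$ we get $(\la,\ve)\eqtA(\rho_1,\ve_1)$ and $(\mu,\ve')\eqtA(\rho_2,\ve_2)$, so it remains to prove $(\rho_1,\ve_1)\eqtA(\rho_2,\ve_2)$. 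Because $\wt{L}^{(\rho_1,\ve_1)}$ and $\wt{L}^{(\rho_2,\ve_2)}$ lie in the same block of the finite-dimensional algebra $\tA$, a standard fact about finite-dimensional algebras provides a chain of simple modules in $B$ joining them in which consecutive members occur as composition factors of a common indecomposable projective $\tA$-module; it therefore suffices to show that if $\wt{L}^{(\rho_1,\ve_1)}$ and $\wt{L}^{(\rho_2,\ve_2)}$ are both composition factors of one indecomposable projective $\wt{P}^{(\sigma,\tau)}$, then $(\rho_1,\ve_1)\eqtA(\rho_2,\ve_2)$.

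Here I would use Proposition~\ref{proj-decom}: for $j=1,2$ there is $(\nu_j,\delta_j)\in\Om$ with $[\,\sh\wt{Z}^{(\nu_j,\delta_j)}:\,\sh\wt{L}^{(\sigma,\tau)}]_{\tA}\ne 0$ and $[\wt{Z}^{(\nu_j,\delta_j)}:\wt{L}^{(\rho_j,\ve_j)}]_{\tA}\ne 0$. The second relation gives $(\nu_j,\delta_j)\eqtA(\rho_j,\ve_j)$ at once. For the first, I would restrict to $\aA$: by Proposition~\ref{tA-aA-left} it becomes $[\,\sh Z^{(\nu_j,\delta_j)}:\,\sh L^{(\sigma,\tau)}]_{\aA}\ne 0$, hence $[Z^{(\nu_j,\delta_j)}:L^{(\sigma,\tau)}]_{\aA}\ne 0$ by Lemma~\ref{lr-aA} for the cellular algebra $\aA$; since Proposition~\ref{tA-aA} realizes $Z^{(\nu_j,\delta_j)}$ as a direct summand of the restriction of $\wt{Z}^{(\nu_j,\delta_j)}$ to $\aA$ while $\wt{L}^{(\sigma,\tau)}$ restricts to the simple $L^{(\sigma,\tau)}$, it follows that $\wt{L}^{(\sigma,\tau)}$ is a composition factor of $\wt{Z}^{(\nu_j,\delta_j)}$ over $\tA$; being also the head of $\wt{Z}^{(\sigma,\tau)}$, this yields $(\nu_j,\delta_j)\eqtA(\sigma,\tau)$. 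Chaining, $(\rho_1,\ve_1)\eqtA(\nu_1,\delta_1)\eqtA(\sigma,\tau)\eqtA(\nu_2,\delta_2)\eqtA(\rho_2,\ve_2)$, which completes (ii). The main obstacle is exactly this converse: the block information delivered by Proposition~\ref{proj-decom} comes packaged in the \emph{left} standard modules $\sh\wt{Z}$, whereas $\eqtA$ is defined through the \emph{right} standard modules, and for a general standardly based algebra there is no bridge between the two --- the only bridge available here is the cellular subalgebra $\aA$, via Lemma~\ref{lr-aA} and the restriction isomorphisms of Propositions~\ref{tA-aA} and~\ref{tA-aA-left}.
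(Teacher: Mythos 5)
Your argument is correct, but it does not follow the paper's proof everywhere, and the difference is worth recording. For part (i) the paper does \emph{not} use the bimodule filtration at all: it takes two composition factors $\wt{L}^{(\mu_1,\ve_1)},\wt{L}^{(\mu_2,\ve_2)}$ of $\wt{Z}^{(\la,\ve)}$, runs a case analysis on $\ve,\ve_1,\ve_2$ and on whether $\a(\la)=\a(\mu_i)$, and uses Corollary \ref{decom-cor} (the left--right transfer through $\aA$) together with Proposition \ref{proj-decom} to show the two projective covers $\wt{P}^{(\mu_1,\ve_1)},\wt{P}^{(\mu_2,\ve_2)}$ share a composition factor, hence lie in one block. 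Your route --- the two-sided ideal chain of the standardly based algebra with sections $\sh\wt{Z}^{(\nu,\d)}\ot_R\wt{Z}^{(\nu,\d)}$, on which left and right multiplication by a central block idempotent coincide, forcing $\wt{Z}^{(\nu,\d)}e\in\{0,\wt{Z}^{(\nu,\d)}\}$ --- is valid (both tensor factors are nonzero for $(\nu,\d)\in\Om$, and the subspace comparison over a field works as you indicate), is exactly the analogue of the paper's remark after Proposition \ref{basis-block} for cellular algebras, and buys more: it proves (i) for an arbitrary standardly based algebra, with no appeal to $\aA$. For part (ii) your skeleton coincides with the paper's (same block $\Rightarrow$ linkage through indecomposable projectives $\Rightarrow$ Proposition \ref{proj-decom} $\Rightarrow$ convert the left multiplicities into right ones through $\aA$); the only organizational difference is that the paper invokes Corollary \ref{decom-cor} after checking, via Proposition \ref{decom-prop}, which of its three cases applies, whereas you prove directly the one implication actually needed, namely
$[\,\sh\wt{Z}^{(\nu,\d)}:\,\sh\wt{L}^{(\s,\t)}]_{\tA}
=[\,\sh Z^{(\nu,\d)}:\,\sh L^{(\s,\t)}]_{\aA}
=[Z^{(\nu,\d)}:L^{(\s,\t)}]_{\aA}
\le[\wt{Z}^{(\nu,\d)}:\wt{L}^{(\s,\t)}]_{\tA}$,
using Propositions \ref{tA-aA-left}, \ref{tA-aA} and Lemma \ref{lr-aA}; this neatly sidesteps the case analysis, and it is consistent with Remark \ref{remark-block} (\roi), since only this one-sided inequality (not its converse) is true. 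Two cosmetic points: $\wt{L}^{(\s,\t)}$ is a simple quotient of $\wt{Z}^{(\s,\t)}$ by the radical of the bilinear form, not literally its head, but all you use is that it is a composition factor of $\wt{Z}^{(\s,\t)}$, which is fine; and the "standard fact" you quote (simples in one block are joined by a chain through common projective indecomposables) is the same fact the paper uses, phrased through the projective covers.
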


\begin{proof}
(\roi) 
Let $\wt{L}^{(\mu_1,\ve_1)}$ and $\wt{L}^{(\mu_2,\ve_2)}$ be composition factors of $\wt{Z}^{(\la,\ve)}$. 
Then we have 
\[[\wt{Z}^{(\la,\ve)} :\wt{L}^{(\mu_i,\ve_i)}]_{\tA}\not=0 \text{ for }i=1,2.\] 

In the case where $\ve=0$, we have $\ve_1=\ve_2=0$ and $\a(\la)=\a(\mu_1)=\a(\mu_2)$ 
by Proposition \ref{decom-prop} (\roii), (\roiv) and Proposition \ref{tA-aA}.
Thus by Corollary \ref{decom-cor} (\roi), we have 
\[[\,\sh\wt{Z}^{(\la,0)} :\,\sh\wt{L}^{(\mu_1,0)}]_{\tA}=[\wt{Z}^{(\la,0)} :\wt{L}^{(\mu_1,0)}]_{\tA}\not=0.\] 
Combining this with Proposition \ref{proj-decom}, we have 
\[[\wt{P}^{(\mu_1,0)}:\wt{L}^{(\mu_2,0)}]_{\tA}
\geq [\,\sh\wt{Z}^{(\la,0)}:\,\sh\wt{L}^{(\mu_1,0)}]_{\tA}\cdot [\wt{Z}^{(\la,0)}:\wt{L}^{(\mu_2,0)}]_{\tA} \not=0.\]
Thus 
$\wt{P}^{(\mu_1,0)}$ and $\wt{P}^{(\mu_2,0)}$ have a common composition factor $\wt{L}^{(\mu_2,0)}$. 
This implies that $\wt{P}^{(\mu_1,0)}$ and $\wt{P}^{(\mu_2,0)}$ belong to the same block  
since $\wt{P}^{(\mu_1,0)}$ and $\wt{P}^{(\mu_2,0)}$ are principal indecomposable modules of $\tA$. 
It follows that $\wt{L}^{(\mu_1,0)}$ and $\wt{L}^{(\mu_2,0)}$ lie in the same block. 

Next we consider the case where $\ve=1$ 
and one of the $\ve_1, \ve_2$ is equal to $1$. 
We may assume that $\ve_1=1$. 
By Corollary \ref{decom-cor}\nobreak\,\nobreak (\roiii), we have 
\[[\,\sh\wt{Z}^{(\la,1)} :\,\sh\wt{L}^{(\mu_1,1)}]_{\tA}=[\wt{Z}^{(\la,1)} :\wt{L}^{(\mu_1,1)}]_{\tA}\not=0.\]
Combining this with Proposition \ref{proj-decom}, we have 
\[[\wt{P}^{(\mu_1,1)}:\wt{L}^{(\mu_2,\ve_2)}]_{\tA}
\geq [\,\sh\wt{Z}^{(\la,1)}:\,\sh\wt{L}^{(\mu_1,1)}]_{\tA}\cdot [\wt{Z}^{(\la,1)}:\wt{L}^{(\mu_2,\ve_2)}]_{\tA} \not=0.\] 
This implies that $\wt{L}^{(\mu_1,1)}$ and $\wt{L}^{(\mu_2,\ve_2)}$ lie in the same block 
by a similar reason as in the case of $\ve=0$. 

Finally, we consider the case where $\ve=1$ and $\ve_1=\ve_2=0$. 
If $\a(\la)\not=\a(\mu_i)$ for some $i$, say $i=1$,  then we have 
\[[\,\sh\wt{Z}^{(\la,1)} :\,\sh\wt{L}^{(\mu_1,0)}]_{\tA}=[\wt{Z}^{(\la,1)} :\wt{L}^{(\mu_1,0)}]_{\tA}\not=0 \]
by Corollary \ref{decom-cor} (\roii). 
Combining this with Proposition \ref{proj-decom}, we have 
\[[\wt{P}^{(\mu_1,0)}:\wt{L}^{(\mu_2,0)}]_{\tA}
\geq [\,\sh\wt{Z}^{(\la,1)}:\,\sh\wt{L}^{(\mu_1,0)}]_{\tA}\cdot [\wt{Z}^{(\la,1)}:\wt{L}^{(\mu_2,0)}]_{\tA} \not=0.\]
This implies that $\wt{L}^{(\mu_1,0)}$ and $\wt{L}^{(\mu_2,0)}$ lie in the same block 
by a similar reason as in the case of $\ve=0$. 
If $\a(\la)=\a(\mu_1)=\a(\mu_2)$ then we have 
\begin{align*}
[\wt{Z}^{(\la,0)} : \wt{L}^{(\mu_i,0)}]_{\tA}&=[Z^{(\la,0)}: L^{(\mu_i,0)}]_{\aA}\\
	&=[Z^{(\la,0)}\oplus Z^{(\la,1)} : L^{(\mu_i,0)}]_{\aA}\\
	&=[\wt{Z}^{(\la,1)}:\wt{L}^{(\mu_i,0)}]_{\tA}\\
	& \not=0 \qquad(\text{ for }i=1,2) 
\end{align*}
by Proposition \ref{decom-prop} (\roiii) and Proposition \ref{tA-aA}. 
This implies that $\wt{L}^{(\mu_1,0)}$ and $\wt{L}^{(\mu_2,0)}$ lie in the same block 
by the the result of $\ve=0$. 

(\roii) 
By (\roi), it is clear that if $(\la,\ve)\eqtA (\mu,\ve')$ then $\wt{Z}^{(\la,\ve)}$ and $\wt{Z}^{(\mu,\ve')}$ belong to the same block. 
We show the converse.
Suppose that $\wt{Z}^{(\la,\ve)}$ and $\wt{Z}^{(\mu,\ve')}$ belong to the same block. 
Then a composition factor $\wt{L}^{(\la,\ve)}$ of $\wt{Z}^{(\la,\ve)}$ and 
	a composition factor $\wt{L}^{(\mu,\ve')}$ of $\wt{Z}^{(\mu,\ve')}$ lie in the same block. 
This implies that $\wt{P}^{(\la,\ve)}$ and $\wt{P}^{(\mu,\ve')}$ belong to the same block. 
Thus there exist sequences $\la=\la_1,\la_2,\cdots,\la_k=\mu$, $\ve=\ve_1, \ve_2,\cdots, \ve_k=\ve'$ 
such that 
$\wt{P}^{(\la_i,\ve_i)}$ and $\wt{P}^{(\la_{i+1},\ve_{i+1})}$ have a common composition factor $\wt{L}^{(\nu_i,\ve_i'')}$. 
By Proposition \ref{proj-decom}, 
one sees that there exists $(\t_1,\ve_1)\in \Om$ such that 
\[ [\,\sh \wt{Z}^{(\t_1,\ve_1)}:\,\sh \wt{L}^{(\la_i,\ve_i)}]_{\tA}\not=0,\quad  [\wt{Z}^{(\t_1,\ve_1)}:\wt{L}^{(\nu_i,\ve_i'')}]_{\tA}\not=0,\] 
and there exists $(\t_2,\ve_2)\in \Om$ such that 
\[ [\,\sh \wt{Z}^{(\t_2,\ve_2)}:\,\sh \wt{L}^{(\la_{i+1},\ve_{i+1})}]_{\tA}\not=0,\quad  [\wt{Z}^{(\t_2,\ve_2)}:\wt{L}^{(\nu_i,\ve_i'')}]_{\tA}\not=0.\] 
By Proposition \ref{decom-prop} (\roiii), Lemma \ref{lr-aA} and  Proposition \ref{tA-aA-left}, 
we have  $\a(\t_1)\not=\a(\la_i)$ if $\ve_1=1$ and  $\ve=0$, 
and we have $\a(\t_2)\not=\a(\la_{i+1})$ if $\ve_2=1$ and $\ve'=0$. 
By Corollary \ref{decom-cor}, we have 
\begin{align*}
&[\wt{Z}^{(\t_1,\ve_1)}:\wt{L}^{(\la_i,\ve_i)}]_{\tA}=[\,\sh \wt{Z}^{(\t_1,\ve_1)}:\,\sh \wt{L}^{(\la_i,\ve_i)}]_{\tA}\not=0, \\
&[\wt{Z}^{(\t_2,\ve_2)}:\wt{L}^{(\la_{i+1},\ve_{i+1})}]_{\tA}=[\,\sh \wt{Z}^{(\t_2,\ve_2)}:\,\sh \wt{L}^{(\la_{i+1},\ve_{i+1})}]_{\tA}\not=0.
\end{align*}
Thus we have 
\[(\la_i,\ve_i)\eqtA (\t_1,\ve_1) \eqtA (\nu_i,\ve_i'') \eqtA (\t_2,\ve_2)  \eqtA (\la_{i+1},\ve_{i+1}) \,\,\text{ for any }i=1,\cdots,k-1, \]
and we have $(\la,\ve) \eqtA (\mu,\ve')$. 
The proposition is proved. 
\end{proof}
For $\la \in \vL^+$, if $(\la,0), (\la,1) \in \Om$ then $\wt{Z}^{(\la,0)}$ 
is an $\tA$-submodule of $\wt{Z}^{(\la,1)}$ 
(see Proposition \ref{tA-A}). 
Thus we have $(\la,0) \eqtA (\la,1)$ by Proposition \ref{block-prop}. 
Therefore, we can define the equivalence relation $\eqtAt$ on $\vL^+$ by $\la \eqtAt \mu $ 
if and only if $\wt{Z}^{(\la,\ve)}$ $(\ve=0,1)$ and $\wt{Z}^{(\mu,\ve')}$ $(\ve'=0,1)$ belong to same block of $\tA$. 
Let $\vL^+/_\eqtAt$ 
be the set of equivalence classes on $\vL^+$  with respect to the relation $\eqtAt$. 
The following result gives a  classification of blocks of $\tA$. 
\begin{cor}\ 
There exists a bijective correspondence between $\vL^+/_\eqtAt $ 
and the set of blocks of $\tA$.
\end{cor}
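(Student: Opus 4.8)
The plan is to mimic the standard argument for cellular algebras (Corollary \ref{block-class}) but phrased through the relation $\eqtAt$ rather than directly through right standard modules, since Proposition \ref{block-prop} has already done the bulk of the work. First I would recall that, by the general theory of standardly based algebras (\cite{DR98}), every principal indecomposable right $\tA$-module $\wt P^{(\la,\ve)}$ has a filtration by modules of the form $\sh\wt Z^{(\nu,\e)}\ot_R\wt Z^{(\nu,\e)}$, and in particular $\{\wt L^{(\la,\ve)}\mid (\la,\ve)\in\wt\Om_0\}$ is a complete set of non-isomorphic simple right $\tA$-modules. Since $R$ is a field, the blocks of $\tA$ are in bijection with the equivalence classes of simple modules under the relation ``$\wt L$ and $\wt L'$ lie in the same block'', equivalently under the transitive closure of ``$\wt L$ and $\wt L'$ are composition factors of a common principal indecomposable module''. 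Each such simple module is $\wt L^{(\la,\ve)}$ for a unique $(\la,\ve)\in\wt\Om_0\subseteq\Om$, and by Proposition \ref{block-prop}(i) all composition factors of $\wt Z^{(\la,\ve)}$ lie in one block; so attaching to a block $B$ the set of those $\la\in\vL^+$ with $\wt Z^{(\la,\ve)}$ ($\ve=0,1$) belonging to $B$ gives a well-defined map from blocks to subsets of $\vL^+$.

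Next I would verify that the subsets arising this way are exactly the classes of $\eqtAt$. On the one hand, if $\la\eqtAt\mu$ then by definition $\wt Z^{(\la,\ve)}$ and $\wt Z^{(\mu,\ve')}$ belong to the same block, so $\la$ and $\mu$ map into the same block; conversely, Proposition \ref{block-prop}(ii) says that $\wt Z^{(\la,\ve)}$ and $\wt Z^{(\mu,\ve')}$ belong to the same block precisely when $(\la,\ve)\eqtA(\mu,\ve')$, and combined with the observation (recorded just before the corollary, using Proposition \ref{tA-A}) that $(\la,0)\eqtA(\la,1)$ whenever both lie in $\Om$, this shows that $\la$ and $\mu$ land in the same block iff $\la\eqtAt\mu$. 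Hence the map ``block $\mapsto$ its $\eqtAt$-class'' is well-defined and injective. Every $(\la,\ve)\in\Om$ has $\wt Z^{(\la,\ve)}$ nonzero (its basis $\wt J(\la,\ve)\supseteq\wt I(\la,\ve)=I(\la,\ve)\ne\emptyset$), hence $\wt Z^{(\la,\ve)}$ has at least one composition factor and so belongs to some block; thus every $\eqtAt$-class is hit, giving surjectivity.

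Finally I would assemble these pieces into the bijection $\vL^+/_{\eqtAt}\to\{\text{blocks of }\tA\}$. The one genuinely non-formal ingredient is already in hand: it is Proposition \ref{block-prop}, whose proof in turn rested on Corollary \ref{decom-cor} (the symmetry $[\wt Z^{(\la,\ve)}:\wt L^{(\mu,\ve')}]_{\tA}=[\sh\wt Z^{(\la,\ve)}:\sh\wt L^{(\mu,\ve')}]_{\tA}$ in the relevant cases) and on Proposition \ref{proj-decom}. So the present corollary is essentially a repackaging: the main point to be careful about is the bookkeeping that a block of $\tA$ determines a single $\eqtAt$-class of $\vL^+$ rather than a class in $\Om$ — i.e. checking that the two points $(\la,0),(\la,1)$ sitting over the same $\la\in\vL^+$ are never separated by the block decomposition, which is exactly the remark preceding the statement. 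I expect no serious obstacle; the ``hard part'' is purely organizational, namely making sure the well-definedness, injectivity and surjectivity of the assignment are each justified by citing the correct earlier statement (completeness of $\{\wt L^{(\la,\ve)}\}$, Proposition \ref{block-prop}(i) for well-definedness, Proposition \ref{block-prop}(ii) together with the $(\la,0)\eqtA(\la,1)$ remark for injectivity, and nonvanishing of $\wt Z^{(\la,\ve)}$ for surjectivity).
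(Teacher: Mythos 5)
Your argument is correct and is essentially the route the paper takes: the corollary is an immediate repackaging of Proposition \ref{block-prop} together with the observation that $(\la,0)\eqtA(\la,1)$ (via $\wt{Z}^{(\la,0)}\subset\wt{Z}^{(\la,1)}$ from Proposition \ref{tA-A}), plus the completeness of $\{\wt{L}^{(\la,\ve)}\mid(\la,\ve)\in\wt{\Om}_0\}$ from \cite{DR98}. The paper leaves exactly this bookkeeping implicit, and your write-up supplies it correctly.
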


Moreover, we have  the following relations among blocks of $\A$,  $\tA$ and  $\oA$. 
\begin{thm}\ \label{relation-block}
\begin{enumerate}
\item
For $\la,\mu \in \vL^+$, $\la \eqA \mu$ if and only if $\la \eqtAt \mu$. 
In particular, $\vL^+/_\sim$ coincides with $\vL^+/_\eqtAt$. 
\item
For $\la,\mu \in \ol{\vL}^+$, $\la \eqA \mu $ and $\a(\la)=\a(\mu)$ if and only if $\la \eqoA \mu$. 
\end{enumerate}
\end{thm}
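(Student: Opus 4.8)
The plan is to derive both statements from the decomposition number identities already established, by showing that the linkage graphs underlying the block classifications of $\A$, $\tA$ and $\oA$ have the "same" connected components (after the appropriate identifications). For part (\roi), recall that $\la \eqA \mu$ means $W^\la$ and $W^\mu$ are connected by a chain of standard modules sharing composition factors, and $\la \eqtAt \mu$ means the same for the right standard modules $\wt{Z}^{(\cdot,\cdot)}$ of $\tA$. First I would prove $\la \eqtAt \mu \Rightarrow \la \eqA \mu$: suppose $\wt{Z}^{(\la_i,\ve_i)}$ and $\wt{Z}^{(\la_{i+1},\ve_{i+1})}$ share a composition factor $\wt{L}^{(\nu,\ve'')}$. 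By Theorem \ref{decom-thm-tA}, every nonzero $[\wt{Z}^{(\la_i,\ve_i)}:\wt{L}^{(\nu,\ve'')}]_{\tA}$ equals the corresponding decomposition number $[W^{\la_i}:L^\nu]_\A$ (for the appropriate value of $\ve''$ determined by $\nu$), so the existence of this common factor forces $L^\nu$ to appear in both $W^{\la_i}$ and $W^{\la_{i+1}}$, giving $\la_i \eqA \la_{i+1}$; chaining these yields $\la \eqA \mu$. Conversely, if $\la \eqA \mu$ via $W^{\la_i}$, $W^{\la_{i+1}}$ sharing a composition factor $L^\nu$ with $\nu \in \vL^+_0$, then either $(\nu,0)\in\Om_0$ or $(\nu,1)\in\Om_0$ (or both), and in each case Theorem \ref{decom-thm-tA} identifies $[W^{\la_j}:L^\nu]_\A$ with $[\wt{Z}^{(\la_j,\ve_j)}:\wt{L}^{(\nu,\ve'')}]_{\tA}$ for a suitable $\ve_j \in\{0,1\}$ (and the right choice of $\ve''$); hence $\wt{Z}^{(\la_i,\ve_i)}$ and $\wt{Z}^{(\la_{i+1},\ve_{i+1})}$ share the composition factor $\wt{L}^{(\nu,\ve'')}$, giving $\la_i \eqtAt \la_{i+1}$, and chaining gives $\la \eqtAt \mu$. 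This proves (\roi), and the statement about equivalence classes is then immediate.

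For part (\roii) the strategy is similar but now comparing $\eqA$ (restricted to $\ol{\vL}^+$) with $\eqoA$, using Corollary \ref{cor-quotient-cellular} together with the fact (Remark \ref{remark-relation-A-aA-oA}, Proposition \ref{decom-prop}) that decomposition numbers of $\oA$ vanish unless $\a(\la)=\a(\mu)$. To show $\la\eqoA\mu \Rightarrow \la\eqA\mu$ and $\a(\la)=\a(\mu)$: a chain $\ol{Z}^{\la_i}$, $\ol{Z}^{\la_{i+1}}$ sharing a composition factor $\ol{L}^\nu$ forces $\a(\la_i)=\a(\nu)=\a(\la_{i+1})$ by Proposition \ref{decom-prop} (\rov), so $\a$ is constant along the chain; and by Theorem \ref{decom-thm} (\roi), $[\ol{Z}^{\la_i}:\ol{L}^\nu]_{\oA}=[W^{\la_i}:L^\nu]_\A$, so $W^{\la_i}$ and $W^{\la_{i+1}}$ share $L^\nu$, giving $\la_i\eqA\la_{i+1}$. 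For the converse, suppose $\la,\mu\in\ol{\vL}^+$ with $\a(\la)=\a(\mu)$ and $\la\eqA\mu$ via a chain in $\vL^+$. The delicate point is that the intermediate terms $\la_i$ need not lie in $\ol{\vL}^+$, nor need $\a$ be constant along the chain. I would handle this by invoking part (\roi) and Proposition \ref{block-prop}: from $\la\eqA\mu$ we get $\la\eqtAt\mu$, i.e.\ $\wt{Z}^{(\la,\ve)}$ and $\wt{Z}^{(\mu,\ve')}$ lie in the same block of $\tA$; since $\la,\mu\in\ol{\vL}^+$ we may take $\ve=\ve'=0$, and then one transports this linkage through the surjection $\tA\twoheadrightarrow\oA$ (under which $\oA$-modules are $\tA$-modules, cf.\ \ref{bl}) using that $\ol{Z}^\la$ corresponds to $\wt{Z}^{(\la,0)}$ and $\ol{L}^\mu$ to the composition factors $\wt{L}^{(\nu,0)}$ with $\a(\nu)=\a(\la)$ appearing in it (Proposition \ref{decom-prop}, Theorem \ref{decom-thm}). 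The constraint $\a(\la)=\a(\mu)$ is exactly what guarantees that the $\tA$-linkage chain connecting $\wt{Z}^{(\la,0)}$ to $\wt{Z}^{(\mu,0)}$ can be refined, via Corollary \ref{decom-cor} and Proposition \ref{block-prop}, to one passing only through the $\ve=0$ part, which then descends to an $\eqoA$-chain.

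The main obstacle I anticipate is the converse direction of (\roii): controlling the intermediate vertices of an $\eqA$-chain between two elements of $\ol{\vL}^+$ when those vertices may leave $\ol{\vL}^+$ or change $\a$-value. The cleanest route, as sketched above, is not to work directly with $\eqoA$-chains but to route everything through $\tA$ using part (\roi) and the block analysis of $\tA$ in Proposition \ref{block-prop}, where the case analysis on the values of $\ve$ and on whether $\a(\la)=\a(\mu)$ has already been carried out; the $\a(\la)=\a(\mu)$ hypothesis then pins down the component of the $\tA$-block decomposition that descends to $\oA$. If that proves awkward, the fallback is a direct argument: given the $\eqA$-chain, replace it by the corresponding $\eqtAt$-chain, break it at the points where $\ve$ or $\a$ jumps, and use Proposition \ref{decom-prop} (\roii)–(\rov) to show that any such jump is incompatible with the endpoints both lying in the $\a(\la)$-component, forcing the chain (after passing to projective covers as in Proposition \ref{block-prop}) to stay within the relevant piece.
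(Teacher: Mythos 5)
Your part (\roi), and the implication $\la\eqoA\mu\Rightarrow\big(\la\eqA\mu$ and $\a(\la)=\a(\mu)\big)$ in part (\roii), are correct and follow the same route as the paper: everything is read off from Theorem \ref{decom-thm-tA} together with Proposition \ref{decom-prop} (in particular (\rov)), by translating each common composition factor along a linkage chain. (For completeness, in (\roi) the case of a factor $\wt{L}^{(\nu,0)}$ of $\wt{Z}^{(\la,1)}$ with $\a(\la)=\a(\nu)$ is not literally covered by the statement of Theorem \ref{decom-thm-tA}; there you also need Proposition \ref{tA-aA} and Proposition \ref{decom-prop} (\roiii), which is routine.)

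The genuine gap is in the remaining implication of (\roii): from $\la\eqA\mu$, $\a(\la)=\a(\mu)$, $\la,\mu\in\ol{\vL}^+$ to $\la\eqoA\mu$. You correctly identify this as the delicate point, but your resolution is only an assertion: the claim that the $\eqtA$-chain joining $\wt{Z}^{(\la,0)}$ to $\wt{Z}^{(\mu,0)}$ ``can be refined, via Corollary \ref{decom-cor} and Proposition \ref{block-prop}, to one passing only through the $\ve=0$ part'' is not proved, and it does not follow from those results. An $\eqA$-chain (equivalently, by (\roi), an $\eqtA$-chain) between two weights of the same $\a$-value may a priori pass only through standard modules $\wt{Z}^{(\cdot,1)}$ and simples $\wt{L}^{(\nu,\ve'')}$ with $\a(\nu)\neq\a(\la)$ or $\ve''=1$; on such steps Theorem \ref{decom-thm-tA} and Proposition \ref{decom-prop} give no information about $\oA$, and the surjection $\tA\twoheadrightarrow\oA$ only says that the partition of simples into $\oA$-blocks refines the one into $\tA$-blocks, not the converse. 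So the ``refinement'' you invoke is essentially a restatement of the implication to be proved. Your fallback sketch has the same problem: Proposition \ref{decom-prop} (\roii)--(\rov) controls steps that stay inside one $\a$-fiber but says nothing that forces a chain back into the fiber. In fairness, the paper's own proof of (\roii) is a one-line appeal to Theorem \ref{decom-thm-tA} and Proposition \ref{decom-prop} (\rov), which, read literally, yields only the direction you did prove together with the fiberwise translation of chains; so you have reproduced the paper's route, but a complete argument must show---and it is not a formal consequence of the cited statements---that two elements of $\ol{\vL}^+$ in the same $\A$-block and the same $\a$-fiber are connected by a chain of nonzero decomposition numbers staying inside that fiber. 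This is not merely an expository point: for the cyclotomic $q$-Schur algebra with $q$ generic and all parameters $Q_i$ equal, $\oSp$ is a direct sum of tensor products of semisimple level-one $q$-Schur algebras, so the asserted implication would force any two distinct multipartitions with the same $\a_{\Bp}$-value to lie in different $\Sc$-blocks, and this needs to be reconciled with the classification of such blocks by residue content; I would press on this case before accepting the step.
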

\begin{proof}
(\roi) follows from  Theorem \ref{decom-thm-tA}. 
(\roii) also follows from Theorem \ref{decom-thm-tA} 
by taking Proposition \ref{decom-prop} (\rov) into account. 
\end{proof}

\remarks\ \label{remark-block}

(\roi) 
We can also prove that all the composition factors of $\sh \wt{Z}^{(\la,\ve)}$ \big($(\la,\ve) \in \Om$\big) lie in the same block 
in a similar way as in the proof of Proposition \ref{block-prop}. 
Thus, for $(\la,\ve),(\mu,\ve') \in \Om$,  if $(\la,\ve) \eqtAl (\mu,\ve')$ 
then $\sh \wt{Z}^{(\la,\ve)}$ and $\sh \wt{Z}^{(\mu,\ve')}$ belong to the same block of $\tA$, 
where the relation $\eqtAl$ 
is defined in a similar way as in the case of right standard modules 
but by replacing the right modules by left modules. 
However one can not prove the converse 
since it may happen that  $[\,\sh \wt{Z}^{(\la,1)} : \,\sh \wt{L}^{(\mu,0)}]_{\tA}=0$ 
even if $[\wt{Z}^{(\la,1)} : \wt{L}^{(\mu,0)}]_{\tA}\not=0$. (Consider the case of $\a(\la)=\a(\mu)$.)

(\roii) 
We see easily  that  $\la \eqA \mu$ if $(\la,\ve)\eqaA (\mu,\ve')$. 
But we can not show $(\la,\ve)\eqaA (\mu,\ve')$ even if $\la \eqA \mu$. 
For example, it happens that $[Z^{(\la,0)}:L^{(\nu,0)}]_{\aA}\not=0, [Z^{(\la,0)}:L^{(\nu,1)}]_{\aA}=0, 
[Z^{(\mu,1)}:L^{(\nu,0)}]_{\aA}=0$ and $[Z^{(\mu,1)}:L^{(\nu,1)}]_{\aA}\not=0$ even if $[W^\la:L^\nu]_\A\not=0$ and $[W^\mu:L^\nu]_\A\not=0$. 
In this case, we have $\la \eqA \mu$, 
but we don't know whether $(\la,0)\eqtA (\mu,1)$ or not. 

\para
Let 
\begin{align}
\A=\bigoplus_{\vG\in \vL^+/_\sim}\CB_{\vG}
\label{block-decom-A}
\end{align}
be the block decomosition of $\A$ as (\ref{block-decom}), 
and $e_\vG$ be the block idempotent of $\A$ such that $e_\vG \A=\CB_\vG$ 
for $\vG \in \vL^+/_\sim$.  

Put $\wt{\CB}_{\vG}^\a=\CB_{\vG}\cap \tA$ for $\vG\in \vL^+/_\sim$. 
Then $\wt{\CB}_{\vG}^\a$ is a two-sided ideal of $\tA$.
We have $\tA=\bigoplus_{\vG}e_\vG \tA$ 
from the decomposition of the unit element of $\A$ into the block idempotents $1_\A=\sum_{\vG} e_\vG$. 
Thus we have $e_\vG \tA \subset \wt{\CB}_{\vG}^\a$. 
This implies that 
\begin{align}
\tA=\bigoplus_{\vG\in \vL^+/_\sim}\wt{\CB}_{\vG}^\a. 
\label{block-decom-tA}
\end{align}
In fact, this decomposition  gives the block decomposition of $\tA$ 
since the number of blocks of $\tA$ is equal to the number of blocks of $\A$ by Theorem \ref{relation-block} (\roi).   

Similarly, 
put $\CB_\vG^\a=\CB_\vG\cap \aA$ for $\vG\in \vL^+/_\sim$. 
Then $\CB_{\vG}^\a$ is a two-sided ideal of $\aA$, and we have 
\begin{align}
\aA=\bigoplus_{\vG\in \vL^+/_\sim}\CB_{\vG}^\a. 
\label{pre-block-decom-aA}
\end{align}
However, (\ref{pre-block-decom-aA}) does not give the block decomposition of $\aA$ in general 
(see Remark \ref{remark-block} (\roii)). 
Hence, $\CB_\vG^\a$ is a sum of some blocks of $\aA$.    

Let $\ol{\CB}_\vG^\a$ be the image of $\wt{\CB}_\vG^\a$ under the natural surjection $\tA \ra \oA$, 
which  coincides with the image of $\CB_\vG^\a$ under the natural surjection $\aA \ra \oA$. 
Thus we have 
\begin{align}
\ol{\CB}_\vG^\a = \wt{\CB}_\vG^\a \big/ \big(\wt{\CB}_\vG^\a \cap \tA(\wh{\Om})\big)
	=\CB_\vG^\a \big/ \big(\CB_\vG^\a \cap \aA(\wh{\Om})\big).
\label{B-bar}
\end{align} 

On the other hand, put $X^+=\{\a(\la)\,|\,\la \in \vL^+\}$. 
For $\vG \in \vL^+/_\sim$, $\eta  \in X^+$, 
put $\vG_\eta=\{\la\,|\,\la \in \vG \text{ such that }\a(\la)=\eta \}$. 
Then, by Theorem \ref{relation-block} (\roii), 
we have the block decomposition of $\oA$; 
\begin{align}
\oA=\bigoplus_{\vG\in \vL^+/_\sim\,} \bigoplus_{\,\eta\in X^+}\ol{\CB}_{\vG_\eta}^\a, 
\label{block-decom-oA}
\end{align}
where we put $\ol{\CB}_{\vG_\eta}^\a=0$ if  $\la \not\in \ol{\vL}^+$ for any $\la \in \vG_\eta$. 
By Theorem \ref{relation-block}, 
we see that, for $\la \in \ol{\vL}^+$,
$\ol{Z}^\la$ belongs to 
$\bigoplus_{\,\eta\in X^+}\ol{\CB}_{\vG_\eta}^\a$ 
if and only if 
$\wt{Z}^{(\la,0)}$ belongs to $\wt{\CB}_\vG^\a$. 
This implies that 
\begin{align}
\ol{\CB}_\vG^\a=\bigoplus_{\,\eta\in X^+}\ol{\CB}_{\vG_\eta}^\a \quad \text{ for }\vG \in \vL^+/_\sim.
\end{align}

Summing up the above arguments, 
we have the following commutative diagram for each $\vG\in \vL^+/_\sim$;
 
\hspace{5em}
\begin{picture}(30,60)
\put(0,40){$\CB_{\vG}^\a$ \scalebox{2}[1]{$\hookrightarrow $} $\wt{\CB}_{\vG}^\a $ \scalebox{2}[1]{$\hookrightarrow $} $\CB_\vG$}
\put(50,-1){$\displaystyle \ol{\CB}_{\vG}^\a = \bigoplus_{\eta\in X^+}\ol{\CB}_{\vG_\eta}^\a,$}
\put(5,8){\rotatebox{140}{\scalebox{4}[0.6]{$\twoheadleftarrow$}}}
\put(53,11){\rotatebox{90}{\scalebox{2.2}[1.2]{$\twoheadleftarrow$}}}
\end{picture}\vspace{2em}\\
where $\CB_{\vG}$ (resp. $\wt{\CB}_{\vG}^\a$, $\ol{\CB}_{\vG_\eta}^\a$) is a block of $\A$ (resp. $\tA$, $\oA$), 
but $\CB_{\vG}^\a$ may not be a block of $\aA$.  

\remark 
By Proposition \ref{basis-block}, $\CB_\vG$ is a cellular algebra with a cellular basis 
$\ZC_\vG=\{b_{\Fs\Ft}^\la \,|\,\Fs,\Ft \in \CT(\la) \text{ for some }\la \in \vG\}$ 
such that $b_{\Fs\Ft}^\la \equiv c_{\Fs\Ft}^\la \mod \A^{\vee \la}$ in $\A$. 
Then one may expect that $\CB_\vG^\a$ (resp. $\wt{\CB}_\vG^\a$) is a Levi type (resp. parabolic type) subalgebra of $\CB_\vG$ 
with respect to the map $\a$. 
But there exist some problems to overcome. 
\vspace{3mm}

(\roi)
Does it hold that $c_{\Fs\Ft}^{\la} e_\mu=c_{\Fs\Ft}^{\la}$ if and only if $b_{\Fs\Ft}^{\la} e_{\mu}=b_{\Fs\Ft}^{\la}$ 
for $\la \in \vG, \,\Ft\in \CT(\la), \,\mu \in \vL$ ? 
\vspace{3mm}\\
Note that $e_\vG=\sum_{\mu\in vL}e_\mu e_\vG$ gives a decomposition of $e_\vG$ into orthogonal idempotents 
since $e_\vG$ is an element of the center of $\A$, 
and that  $b_{\Fs\Ft}^{\la} e_{\mu}=b_{\Fs\Ft}^{\la}$ if and only if $b_{\Fs\Ft}^\la e_{\mu}e_\vG=b_{\Fs\Ft}^\la$ 
since $b_{\Fs\Ft}^\la e_\vG=b_{\Fs\Ft}^\la$. 
The \lq\lq if part" in (\roi) always holds 
since $b_{\Fs\Ft}^\la \equiv c_{\Fs\Ft}^\la \mod \A^{\vee \la}$ 
and $c_{\Fs\Ft}^\la e_\mu$ is equal to $c_{\Fs\Ft}^\la$ or zero. 
However, \lq\lq only if part" in (\roi) is not clear. 
\vspace{3mm}

(\roii) 
Does $\ZC_\vG\cap \CB_\vG^\a $ (resp. $\ZC_\vG \cap \wt{\CB}_\vG^\a$) give a cellular basis of $\CB_\vG^\a$ 
(resp. a standard basis of $\wt{\CB}_\vG^\a$) ? 
\vspace{3mm}\\
If (\roii) holds, then we have 
$\ZC_\vG\cap \CB_\vG^\a=\{b_{\Fs\Ft}^\la\,|\,\Fs,\Ft \in I(\la,\ve) \text{ for some } \la \in \vG,\,\ve=0,1\}$ and 
$\ZC_\vG\cap \CB_\vG^\a=\{b_{\Fs\Ft}^\la\,|\,(\Fs,\Ft) \in \wt{I}(\la,\ve)\times \wt{J}(\la,\ve) \text{ for some } \la \in \vG,\,\ve=0,1\}$. 



\begin{thebibliography}{1}

\bibitem[DJM]{DJM98}
R.~Dipper, G.~James, and A.~Mathas; 
\newblock Cyclotomic {$q$}-{S}chur algebra, 
\newblock {\em Math. Z.} {\bf 229} (1998), 385-416.


\bibitem[Do]{Doty03}
S.~Doty; 
\newblock Presenting generalized $q$-Schur algebras,  
\newblock {\em Represent. Theory} {\bf7} (2003), 196-213.

\bibitem[DR]{DR98}
J.~Du and H.~Rui; 
\newblock Based algebras and standard bases for quasi-hereditary algebras, 
\newblock {\em Trans. Amer. Math. Soc.} {\bf350} (1998), 3207-3235.

\bibitem[E]{Eny04}
J.~Enyang;  
\newblock Cellular bases for the Brauer and Birman-Murakami-Wenzl Algebras,  
\newblock {\em J. Algebra} {\bf281} (2004) 413-449. 

\bibitem[G]{Geck08}
M.~Geck;  
\newblock Leading coefficients and cellular  bases of Hecke algebras, preprint. 

\bibitem[GL]{GL96}
J.~J. Graham and G.~I. Lehrer; 
\newblock Cellular algebras, 
\newblock {\em Invent. Math.} {\bf123} (1996), 1-34.

\bibitem[KX]{KX98}
S.~K\"{o}nig and C.C.~ Xi;  
\newblock On the structure of cellular algebras,  
\newblock {\em Canadian Math. Soc. Conference Proceedings.} {\bf24} (1998), 365-386. 

\bibitem[M]{M-book}
A.~Mathas; 
\newblock {\em Iwahori-{H}ecke algebras and {S}chur algebras of the symmetric
  group}, {\em University Lecture Series} Vol.{\bf 15}, 
\newblock Amer. Math. Soc. 1999.

\bibitem[Saw]{Saw}
N.~Sawada;  
\newblock On decomposition numbers of the cyclotomic {$q$}-{S}chur algebras, 
\newblock {\em J. Algebra} {\bf311} (2007), 147--177. 

\bibitem[SW]{SW}
T.~Shoji and K.~Wada; 
\newblock Cyclotomic $q$-Schur algebras associated to the Ariki-Koike algebra, preprint.

\end{thebibliography}
\end{document}